\documentclass[12pt]{amsart}

\usepackage{a4wide}
\usepackage{cite}
\usepackage{tikz}
\usepackage{hyperref}
\usepackage{amsmath,amsthm}
\usepackage{graphicx}
\usepackage{standalone}
\usepackage{mathdots}
\usepackage{makecell}
\usepackage{algorithm,array,ltablex}
\usepackage{subcaption}
\usepackage{tabularx}

\newtheorem{lemma}{Lemma}

\newtheorem{theorem}{Theorem}

\newtheorem{corollary}{Corollary}
\newtheorem{example}{Example}
\newtheorem*{example*}{Example}
\newtheorem{question}{Question}

\newcommand{\seqnum}[1]{\href{https://oeis.org/#1}{#1}}

\newcommand{\gfll}[1]{F_{#1}(x)}
\newcommand{\gfall}{\gfll{A}}
\newcommand{\gff}[2]{F_{#2}^{(#1)}(x)}
\newcommand{\gfa}[1]{\gff{#1}{A}}
\newcommand{\gfnoa}[1]{F^{(#1)}(x)}

\begin{document}
\pagestyle{plain}
\setlength{\parindent}{0 cm}

\title{Counting colored trees}

\author[1]{Stoyan Dimitrov}
\address[Stoyan Dimitrov]{Dartmouth College,
Department of Mathematics,
29 North Main Street,
6188 Kemeny Hall
Hanover, NH 03755, USA}
\email{\textcolor{blue}{\href{mailto:emailtostoyan@gmail.com}{emailtostoyan@gmail.com}}}

\author[2]{Nathan Fox}
\address[Nathan Fox]{Canisius University,
Department of Quantitative Sciences,
2001 Main Street,
Buffalo, NY 14208, USA}
\email{\textcolor{blue}{\href{mailto:fox42@canisius.edu}{fox42@canisius.edu}}}

\author[3]{Kimberly Hadaway}
\address[Kimberly Hadaway]{Iowa State University, Department of Mathematics, 411 Morrill Rd, Ames, IA 50011, USA}
\email{\textcolor{blue}{\href{mailto:kph3@iastate.edu}{kph3@iastate.edu}}}

\author[4]{Ashley Tharp}
\address[Ashley Tharp]{North Carolina School of Science and Mathematics, Department of Mathematics, 1219 Broad St, Durham, NC 27705, USA}
\email{\textcolor{blue}{\href{mailto:ashley.tharp@ncssm.edu}{ashley.tharp@ncssm.edu}}}

\author[5]{Stephan Wagner}
\address[Stephan Wagner]{Institute of Discrete Mathematics, TU Graz, Steyrergasse 30, 8010 Graz, Austria \and Department of Mathematics, Uppsala University, Box 480, 751 06 Uppsala, Sweden}
\email{\textcolor{blue}{\href{mailto:stephan.wagner@tugraz.at}{stephan.wagner@tugraz.at}}}
\thanks{S. Wagner was supported by the Swedish research council (VR), grant 2022-04030.}

\thanks{We wish to thank the American Mathematical Society for organizing the Mathematics Research Community workshop where this work began. This material is based on work supported by the National Science Foundation under Grant Number DMS 1641020.}

\begin{abstract}
We consider the enumeration of plane trees (rooted ordered trees) whose vertices are colored according to a specific coloring rule that prescribes which possible pairs of colors can occur as the colors of a parent vertex and its child. This general construction covers many different examples that have been studied in the literature.
Some general necessary and sufficient conditions for two different coloring rules to result in the same counting sequence are established. 
We also provide exhaustive lists of counting sequences arising from coloring rules with two or three colors, and we find formulas and closed form expressions for many of these sequences. 
The famous Fibonacci, Catalan, Narayana, and Schr\"oder sequences appear in several cases. Some of these coloring rules are extended to families of coloring rules with arbitrarily many colors.
\end{abstract}

\maketitle

\section{Introduction}\label{sec:intro}

The enumeration of different kinds of trees is a classical topic within enumerative combinatorics. In this article, we consider a class of enumeration problems related to \emph{plane trees} (rooted ordered trees), which are a well-known example of a class of combinatorial objects counted by the \emph{Catalan numbers} (\seqnum{A000108})\footnote{Here and in the following, we refer to sequences in the On-Line Encyclopedia of Integer Sequences (OEIS) \cite{OEIS}.}. A plane tree is an unlabeled rooted tree where vertices can have arbitrary degree and the left-to-right order of children matters. For example, Figure~\ref{fig:plane-4} depicts all plane trees on four vertices.
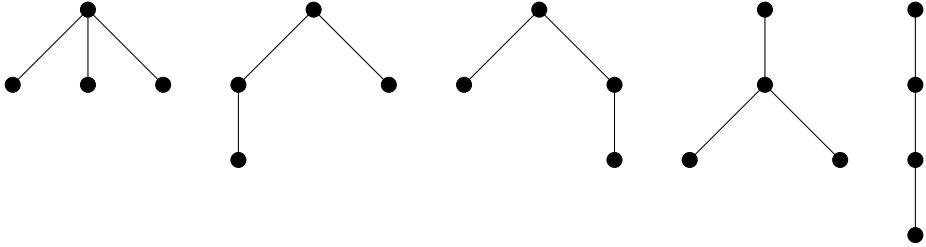
\begin{figure}[htbp]
\begin{tikzpicture}
\tikzset{point/.style = {circle, fill=black, draw=black, inner sep=2pt}}
    \node[point] at (0,0) {};
    \node[point] at (-1,-1) {};
    \node[point] at (0,-1) {};
    \node[point] at (1,-1) {};

\draw (-1,-1)--(0,0)--(1,-1);
\draw (0,0)--(0,-1);

    \node[point] at (3,0) {};
    \node[point] at (2,-1) {};
    \node[point] at (4,-1) {};
    \node[point] at (2,-2) {};

\draw (2,-2)--(2,-1)--(3,0)--(4,-1);

    \node[point] at (6,0) {};
    \node[point] at (5,-1) {};
    \node[point] at (7,-1) {};
    \node[point] at (7,-2) {};

\draw (7,-2)--(7,-1)--(6,0)--(5,-1);

    \node[point] at (9,0) {};
    \node[point] at (9,-1) {};
    \node[point] at (8,-2) {};
    \node[point] at (10,-2) {};

\draw (10,-2)--(9,-1)--(9,0);
\draw (8,-2)--(9,-1);

    \node[point] at (11,0) {};
    \node[point] at (11,-1) {};
    \node[point] at (11,-2) {};
    \node[point] at (11,-3) {};

\draw (11,0)--(11,-3);

\end{tikzpicture}
 \caption{All $4$-vertex plane trees.}\label{fig:plane-4}
\end{figure}
The number of plane trees with $n$ vertices is
$$C_{n-1} = \frac{1}{n} \binom{2n-2}{n-1}.$$
Other classical combinatorial sequences can be interpreted in terms of plane trees, as well. One example are the \emph{Narayana numbers} (\seqnum{A001263})
$$N_{n,k} = \frac{1}{n} \binom{n}{k} \binom{n}{k-1}.$$
Indeed, $N_{n-1,k}$ is the number of plane trees with $k$ leaves and $n$ vertices in total. The \emph{large Schr\"oder numbers} (\seqnum{A006318}) are another such example. They can be expressed as
\begin{equation}\label{eq:schroder}
R_n = \sum_{k=1}^n N_{n,k} 2^k,
\end{equation}
which has a simple (and well-known) combinatorial interpretation: The large Schr\"oder number $R_{n-1}$ counts plane trees with $n$ vertices and bicolored leaves (i.e., each leaf can have one of two possible colors). See for example \cite[Theorem 3.5]{CLS2007Butterfly} or \cite[Example 18]{Barry2016Riordan}. We discuss large Schr\"oder numbers more in Section~\ref{sec:small}. Figure~\ref{fig:Schr-3} shows all such plane trees on three vertices with bicolored leaves.
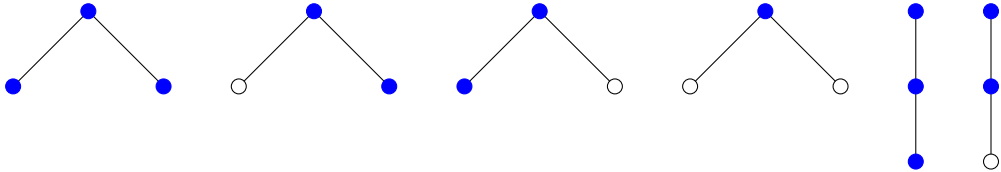
\begin{figure}[htbp]
\begin{tikzpicture}
\tikzset{blpoint/.style = {circle, fill=blue, draw=blue, inner sep=2pt}}
\tikzset{wpoint/.style = {circle, fill=white, draw=black, inner sep=2pt}}
\draw (-1,-1)--(0,0)--(1,-1);

    \node[blpoint] at (0,0) {};
    \node[blpoint] at (-1,-1) {};
    \node[blpoint] at (1,-1) {};

\draw (2,-1)--(3,0)--(4,-1);

    \node[blpoint] at (3,0) {};
    \node[wpoint] at (2,-1) {};
    \node[blpoint] at (4,-1) {};

\draw (5,-1)--(6,0)--(7,-1);

    \node[blpoint] at (6,0) {};
    \node[blpoint] at (5,-1) {};
    \node[wpoint] at (7,-1) {};

\draw (8,-1)--(9,0)--(10,-1);

    \node[blpoint] at (9,0) {};
    \node[wpoint] at (8,-1) {};
    \node[wpoint] at (10,-1) {};

\draw (11,0)--(11,-2);

    \node[blpoint] at (11,0) {};
    \node[blpoint] at (11,-1) {};
    \node[blpoint] at (11,-2) {};

\draw (12,0)--(12,-2);

    \node[blpoint] at (12,0) {};
    \node[blpoint] at (12,-1) {};
    \node[wpoint] at (12,-2) {};

\end{tikzpicture}
\caption{All $3$-vertex plane trees with leaves that can either be blue or white (while all other vertices are blue).}\label{fig:Schr-3}
\end{figure}
This generalizes in a natural way to more colors, see OEIS sequences \seqnum{A047891}, \seqnum{A082298} and \seqnum{A082301} for plane trees with $3$-colored, $4$-colored, and $5$-colored leaves, respectively. 
These counting sequences can also be interpreted in terms of colored/weighted lattice paths, see \cite{Barry2019Generalized,CP2017Identities}.
Another type of colored plane trees that are counted by the Catalan-like sequence \seqnum{A007226} stems from the following result due to Kirschenhofer, Prodinger and Tichy~\cite{KPT1986Fibonacci}:

\begin{theorem}[{see~\cite{KPT1986Fibonacci}}]\label{thm:indsets}
The total number of independent sets over all plane trees with $n$ vertices is
$$\frac{2}{n} \binom{3n-3}{n-1}.$$
\end{theorem}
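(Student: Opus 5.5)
Independent sets in plane trees are exactly colorings with two colors, "in" (black) and "out" (white), under the rule that a black parent cannot have a black child. So I would count plane trees whose vertices are colored black or white, with no black vertex having a black child, and derive the formula from the generating functions of such trees.

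Let $B(x)$ and $W(x)$ be the generating functions, by number of vertices, of such trees whose root is black or white, respectively. A tree with a white root is a root followed by an arbitrary sequence of subtrees of either kind. A tree with a black root is a root followed by a sequence of white-rooted subtrees. Hence
$$W = \frac{x}{1-(B+W)}, \qquad B = \frac{x}{1-W}.$$
The total count is $T=B+W$.

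The main step is to remove $B$ and put the system into a form suited to Lagrange inversion. From the first equation, $B+W = 1 - x/W$. Substituting into the second gives
$$1-\frac{x}{W}-W=\frac{x}{1-W}.$$
Clearing denominators, $(W-x-W^2)(1-W)=xW$, which simplifies to
$$W(1-W)^2 = x(1-W)+xW = x.$$
So $W = x/(1-W)^{-2}$, i.e. $W=x\phi(W)$ with $\phi(u)=(1-u)^{-2}$. This is the ternary-tree type equation, and it is consistent with a count of the form $\binom{3n-3}{n-1}$.

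Next I express $T$ through $W$:
$$T = 1-\frac{x}{W} = 1-(1-W)^2 = 2W - W^2.$$
Lagrange inversion with $H(u)=2u-u^2$, so $H'(u)=2(1-u)$, gives
$$[x^n]T=\frac1n[u^{n-1}]\,2(1-u)(1-u)^{-2n}=\frac2n[u^{n-1}](1-u)^{-(2n-1)}.$$
Since
$$[u^{n-1}](1-u)^{-(2n-1)}=\binom{2n-1+n-1-1}{n-1}=\binom{3n-3}{n-1},$$
this yields exactly $\frac{2}{n}\binom{3n-3}{n-1}$.

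The only delicate point is the elimination algebra, so I would check it on small cases. For $n=1$ the formula gives $2$, and a single vertex has indeed two independent sets. For $n=2$ it gives $3$, and the path on two vertices has three independent sets.
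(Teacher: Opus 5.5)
Your proof is correct, apart from a typo: $W=x/(1-W)^{-2}$ should read $W=x/(1-W)^{2}$. Your $\phi(u)=(1-u)^{-2}$ is the right one, so nothing downstream is affected.

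The paper does not prove this statement where it states it; it cites Kirschenhofer, Prodinger and Tichy. It does, however, give two arguments of its own for the equivalent colored-tree version (Entry~7 of Table~\ref{tb:2cols}).

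Your route is essentially the paper's generating-function treatment of Family~2 in Section~\ref{sec:fam3}, specialized to $\ell=m=1$. Your ``white'' is the paper's blue and your $B$ is its $h$. The paper's equation $g=x(1+(m-\ell)g)/(1-\ell g)^2$ then collapses to your $W=x/(1-W)^2$. The only difference is that you apply Lagrange--B\"urmann once to the total $T=2W-W^2$, whereas the paper extracts the two root-color classes separately and adds them.

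The paper's other argument, the bijection $\tau$ of Appendix~\ref{app:ent7bij}, is genuinely different. It sends blue-rooted trees to trees on $2n$ vertices, and white-rooted trees to trees on $2n-1$ vertices, all of whose downward paths have even length. Equivalently, these are Dyck paths whose ascents are all even except possibly the first. That bijection explains the ternary-type numbers combinatorially, while your computation is shorter and self-contained.

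Both routes give the refinement by root color, and in your setup it costs nothing. From $[x^n]W=\frac1n[u^{n-1}](1-u)^{-2n}$ you get $[x^n]W=\frac1n\binom{3n-2}{n-1}$. Since $B=x/(1-W)=W-W^2$, you also get $[x^n]B=\frac{1}{2n-1}\binom{3n-3}{n-1}$.
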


This can also be interpreted in terms of colored trees: for every pair of a plane tree and an independent set of that tree, let us assign the color white to vertices that belong to the independent set and the color blue to all other vertices. The result is a blue-white colored plane tree in which no two white vertices are adjacent. Thus we have the following equivalent theorem.

\begin{theorem}\label{thm:red-blue}
The number of blue-white colored plane trees with $n$ vertices in which no two white vertices are adjacent is
$$\frac{2}{n} \binom{3n-3}{n-1}.$$
\end{theorem}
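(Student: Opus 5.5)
Since Theorem~\ref{thm:red-blue} is only a reformulation of Theorem~\ref{thm:indsets}, it would suffice to point to the bijection between independent sets and admissible colorings described above. Still, I would give a self-contained generating function proof, since the same method will be needed later for general coloring rules. Let $B(x)$ and $W(x)$ be the generating functions, by number of vertices, of admissible blue-white plane trees whose root is blue, respectively white. A plane tree is a root with an ordered sequence of subtrees. A blue root may have children of either color, while a white root may only have blue children. This gives
\begin{equation*}
B = \frac{x}{1-(B+W)}, \qquad W = \frac{x}{1-B}.
\end{equation*}
The quantity we want is $[x^n](B+W)$.

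Next I would eliminate $W$ and apply Lagrange inversion. From the second equation, $W = x/(1-B)$. Substituting into the first gives $B(1-B-W)=x$, that is,
\begin{equation*}
B(1-B) - \frac{xB}{1-B} = x, \qquad\text{so}\qquad B(1-B)^2 = x(1-B) + xB = x .
\end{equation*}
Hence $B = x\,\phi(B)$ with $\phi(u) = (1-u)^{-2}$. Lagrange inversion then gives
\begin{equation*}
[x^n]B = \frac{1}{n}[u^{n-1}](1-u)^{-2n} = \frac{1}{n}\binom{3n-2}{n-1}.
\end{equation*}
For $W$, note that $W = x/(1-B) = B(1-B)$, using $x = B(1-B)^2$. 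Set $H(u)=u-u^2$. Lagrange--B\"urmann gives
\begin{equation*}
[x^n]H(B) = \frac{1}{n}[u^{n-1}]H'(u)\phi(u)^n = \frac{1}{n}[u^{n-1}](1-2u)(1-u)^{-2n},
\end{equation*}
and the total is obtained by taking $H(u)=2u-u^2$ for $B+W$:
\begin{equation*}
[x^n](B+W) = \frac{2}{n}[u^{n-1}](1-u)(1-u)^{-2n} = \frac{2}{n}[u^{n-1}](1-u)^{-(2n-1)} = \frac{2}{n}\binom{3n-3}{n-1}.
\end{equation*}
This is the claimed formula.

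The only step that needs care is the elimination. One has to check that the substitution really collapses to the simple equation $B(1-B)^2=x$, and that $B+W$ is the nice function $2B-B^2$, whose derivative $2(1-B)$ cancels one factor of $(1-u)^{-2n}$. Before writing this up, I would sanity-check small cases: for $n=1$ the formula gives $2$, and for $n=2$ it gives $\binom{3}{1}=3$ (the colorings BB, BW, WB). Both match.
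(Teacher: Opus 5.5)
Your proof is correct. The system is Theorem~\ref{prop:gfgeneral} for this matrix, the elimination does give $B(1-B)^2=x$, and $H(u)=2u-u^2$ yields $\frac{2}{n}[u^{n-1}](1-u)^{-(2n-1)}=\frac{2}{n}\binom{3n-3}{n-1}$ for all $n\ge 1$. It is, however, not how the paper establishes this statement. The paper gives no independent argument at this point: Theorem~\ref{thm:red-blue} is obtained by translating the cited result of Kirschenhofer, Prodinger and Tichy (Theorem~\ref{thm:indsets}) through the correspondence between independent sets and white vertices.

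The paper later supplies two further routes. One is a bijection (Appendix~\ref{app:ent7bij}) from blue-rooted and white-rooted colored trees to uncolored trees on $2n$ and $2n-1$ vertices all of whose downward paths have even length. Equivalently, these are Dyck paths whose ascents are all even except possibly the first. The other is the Family~2 computation, where eliminating $h$ gives $x=g(1-\ell g)^2/(1+(m-\ell)g)$. At $\ell=m=1$ this is exactly your equation, and the total in Theorem~\ref{thm:removesquare-alternative2} reduces to your formula because only the $k=0$ term survives. So your argument is in effect the $\ell=m=1$ case of that analysis, written out directly.

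Compared with the citation, your route gives a complete proof in a few lines. It also produces the root-color refinements $\frac{1}{n}\binom{3n-2}{n-1}$ and $\frac{1}{2n-1}\binom{3n-3}{n-1}$. Your choice $H(u)=2u-u^2$, whose derivative cancels one factor of $(1-u)^{-2n}$, reaches the total directly. This avoids summing the two refined counts, or applying Vandermonde's identity to the Kirschenhofer--Prodinger--Tichy refinement by number of blue vertices, as the paper remarks. The bijective proof, by contrast, explains combinatorially why these counts agree with even-ascent Dyck paths. Its cost is a much longer construction and a separate verification of the inverse map.
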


A generalization to several colors can be found in \cite{GPW2010Bijections}:

\begin{theorem}[{see~\cite{GPW2010Bijections}}]\label{thm:k-plane}
The number of plane trees with $n$ vertices that are given labels between $1$ and $m$ in such a way that the labels of adjacent vertices never add up to a number greater than $m+1$ is
$$\frac{m}{n} \binom{(m+1)(n-1)}{n-1}.$$
\end{theorem}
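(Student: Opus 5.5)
Our plan is to prove Theorem~\ref{thm:k-plane} with generating functions and Lagrange inversion. For $j=1,\dots,m$, let $T_j(x)$ be the generating function, by number of vertices, of labelled plane trees that satisfy the rule and whose root has label $j$. A root labelled $j$ may only have children with labels $i\le m+1-j$. Removing the root therefore gives
\[
T_j = \frac{x}{1-S_{m+1-j}}, \qquad S_k := T_1+\dots+T_k ,
\]
since the subtrees hanging from the root form an ordered sequence of trees with root labels in $\{1,\dots,m+1-j\}$. The answer we want is the coefficient of $x^n$ in $S_m$.

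The key step is to decouple this system. Write $U_k = 1/(1-S_k)$, so that $T_j = xU_{m+1-j}$. Then $T_k = S_k - S_{k-1}$ gives
\[
1/U_{k-1} - 1/U_k = xU_{m+1-k}.
\]
The relation is symmetric under $k\mapsto m+1-k$ after suitable pairing. We will guess and verify an ansatz in which all the $U_k$ are expressed through a single series $w$. One natural choice is $U_k = (1+w)^{k}$, up to normalisation, with $U_0=1$ because $S_0=0$.

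To test the ansatz, substitute it into the relation:
\[
(1+w)^{-(k-1)} - (1+w)^{-k} = w(1+w)^{-k}.
\]
This must equal $x(1+w)^{m+1-k}$, which gives $w = x(1+w)^{m+1}$, and this condition does not depend on $k$. So the ansatz works, and $w$ is the generating function of $(m+1)$-ary trees.

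It remains to extract coefficients. We have $S_m = 1 - (1+w)^{-m}$. Lagrange inversion with $\phi(w)=(1+w)^{m+1}$ gives, for $n\ge 1$,
\[
[x^n]\,H(w) = \frac1n\,[w^{n-1}]\,H'(w)\,\phi(w)^n .
\]
Taking $H(w) = 1-(1+w)^{-m}$, so that $H'(w) = m(1+w)^{-m-1}$, yields
\[
[x^n]S_m = \frac{m}{n}\,[w^{n-1}](1+w)^{(m+1)n-m-1} = \frac{m}{n}\binom{(m+1)(n-1)}{n-1},
\]
which is the formula claimed in Theorem~\ref{thm:k-plane}.

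The main obstacle is finding the ansatz; once it is found, checking it is routine. If the guess $U_k=(1+w)^k$ had failed, we would have solved the cases $m=1,2$ explicitly, looked for the pattern there, and then verified the general case by induction on $k$. As a sanity check, the case $m=2$ gives $\frac{2}{n}\binom{3n-3}{n-1}$, matching Theorem~\ref{thm:red-blue}.
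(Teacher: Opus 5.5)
Your proof is correct. The paper does not prove this theorem itself (it is quoted from \cite{GPW2010Bijections}), but your argument is essentially the method the paper uses for the variant in Theorem~\ref{thm:kplane-variant}: your solution $T_j = w(1+w)^{-j}$ is exactly the geometric ansatz $UV^{j-1}$ with $U = w/(1+w)$ and $V = 1-U$, which reduces the system to $x = U(1-U)^m$, followed by Lagrange--B\"urmann.

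The one point worth stating explicitly is why verifying the ansatz is enough. The system $T_j = x/(1-S_{m+1-j})$ has a unique solution in power series with zero constant term, so the series you construct must be the actual generating functions.
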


The labels can also be interpreted as colors 
$1,2,\ldots,m$ with the restriction that vertices of colors $i$ and $j$ may only be adjacent if $i+j \leq k+1$. See also \cite{OW2024Refined} for a refinement of Theorem~\ref{thm:k-plane}. The trees in Theorem~\ref{thm:k-plane} are called $m$-\emph{plane trees}.

In a recent paper \cite{BC2024Lambda}, an even more general kind of colored plane tree was considered in the context of random matrices: let $\lambda$ be an integer partition, i.e., a nonincreasing finite sequence $\lambda = (\lambda_1,\lambda_2,\ldots,\lambda_m)$ of positive integers. A $\lambda$-\emph{plane tree} is defined as a plane tree in which the vertices are colored using $m$ colors 
$1,2,\ldots,m$,
and a vertex of color $i$ can only have children of color $j$ if $j \leq \lambda_i$. The aforementioned family of $m$-plane trees is obtained as the special case of this construction where $\lambda = (m,m-1,\ldots,2,1)$.

The aim of this paper is to study all these examples of colored plane trees in a unified framework. In general, let us take a set of $m$ colors and an $m \times m$ matrix $A$ with entries $a_{ij} \in \{0,1\}$. We color the vertices of a rooted tree according to the following rule:

\begin{center}
A vertex of color $j$ can only be a child of a vertex of color $i$ if $a_{ij} = 1$.
\end{center}

We are interested in the number of trees $t_A(n)$ with $n$ vertices that are colored according to this rule. All types of trees mentioned so far fit this general scheme. Let us briefly discuss these and other examples.

\begin{enumerate}
\item Of course, the class of plane trees itself can be represented by the $1 \times 1$-matrix consisting of a single $1$. Generally, if each vertex can be colored in $1$ out of $m$ colors, then let $A$ be the $m \times m$-matrix whose entries are all equal to $1$. We have $t_A(n) = m^n C_{n-1}$, since every tree can be colored in $m^n$ ways.
\item The example of plane trees with bicolored leaves (as shown in Figure~\ref{fig:Schr-3}) can be represented by the $2 \times 2$-matrix
$$A = \begin{bmatrix} 1 & 1 \\ 0 & 0 \end{bmatrix}.$$
The first row corresponds to the color blue in the figure: children of a vertex that is colored blue can have any color. The second row corresponds to the color white in the figure. Since there is no possible color for children of a white vertex, these vertices must necessarily be leaves.
Thus counting plane trees with bicolored leaves: internal vertices are necessarily blue, while leaves can be either blue or white. It follows that $t_A(n)$ is the sequence of large Schr\"oder numbers in this example.
\item Theorem~\ref{thm:red-blue} corresponds to the matrix
$$A = \begin{bmatrix} 1 & 1 \\ 1 & 0 \end{bmatrix},$$
where the color blue corresponds to the first row and the color white corresponds to the second row. All possible parent-child combinations are allowed, except for two neighboring white vertices. So we have $t_A(n) = \frac{2}{n} \binom{3n-3}{n-1}$ in this case.
\item The more general example of $m$-plane trees is modeled by the $m \times m$-matrix of the form
$$A_m = \begin{bmatrix} 1 & 1 & \cdots & 1 & 1 \\ 1 & 1 & \cdots & 1 & 0 \\ \vdots & \vdots & \iddots & \vdots & \vdots \\ 1 & 1 & \cdots & 0 & 0 \\ 1 & 0 & \cdots & 0 & 0  \end{bmatrix}.$$
So Theorem~\ref{thm:k-plane} can be expressed as $t_{A_m}(n) = \frac{m}{n} \binom{(m+1)(n-1)}{n-1}$.
\end{enumerate}

Looking at further examples, one quickly notices that the sequence $t_A(n)$ does not uniquely determine the matrix $A$. For instance, the pair of matrices
$$A = \begin{bmatrix} 1 & 0 \\ 0 & 1 \end{bmatrix} \text{ and } B = \begin{bmatrix} 0 & 1 \\ 1 & 0 \end{bmatrix},$$
have the same counting sequence given by twice the Catalan numbers: $t_A(n) = t_B(n) = 2C_{n-1}$. In each case, the color of the root already determines the entire coloring. For the coloring rule encoded by matrix $A$, all vertices have to have the same color, while for the coloring rule encoded by matrix $B$, the colors have to alternate.

There are also more complicated examples, such as the following pair of matrices:
$$A = \begin{bmatrix} 1 & 1 & 0 \\ 1 & 0 & 0 \\ 1 & 1 & 0 \end{bmatrix} \text{ and } B = \begin{bmatrix} 1 & 1 & 0 \\ 0 & 0 & 1 \\ 1 & 1 & 0 \end{bmatrix}.$$
Both produce the counting sequence $3, 5, 17, 72, 341, 1729,\ldots$
with the general formula $t_A(n) = t_B(n) = \frac{7 n - 4}{n (2 n - 1)} \binom{3n-3}{n-1}$. The following general question thus arises:

\begin{question}
Can we characterize pairs of matrices $A$ and $B$ for which the counting sequences $t_A(n)$ and $t_B(n)$ coincide?
\end{question}

We will call two matrices with this property \emph{tree coloring equivalent}. A full characterization of tree coloring equivalence seems out of reach. In this paper, we provide some necessary conditions for matrices to be tree coloring equivalent (see Section~\ref{sec:necessary}). In Section~\ref{sec:constructions}, we discuss some ways to construct tree coloring equivalent pairs of matrices.

Most of the examples mentioned so far lead to ``nice'' formulas for the sequence $t_A(n)$ (i.e., hypergeometric expressions). This raises another general question.

\begin{question}
\label{q:2}
When is there an explicit formula for the counting sequence $t_A(n)$. In particular, when is this sequence hypergeometric?
\end{question}
A sequence $a_n$ is hypergeometric if the quotient $\frac{a_n}{a_{n-1}}$ is a rational function of $n$.

Again, a general answer of Question~\ref{q:2} seems out of reach. There are eight different sequences that arise from $2 \times 2$-matrices, and 72 different sequences that arise from $3 \times 3$-sequences. These are discussed in detail in Section~\ref{sec:small}. A total of $13$ of the sequences for $3\times3$ matrices are nonconstant hypergeometric sequences, and there are also some other counting sequences that are not hypergeometric but still have explicit counting formulas. For example, there are several $3 \times 3$-matrices for which the associated counting sequence is $(2^n+1)C_{n-1}$, e.g. see Entry 29 in Appendix~\ref{app:3by3}. The latter expression can be easily shown to not be hypergeometric. 
\subsection{Other related work and summary of the paper}

Knuth \cite[Section~3]{knuth1968another} considers essentially the same problem, but for oriented trees (arborescences), in which the order of the children for each vertex does not matter, yet they still have a root and all edges are oriented towards it. 
His article establishes counting formulas even when we have a given number of vertices of each color, generalizing Cayley's formula on the total number of different spanning trees. 
A further generalization was obtained by Bernardi and Morales \cite{bernardi2014counting}. 
Another recent paper~\cite{white2024quota} studies quota trees, which are quite general objects with a certain specialization that is quite similar to our setting.

It is also worth mentioning that via the glove bijection, which maps ordered trees to Dyck paths (see Section~\ref{sec:prelim} for a description), the problems we are studying can be considered as problems of counting colored Dyck paths. 
While some papers consider Dyck paths with colored accents or hills \cite{asinowski2008dyck,manesbijections}, the coloring we look at does not seem to be considered.     

The rest of the paper is summarized as follows. In Section~\ref{sec:prelim}, we start by discussing some needed background. In Section~\ref{sec:necessary} and Section~\ref{sec:constructions}, we establish some necessary and sufficient conditions for two coloring rules to have the same counting sequence. 
Next, in Section~\ref{sec:new_from_old}, we explore how certain coloring rules can be built out of simpler rules. 
As previously mentioned, Section~\ref{sec:small} contains an exhaustive analysis of coloring rules with two or three colors.
Then, in Section~\ref{sec:special}, we describe some infinite families of matrices for which there are explicit formulas for the associated counting sequences. 
Finally, Section~\ref{sec:future} outlines some possible directions for further study.

\section{Preliminaries}\label{sec:prelim}

As mentioned previously, plane trees form a classical example of a class of Catalan objects. In several proofs, we refer to the \emph{standard Catalan decomposition} of plane trees. Given a plane tree with $n\geq2$ vertices, decompose it into two pieces:
\begin{itemize}
    \item The tree rooted at the rightmost child of the root $(R)$. 
    \item The rest of the tree (including the root) $(L)$.
\end{itemize}
In such a decomposition of a tree $T$ with $n$ vertices, $L$ has $k$ vertices (with $1\leq k\leq n-1$), and $R$ has the remaining $n-k$ vertices (see Figure~\ref{fig:cat-standard}). 
This decomposition demonstrates that the number of plane trees on $n$ vertices satisfies the same recurrence as the Catalan numbers.
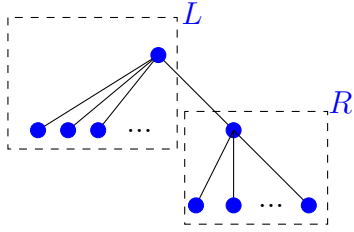
\begin{figure}
    \centering
\begin{tikzpicture}
\tikzset{blpoint/.style = {circle, fill=blue, draw=blue, inner sep=2pt}}
\tikzset{wpoint/.style = {circle, fill=white, draw=black, inner sep=2pt}}
\draw (0,0)--(1,-1);
\draw (0,0)--(-1.6,-1);
\draw (0,0)--(-1.2,-1);
\draw (0,0)--(-0.8,-1);

    \node[blpoint] at (0,0) {};
    \node[blpoint] at (-1.6,-1) {};
    \node[blpoint] at (-1.2,-1) {};
    \node[blpoint] at (-0.8,-1) {};
    \node[] at (-0.25,-1) {...};
    \node[blpoint] at (1,-1) {};

\draw (1,-1)--(0.5,-2);
\draw (1,-1)--(1,-2);
    \node[] at (1.5,-2) {...};
\draw (1,-1)--(2,-2);

    \node[blpoint] at (0.5,-2) {};
    \node[blpoint] at (1,-2) {};
    \node[blpoint] at (2,-2) {};
    
\draw[dashed] (-2,0.5) rectangle (0.25,-1.25);
\node[blue] at (0.45,0.557) {$L$};

\draw[dashed] (0.35,-0.75) rectangle (2.25,-2.25);
\node[blue] at (2.42,-0.625) {$R$};

\end{tikzpicture}

    \caption{The standard Catalan decomposition for plane trees}
    \label{fig:cat-standard}
\end{figure}
Also, in several of our proofs, we use a folklore bijection between ordered trees and Dyck paths, known as the \emph{glove} or the \emph{accordion} bijection \cite{callan}: Execute a pre-order traversal of an ordered tree (i.e., run a depth-first search while prioritizing exploring to the left).
During this process, write an up-step whenever we go further away from the root, and a down-step every time we go closer to the root. 
The result is a 
Dyck path, and it is straightforward to show the map is bijective (see Figure~\ref{fig:glove} below).

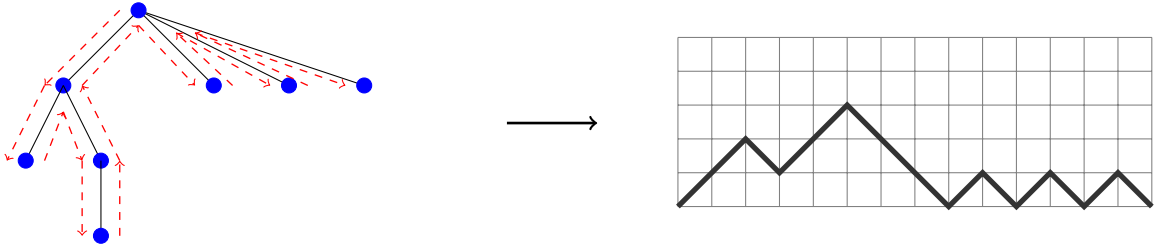
\begin{figure}[ht!]
    \centering
\begin{minipage}{0.4\textwidth}
  \centering
\begin{tikzpicture}
\tikzset{blpoint/.style = {circle, fill=blue, draw=blue, inner sep=2pt}}
\tikzset{wpoint/.style = {circle, fill=white, draw=black, inner sep=2pt}}
\draw (0,0)--(1,-1);
\draw (0,0)--(3,-1);
\draw (0,0)--(2,-1);
\draw (0,0)--(-1,-1);

    \node[blpoint] at (0,0) {};
    \node[blpoint] at (3,-1) {};
    \node[blpoint] at (2,-1) {};
    \node[blpoint] at (1,-1) {};
    \node[blpoint] at (-1,-1) {};

\draw (-1,-1)--(-0.5,-2);
\draw (-1,-1)--(-1.5,-2);

    \node[blpoint] at (-0.5,-2) {};
    \node[blpoint] at (-1.5,-2) {};

    \draw (-0.5,-3)--(-0.5,-2);
    \node[blpoint] at (-0.5,-3) {};

    \draw[->, red, dashed, line width=0.5pt] (-0.25,0) -- (-1.25,-1);
    \draw[->, red, dashed, line width=0.5pt] (-1.25,-1) -- (-1.75,-2);
    \draw[->, red, dashed, line width=0.5pt] (-1.25,-2) -- (-1,-1.35);
    \draw[->, red, dashed, line width=0.5pt] (-1,-1.35) -- (-0.75,-2);
    \draw[->, red, dashed, line width=0.5pt] (-0.75,-2) -- (-0.75,-3);
    \draw[->, red, dashed, line width=0.5pt] (-0.25,-3) -- (-0.25,-2);
    \draw[->, red, dashed, line width=0.5pt] (-0.25,-2) -- (-0.75,-1);
    \draw[->, red, dashed, line width=0.5pt] (-0.75,-1) -- (0,-0.2);
    \draw[->, red, dashed, line width=0.5pt] (0,-0.2) -- (0.75,-1);

    \draw[->, red, dashed, line width=0.5pt] (1.25,-1) -- (0.5,-0.3);
    \draw[->, red, dashed, line width=0.5pt] (0.5,-0.3) -- (1.75,-1);

    \draw[->, red, dashed, line width=0.5pt] (2.25,-1) -- (0.75,-0.3);
    \draw[->, red, dashed, line width=0.5pt] (0.75,-0.3) -- (2.75,-1);
\end{tikzpicture}
\end{minipage}
\hspace{0.5cm}
\begin{minipage}[c]{0.1\textwidth}
  \centering
  \begin{tikzpicture}
    \draw[->, line width=1pt] (0,0) -- (1.2,0);
  \end{tikzpicture}
\end{minipage}
\hspace{0.5cm}
\begin{minipage}{0.4\textwidth}
  \centering
  \begin{tikzpicture}[scale=.45, roundnode/.style={circle,fill=black!60, inner sep=1.5pt, minimum width=4pt}, triangnode/.style={regular polygon,regular polygon sides=3, fill=black!60, inner sep=1.5pt, outer sep = 0pt}]
        \draw[step=1.0, gray!100, thin] (0,0) grid (14, 5);
	\draw[black!80, line width=2pt] (0,0) -- (1,1) -- (2,2) -- (3,1) -- (4,2) -- (5,3) -- (6,2) -- (7,1) -- (8,0) -- (9,1) -- (10,0) -- (11,1) -- (12,0) -- (13,1) -- (14,0);
    \end{tikzpicture}
\end{minipage}
    \caption{Example of the glove bijection for plane trees.}
    \label{fig:glove}
\end{figure}

Let us define some more notation and terminology. Let $\mathcal{T}_{n}$ denotes the set of rooted plane trees with $n$ vertices. We refer to a square zero-one matrix as a \emph{coloring matrix} since such a matrix encodes a coloring rule. 
Letting $A$ be a fixed $m \times m$ 
coloring matrix, for $1\leq i\leq m$, we let 
$$
t_A^{(i)}(n) =  \textit{the number of trees in $\mathcal{T}_{n}$ with root color $i$ and coloring rule given by $A$.}
$$ 

We also use the convention $t^{(i)}_A(0)=0$, for all $i$. 
Immediately, we have $t_A(n)=\sum_{i=1}^mt_A^{(i)}(n)$ for all $n$. 
This leads to the following simple result.
\begin{theorem}\label{prop:t2}
    Given an $m\times m$ coloring matrix $A$, for any $1\leq i\leq m$, $t^{(i)}_A(2)$ is equal to the number of ones in row $i$ of $A$. Consequently, $t_A(2)$ is equal to the number of ones in $A$.
\end{theorem}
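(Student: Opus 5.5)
The plan is to enumerate $\mathcal{T}_2$ directly and then count the admissible colorings of its single element.

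First observe that $\mathcal{T}_2$ has exactly one element: a root together with one child, joined by a single edge. Fix a root color $i$ with $1\leq i\leq m$. A coloring of this tree that obeys the rule encoded by $A$ is determined by the color $j$ of the child. The only parent--child pair in the tree is (root, child), so the coloring rule imposes exactly one condition, namely $a_{ij}=1$, and no condition at all on the root. Hence colorings with root color $i$ correspond bijectively to indices $j$ with $a_{ij}=1$. Since the entries of $A$ lie in $\{0,1\}$, the number of such $j$ is $\sum_{j=1}^m a_{ij}$, which is the number of ones in row $i$. This proves the first claim.

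For the second claim, I will use the identity $t_A(n)=\sum_{i=1}^m t_A^{(i)}(n)$ noted just before the statement, with $n=2$. This gives $t_A(2)=\sum_{i,j}a_{ij}$, which is the total number of ones in $A$.

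I expect no real obstacle here. The only point needing care is that the coloring rule constrains only parent--child pairs, so the root color is free and each edge contributes exactly one entry of $A$.
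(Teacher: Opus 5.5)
Your proof is correct and takes the same approach as the paper. With root color $i$, the unique tree in $\mathcal{T}_2$ has one valid coloring for each $j$ with $a_{ij}=1$, and summing over the root colors $i$ gives the total number of ones in $A$.
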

\begin{proof}
    The only tree with two vertices has a root and one child of the root. If the root has color $i$, then the options for coloring the child correspond to the ones in row $i$. Considering all possible root colors proves the remainder of the theorem.
\end{proof}

With the previous ideas in mind, we introduce a stronger notion of tree coloring equivalence that we sometimes need: Two $m\times m$ coloring matrices $A$ and $B$ are \emph{strictly tree coloring equivalent} if the sequences $t_A^{(i)}(n)$ and $t_B^{(i)}(n)$ are equal for all $1\leq i\leq m$. Clearly, if $A$ and $B$ are strictly tree coloring equivalent, then they are tree coloring equivalent, yet the converse is not true.

Given the $m\times m$ coloring matrix $A$, define $\gfall=\sum_{n=1}^\infty t_A(n)x^n$ to be the ordinary generating function for the coloring rule determined by $A$. Similarly, for $1\leq i\leq m$, define $\gfa{i}=\sum_{n=1}^\infty t_A^{(i)}(n)x^n$. We have the following observations.
\begin{theorem}\label{prop:rootrec}
    Let $A$ be an $m\times m$ coloring matrix, and let $1\leq i\leq m$ be fixed. For $n\geq2$, we have
    \[
    t_A^{(i)}(n)=\sum_{k=1}^{n-1}\sum_{j=1}^ma_{ij}t_A^{(i)}(k)t_A^{(j)}(n-k),
    \]
    where $a_{ij}$ denotes the $(i,j)$-entry of $A$.
\end{theorem}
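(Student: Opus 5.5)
The plan is to use the standard Catalan decomposition from Section~\ref{sec:prelim} and show that it gives a bijection that respects colorings. Fix $n\geq 2$ and a root color $i$. Take a tree $T\in\mathcal{T}_n$ colored according to $A$ with root color $i$. It splits into two pieces: $R$, the subtree rooted at the rightmost child of the root, and $L$, the remaining tree containing the original root. Let $L$ have $k$ vertices, where $1\leq k\leq n-1$, so that $R$ has $n-k\geq 1$ vertices. Write $j$ for the color of the root of $R$.

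First I check that both pieces are valid colored trees. Every parent--child pair inside $L$ or inside $R$ is also a parent--child pair of $T$. Hence $L$ is a tree colored according to $A$ with root color $i$, and $R$ is a tree colored according to $A$ with root color $j$. The one edge of $T$ lying in neither piece joins the root of $T$ (color $i$) to the root of $R$ (color $j$), and it forces $a_{ij}=1$.

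Next I check the converse. Suppose $L$ has $k$ vertices, root color $i$, and is colored according to $A$. Suppose $R$ has $n-k$ vertices, root color $j$ with $a_{ij}=1$, and is colored according to $A$. Attach the root of $R$ as a new rightmost child of the root of $L$. The result is a tree in $\mathcal{T}_n$ with root color $i$, and it satisfies the coloring rule: all old edges already did, and the new edge does because $a_{ij}=1$. Decomposing this tree returns exactly $(L,R)$, since $R$ was attached as the rightmost child. Conversely, regluing the pieces of a decomposed tree recovers that tree. So the two maps are mutually inverse.

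Counting the pairs, for each $k$ and each $j$ with $a_{ij}=1$, gives
\[
t_A^{(i)}(n)=\sum_{k=1}^{n-1}\sum_{j=1}^m a_{ij}\,t_A^{(i)}(k)\,t_A^{(j)}(n-k).
\]
The factor $a_{ij}\in\{0,1\}$ removes the forbidden colors $j$.

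There is no real obstacle here. The only point needing care is the bookkeeping at the boundary. Because $n\geq 2$, the root has at least one child, so $R$ is nonempty. The piece $L$ always contains the root, so $k\geq 1$. This is why $k$ ranges exactly over $1,\dots,n-1$, and it is consistent with the convention $t_A^{(i)}(0)=0$.
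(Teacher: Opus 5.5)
Your proposal is correct and follows the same route as the paper: the standard Catalan decomposition into $L$ (containing the root, color $i$) and $R$ (rooted at the rightmost child, color $j$ with $a_{ij}=1$), where both pieces range over all validly colored trees of the appropriate sizes. You spell out the inverse (regluing) map more explicitly than the paper does, which makes the bijection fully rigorous.
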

\begin{proof}
    Consider the standard Catalan decomposition of a rooted plane tree $T$ with $n\geq 2$ vertices into pieces $L$ with $k$ vertices and $R$ with $n-k$ vertices. 
    If $T$ has root color $i$, then $L$ has root color $i$, and the root color of $R$ is some color $j$ for which $a_{ij}=1$. In fact, $L$ and $R$ can be any plane trees of the appropriate order with those root colors. 
    The recurrence now follows by considering all possibilities of root colors.
\end{proof}

Theorem~\ref{prop:rootrec} leads to a corresponding result about generating functions.
\begin{theorem}\label{prop:gfgeneral}
    Let $A$ be an $m\times m$ coloring matrix, and fix $1\leq i\leq m$. We have
    \[
    \gfa{i}=x+\gfa{i}\sum_{j=1}^ma_{ij}\gfa{j}.
    \]
\end{theorem}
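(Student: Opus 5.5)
The plan is to translate the recurrence of Theorem~\ref{prop:rootrec} directly into generating functions. Multiply both sides of
\[
t_A^{(i)}(n)=\sum_{k=1}^{n-1}\sum_{j=1}^m a_{ij}\,t_A^{(i)}(k)\,t_A^{(j)}(n-k)
\]
by $x^n$ and sum over all $n\geq 2$. Since $t_A^{(i)}(0)=0$, every coefficient sequence starts at $n=1$. The only boundary term is $n=1$, where $t_A^{(i)}(1)=1$: the single-vertex tree with root color $i$ is always admissible, since it has no parent–child pairs. Therefore the left-hand side becomes $\gfa{i}-x$.

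On the right-hand side, exchange the finite sum over $j$ with the sum over $n$. For each fixed $j$, the inner sum $\sum_{n\ge2}\sum_{k=1}^{n-1} t_A^{(i)}(k)t_A^{(j)}(n-k)x^n$ is exactly the coefficient expansion of the Cauchy product $\gfa{i}\,\gfa{j}$. Both factors have zero constant term, so the product's coefficient of $x^n$ is precisely the convolution over $1\le k\le n-1$ and vanishes for $n\le 1$. Weighting by $a_{ij}$ and summing over $j$ gives $\gfa{i}\sum_{j} a_{ij}\gfa{j}$. Rearranging yields the identity.

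Everything here is an identity of formal power series, so no convergence issue arises. The only point needing care is the bookkeeping of index ranges in the convolution, which is immediate from the convention $t^{(i)}_A(0)=0$. No step should present a real obstacle.

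Alternatively, one can argue combinatorially via the standard Catalan decomposition. A tree with root color $i$ is either the single vertex (contributing $x$), or splits into $L$, rooted at color $i$ (contributing $\gfa{i}$), and $R$, rooted at some color $j$ with $a_{ij}=1$ (contributing $\gfa{j}$). This decomposition is bijective, and vertex counts add, so the generating functions multiply.
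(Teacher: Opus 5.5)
Your proposal is correct and matches the paper's proof: both multiply the recurrence of Theorem~\ref{prop:rootrec} by $x^n$, sum over $n\geq 2$, account for the $n=1$ term via $t_A^{(i)}(1)=1$, and identify the convolution with the product of the generating functions. Your combinatorial alternative is simply the generating-function form of the standard Catalan decomposition that the paper uses to prove Theorem~\ref{prop:rootrec}.
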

\begin{proof}
    First, note that, for each $1\leq i\leq m$, $t_A^{(i)}(0)=0$ (conventionally) 
    and $t_A^{(i)}(1)=1$, since there is a single way to color the tree with one vertex when the root is forced to have color $i$. Theorem~\ref{prop:rootrec} applies when $n\geq2$, so we have
    \begin{align*}
        \gfa{i}&=\sum_{n=0}^\infty t_A^{(i)}(n)x^n\\
        &=x+\sum_{n=2}^\infty\sum_{k=1}^{n-1}\sum_{j=1}^ma_{ij}t_A^{(i)}(k)t_A^{(j)}(n-k)x^n\\
        &=x+\gfa{i}\sum_{j=1}^ma_{ij}\gfa{j},
    \end{align*}
    as required.
\end{proof}

Based on Theorem~\ref{prop:gfgeneral}, we see that the $m$ generating functions corresponding to the $m$ root colors for a rule defined by an $m\times m$ coloring matrix satisfy a system of polynomial equations. This leads immediately to the following corollary.
\begin{corollary}\label{cor:algebraicgf}
    Let $A$ be an $m\times m$ coloring matrix. The function $\gfall$ is algebraic, as are the functions $\gfa{i}$ for $1\leq i\leq m$.
\end{corollary}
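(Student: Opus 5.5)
The plan is to read the system in Theorem~\ref{prop:gfgeneral} as a system of $m$ polynomial equations in the $m+1$ quantities $x,\gfa{1},\ldots,\gfa{m}$, and then apply a standard elimination (or uniqueness) result for algebraic power series. For each $i$ set
\[
P_i(x,y_1,\ldots,y_m)=y_i-x-y_i\sum_{j=1}^m a_{ij}y_j .
\]
Each $P_i$ has integer coefficients, and Theorem~\ref{prop:gfgeneral} says that $(y_1,\ldots,y_m)=(\gfa{1},\ldots,\gfa{m})$ solves $P_1=\cdots=P_m=0$ in the ring $\mathbb{Q}[[x]]$ of formal power series.

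The key step is that this is a \emph{proper} system in the sense of the implicit function theorem. Each series $\gfa{j}$ has zero constant term. At $x=0$, $y=0$ the Jacobian $\left(\partial P_i/\partial y_k\right)$ is therefore the identity matrix, because the only linear term in $y$ is $y_i$ and the other terms are quadratic. Two results can then be invoked.

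\begin{itemize}
\item The formal implicit function theorem shows that the system has a unique solution in power series with zero constant term. This is consistent with the recurrence of Theorem~\ref{prop:rootrec}, which determines all coefficients.
\item The standard theorem, found for instance in Stanley's \emph{Enumerative Combinatorics}, Vol.~2, or in Flajolet--Sedgewick, states that the components of such a solution of a polynomial system with invertible Jacobian are algebraic over $\mathbb{Q}(x)$.
\end{itemize}

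For a more self-contained argument, I would note the following. The variety cut out by $P_1,\ldots,P_m$ in $(x,y)$-space has the Jacobian in $y$ nonsingular at the origin, so its component through the origin is a curve. Projecting this curve to the $(x,y_i)$-plane gives a curve $Q_i(x,y_i)=0$ with $Q_i\neq0$, obtained by elimination via resultants or Gröbner bases. The formal solution lies on this component, because it is the unique solution near the origin. Hence $Q_i(x,\gfa{i})=0$, and each $\gfa{i}$ is algebraic.

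The final step uses that algebraic power series over $\mathbb{Q}(x)$ form a field, being closed under sums. Then $\gfall=\sum_{i=1}^m\gfa{i}$ is algebraic. The main obstacle is the elimination step: we must ensure that the eliminating polynomial $Q_i$ is nonzero and actually vanishes at the given solution, rather than the solution lying in a higher-dimensional component. The nonsingular Jacobian at the origin is exactly what rules this out, so I would cite the standard theorem rather than redo the algebraic geometry.
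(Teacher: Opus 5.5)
Your proposal is correct and follows the same route as the paper, which simply observes that Theorem~\ref{prop:gfgeneral} yields a polynomial system satisfied by $\gfa{1},\ldots,\gfa{m}$ and treats the corollary as immediate. Your two additions supply exactly the justification the paper leaves implicit: the Jacobian in $y$ is the identity at the origin, which rules out the solution lying on a positive-dimensional component and so lets the standard algebraicity theorem apply, and algebraic series are closed under addition, which gives $\gfall$.
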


We conclude this section with some extensions of our notation to a somewhat more general setting. For integers $0\leq q<n$ and fixed $m\times m$ coloring matrix $A$, denote by $t_A(n,q)$ the number of $m$-colored $n$-vertex trees where exactly $q$ edges violate the coloring rule specified by $A$, and define $t_A^{(i)}(n,q)$ analogously for $1\leq i\leq m$. Using this notation, we have that $t_A(n,0)=t_A(n)$ and that $t_A^{(i)}(n,0)=t_A^{(i)}(n)$ for each $1\leq i\leq m$. Conventionally, we also define $t_A^{(i)}(n,q)=0$ whenever $q<0$ or $q\geq n$ (and similarly for $t_A(n,q)$).

We also define bivariate analogues of our generating functions: Let
\[
F_A(x,u)=\sum_{n=1}^\infty\sum_{q=0}^{n-1}t_A(n,q)x^nu^q,
\]
and for each $1\leq i\leq m$ let
\[
F_A^{(i)}(x,u)=\sum_{n=1}^\infty\sum_{q=0}^{n-1}t_A^{(i)}(n,q)x^nu^q.
\]

We have analogues of Theorems~\ref{prop:rootrec} and~\ref{prop:gfgeneral} for this generalization.

\begin{theorem}\label{prop:rootrecgeneral}
    Let $A$ be an $m\times m$ coloring matrix, and let $1\leq i\leq m$ be fixed. For $n\geq2$ and $0\leq q<n$, we have
    \[
    t_A^{(i)}(n,q)=\sum_{k=1}^{n-1}\sum_{\ell=0}^qt_A^{(i)}(k,\ell)\sum_{j=1}^m\left(a_{ij}t_A^{(j)}(n-k,q-\ell)+(1-a_{ij})t_A^{(j)}(n-k,q-\ell-1)\right),
    \]
    where $a_{ij}$ denotes the $(i,j)$-entry of $A$.
\end{theorem}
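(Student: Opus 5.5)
We mirror the proof of Theorem~\ref{prop:rootrec}, now also keeping track of edges that violate the coloring rule. Let $T$ be an $m$-colored plane tree with $n\geq 2$ vertices, root color $i$, and exactly $q$ violating edges. Apply the standard Catalan decomposition: $L$ is the tree consisting of the root together with all subtrees except the rightmost one, and $R$ is the subtree rooted at the rightmost child of the root. Say $L$ has $k$ vertices, where $1\leq k\leq n-1$, and $R$ has $n-k$ vertices. Then $L$ has root color $i$, and $R$ has some root color $j\in\{1,\dots,m\}$. Here $j$ is unrestricted, because violating edges are now permitted.

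The key step is to observe that every edge of $T$ lies in exactly one of three places: inside $L$, inside $R$, or the single connecting edge between the root of $T$ and the root of $R$. Moreover, the color of each vertex, and hence whether an edge violates the rule, is the same in the piece as in $T$. Let $\ell$ be the number of violating edges in $L$. If $a_{ij}=1$, the connecting edge is legal, so $R$ must contain exactly $q-\ell$ violating edges. If $a_{ij}=0$, the connecting edge itself violates the rule, so $R$ must contain exactly $q-\ell-1$ violating edges.

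Conversely, take any colored tree $L$ with $k$ vertices, root color $i$ and $\ell$ violations, and any colored tree $R$ with $n-k$ vertices, root color $j$ and the matching number of violations. Attaching $R$ as a new rightmost child of the root of $L$ gives a tree with $q$ violations. This map inverts the decomposition, so we have a bijection. The counts therefore multiply, giving $t_A^{(i)}(k,\ell)\,t_A^{(j)}(n-k,q-\ell)$ when $a_{ij}=1$, and $t_A^{(i)}(k,\ell)\,t_A^{(j)}(n-k,q-\ell-1)$ when $a_{ij}=0$. Both cases are captured by the single expression $a_{ij}(\cdots)+(1-a_{ij})(\cdots)$, since $a_{ij}\in\{0,1\}$.

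Summing over $k$, $j$ and $\ell$ yields the stated formula. The only point needing care is the range of $\ell$. Taking $0\leq\ell\leq q$ is harmless: the convention $t_A^{(i)}(n,q)=0$ whenever $q<0$ or $q\geq n$ automatically kills the terms with an impossible number of violations, such as $q-\ell-1<0$ when $\ell=q$, or $\ell\geq k$.
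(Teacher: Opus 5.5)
Your proof is correct and follows essentially the same route as the paper: you use the standard Catalan decomposition, split on whether the connecting edge is legal ($a_{ij}=1$) or violating ($a_{ij}=0$), and invoke the convention $t_A^{(j)}(n-k,-1)=0$ so that both cases can be summed over the common range $0\le \ell\le q$. The paper does the same, first writing the $a_{ij}=0$ sum over $0\le\ell\le q-1$ and then extending it to $\ell=q$ before combining.
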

\begin{proof}
    We prove first that
    \begin{align*}
        t_A^{(i)}(n,q)=\sum_{k=1}^{n-1}\sum_{j=1}^m&\left(\sum_{\ell=0}^qa_{ij}t_A^{(i)}(k,\ell)t_A^{(j)}(n-k,q-\ell)\right.\\&+\left.\sum_{\ell=0}^{q-1}(1-a_{ij})t_A^{(i)}(k,\ell)t_A^{(j)}(n-k,q-\ell-1)\right).
    \end{align*}
    Consider the standard Catalan decomposition of a rooted plane tree $T$ with $n\geq 2$ vertices into pieces $L$ with $k$ vertices and $R$ with $n-k$ vertices. 
    If $T$ has root color $i$, then $L$ has root color $i$, and the root color of $R$ is some color $j$. There are two cases to consider. First, if $a_{ij}=1$, then the rightmost edge from the root does not violate the coloring rule. This means that there are a total of $q$ coloring violations split between $L$ and $R$. The first inner summation enumerates all such colorings. On the other hand, if $a_{ij}=0$, then the rightmost edge from the root does violate the coloring. This means that there are a total of $q-1$ coloring violations split between $L$ and $R$. The second inner summation enumerates all such colorings.
    The recurrence now follows by considering all possibilities of root colors.

    Now, we observe that
    \[
    \sum_{\ell=0}^{q-1}(1-a_{ij})t_A^{(i)}(k,\ell)t_A^{(j)}(n-k,q-\ell-1)=\sum_{\ell=0}^{q}(1-a_{ij})t_A^{(i)}(k,\ell)t_A^{(j)}(n-k,q-\ell-1)
    \]
    by the convention $t_A^{(j)}(n-k,-1)=0$ for every value of $n-k$. The desired result then follows by combining the two inner sums and then exchanging the order of summation and factoring out $t_A^{(i)}(k,\ell)$.
\end{proof}

\begin{theorem}\label{prop:gfgeneralgeneral}
    Let $A$ be an $m\times m$ coloring matrix, and fix $1\leq i\leq m$. We have
    \[
    F_A^{(i)}(x,u)=x+F_A^{(i)}(x,u)\sum_{j=1}^m(a_{ij} + (1-a_{ij})u) F_A^{(j)}(x,u).
    \]
\end{theorem}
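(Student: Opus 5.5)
We derive the functional equation directly from the recurrence of Theorem~\ref{prop:rootrecgeneral}, mirroring the proof of Theorem~\ref{prop:gfgeneral}. First we record the initial terms. The one-vertex tree with root color $i$ has no edges, so $t_A^{(i)}(1,0)=1$ and $t_A^{(i)}(1,q)=0$ for $q\neq 0$. Together with the convention $t_A^{(i)}(0,q)=0$, the coefficient of $x^1$ in $F_A^{(i)}(x,u)$ is exactly $1$.

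Next we multiply the recurrence of Theorem~\ref{prop:rootrecgeneral} by $x^nu^q$ and sum over $n\ge 2$ and $0\le q<n$. The restriction $q<n$ can be dropped, because of the convention $t_A^{(i)}(n,q)=0$ for $q\ge n$ or $q<0$. We then split the inner sum into its two parts. The first part is
\[
\sum_j a_{ij}\sum_{n,q}\sum_{k,\ell} t_A^{(i)}(k,\ell)\,t_A^{(j)}(n-k,q-\ell)\,x^nu^q .
\]
This is a Cauchy product in both variables, and it equals $a_{ij}F_A^{(i)}(x,u)F_A^{(j)}(x,u)$. The second part is the same expression with $t_A^{(j)}(n-k,q-\ell-1)$ in place of $t_A^{(j)}(n-k,q-\ell)$. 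Writing $u^q=u\cdot u^{\ell}\cdot u^{q-\ell-1}$, it becomes $(1-a_{ij})\,u\,F_A^{(i)}(x,u)F_A^{(j)}(x,u)$.

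Adding the constant term $x$ and factoring out $F_A^{(i)}(x,u)$ then gives the claimed identity.

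The only point needing care is checking that the index ranges in the recurrence match the full Cauchy product. Here $k$ runs over $1,\dots,n-1$ and $\ell$ over $0,\dots,q$, and we need this to agree with the product of the two series. It does: terms with $k=0$ or $k=n$ vanish because $t(0,\cdot)=0$, and terms with $\ell>q$ or $q-\ell-1<0$ vanish by the sign convention.

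Alternatively, the identity can be argued combinatorially in one step via the standard Catalan decomposition: the rightmost root edge contributes a factor $u$ exactly when $a_{ij}=0$. We would nevertheless follow the algebraic route, since it is fully determined by the preceding theorem.
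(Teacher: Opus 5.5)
Your proposal is correct and follows the same route as the paper: record the $n=1$ terms, multiply the recurrence of Theorem~\ref{prop:rootrecgeneral} by $x^nu^q$ and sum, then identify the two pieces as the Cauchy products $a_{ij}F_A^{(i)}(x,u)F_A^{(j)}(x,u)$ and $(1-a_{ij})\,u\,F_A^{(i)}(x,u)F_A^{(j)}(x,u)$. Your explicit check of the index ranges, including that the $\ell=q$ term of the second piece vanishes, is more careful than the paper's write-up, which leaves these verifications implicit.
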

\begin{proof}
    First, note that $t_A^{(i)}(1,0)=1$, since there is a single way to color the tree with one vertex when the root is forced to have color $i$. Also, note that if $n\in\{0,1\}$ and $(n,q)\neq(1,0)$, $t_A^{(i)}(n,q)=0$ (conventionally).
    Theorem~\ref{prop:rootrecgeneral} applies when $n\geq2$, so we have
    \begin{align*}
        &\phantom{=}F_A^{(i)}(x,u)=\sum_{n=0}^\infty t_A^{(i)}(n,q)x^nu^q\\
        &=x+\sum_{n=2}^\infty\sum_{k=1}^{n-1}\sum_{\ell=0}^qt_A^{(i)}(k,\ell)\sum_{j=1}^m\left(a_{ij}t_A^{(j)}(n-k,q-\ell)+(1-a_{ij})t_A^{(j)}(n-k,q-\ell-1)\right)x^nu^q\\
        &=x+F_A^{(i)}(x,u)\sum_{j=1}^m(a_{ij} + (1-a_{ij})u) F_A^{(j)}(x,u),
    \end{align*}
    as required.
\end{proof}

\section{Tree coloring equivalence}

In this section, we study conditions that let us say something about tree coloring equivalence.

\subsection{Necessary conditions}\label{sec:necessary}

First, we give some necessary conditions for two coloring matrices $A$ and $B$ to be tree coloring equivalent. 
\begin{theorem}\label{prop:equalones}
    Let $A$ and $B$ be two $m\times m$ coloring matrices. If $A$ and $B$ are tree coloring equivalent, then $A$ and $B$ have the same numbers of ones. 
    Also, if $A$ and $B$ are strictly tree coloring equivalent, then corresponding rows of $A$ and $B$ have the same number of ones.
\end{theorem}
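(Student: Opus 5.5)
This follows directly from Theorem~\ref{prop:t2} by specializing the equality of counting sequences to trees with two vertices. No other values of $n$ are needed. There is no real obstacle here.

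For the first claim, suppose $A$ and $B$ are tree coloring equivalent, so $t_A(n)=t_B(n)$ for all $n$. Taking $n=2$ gives $t_A(2)=t_B(2)$. By Theorem~\ref{prop:t2}, $t_A(2)$ is the number of ones in $A$ and $t_B(2)$ is the number of ones in $B$. Hence the two matrices contain the same number of ones.

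For the second claim, suppose $A$ and $B$ are strictly tree coloring equivalent, so $t_A^{(i)}(n)=t_B^{(i)}(n)$ for every $1\le i\le m$ and every $n$. Fix $i$ and take $n=2$ to get $t_A^{(i)}(2)=t_B^{(i)}(2)$. The first part of Theorem~\ref{prop:t2} identifies $t_A^{(i)}(2)$ with the number of ones in row $i$ of $A$, and likewise $t_B^{(i)}(2)$ with the number of ones in row $i$ of $B$. Therefore row $i$ of $A$ and row $i$ of $B$ contain the same number of ones, for each $i$.

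The only point needing care is that strict equivalence compares rows with the same index $i$. This is built into the definition, since it matches $t_A^{(i)}$ with $t_B^{(i)}$ and not with some permuted root color.
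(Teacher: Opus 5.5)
Your proposal is correct and follows the same approach as the paper's proof. You specialize the (strict) equivalence to $n=2$ and invoke Theorem~\ref{prop:t2} to identify $t_A(2)$ and $t_A^{(i)}(2)$ with the total number of ones and the number of ones in row $i$, respectively.
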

\begin{proof}
    If $A$ and $B$ are tree coloring equivalent, then $t_A(2)=t_B(2)$. By Theorem~\ref{prop:t2}, this means that the number of ones in $A$ is the same as the number of ones in $B$, as required. Analogously, if $A$ and $B$ are strictly tree coloring equivalent, then for each $1\leq i\leq m$ we have $t^{(i)}_A(2)=t^{(i)}_B(2)$. By Theorem~\ref{prop:t2}, this means that the number of ones in row $i$ of $A$ is the same as the number of ones in row $i$ of $B$, as required.
\end{proof}

\begin{theorem} \label{prop:3mtns+3lines}
     Let $A$ and $B$ be two $m\times m$ coloring matrices. If $A$ and $B$ are tree coloring equivalent, then the sum of the entries in $A^T A+A^2$ equals the sum of the entries in $B^T B+B^2$.
\end{theorem}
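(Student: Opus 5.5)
The plan is to compute $t_A(3)$ directly in terms of $A$, exactly as Theorem~\ref{prop:t2} computes $t_A(2)$. Tree coloring equivalence gives $t_A(3)=t_B(3)$, so it suffices to show that $t_A(3)$ equals the sum of the entries of $A^TA+A^2$.

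There are exactly two plane trees with three vertices: the path (root, child, grandchild) and the cherry (a root with two children). I will count the valid colorings of each separately.

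For the path, choose colors $i,j,k$ for the root, child and grandchild. The coloring is valid iff $a_{ij}a_{jk}=1$. So the number of valid colorings is
\[
\sum_{i,j,k} a_{ij}a_{jk}=\sum_{i,k}(A^2)_{ik},
\]
which is the sum of the entries of $A^2$.

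For the cherry, choose the root color $i$ and the colors $j,k$ of the left and right children. The coloring is valid iff $a_{ij}a_{ik}=1$, giving
\[
\sum_{i,j,k}a_{ij}a_{ik}=\sum_{j,k}(A^TA)_{jk},
\]
the sum of the entries of $A^TA$. The order of the children matters, but it is already accounted for, since $(j,k)$ is an ordered pair.

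Alternatively, both counts can be read off from Theorem~\ref{prop:rootrec} with $n=3$:
\[
t_A^{(i)}(3)=\sum_j a_{ij}\bigl(t_A^{(i)}(1)t_A^{(j)}(2)+t_A^{(i)}(2)t_A^{(j)}(1)\bigr).
\]
By Theorem~\ref{prop:t2}, $t_A^{(j)}(2)=\sum_k a_{jk}$ and $t_A^{(i)}(2)=\sum_k a_{ik}$, and summing over $i$ yields the same two terms. Adding the path and cherry counts gives $t_A(3)$ as claimed, and the theorem follows. There is no real obstacle here; the only care needed is identifying $\sum_{i,j,k}a_{ij}a_{ik}$ with the entry sum of $A^TA$, since $(A^TA)_{jk}=\sum_i a_{ij}a_{ik}$.
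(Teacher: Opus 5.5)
Your proposal is correct and follows essentially the paper's proof: the paper likewise writes $t_A(3)$ as the number of valid colorings of the cherry (entry sum of $A^TA$, its Lemma~\ref{lem:3mtns}) plus those of the path (entry sum of $A^2$, its Lemma~\ref{lem:all_lines}), and then uses $t_A(3)=t_B(3)$. Your alternative check via Theorem~\ref{prop:rootrec} at $n=3$ is a valid cross-check of the same count.
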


To prove this theorem, we need the following lemmas.

\begin{lemma}\label{lem:3mtns}
Let $A$ be an $m\times m$ coloring matrix. The sum of the entries in $A^T A$ is the number of colored plane trees with three vertices, where the root has exactly two children.
\end{lemma}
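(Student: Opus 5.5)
The plan is to count the colorings directly. There is exactly one plane tree with three vertices in which the root has two children: a root with an ordered pair of children (left and right), both leaves. A valid coloring is a triple $(i,j,k)$, where $i$ is the root color, $j$ is the left child's color and $k$ is the right child's color, subject to $a_{ij}=1$ and $a_{ik}=1$. The children are leaves, so no further constraints arise. Hence the number of such colored trees is
\[
\sum_{i=1}^m\sum_{j=1}^m\sum_{k=1}^m a_{ij}a_{ik}.
\]

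Next, identify this triple sum with the sum of the entries of $A^TA$. The $(j,k)$-entry of $A^TA$ is $\sum_{i=1}^m (A^T)_{ji}a_{ik}=\sum_{i=1}^m a_{ij}a_{ik}$. Summing over all $j$ and $k$ gives exactly the triple sum above. Each $a_{ij}a_{ik}$ is $1$ precisely when both parent–child pairs are allowed, and $0$ otherwise, so each term is the indicator that the coloring $(i,j,k)$ is valid.

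One point needs care, and it is the only real obstacle: since the tree is ordered, the colorings $(i,j,k)$ and $(i,k,j)$ with $j\neq k$ must be counted as distinct trees. The sum over ordered pairs $(j,k)$ does exactly this, because $A^TA$ is summed over all entries, including both off-diagonal positions $(j,k)$ and $(k,j)$. So no symmetrization or division by $2$ is needed, and the identity follows.

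As an equivalent check, the count equals $\sum_i r_i^2$, where $r_i$ is the number of ones in row $i$, in line with Theorem~\ref{prop:t2}.
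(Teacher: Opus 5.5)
Your proof is correct and follows the same route as the paper: you count the valid colorings $(i,j,k)$ of the unique three-vertex tree whose root has two children as $\sum_{i,j,k} a_{ij}a_{ik}$, and you identify this with the entry sum of $A^TA$ via $(A^TA)_{jk}=\sum_i a_{ij}a_{ik}$. You spell out the matrix-multiplication step and the ordered-children point more explicitly than the paper does, and your check that the count equals $\sum_i r_i^2$ is also correct.
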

\begin{proof}
    Suppose we have a root of color $i$, and that it has two children in colors $j$ and $k$ (not necessarily distinct). 
    Consider the sum
    \[
    \sum_{i=1}^m \sum_{j=1}^m \sum_{k=1}^m a_{ij}a_{ik}.
    \]
    Since $a_{ij}=a^T_{ji}$, 
    we claim that this is exactly the sum of all entries in the matrix $A^T A$, as desired.
\end{proof}
\begin{lemma}\label{lem:all_lines}
Let $A$ be an $m\times m$ coloring matrix. For $n\geq0$, the sum of the entries in $A^n$ is the number of colored rooted paths on $n+1$ vertices.
\end{lemma}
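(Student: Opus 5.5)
We argue by induction on $n$, with the stronger claim that for all $i,j$ the $(i,j)$-entry of $A^n$ equals the number of colorings of the rooted path on $n+1$ vertices whose root has color $i$, whose unique leaf (the vertex at depth $n$) has color $j$, and which obey the rule given by $A$. Here the rooted path on $n+1$ vertices is the plane tree in which every vertex except the last has exactly one child, so its vertices are $v_0,v_1,\dots,v_n$ with $v_0$ the root and $v_{k+1}$ the child of $v_k$. A coloring $(c_0,\dots,c_n)$ is valid precisely when $a_{c_kc_{k+1}}=1$ for all $0\le k<n$. Summing the refined count over all $i,j$ then gives the statement.

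For the base case $n=0$, we have $A^0=I$, and the single-vertex path with root color $i$ has its only vertex (root and leaf) colored $i$. So the count is $1$ if $i=j$ and $0$ otherwise, matching the identity matrix. (One could equally start at $n=1$, where the count is $a_{ij}$ by Theorem~\ref{prop:t2}.)

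For the induction step, we condition on the color $k$ of $v_{n}$, the parent of the last vertex. Removing $v_{n+1}$ leaves a valid path on $n+1$ vertices from color $i$ to color $k$. Appending a child of color $j$ is allowed exactly when $a_{kj}=1$. Hence the count for $n+1$ is
\[
\sum_{k=1}^m (A^n)_{ik}a_{kj}=(A^{n+1})_{ij}.
\]
Equivalently, one can write the whole argument directly: the number of valid colorings is $\sum_{c_0,\dots,c_n}\prod_{k=0}^{n-1}a_{c_kc_{k+1}}$, which is the standard expansion of the sum of all entries of $A^n$.

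There is no real obstacle here. The only point needing care is the convention that $A^0=I$ corresponds to the single vertex, which is counted $m$ times, once for each color.
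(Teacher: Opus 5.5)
Your proof is correct and is essentially the paper's argument: the paper treats $A$ as the adjacency matrix of a directed graph on the colors and identifies valid colorings of the rooted path with walks of length $n$. Your refined induction on $(A^n)_{ij}$, or equivalently the direct expansion $\sum_{c_0,\dots,c_n}\prod_k a_{c_kc_{k+1}}$, simply writes out the standard proof of that walk-counting fact, which the paper quotes without proof.
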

\begin{proof}
    Note that we may interpret $A$ 
    as an adjacency matrix for a directed graph on vertex set $[m]$. 
    As such, the sum of entries in $A^n$ is the number of walks of length $n$ in this directed graph. 
    There is a bijection between such walks and colored rooted paths on $n+1$ vertices: 
    Apply colors down the path corresponding to vertex labels along the walk. 
    In other words, when we sum the entries in $A^n$, we get exactly the number of colorings of a rooted path with $n+1$ vertices.
\end{proof}

We are now ready to prove Theorem \ref{prop:3mtns+3lines}.

\begin{proof}[Proof of Theorem \ref{prop:3mtns+3lines}]
    There are two rooted trees with three vertices: the tree where the root has exactly two children and the path of length $2$. By combining Lemmas~\ref{lem:3mtns} and~\ref{lem:all_lines}, we thereby see that $t_A(3)$ equals the sum of the entries in $A^TA+A^2$. Likewise, $t_B(3)$ equals the sum of the entries in $B^TB+B^2$. In order for $A$ and $B$ to be tree coloring equivalent, we must have $t_A(3)=t_B(3)$. 
    This concludes the proof.
\end{proof}

\subsection{Sufficient conditions}\label{sec:constructions}

Now, we discuss some conditions that guarantee tree coloring equivalence. 
First, we state and prove the obvious fact that permuting colors results in matrices that are tree coloring equivalent.

\begin{theorem}\label{prop:permmat}
If coloring matrices $A$ and $B$ are equal up to permutation of rows and columns, i.e., there is a permutation matrix $P$ such that $P^TAP = B$, then $A$ and $B$ are tree coloring equivalent.
\end{theorem}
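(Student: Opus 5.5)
The plan is to build an explicit bijection that recolors trees. Let $P$ be the permutation matrix of a permutation $\sigma$ of $[m]$, with the convention $P_{k\ell}=1$ if and only if $k=\sigma(\ell)$. First compute the entries of $B=P^TAP$:
\[
b_{ij}=\sum_{k,\ell}P_{ki}a_{k\ell}P_{\ell j}=a_{\sigma(i)\sigma(j)}.
\]
With the opposite convention for $P$ one gets $\sigma^{-1}$ instead, which changes nothing below. So $B$ is obtained from $A$ by renaming color $i$ as color $\sigma(i)$.

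Next, define a map on colored plane trees. Given a tree $T\in\mathcal{T}_n$ colored according to $B$, recolor each vertex of color $i$ with color $\sigma(i)$, leaving the tree shape unchanged. Take any parent-child pair with colors $(i,j)$ in $T$. It satisfies $b_{ij}=1$, which means $a_{\sigma(i)\sigma(j)}=1$. Hence the recolored tree is colored according to $A$. The map recoloring by $\sigma^{-1}$ sends $A$-colorings to $B$-colorings by the same computation, and it is inverse to the first map. So we have a bijection between the $B$-colored and $A$-colored trees with $n$ vertices, and therefore $t_A(n)=t_B(n)$ for all $n$.

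As an alternative check, Theorem~\ref{prop:gfgeneral} gives the same conclusion. The family $\gfa{\sigma(i)}$, $i\in[m]$, satisfies the system of equations for $B$. That system has a unique power series solution with zero constant term, since the coefficients are determined recursively. Summing over $i$ then gives $\gfll{A}=\gfll{B}$.

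No step here is a real obstacle. The only point needing care is fixing the convention for $P$ so that the index computation for $b_{ij}$ comes out correctly.

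A byproduct of the argument is a refinement: the bijection sends root color $i$ to root color $\sigma(i)$, so $t_B^{(i)}(n)=t_A^{(\sigma(i))}(n)$ for every $i$ and $n$.
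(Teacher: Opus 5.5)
Your proof is correct and uses the same idea as the paper: renaming colors according to the permutation gives a bijection between $A$-colored and $B$-colored trees. The difference is that you make explicit the index computation $b_{ij}=a_{\sigma(i)\sigma(j)}$, the inverse map, and the root-color refinement $t_B^{(i)}(n)=t_A^{(\sigma(i))}(n)$, all of which the paper leaves implicit.
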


\begin{proof}
    Suppose $A$ and $B$ are coloring matrices satisfying $P^TAP = B$ for some permutation matrix $P$, that is, a zero-one matrix with exactly one $1$ in each row and column. 
    By definition of matrix multiplication, $P$ simply permutes rows and columns of $A$ to yield $B$.
    In other words, without loss of generality, for each color $i$ used in $A$, $P$ renames this color to $j$ which is used in $B$.
    In this way, we can construct a bijection from the colors used in $A$ to the colors used in $B$, and hence, these matrices are tree coloring equivalent.
\end{proof}

Whereas the condition in Theorem~\ref{prop:permmat} is sufficient, it is not necessary. In fact, there are many pairs of matrices that are tree coloring equivalent that are not related by Theorem~\ref{prop:permmat}. Section~\ref{sec:intro} includes multiple such examples. 
Keeping Theorem~\ref{prop:permmat} in mind, we say that coloring matrices $A$ and $B$ are \emph{isomorphic} if $A$ and $B$ satisfy $P^TAP = B$ for some permutation matrix $P$. 
Then, recalling our earlier definition of strict tree coloring equivalence, we say that $A$ and $B$ are \emph{strongly tree coloring equivalent} if there is a matrix $A'$ that is isomorphic to $A$ and strictly tree coloring equivalent to $B$. 
Using the permutation given by this isomorphism, we equivalently have that 
$A$ and $B$ are strongly tree coloring equivalent if the multisets 
of sequences $\{t_A^{(1)}(n),t_A^{(2)}(n),\ldots,t_A^{(m)}(n)\}$ and $\{t_B^{(1)}(n),t_B^{(2)}(n),\ldots,t_B^{(m)}(n)\}$ are equal. 
Note that strict tree coloring equivalence implies strong tree coloring equivalence and that strong tree coloring equivalence implies tree coloring equivalence, but neither converse holds.

The following theorem gives a sufficient condition for two matrices to be strictly tree coloring equivalent, and therefore, also a sufficient condition for two matrices to be tree coloring equivalent.
\begin{theorem}\label{prop:EqivColsStrict}
    Let $A$ be an $m\times m$ coloring matrix. Suppose $I\subseteq[m]$ and $J\subseteq[m]$ are two disjoint sets with $|I|=|J|$ such that, for all $n$, $\sum_{i\in I}t_A^{(i)}(n)=\sum_{j\in J}t_A^{(j)}(n)$ and that $\ell$ is a color (possibly in $I$ or $J$), such that $a_{\ell i} = 0$ for all $i\in I$ and $a_{\ell j} = 1$ for all $j\in J$. 
    Let $B$ be the $m\times m$ coloring matrix defined by
    \[
    b_{pq}=\begin{cases}
        1,&\text{$p=\ell$ and $q\in I$}\\
        0,&\text{$p=\ell$ and $q\in J$}\\
        a_{pq},&\text{otherwise}.
    \end{cases}
    \]
    Then, $A$ and $B$ are strictly tree coloring equivalent.
    Furthermore, for all $n\geq1$,
    $\sum_{i\in I}t^{(i)}_A(n)=\sum_{j\in J}t^{(j)}_A(n)=\sum_{i\in I}t^{(i)}_{B}(n)=\sum_{j\in J}t^{(j)}_{B}(n)$.
\end{theorem}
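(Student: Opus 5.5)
Our plan is to prove, by strong induction on $n$, the joint statement $P(n)$: $t_A^{(p)}(n)=t_B^{(p)}(n)$ for every color $p\in[m]$. Once we have $P(n)$ for all $n$, the final chain of equalities follows at once. The hypothesis gives $\sum_{i\in I}t_A^{(i)}(n)=\sum_{j\in J}t_A^{(j)}(n)$, and summing $P(n)$ over $I$ and over $J$ moves these sums to $B$. The base case $n=1$ is trivial, since $t_A^{(p)}(1)=t_B^{(p)}(1)=1$ for all $p$.

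For the inductive step we fix $n\ge 2$ and assume $P(k)$ for all $k<n$. We apply Theorem~\ref{prop:rootrec} to both matrices:
\[
t_B^{(p)}(n)=\sum_{k=1}^{n-1}t_B^{(p)}(k)\sum_{q=1}^m b_{pq}\,t_B^{(q)}(n-k).
\]
Since $1\le k,\,n-k\le n-1$, the induction hypothesis lets us replace every $t_B^{(\cdot)}(k)$ and $t_B^{(\cdot)}(n-k)$ on the right by the corresponding $t_A^{(\cdot)}$ value. If $p\neq \ell$, then row $p$ of $B$ equals row $p$ of $A$, so the right-hand side is literally the recurrence for $t_A^{(p)}(n)$, and we are done.

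The only real case is $p=\ell$. Row $\ell$ of $A$ and row $\ell$ of $B$ agree outside $I\cup J$. On $I\cup J$ they contribute, respectively,
\[
\sum_{j\in J}t_A^{(j)}(n-k)\quad\text{and}\quad\sum_{i\in I}t_A^{(i)}(n-k)
\]
to the inner sum, after we have substituted $B$-counts by $A$-counts. This uses that $I$ and $J$ are disjoint, so the redefinition of $B$ is consistent. By the hypothesis of the theorem these two sums are equal for every $n-k$. Hence the inner sums coincide for each $k$, and $t_B^{(\ell)}(n)=t_A^{(\ell)}(n)$.

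One subtlety is that $\ell$ may itself lie in $I$ or $J$. In that case the outer factor $t_B^{(\ell)}(k)$ is also affected, but only at the smaller index $k<n$, where the induction hypothesis already applies. This is exactly why we induct on all colors simultaneously rather than treating $\ell$ separately.

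We expect the main obstacle to be this bookkeeping in the case $p=\ell$: we must make sure that the hypothesis about $A$ is applied at index $n-k<n$, and that the $B$-quantities are first converted to $A$-quantities via the induction hypothesis, not assumed equal at index $n$. Alternatively, the same argument can be phrased with generating functions via Theorem~\ref{prop:gfgeneral}: the vector $(\gfa{p})_p$ satisfies $B$'s system because $\sum_{i\in I}\gfa{i}=\sum_{j\in J}\gfa{j}$, and uniqueness of the power series solution with zero constant term then gives $\gff{p}{B}=\gfa{p}$. This reformulation could serve as a cleaner write-up.
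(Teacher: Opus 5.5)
Your proof is correct, but it follows a different route from the paper's.

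The paper argues bijectively. It takes a size-preserving bijection $\phi$ from trees with root color in $J$ to trees with root color in $I$. It sets $\psi(T)=T$ when the root color is not $\ell$, and otherwise replaces each root subtree whose root color lies in $J$ by its $\phi$-image. You instead run a strong induction on $n$ over all root colors at once, through the recurrence of Theorem~\ref{prop:rootrec}. The hypothesis enters only through the identity $\sum_q b_{\ell q}t_A^{(q)}(n-k)=\sum_q a_{\ell q}t_A^{(q)}(n-k)$ at the smaller index $n-k$.

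The advantage of your version is that it handles vertices of color $\ell$ at every depth automatically, because the standard Catalan decomposition enforces the coloring rule at every vertex. The paper's $\psi$, as written, only intervenes at a root of color $\ell$. A vertex of color $\ell$ deeper in $T$, or inside the $A$-colored tree $\phi(\tilde T)$, may still have a child whose color lies in $J$, which $B$ forbids. For instance, take the first two matrices of the first example after Corollary~\ref{cor:EqivCols}, so $\ell=3$, $I=\{2\}$, $J=\{3\}$. A root of color $1$ whose child of color $3$ has a child of color $3$ is $A$-colored, is fixed by $\psi$, and is not $B$-colored.

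To complete the bijection, $\psi$ must also be applied recursively to the subtrees. That turns it into an induction on the number of vertices, which is essentially your argument in bijective form. What the repaired bijective proof offers in return is an explicit correspondence between the two families of colored trees.

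Your generating-function reformulation is the same argument in compact form: the vector $(\gfa{p})_p$ solves $B$'s system, and that system has a unique solution with zero constant terms. It mirrors the reasoning the paper uses for the converse, Theorem~\ref{thm:EqivColsStrictIff}.

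One small point: the outer factor $t_B^{(\ell)}(k)$ needs the induction hypothesis whether or not $\ell\in I\cup J$. So that ``subtlety'' is not special to that case, and your simultaneous induction already covers it.
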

\begin{proof}
Since $\sum_{i\in I}t_A^{(i)}(n)=\sum_{j\in J}t_A^{(j)}(n)$, there is a bijection $\phi$ mapping each tree with $n$ vertices and a root color in $J$ to a tree with $n$ vertices and a root color in $I$. We now use $\phi$ to construct a bijection $\psi$ between the set of $A$-colored trees and the set of $B$-colored trees where $\psi(T)$ has the same number of vertices and same root color as $T$. This suffices to prove that $t^{(k)}_A(n)=t^{(k)}_{B}(n)$ for all $n$ and for any color $1\leq k\leq m$, that is that $A$ and $B$ are strictly tree coloring equivalent.

In what follows, a \emph{subtree} of a tree is defined to mean a child of the root along with all of that child's descendants. Suppose $T$ is an $A$-colored tree with $n$ vertices and root color $k$, where $1\leq k\leq m$.  Define $\psi(T)$ as follows:
\begin{itemize}
    \item If $k\neq\ell$, then $\psi(T)=T$.
    \item If $k=\ell$, then $\psi(T)$ is the tree $T'$ where:
    \begin{itemize}
        \item The root of $T'$ has color $k$.
        \item For each subtree of $T$ whose root color is not in $J$, $T'$ has that same subtree in the same position.
        \item For each subtree $\tilde T$ of $T$ whose root color is in $J$, $T'$ has $\phi(\tilde T)$ in its place.
    \end{itemize}
\end{itemize}
Each operation in computing $\psi$ is either an identity map or an application of $\phi$. Since $\phi$ is a bijection, it follows that $\psi$ is also a bijection. Furthermore, $\psi$ is explicitly constructed to preserve root colors, and the fact that $\phi$ preserves the number of vertices implies that $\psi$ preserves the number of vertices as well. Finally, notice that the roots of the subtrees obtained via $\phi$ have colors in $I$, and the other subtrees have root colors that can follow color $\ell$ in both $A$ and $B$ since the remaining parts of row $\ell$ are identical in $A$ and $B$. Therefore, $\psi(T)$ is $B$-colored.

So, we know that $t^{(k)}_A(n)=t^{(k)}_{B}(n)$ for all $n$ and for any color $1\leq k\leq m$. This immediately implies that $A$ and $B$ are strictly tree coloring equivalent. It also gives that $\sum_{i\in I}t^{(i)}_A(n)=\sum_{i\in I}t^{(i)}_{B}(n)$ and that $\sum_{j\in J}t^{(j)}_A(n)=\sum_{j\in J}t^{(j)}_{B}(n)$ for all $n$. This, along with the assumption that $\sum_{i\in I}t^{(i)}_A(n)=\sum_{j\in J}t^{(j)}_{A}(n)$, completes the proof of the remaining parts of the theorem.
\end{proof}

We obtain an important corollary of Theorem~\ref{prop:EqivColsStrict} when we take $|I|=|J|=1$. Corollary~\ref{cor:EqivCols} below is used
extensively in the proofs for the sequences in Section~\ref{sec:small} and Appendix~\ref{app:3by3}. 
Given a coloring rule $A$, let us call two colors $i$ and $j$ \emph{interchangeable} if
$t^{(i)}_A(n)=t^{(j)}_A(n)$ for all $n\geq1$. Note that if the $i$-th and the $j$-th rows of $A$ are identical, then $i$ and $j$ are interchangeable, since then we can just change the root color from $i$ to $j$ (or vice versa) in any valid coloring. However, this is not a necessary condition for two colors to be interchangeable. Though, as a consequence of Theorem~\ref{prop:equalones}, rows for interchangeable colors must contain the same number of ones.

\begin{corollary}
\label{cor:EqivCols}
Suppose $i$ and $j$ are two interchangeable colors for a coloring matrix $A$, and that $\ell$ is a color (not necessarily different from $i$ and $j$), such that 
$a_{\ell i} = 0$ and $a_{\ell j} = 1$. If we switch the numbers $a_{\ell i}$ and $a_{\ell j}$ in $A$, we obtain another coloring matrix $B$ such that $A$ and $B$ are strictly tree coloring equivalent.
Furthermore, for any $n\geq1$, 
$t^{(k)}_A(n)=t^{(k)}_{B}(n)$ for any color $k$, and 
in particular, $t^{(i)}_A(n)=t^{(j)}_A(n)=t^{(i)}_{B}(n)=t^{(j)}_{B}(n)$.
\end{corollary}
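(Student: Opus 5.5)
This corollary is the special case $I=\{i\}$ and $J=\{j\}$ of Theorem~\ref{prop:EqivColsStrict}, so the plan is to check that theorem's hypotheses and read off its conclusions. First, $I$ and $J$ are disjoint with $|I|=|J|=1$; we have $i\neq j$, since $a_{\ell i}=0$ and $a_{\ell j}=1$ cannot both hold if $i=j$. Second, the hypothesis $\sum_{i'\in I}t_A^{(i')}(n)=\sum_{j'\in J}t_A^{(j')}(n)$ for all $n$ becomes $t_A^{(i)}(n)=t_A^{(j)}(n)$. This is exactly the definition of $i$ and $j$ being interchangeable. Third, the condition on row $\ell$ becomes $a_{\ell i}=0$ and $a_{\ell j}=1$, which is assumed. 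The theorem explicitly allows $\ell$ to lie in $I$ or $J$, which covers the parenthetical remark that $\ell$ may equal $i$ or $j$.

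Next, I would verify that the matrix $B$ in the corollary agrees with the one built in Theorem~\ref{prop:EqivColsStrict}. Swapping $a_{\ell i}$ and $a_{\ell j}$ gives $b_{\ell i}=1$ and $b_{\ell j}=0$, and every other entry is unchanged. This is precisely the case definition of $b_{pq}$ in the theorem with $I=\{i\}$ and $J=\{j\}$.

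The theorem then yields the conclusions directly. $A$ and $B$ are strictly tree coloring equivalent, which by definition means $t_A^{(k)}(n)=t_B^{(k)}(n)$ for every color $k$ and every $n\ge 1$. The theorem's final chain of equalities, specialized to singletons, reads $t_A^{(i)}(n)=t_A^{(j)}(n)=t_B^{(i)}(n)=t_B^{(j)}(n)$.

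The argument involves no real obstacle; it is just matching definitions. The only points needing care are confirming $i\neq j$, so that $I$ and $J$ are disjoint, and noticing that the theorem places no restriction on $\ell$ relative to $I\cup J$.
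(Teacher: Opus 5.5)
Your proposal is correct and follows exactly the paper's route: the paper obtains this corollary by specializing Theorem~\ref{prop:EqivColsStrict} to $|I|=|J|=1$, that is, $I=\{i\}$ and $J=\{j\}$. Your explicit check that $i\neq j$, so that $I$ and $J$ are disjoint, is a small detail the paper leaves implicit.
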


In the following examples, we illustrate how to apply Theorem~\ref{prop:EqivColsStrict} and Corollary~\ref{cor:EqivCols}.
\begin{example}
    Consider the following three matrices:
        \[\begin{bmatrix}
        1 & 1 & 1 \\
        0 & 1 & 0 \\
        0 & 0 & 1
        \end{bmatrix}, 
        \begin{bmatrix}
        1 & 1 & 1 \\
        0 & 1 & 0 \\
        0 & 1 & 0
        \end{bmatrix},
        \begin{bmatrix}
        1 & 1 & 1 \\
        0 & 0 & 1 \\
        0 & 1 & 0
        \end{bmatrix}.\]
    Here, colors $i = 2$ and $j = 3$ are interchangeable (see the second matrix).
    Starting with the first matrix, we interchange the $(3,2)$ entry and the $(3,3)$ entry; that is, we replace $a_{32}$ with $a_{23}$ and vice versa.
    This gives us the second matrix in the list, and by Corollary~\ref{cor:EqivCols}, we know these two matrices have the same counting sequence.
    Now, we inspect the second matrix, and we similarly interchange the $(2,2)$ entry and the $(2,3)$ entry; this gives us the third matrix in the list, which has the same counting sequence as the first two.
    Hence, all three of these matrices have the same counting sequence.
    We direct interested readers to Entry~29 in Appendix~\ref{app:3by3} to explore other matrices which have the same counting sequence as these three, yet this cannot be obtained using this notion of interchangeable colors.
\end{example}

\begin{example}\label{ex:iffcounter}
    Let
    \[
A=\begin{bmatrix}
    1 & 1 & 1 & 0 & 0 & 0\\
    0 & 1 & 0 & 0 & 0 & 0\\
    0 & 0 & 1 & 0 & 0 & 0\\
    0 & 0 & 0 & 1 & 1 & 0\\
    0 & 0 & 0 & 1 & 1 & 0\\
    0 & 0 & 0 & 0 & 0 & 1
\end{bmatrix}
\quad\text{and}\quad
B=\begin{bmatrix}
    0 & 0 & 0 & 1 & 1 & 1\\
    0 & 1 & 0 & 0 & 0 & 0\\
    0 & 0 & 1 & 0 & 0 & 0\\
    0 & 0 & 0 & 1 & 1 & 0\\
    0 & 0 & 0 & 1 & 1 & 0\\
    0 & 0 & 0 & 0 & 0 & 1
\end{bmatrix}.
\]
Future results (Entry~29 in Appendix~\ref{app:3by3} and Theorem~\ref{prop:blockdiag}) give us values for $t^{(i)}_A(n)$ for $1\leq i\leq 6$. In particular, we learn 
that $t^{(1)}_A(n)+t^{(2)}_A(n)+t^{(3)}_A(n)=t^{(4)}_A(n)+t^{(5)}_A(n)+t^{(6)}_A(n)$. 
Noting that $B$ differs from $A$ only in the first row; $A$ and $B$ are strictly tree coloring equivalent by Theorem~\ref{prop:EqivColsStrict}, taking $I=\{4,5,6\}$, $J=\{1,2,3\}$, and $l=1$. 
Since the first rows of these matrices are the only rows with sum $3$, color~$1$ is not interchangeable with any other color. So, it cannot be the case that $A$ and $B$ are related by Corollary~\ref{cor:EqivCols}.
\end{example}

Example~\ref{ex:iffcounter} makes it clear that matrices can be strictly tree coloring equivalent without being obtainable via
repeated applications of Corollary~\ref{cor:EqivCols}. 
However, it turns out that $A$ and $B$ are strictly tree coloring equivalent if and only if they are related by repeated applications of Theorem~\ref{prop:EqivColsStrict}.
\begin{theorem}\label{thm:EqivColsStrictIff}
    Suppose that $A\neq B$ are strictly tree coloring equivalent coloring matrices. Then, $B$ can be obtained from $A$ via one or more applications of Theorem~\ref{prop:EqivColsStrict}.
\end{theorem}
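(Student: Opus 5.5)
The plan is to work with generating functions and change $A$ into $B$ one row at a time, invoking Theorem~\ref{prop:EqivColsStrict} once per row in which $A$ and $B$ differ.

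\textbf{Step 1: a linear relation for each row.} Since $A$ and $B$ are strictly tree coloring equivalent, $t_A^{(i)}(n)=t_B^{(i)}(n)$ for all $i$ and $n$. Hence $\gfa{i}=\gff{i}{B}$, and I will write $F_i$ for this common series. By Theorem~\ref{prop:gfgeneral}, for every color $\ell$,
\[
F_\ell = x + F_\ell\sum_{j=1}^m a_{\ell j}F_j = x + F_\ell\sum_{j=1}^m b_{\ell j}F_j .
\]
Each $F_\ell = x+O(x^2)$ is a nonzero power series. We may therefore cancel $F_\ell$ from $F_\ell\sum_j a_{\ell j}F_j=F_\ell\sum_j b_{\ell j}F_j$, working in the field of formal Laurent series. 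This gives $\sum_j (a_{\ell j}-b_{\ell j})F_j=0$ for every $\ell$.

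\textbf{Step 2: the index sets for a differing row.} Fix a row $\ell$ in which $A$ and $B$ differ, and set
\[
I_\ell=\{j : a_{\ell j}=0,\ b_{\ell j}=1\},\qquad J_\ell=\{j : a_{\ell j}=1,\ b_{\ell j}=0\}.
\]
These sets are disjoint, and Step~1 gives $\sum_{i\in I_\ell}F_i=\sum_{j\in J_\ell}F_j$. In terms of coefficients, $\sum_{i\in I_\ell}t_A^{(i)}(n)=\sum_{j\in J_\ell}t_A^{(j)}(n)$ for all $n$. Each $F_j$ has coefficient $1$ at $x$, so comparing coefficients of $x$ yields $|I_\ell|=|J_\ell|$. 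Since row $\ell$ differs, $I_\ell\cup J_\ell\neq\emptyset$, and so both sets are nonempty. By construction $a_{\ell i}=0$ for $i\in I_\ell$ and $a_{\ell j}=1$ for $j\in J_\ell$.

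\textbf{Step 3: one application of Theorem~\ref{prop:EqivColsStrict}.} All hypotheses of Theorem~\ref{prop:EqivColsStrict} now hold with $I=I_\ell$, $J=J_\ell$ and this $\ell$. The matrix it produces is exactly $A$ with row $\ell$ replaced by row $\ell$ of $B$, and it is strictly tree coloring equivalent to $A$.

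\textbf{Step 4: iterate over the differing rows.} Let $\ell_1,\dots,\ell_r$ ($r\ge1$, since $A\neq B$) be the rows in which $A$ and $B$ differ. Put $A_0=A$, and let $A_s$ be $A$ with rows $\ell_1,\dots,\ell_s$ replaced by the corresponding rows of $B$.

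Suppose $A_{s-1}$ is strictly tree coloring equivalent to $A$, hence to $B$. Then Steps~1--3 apply verbatim to the pair $(A_{s-1},B)$ and the row $\ell_s$. Indeed, row $\ell_s$ of $A_{s-1}$ is still row $\ell_s$ of $A$, so the sets $I_{\ell_s}$ and $J_{\ell_s}$ are unchanged. The required equality of sums holds for $t_{A_{s-1}}^{(i)}(n)=t_A^{(i)}(n)$. Thus $A_s$ is obtained from $A_{s-1}$ by an application of Theorem~\ref{prop:EqivColsStrict} and remains strictly equivalent to $A$.

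By induction on $s$, after $r$ steps we reach $A_r=B$, so $B$ is obtained from $A$ by one or more applications of the theorem.

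The only real point, which I expect to be the main (though mild) obstacle, is Step~1: turning equality of root-colored counts into the linear relation $\sum_j(a_{\ell j}-b_{\ell j})F_j=0$. This requires cancelling $F_\ell$ legitimately, which is justified because $F_\ell$ is a nonzero power series. After that, extracting $|I_\ell|=|J_\ell|$ from the coefficient of $x$ is straightforward, and the rest is bookkeeping.
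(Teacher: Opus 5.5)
Your proposal is correct and follows the paper's proof essentially step for step: equate the generating-function equations of Theorem~\ref{prop:gfgeneral} for $A$ and $B$, cancel the root-colored series, read off $|I|=|J|$ from the coefficient of $x$, and then apply Theorem~\ref{prop:EqivColsStrict} once per row in which $A$ and $B$ differ. Your explicit induction in Step~4 checks that each intermediate matrix is still strictly equivalent to $A$ and still has its original row $\ell_s$. This fills in a detail that the paper's one-line remark (``one application is needed per row where they differ'') leaves implicit.
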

\begin{proof}
    Suppose that $A\neq B$ are $m\times m$ coloring matrices that are strictly tree coloring equivalent. This implies that $\gfa{k}=\gff{k}{B}$ for all $1\leq k\leq m$. So, by Theorem~\ref{prop:gfgeneral}, we have that
    \begin{equation}\label{eq:A}
    \gfa{k}=x+\gfa{k}\sum_{j=1}^ma_{kj}\gfa{j}
    \end{equation}
    and
    \begin{equation}\label{eq:B}
    \gff{k}{B}=x+\gff{k}{B}\sum_{j=1}^mb_{kj}\gff{j}{B}.
    \end{equation}
    Since we know that $\gfa{k}=\gff{k}{B}$, \eqref{eq:B} can be rewritten as
    \begin{equation}\label{eq:C}
    \gfa{k}=x+\gfa{k}\sum_{j=1}^mb_{kj}\gfa{j}.
    \end{equation}
    Subtracting \eqref{eq:C} from \eqref{eq:A} and dividing through by $\gfa{k}$, we obtain that
    \[
    \sum_{j=1}^ma_{kj}\gfa{j}=\sum_{j=1}^mb_{kj}\gfa{j},
    \]
    or
    \[
    \sum_{j=1}^m(a_{kj}-b_{kj})\gfa{j}=0.
    \]
    Since $A\neq B$, there is some $k$ for which not all of the coefficients $a_{kj}-b_{kj}$ are zero. As $t^{(j)}_A(1)=1$ for all $j$, there must be an equal number of $1$s and $-1$s among such coefficients. This implies there are sets $I$ and $J$ with $|I|=|J|$ 
    such that the entries in row $k$ of $A$ match the entries in row $k$ of $B$, except that the entries in the columns of $I$ are $0$ in $A$ and $1$ in $B$ and the entries in the columns of $J$ are $1$ in $A$ and $0$ in $B$. We now claim that $\sum_{i\in I}\gfa{i}=\sum_{j\in J}\gfa{j}$, which would essentially complete the proof.
    
    For $j\in J$, we have $a_{kj}-b_{kj}=1$, and for $i\in I$ we have $a_{kj}-b_{kj}=-1$. For all other entries in row $k$, $A$ and $B$ match. So, 
    \[
    0=\sum_{j=1}^m(a_{kj}-b_{kj})\gfa{j}=\sum_{j\in J}\gfa{j}-\sum_{i\in I}\gfa{i}.
    \]
    Therefore, $\sum_{i\in I}\gfa{i}=\sum_{j\in J}\gfa{j}$, as required.

    The preceding argument shows that $B$ is obtainable from $A$ via a single application of Theorem~\ref{prop:EqivColsStrict} if $A$ and $B$ differ in only a single row. If they differ in multiple rows, one application is needed per row where they differ.
\end{proof}

We conclude this subsection with a couple of additional results. First, the following simple observation on matrices with identical row sums yields many instances of tree coloring equivalent matrices.

\begin{theorem}\label{prop:row-sums}
    If $A$ is an $m \times m$ coloring matrix whose row sums are all equal to $r$, then we have $t_A^{(i)}(n) = r^{n-1} C_{n-1}$ for all $i$, thus $t_A(n) = mr^{n-1}C_{n-1}$. 
    In particular, $A$ has all of its colors interchangeable, and all such matrices are strictly tree coloring equivalent.
\end{theorem}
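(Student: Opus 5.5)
The plan is a direct counting argument: fix the shape of the tree and count its colorings, rather than working through the generating functions. Fix a plane tree $T\in\mathcal{T}_n$ and a root color $i$. Color the vertices of $T$ top-down, for instance in pre-order, so that each non-root vertex is colored after its parent. A non-root vertex whose parent has color $p$ may receive exactly the colors $q$ with $a_{pq}=1$. By hypothesis there are exactly $r$ such colors, whatever $p$ is. Each of the $n-1$ non-root vertices therefore has exactly $r$ choices, independently of the earlier choices, so $T$ has exactly $r^{n-1}$ valid colorings with root color $i$.

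Summing over the $C_{n-1}$ plane trees on $n$ vertices gives $t_A^{(i)}(n)=r^{n-1}C_{n-1}$. Summing this over the $m$ root colors gives $t_A(n)=mr^{n-1}C_{n-1}$. As a consistency check, one could instead verify via Theorem~\ref{prop:gfgeneral} that $F^{(i)}_A(x)=G(x)$ for every $i$, where $G$ is the unique power series solution of $G=x+rG^2$. Substituting $G$ for every $F^{(j)}_A$ turns each equation of the system into $G=x+rG^2$, because $\sum_j a_{ij}=r$. Uniqueness of the formal power series solution then follows from the recurrence of Theorem~\ref{prop:rootrec}, which determines all coefficients. The bijective count is cleaner, though, so I would present that one.

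The remaining assertions follow at once. Since $t_A^{(i)}(n)$ does not depend on $i$, all colors are interchangeable. Now take any two $m\times m$ coloring matrices $A$ and $B$ whose rows all sum to the same $r$. Then $t_A^{(i)}(n)=t_B^{(i)}(n)=r^{n-1}C_{n-1}$ for each $i$, so $A$ and $B$ are strictly tree coloring equivalent.

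There is no real obstacle here. The only point needing care is that the number of choices at each vertex is independent of its parent's color, and this is exactly the equal-row-sum hypothesis.
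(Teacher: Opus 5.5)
Your proposal is correct and matches the paper's proof. The paper likewise notes that once the root color is fixed, each of the remaining $n-1$ vertices has exactly $r$ admissible colors regardless of its parent's color and the tree's shape, which gives $r^{n-1}C_{n-1}$; your generating-function cross-check via $G=x+rG^2$ is valid but not needed.
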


\begin{proof}
If all row sums of $A$ are equal to $r$, then every color can be followed by precisely $r$ different colors. Thus, once the root color has been selected, there are $r$ possibilities for the color of every subsequent vertex, regardless of the structure of the tree. The statement readily follows.\end{proof}

Next, we have a result about our generalized counting sequences, where we allow a fixed number of coloring violations.
\begin{theorem}\label{thm:strongcomplement}
    Suppose $m\times m$ coloring matrices $A$ and $B$ are tree coloring equivalent. Then, for all $0\leq q<n$ we have $t_A(n,q)=t_B(n,q)$.
\end{theorem}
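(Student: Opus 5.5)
The plan is to work entirely with the bivariate generating functions of Theorem~\ref{prop:gfgeneralgeneral}, and to show that $F_A(x,u)=\sum_i F_A^{(i)}(x,u)$ is determined by the univariate function $\gfall$ alone. Then $\gfall=\gfll{B}$ forces $F_A(x,u)=F_B(x,u)$, and comparing coefficients of $x^nu^q$ gives $t_A(n,q)=t_B(n,q)$. A combinatorial bijection looks awkward here. Cutting along the violating edges produces components whose numbers of $A$-colorings depend on their shapes, not only on their sizes, so I will argue algebraically.

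\textbf{Step 1: rewrite the system.} Write $S=S_A(x,u)=\sum_{j=1}^m F_A^{(j)}(x,u)$. Since $a_{ij}+(1-a_{ij})u=u+(1-u)a_{ij}$, Theorem~\ref{prop:gfgeneralgeneral} becomes
\[
F_A^{(i)}=\frac{x}{1-uS-(1-u)\sum_j a_{ij}F_A^{(j)}}.
\]
All series live in $\mathbb{Q}[u][[x]]$. Because $S=O(x)$, the element $1-uS$ is invertible. Set
\[
w=\frac{1-u}{1-uS}, \qquad z=\frac{x}{1-uS}, \qquad G^{(i)}=wF_A^{(i)}.
\]
Dividing numerator and denominator by $1-uS$ and multiplying by $w$ gives
\[
G^{(i)}=\frac{wz}{1-\sum_j a_{ij}G^{(j)}},
\]
that is, $G^{(i)}=wz+G^{(i)}\sum_j a_{ij}G^{(j)}$.

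\textbf{Step 2: identify the solution.} This is exactly the system of Theorem~\ref{prop:gfgeneral} with the variable $x$ replaced by $y:=wz=x(1-u)/(1-uS)^2$. That system has a unique solution in power series with zero constant term, determined coefficient by coefficient. The series $y$ lies in $x\,\mathbb{Q}[u][[x]]$, so substituting it is legitimate, and we conclude $G^{(i)}=\gfa{i}\big|_{x=y}$. Summing over $i$ gives the functional equation
\[
\frac{1-u}{1-uS}\,S=\gfall\Big|_{x=\frac{x(1-u)}{(1-uS)^2}} .
\]
This needs a little care with invertibility. We never divide by $w$ itself, which is why I work with $G^{(i)}=wF_A^{(i)}$ rather than $F_A^{(i)}$. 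The equation should also be kept in the multiplied-out form $(1-u)S(1-uS)^{-1}=\dots$, since $1-u$ is not a unit in $\mathbb{Q}[u]$.

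\textbf{Step 3: uniqueness.} The same derivation for $B$ gives the identical equation for $S_B$, because $\gfll{B}=\gfall$. It remains to show that this equation has at most one solution $S\in x\,\mathbb{Q}[u][[x]]$. This is the main point to check, since the left side carries the factor $1-u$. I would instead compare with the original system. Rearranged, the equation expresses $S$ as $x$ plus terms in which the coefficient of $x^n$ depends only on coefficients of $S$ of index less than $n$. Concretely, one can go back to $S=\sum_i F^{(i)}$, where $F^{(i)}=x+\dots$ is recursively determined, and verify directly that $[x^n]S$ is a polynomial in lower coefficients of $S$ and in the coefficients $t_A(k)$. The uniqueness then follows by induction on $n$.

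The cleanest way to organise this induction is to substitute $F^{(i)}=G^{(i)}/w$ back into the equation. The factor $1-u$ then cancels via $\gfall=x+\dots$, since $\gfall(y)/w = x(1-uS)^{-1}+\dots$ contains no division by $1-u$. This yields $S=\Phi(x,u,S)$ with $\Phi$ built only from the coefficients $t_A(k)$, and $\Phi$ is a contraction in the $x$-adic topology. Hence $S_A=S_B$, so $F_A(x,u)=F_B(x,u)$ and $t_A(n,q)=t_B(n,q)$ for all $0\le q<n$.
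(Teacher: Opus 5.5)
Your proposal is correct and follows the paper's proof. It uses the same rescaling $\frac{(1-u)F_A^{(i)}(x,u)}{1-uF_A(x,u)}$, which turns the bivariate system into the original one with $x$ replaced by $\frac{x(1-u)}{(1-uF_A(x,u))^2}$, and so arrives at the same functional equation expressing $F_A(x,u)$ through $\gfall$.

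The only difference is the last step. The paper solves the functional equation explicitly: $H_A=\frac{(1-u)F_A(x,u)}{1-uF_A(x,u)}$ is the compositional inverse of $(1-u)(1-u+ux)^{-2}F_A^{\langle-1\rangle}(x)$, which tacitly divides by $1-u$. You instead cancel the factor $1-u$ and conclude uniqueness from the $x$-adic contraction $S=(1-uS)\sum_{k\ge1}t_A(k)\,x^k(1-u)^{k-1}(1-uS)^{-2k}$, which stays within power series whose coefficients are polynomials in $u$.

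One small slip: $t_A(1)=m$, so the leading term is $mx(1-uS)^{-1}$, not $x(1-uS)^{-1}$. This does not affect the argument.
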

\begin{proof}
    By Theorem~\ref{prop:gfgeneral}, the generating functions $\gfa{i}$ satisfy
    \begin{equation}\label{eq:gf-eq}
    \gfa{i}= \frac{x}{1-\sum_{j=1}^m a_{ij} \gfa{j}}   
    \end{equation}
    for $1 \leq i \leq m$.
    Two matrices $A$ and $B$ are tree coloring equivalent if and only if
    \[
    \gfall = \sum_{i=1}^m \gfa{i} =  \sum_{i=1}^m \gff{i}{B} = \gfll{B}.\]
    Thus it suffices to show that $\gfall$ determines $t_A(n,q)$ for all $n$.
    
    We shall show that the bivariate generating $F_A(x,u)$ is already determined by $\gfall = F_A(x,0)$. Note first that, by Theorem~\ref{prop:gfgeneralgeneral}, the functional equation~\eqref{eq:gf-eq} extends to
    \begin{align*}
    F_A^{(i)}(x,u) &= \frac{x}{1-\sum_{j=1}^m (a_{ij} + (1-a_{ij})u) F_A^{(j)}(x,u)} \\
    &= \frac{x}{1- u F_A(x,u) - (1-u) \sum_{j=1}^m a_{ij}F_A^{(j)}(x,u)}.
    \end{align*}
    This can be rewritten as
    \[
    \frac{(1-u)F_A^{(i)}(x,u)}{1-u F_A(x,u)} = \frac{\frac{x(1-u)}{(1- u F_A(x,u))^2}}{1 - \sum_{j=1}^m a_{ij} \frac{(1-u)F_A^{(j)}(x,u)}{1-u F_A(x,u)}}.
    \]
    In other words, $\frac{(1-u)F_A^{(1)}(x,u)}{1-uF_A(x,u)},\frac{(1-u)F_A^{(2)}(x,u)}{1-uF_A(x,u)}, \ldots, \frac{(1-u)F_A^{(m)}(x,u)}{1-uF_A(x,u)}$ satisfy the same system of equations as $\gfa{1},\gfa{2},\ldots,\gfa{m}$, with $x$ replaced by $\frac{x(1-u)}{(1- u F_A(x))^2}$. Since this system of equations characterizes them uniquely as formal power series, we have
    \[
    \frac{(1-u)F_A^{(i)}(x,u)}{1-u F_A(x,u)} = F_A^{(i)}\!\left(\frac{x(1-u)}{(1- u F_A(x))^2}\right)
    \]
    for $1 \leq i \leq m$. Summing this equation over all $i$, we find
    \[
    \frac{(1-u)F_A(x,u)}{1-uF_A(x,u)} = F_A \!\left( \frac{x(1-u)}{(1-uF_A(x,u))^2} \right).
    \]
    It is well known that a formal power series has a unique compositional inverse if its constant coefficient is $0$ and its linear coefficient is not $0$. This is the case for $\gfall$, so it has a compositional inverse $F_A^{\langle-1 \rangle}(x)$. Setting $H_A(x,u) = \frac{(1-u)F_A(x,u)}{1-uF_A(x,u)}$ (equivalently, $F_A(x,u) = \frac{H_A(x,u)}{1-u+uH_A(x,u)}$), we have
    \[
    \frac{x}{1-u}(1-u+uH_A(x,u))^2 = \frac{x(1-u)}{(1-uF_A(x,u))^2} = F_A^{\langle-1 \rangle}(H_A(x,u)),
    \]
    or
    \[
    x = (1-u)(1-u+uH_A(x,u))^{-2} F_A^{\langle-1 \rangle}(H_A(x,u)).
    \]
    So $H_A(x,u)$ is the unique compositional inverse of the power series $(1-u)(1-u+ux)^{-2} F_A^{\langle-1 \rangle}(x)$. Thus $\gfall$ determines $H_A(x,u)$, which in turn determines $F_A(x,u)$. This completes the proof.
\end{proof}

Theorem~\ref{thm:strongcomplement} has an important corollary.
\begin{corollary}\label{cor:complement}
    Given $m\times m$ coloring matrices $A$ and $B$, let $\bar{A}$ and $\bar{B}$ be obtained from $A$ and $B$ by replacing all $0$'s with $1$'s and $1$'s with $0$'s. Then, $A$ and $B$ are tree coloring equivalent if and only if $\bar{A}$ and $\bar{B}$ are tree coloring equivalent.
\end{corollary}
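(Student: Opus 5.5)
The plan is to deduce the corollary from Theorem~\ref{thm:strongcomplement} by relating violations of the rule $\bar{A}$ to violations of the rule $A$. Color a plane tree with $n$ vertices using all $m$ colors freely. Each of its $n-1$ edges (parent colored $i$, child colored $j$) is valid for exactly one of $A$ and $\bar{A}$, because $\bar{a}_{ij}=1-a_{ij}$. So a coloring has exactly $q$ edges violating $\bar{A}$ if and only if it has exactly $n-1-q$ edges violating $A$. Since this is a bijection (in fact the identity) on the set of all $m$-colored $n$-vertex trees, we get
\[
t_{\bar{A}}(n,q)=t_A(n,n-1-q)\qquad (0\leq q<n),
\]
and the same identity holds for $B$.

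For the forward direction, suppose $A$ and $B$ are tree coloring equivalent. By Theorem~\ref{thm:strongcomplement}, $t_A(n,q)=t_B(n,q)$ for all $0\leq q<n$. In particular, taking $q=n-1$ gives
\[
t_{\bar{A}}(n)=t_{\bar{A}}(n,0)=t_A(n,n-1)=t_B(n,n-1)=t_{\bar{B}}(n,0)=t_{\bar{B}}(n)
\]
for every $n\geq1$. Hence $\bar{A}$ and $\bar{B}$ are tree coloring equivalent.

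For the converse, complementation is an involution: $\bar{\bar{A}}=A$. So we apply the forward direction to the pair $\bar{A},\bar{B}$.

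The proof has no real obstacle. The only point needing care is that Theorem~\ref{thm:strongcomplement} controls the full violation distribution, including the extreme case $q=n-1$ where every edge violates the rule, and not just $q=0$. That theorem does provide this, since it covers all $0\leq q<n$.
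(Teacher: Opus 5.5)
Your proposal is correct and matches the paper's proof: you apply Theorem~\ref{thm:strongcomplement} at $q=n-1$, identify $t_A(n,n-1)$ with $t_{\bar{A}}(n)$ because every edge must then violate $A$, and get the converse from $\bar{\bar{A}}=A$. The general identity $t_{\bar{A}}(n,q)=t_A(n,n-1-q)$ is a valid strengthening of the single case the paper uses, but it is not needed.
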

\begin{proof}
    First, suppose that $m\times m$ coloring matrices $A$ and $B$ are tree coloring equivalent. By Theorem~\ref{thm:strongcomplement}, we have $t_A(n,n-1)=t_B(n,n-1)$ for all $n\geq1$. But, $t_A(n,n-1)$ (respectively $t_B(n,n-1)$) counts the number of $m$-colored $n$-vertex trees with exactly $n-1$ edges violating the coloring rule $A$ (respectively $B$). A tree on $n$ vertices has $n-1$ edges, so such a tree violates the coloring rule for every edge. The coloring matrices $\bar{A}$ and $\bar{B}$ specify rules where every edge violates the coloring rules $A$ and $B$ respectively. So, we have $t_A(n,n-1)=t_{\bar{A}}(n)$ and $t_B(n,n-1)=t_{\bar{B}}(n)$. Therefore, $t_{\bar{A}}(n)=t_{\bar{B}}(n)$.

    The other direction of the equivalence follows from the fact that $\bar{\bar{A}}=A$ and $\bar{\bar{B}}=B$, so we can run the same argument with $A$ and $B$ replaced by $\bar{A}$ and $\bar{B}$.
\end{proof}

\section{Coloring rules based on block matrices}
\label{sec:new_from_old}
In this section, we describe several block matrix constructions that allow us to obtain sequence counts and/or generating functions for rules with more colors given one or more rules with fewer colors. We begin with the simplest such result, where the block matrix is diagonal.

\begin{theorem}\label{prop:blockdiag}
Given coloring matrices $A_1,A_2,\ldots,A_k$, let
\[
A=\left[\begin{array}{c|c|c|c}A_1&0&0&0\\\hline0&A_2&0&0\\\hline0&0&\ddots&0\\\hline0&0&0&A_k\end{array}\right].
\]
We have
\[
t_A(n)=\sum_{i=1}^kt_{A_i}(n)
\]
for all $n$.
\end{theorem}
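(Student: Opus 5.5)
The idea is that the block-diagonal structure forces every valid $A$-coloring to use colors from a single block. So I will show $t_A^{(c)}(n)=t_{A_s}^{(c')}(n)$ whenever color $c$ of $A$ corresponds to color $c'$ of block $A_s$, and then sum over all root colors.

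First I index the colors of $A$ in consecutive groups. If $A_s$ is $m_s\times m_s$, then the colors $M_{s-1}+1,\ldots,M_{s-1}+m_s$ with $M_s=m_1+\cdots+m_s$ belong to block $s$, and color $M_{s-1}+c'$ of $A$ plays the role of color $c'$ of $A_s$. The key observation is that $a_{pq}=0$ whenever $p$ and $q$ lie in different blocks. Therefore in any $A$-colored tree, a child always has a color in the same block as its parent. By induction on depth, or simply by following the path from the root to any vertex, every vertex has a color in the block of the root color.

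Next, for a root color in block $s$, the restriction of the coloring rule to that block is exactly the rule of $A_s$, since $a_{M_{s-1}+p,\,M_{s-1}+q}=(A_s)_{pq}$. Shifting every color by $M_{s-1}$ therefore gives a bijection between $A$-colored trees on $n$ vertices with root color $M_{s-1}+c'$ and $A_s$-colored trees on $n$ vertices with root color $c'$. The same fact can be checked from the generating-function system of Theorem~\ref{prop:gfgeneral}: the equations for the colors of block $s$ involve only those colors' series and coincide with the system for $A_s$, and that system determines the series uniquely as formal power series.

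Finally I sum over all root colors:
\[
t_A(n)=\sum_{c=1}^{M_k} t_A^{(c)}(n)=\sum_{s=1}^k\sum_{c'=1}^{m_s} t_{A_s}^{(c')}(n)=\sum_{s=1}^k t_{A_s}(n).
\]
I do not expect a real obstacle. The only step that needs care is the claim that all colors stay within one block, and that follows directly from the zero off-diagonal blocks.
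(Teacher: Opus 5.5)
Your proof is correct and is essentially the paper's argument: colors never leave the root's diagonal block, so $t_A^{(j)}(n)$ equals the corresponding root-color count for that block $A_s$, and summing over all root colors gives the result. Your explicit indexing, which assigns colors to blocks by index, is slightly more careful than the paper's wording. The paper identifies the block by where the nonzero entries of row $j$ lie, which is ambiguous when row $j$ is entirely zero.
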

\begin{proof}
    Let $A_1,A_2,\ldots,A_k$ and $A$ be as in the statement of the theorem. Consider a tree on $n$ vertices colored according to $A$. Suppose the root has color $j$, and suppose that the nonzero entries in row $j$ of $A$ come from matrix $A_i$. Then, the tree is colored according to $A_i$. This proves that $t_A^{(j)}(n)=t_{A_i}^{(j)}(n)$. Summing over all root colors gives the desired result.
\end{proof}

Next, we examine what happens if we ``blow up'' each $0$ in an existing coloring matrix to a square block of zeroes and each $1$ to a square block of ones.

\begin{theorem}\label{prop:blowup}
    Given a coloring matrix $A$, define the block matrix $\widehat{A}$ where we replace every $1$ in $A$ with the $k$-dimensional all ones matrix, and we replace every $0$ in $A$ with the $k$-dimensional all zeroes matrix.
    We call $\widehat{A}$ the \emph{blowup} of $A$.
    If $A$ and $B$ are (strictly) tree coloring equivalent, then so are $\widehat{A}$ and $\widehat{B}$. Moreover, $t_{\widehat{A}}(n) = k^n t_A(n)$ holds for all $n \geq 1$.
\end{theorem}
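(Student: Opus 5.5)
Index the colors of $\widehat{A}$ by pairs $(p,s)$ with $p\in[m]$ the color of $A$ and $s\in[k]$ the position inside the block, so that $\widehat{a}_{(p,s),(q,r)}=a_{pq}$ for all $s,r$. The key observation is that the rule of $\widehat{A}$ only looks at the first coordinate. I would therefore set up a projection map $\pi$ from $\widehat{A}$-colored trees to $A$-colored trees that keeps the tree shape and replaces each color $(p,s)$ by $p$.

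First, I would check that $\pi$ is well defined and describe its fibers. An edge from a parent of color $(p,s)$ to a child of color $(q,r)$ is allowed in $\widehat{A}$ if and only if $a_{pq}=1$. Hence a coloring of a fixed plane tree by pairs is $\widehat{A}$-valid exactly when its projection is $A$-valid. The second coordinates are completely free, so every $A$-colored tree on $n$ vertices with root color $p$ has exactly $k^n$ preimages. Of these, exactly $k^{n-1}$ have root color $(p,s)$ for each fixed $s$. This gives
\[
t^{((p,s))}_{\widehat{A}}(n)=k^{n-1}t^{(p)}_A(n),
\]
and summing over the $mk$ root colors yields $t_{\widehat{A}}(n)=k^n t_A(n)$.

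The equivalence statements then follow directly. If $A$ and $B$ are tree coloring equivalent, then $t_{\widehat{A}}(n)=k^nt_A(n)=k^nt_B(n)=t_{\widehat{B}}(n)$. If they are strictly tree coloring equivalent, the displayed per-root-color formula gives $t^{((p,s))}_{\widehat{A}}(n)=k^{n-1}t^{(p)}_A(n)=k^{n-1}t^{(p)}_B(n)=t^{((p,s))}_{\widehat{B}}(n)$.

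The only care needed is in the strict case. One must fix the same indexing of blown-up colors for $\widehat{A}$ and $\widehat{B}$, since strict equivalence is color-by-color. Matching block positions under the natural ordering, color $(p,s)$ is index $(p-1)k+s$ in both matrices. Beyond this, I expect no real obstacle. If a more algebraic argument were preferred, one could instead verify that setting $F^{((p,s))}_{\widehat{A}}=\tfrac1k F^{(p)}_A(kx)$ satisfies the system in Theorem~\ref{prop:gfgeneral}, which has a unique solution as formal power series.
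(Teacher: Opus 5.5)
Your proof is correct and uses the same idea as the paper's: every vertex of a valid $A$-coloring can be independently recolored by any of the $k$ colors in its block, which gives $k^n$ valid $\widehat{A}$-colorings per $A$-colored tree. Your per-root-color count $t^{((p,s))}_{\widehat{A}}(n)=k^{n-1}t^{(p)}_A(n)$, together with the fixed indexing of blown-up colors, makes the strict-equivalence claim more precise than the paper's informal remark that ``the new colors behave like the old colors.''
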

\begin{proof}
    Assume that $A$ and $B$ are (strictly) tree coloring equivalent and that each uses $m$ colors.
    When we blow up $A$ and $B$, each color is replaced with a $k \times k$ matrix, so we have $mk$ colors total. 
    Every color in $A$ is replaced with $k$ new colors in $\widehat{A}$, and similarly for $B$ and $\widehat{B}$.
    Since the new colors 
    behave like the old colors, 
    and the only difference is the quantity of colors, we conclude that $\widehat{A}$ and $\widehat{B}$ are (strictly) tree coloring equivalent.
    The formula for $t_{\widehat{A}}(n)$ follows from the observation that every color in a feasible coloring of a tree with $n$ vertices according to $A$ can be replaced by one of precisely $k$ colors to yield a coloring that is feasible according to $\widehat{A}$.
\end{proof}

For example, since $t_{[1]}(n) = C_{n-1}$, it follows that if $A = \begin{bmatrix}
1 & 1 \\
1 & 1 
\end{bmatrix}$, then $t_{A}(n) = 2^{n}C_{n-1}$ (see Entry~8 in Table~1). 
The rest of the constructions in this section are based on the observation that if we know that some colors are only usable for the root, we can often use that information to obtain information about the generating functions for trees with those root colors. Our first result in that vein is a simple observation about rules where some colors can never appear anywhere below vertices of other colors.
\begin{theorem}\label{prop:blockul}
    Let $A$ be an $m\times m$ coloring matrix, $B$ an $m'\times m$ matrix of zeroes and ones, and $C$ an $m'\times m'$ coloring matrix. Define $A'$ to be the block matrix
    \[
    A'=\left[\begin{array}{c|c}A&0\\\hline B&C\end{array}\right].
    \]
    For any $1\leq i\leq m$, we have
    \[
    \gff{i}{A'}=\gff{i}{A}.
    \]
\end{theorem}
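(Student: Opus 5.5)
The plan is to show directly that, for every $n$ and every color $1\leq i\leq m$, the $A'$-colored trees with root color $i$ are exactly the $A$-colored trees with root color $i$. This gives $t_{A'}^{(i)}(n)=t_A^{(i)}(n)$ for all $n$, and hence $\gff{i}{A'}=\gff{i}{A}$.

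The key observation concerns rows $1,\dots,m$ of $A'$. Their entries in columns $m+1,\dots,m+m'$ come from the zero block, so a vertex whose color lies in $[m]$ can only have children whose colors lie in $[m]$. Among those columns, the first $m$ entries of row $p\le m$ of $A'$ are exactly the entries of row $p$ of $A$.

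The argument then proceeds by induction on depth, or simply along root-to-leaf paths. Suppose $T$ is $A'$-colored with root color $i\le m$. Every vertex of $T$ then has a color in $[m]$, because each parent-child step starting from a color in $[m]$ stays in $[m]$. For any edge from a parent of color $p\le m$ to a child of color $q\le m$, we have $a'_{pq}=a_{pq}$. Hence $T$ is a valid $A$-coloring with root color $i$.

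Conversely, an $A$-colored tree uses only colors in $[m]$. Every edge satisfies $a_{pq}=1$, so $a'_{pq}=1$, and the tree is $A'$-colored. The identity map is therefore a bijection between the two sets of trees, preserving both the number of vertices and the root color.

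Alternatively, one could argue through the system in Theorem~\ref{prop:gfgeneral'}s generating-function form, Theorem~\ref{prop:gfgeneral}. The equations for $i\le m$ in the $A'$ system involve only $\gff{j}{A'}$ with $j\le m$, and they coincide with the $A$ system. Uniqueness of formal power series solutions then gives the result. I would present the bijective argument, since it avoids justifying uniqueness. No step is a real obstacle; the only point needing care is noting that the zero block in the upper right is what confines the colors to $[m]$.
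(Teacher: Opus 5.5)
Your proposal is correct and follows the paper's own argument: the zero upper-right block forces every vertex below a root of color $i\le m$ to have a color in $[m]$. On those colors $A'$ agrees with $A$, so the $A'$-colored and $A$-colored trees with root color $i$ are the same trees, and the generating functions are equal.
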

\begin{proof}
    Fix a color $1\leq i\leq m$. 
    Notice that no color greater than $m$ can appear in any $A'$ colored tree anywhere below a vertex of color $i$.
    This implies trees colored according to $A'$ with root color $i$ are precisely those colored by $A$ with that root color.
    Thus, $\gff{i}{A'}=\gff{i}{A}$, as required.
\end{proof}
Theorem~\ref{prop:blockul} applies to numerous entries in Table~\ref{tb:2cols} and Appendix~\ref{app:3by3}.
We now present several results that specialize Theorem~\ref{prop:blockul} to situations where we can say something about the counting sequences with root colors greater than $m$.
\begin{theorem}\label{prop:rootonlysingle}
    Let $A$ be an $m\times m$ coloring matrix and $B$ an $m'\times m$ matrix of zeroes and ones. Define $A'$ to be the block matrix
    \[
    A'=\left[\begin{array}{c|c}A&0\\\hline B&0\end{array}\right].
    \]
    For any $1\leq i\leq m$, we have
    \[
    \gff{i}{A'}=\gff{i}{A}.
    \]
    Moreover, for any $m+1\leq i\leq m+m'$,
    we have
    \[
    \gff{i}{A'}=\frac{x}{1-\sum_{j=1}^mb_{i-m,j}\gff{j}{A}}.
    \]
\end{theorem}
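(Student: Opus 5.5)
The first claim is the special case $C=0$ of Theorem~\ref{prop:blockul}, so for $1\leq i\leq m$ we get $\gff{i}{A'}=\gff{i}{A}$ immediately. No further argument is needed for this part.

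For a root color $i$ with $m+1\leq i\leq m+m'$, I would apply Theorem~\ref{prop:gfgeneral} to $A'$. Row $i$ of $A'$ is $(b_{i-m,1},\ldots,b_{i-m,m},0,\ldots,0)$, so the last $m'$ columns contribute nothing. This gives
\[
\gff{i}{A'}=x+\gff{i}{A'}\sum_{j=1}^m b_{i-m,j}\gff{j}{A'}.
\]
Substituting $\gff{j}{A'}=\gff{j}{A}$ for $j\leq m$ from the first part and solving the linear equation for $\gff{i}{A'}$ yields
\[
\gff{i}{A'}=\frac{x}{1-\sum_{j=1}^m b_{i-m,j}\gff{j}{A}}.
\]
The division is legitimate in formal power series because the denominator has constant term $1$, since each $\gff{j}{A}$ has zero constant term.

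As a sanity check, there is also a direct combinatorial reading. A vertex colored $i>m$ can never be a child of any vertex, because columns $m+1,\ldots,m+m'$ of $A'$ are zero. So such a color can appear only at the root. The tree is then a root of color $i$ with a sequence of subtrees, each rooted at some color $j\leq m$ with $b_{i-m,j}=1$ and colored according to $A$. The sequence construction gives exactly the displayed formula.

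None of these steps is a real obstacle. The only point needing care is recognising that the zero right-hand blocks eliminate all terms involving colors above $m$.
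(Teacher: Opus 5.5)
Your proposal is correct and follows the paper's proof essentially verbatim. You obtain the first claim from Theorem~\ref{prop:blockul}, then apply Theorem~\ref{prop:gfgeneral} to row $i$ of $A'$, substitute the first part, and solve the resulting linear equation. Your remarks on the invertibility of the denominator and the direct combinatorial reading (colors above $m$ can occur only at the root) are correct extras the paper does not spell out.
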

\begin{proof}
    First, note that 
    for $1\leq i\leq m$, $\gff{i}{A'}=\gff{i}{A}$ by Theorem~\ref{prop:blockul}.
    Now, let $m+1\leq i\leq m+m'$. Theorem~\ref{prop:gfgeneral} says that
    \[
    \gff{i}{A'}=x+\gff{i}{A'}\sum_{j=1}^{m+m'}a'_{ij}\gff{j}{A'}.
    \]
    We know that
    \[
    a'_{ij}=\begin{cases}
        b_{i-m,j}&j\leq m,\\
        0&j>m,
    \end{cases}
    \]
    and we know that $\gff{j}{A'}=\gff{j}{A}$ for $j\leq m$. 
    We substitute to find
    \[
    \gff{i}{A'}=x+\gff{i}{A'}\sum_{j=1}^{m}b_{i-m,j}\gff{j}{A}.
    \]
    Solving this expression for $\gff{i}{A'}$ yields the desired result.
\end{proof}
Theorem~\ref{prop:rootonlysingle} with $B=\begin{bmatrix}1&0\end{bmatrix}$ applies to Entries~2, 4, 8, 9, 18, 21, 40, and~43 in Appendix~\ref{app:3by3}. These can be obtained in this order, up to isomorphism, by letting $A$ be each of the eight entries of Table~\ref{tb:2cols} in order. Similarly, Theorem~\ref{prop:rootonlysingle} with $B=\begin{bmatrix}0&1\end{bmatrix}$ applies to Entries~2, 3, 7, 9, 14, 13, 42, and~43 in Appendix~\ref{app:3by3}. 
Theorem \ref{prop:rootonlysingle}
also applies with other matrices $B$, but at least one of the upcoming theorems also applies in each such case.

Next, here is a rule for when there are $m'$ colors, each of which can only be used for the root, 
and any other color can follow these $m'$ colors.
\begin{theorem}\label{prop:rootonly}
    Let $A$ be an $m\times m$ coloring matrix, and define $A'$ to be the block matrix
    \[
    A'=\left[\begin{array}{c|c}A&0\\\hline1&0\end{array}\right],
    \]
    where $1$ represents a block matrix of $m'$ rows of ones and $0$ represents a block of zeroes. 
    We have
    \[
    \gfll{A'}=\gfall+\frac{m'x}{1-\gfall}.
    \]
\end{theorem}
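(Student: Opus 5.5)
This is a direct specialization of Theorem~\ref{prop:rootonlysingle}, taking $B$ to be the $m'\times m$ all-ones matrix. The plan is to split $\gfll{A'}=\sum_{i=1}^{m+m'}\gff{i}{A'}$ into the contribution of the first $m$ root colors and the contribution of the last $m'$ root colors, and compute each separately.

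For $1\leq i\leq m$, Theorem~\ref{prop:rootonlysingle} (or already Theorem~\ref{prop:blockul}) gives $\gff{i}{A'}=\gff{i}{A}$. Summing over these colors gives exactly $\gfall$.

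For $m+1\leq i\leq m+m'$, every entry $b_{i-m,j}$ equals $1$. The second formula of Theorem~\ref{prop:rootonlysingle} then becomes
\[
\gff{i}{A'}=\frac{x}{1-\sum_{j=1}^m\gff{j}{A}}=\frac{x}{1-\gfall}.
\]
This is independent of $i$, so summing over the $m'$ such colors contributes $\frac{m'x}{1-\gfall}$. Adding the two pieces gives the claimed identity.

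There is no genuine obstacle; the only point to check is that the denominator is a valid formal power series inverse, which holds because $\gfall$ has zero constant term. Combinatorially, the formula says a tree with a root-only color is one of $m'$ root colors together with an arbitrary sequence of $A$-colored subtrees. This sequence is counted by $1/(1-\gfall)$, and the factor $x$ accounts for the root.
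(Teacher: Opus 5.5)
Your argument is correct, but it is organized differently from the paper's proof. The paper does not invoke Theorem~\ref{prop:rootonlysingle}; it works with total counts instead. Via the standard Catalan decomposition it obtains $t_{A'}(n)=t_A(n)+\sum_{k=1}^{n-1}\bigl(t_{A'}(k)-t_A(k)\bigr)t_A(n-k)$. Here a tree whose root has one of the last $m'$ colors splits into a smaller such tree and a rightmost $A$-colored subtree. This recurrence becomes the linear equation $\gfll{A'}=\gfall+\gfll{A'}\gfall-\gfall^2+m'x$, which is then solved for $\gfll{A'}$.

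Your route reuses the per-color functional equations already packaged in Theorem~\ref{prop:rootonlysingle}, taking $B$ to be all ones. It is shorter, and it gives the finer statement that each of the $m'$ root-only colors contributes exactly $x/(1-\gfall)$. The paper's proof is self-contained but only tracks the aggregate. Underneath, both rest on the same root-plus-subtrees decomposition, so the difference is essentially one of bookkeeping.
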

\begin{proof}
    As a consequence of Theorem~\ref{prop:blockul}, the
    number of $A'$-colored $n$-vertex trees whose root is not one of the last $m'$ colors is given by $t_A(n)$, and $t_{A'}(n)-t_A(n)$ gives the number of trees whose root \emph{is} one of the last $m'$ colors. 
    Any tree whose root is colored by one of the last $m'$ colors has its rightmost subtree not having a root color among the last $m'$ colors. 
    By the standard Catalan decomposition, we have
    \[
    t_{A'}(n)=t_A(n)+\sum_{k=1}^{n-1}(t_{A'}(k)-t_A(k))t_A(n-k),
    \]
    since either the tree does not have a root color from among the last $m'$ colors or it does. 
    Converting to ordinary generating functions,  this gives 
    \[
    \gfll{A'}=\gfall+\gfll{A'}\gfall-\gfall^2+m'x,
    \]
    with the addition of $m'x$ present because there are $m'$ more colorings of the $1$-vertex tree for $A'$ than there are for $A$. 
    Solving for $\gfll{A'}$ gives
    \[
    \gfll{A'}=\frac{\gfall-\gfall^2+m'x}{1-\gfall}=\gfall+\frac{m'x}{1-\gfall},
    \]
    as required.
\end{proof}
Theorem~\ref{prop:rootonly} applies to Entries~2 and~4 in Table~\ref{tb:2cols}, which can be obtained, up to isomorphism, from the two $1\times1$ coloring matrices.
It also applies to Entries~3 and~9 in Appendix~\ref{app:3by3}, which can be obtained, up to isomorphism, from the two $1\times 1$ coloring matrices. 
In addition, it applies to Entries~5, 15 (first three entries), 16, 19, 25, 26, 30 and 34 in Appendix~\ref{app:3by3}, which can be obtained, up to isomorphism, from the eight entries of Table~\ref{tb:2cols} in order.

Next, we define an up-set in a tree. An \emph{up-set} is a set of vertices closed under taking ancestors. With this in mind,
here is a rule for when there are $m'$ colors, each of which can only be used for some up-set, 
and these colors can freely intermingle in said up-set. Also, any other color can follow these $m'$ colors.
\begin{theorem}\label{prop:upsetonly}
    Let $A$ be an $m\times m$ coloring matrix, and define $A'$ to be the block matrix
    \[
    A'=\left[\begin{array}{c|c}A&0\\\hline1&1\end{array}\right],
    \]
    where $1$ represents a block matrix of $m'$ rows of ones and $0$ represents a block of zeroes.
    We have the relation
    $\gfll{A'}=\gfall+\gfll{A'}^2-\gfll{A'}\gfall+m'x$,
    and accordingly we have that
    \[
    \gfll{A'}=\frac{1+\gfall-\sqrt{(1-\gfall)^2-4m'x}}{2}.
    \]
\end{theorem}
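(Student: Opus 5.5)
I would mirror the proof of Theorem~\ref{prop:rootonly}, using the standard Catalan decomposition, and then solve a quadratic. By Theorem~\ref{prop:blockul}, the $A'$-colored trees whose root has one of the first $m$ colors are exactly the $A$-colored trees. Their generating function is $\gfall$. So $G(x):=\gfll{A'}-\gfall$ counts the $A'$-colored trees whose root has one of the last $m'$ colors (the ``top'' colors). In such a tree, all vertices of top colors form an up-set, since a top color can only follow a top color. The remaining vertices have colors among the first $m$ and satisfy the rule $A$ below them.

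Next I would derive the recurrence. Take a tree with $n\ge 2$ vertices whose root has a top color, and decompose it into $L$ and $R$ as in Theorem~\ref{prop:rootrec}. Then $L$ is any $A'$-colored tree with a top root color; it has $k$ vertices and is counted by $t_{A'}(k)-t_A(k)$. The subtree $R$ is any $A'$-colored tree with $n-k$ vertices, of any root color, because row $i$ of $A'$ is all ones for every top color $i$. This gives
\[
t_{A'}(n)-t_A(n)=\sum_{k=1}^{n-1}\bigl(t_{A'}(k)-t_A(k)\bigr)t_{A'}(n-k)\qquad(n\ge2).
\]
The one-vertex tree adds $m'x$. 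In generating functions this reads $\gfll{A'}-\gfall=m'x+(\gfll{A'}-\gfall)\gfll{A'}$, which rearranges to the stated relation $\gfll{A'}=\gfall+\gfll{A'}^2-\gfll{A'}\gfall+m'x$.

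Finally I would solve the quadratic
\[
\gfll{A'}^2-(1+\gfall)\gfll{A'}+\gfall+m'x=0 .
\]
Its discriminant is $(1+\gfall)^2-4\gfall-4m'x=(1-\gfall)^2-4m'x$, so the two roots are $\bigl(1+\gfall\pm\sqrt{(1-\gfall)^2-4m'x}\bigr)/2$. The only step needing care is choosing the sign. As formal power series, $\sqrt{(1-\gfall)^2-4m'x}$ has constant term $1$, because $\gfall$ has zero constant term. The ``$+$'' root therefore has constant term $1$. But $\gfll{A'}$ has constant term $0$, so the minus sign is the correct one, giving the formula in Theorem~\ref{prop:upsetonly}.
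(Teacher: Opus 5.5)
Your proposal is correct and follows the paper's proof essentially step for step: the same appeal to Theorem~\ref{prop:blockul}, the same Catalan-decomposition recurrence $t_{A'}(n)=t_A(n)+\sum_{k=1}^{n-1}(t_{A'}(k)-t_A(k))\,t_{A'}(n-k)$ with the extra $m'x$ term, and the same quadratic. Your justification of the minus sign by comparing constant terms is a detail the paper leaves implicit.
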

\begin{proof}
    As a consequence of Theorem~\ref{prop:blockul}, the
    number of $A'$-colored $n$-vertex trees whose root is not one of the last $m'$ colors is given by $t_A(n)$, and $t_{A'}(n)-t_A(n)$ gives the number of trees whose root \emph{is} one of the last $m'$ colors. 
    Any tree whose root is colored by one of the last $m'$ colors
    has its rightmost subtree as possibly any smaller tree colored by $A'$. By the standard Catalan decomposition, we have
    \[
    t_{A'}(n)=t_A(n)+\sum_{k=1}^{n-1}(t_{A'}(k)-t_A(k))t_{A'}(n-k),
    \]
    as either the tree does not have a root color from among the last $m'$ colors or it does. Converting to ordinary generating functions, this gives the desired expression $\gfll{A'}=\gfall+\gfll{A'}^2-\gfll{A'}\gfall+m'x$, with the addition of $m'x$ present because there are $m'$ more colorings of the $1$-vertex tree for $A'$ than there are for $A$. Solving for $\gfll{A'}$ gives
    \begin{align*}
    \gfll{A'} &=\frac{1+\gfall-\sqrt{(1+\gfall)^2-4(m'x+\gfall})}{2} \\ &=\frac{1+\gfall-\sqrt{(1-\gfall)^2-4m'x}}{2},
    \end{align*}
    as required.
\end{proof}
Theorem~\ref{prop:upsetonly} applies to Entries~5 and~6 in Table~\ref{tb:2cols}, which can be obtained, up to isomorphism, from the two $1\times1$ coloring matrices. 
It also applies to Entries~64 and~65 in Appendix~\ref{app:3by3}, which can be obtained, up to isomorphism, from the two $1\times 1$ coloring matrices. In addition, it applies to Entries~27, 28, 29 (first three matrices), 31, 32, 33, 35, and~36 in Appendix~\ref{app:3by3}, which can be obtained, up to isomorphism, from the eight entries of Table~\ref{tb:2cols} in order.

Next, we provide a rule for when there are $m'$ colors, each of which can only be used for some up-set, 
and these up-set must be monochromatic. Also, any other color can follow these $m'$ colors.
\begin{theorem}\label{prop:separateupsetsonly}
    Let $A$ be an $m\times m$ coloring matrix, and define $A'$ to be the block matrix
    \[
    A'=\left[\begin{array}{c|c}A&0\\\hline1&I\end{array}\right],
    \]
    where $1$ represents a block matrix of $m'$ rows of ones, $0$ represents a block of zeroes, and $I$ represents the $m'\times m'$ identity matrix.
    We have the relation
    \[
    \frac{1}{m'}\gfll{A'}^2+\left(\frac{m'-2}{m'}\gfall-1\right)\gfll{A'}+\left(\gfall-\frac{m'-1}{m'}\gfall^2+m'x\right)=0,
    \]
    and accordingly we have that
    \begin{align*}
    \gfll{A'} &=\frac{m'-(m'-2)\gfall}{2} \\
    &\qquad - \frac{\sqrt{(m'-(m'-2)\gfall)^2-4(m'\gfall-(m'-1)\gfall^2+(m')^2x)}}{2}.
    \end{align*}
\end{theorem}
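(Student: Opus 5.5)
By the symmetry among the last $m'$ colors, the plan is to reduce everything to a single unknown series, namely the generating function for trees whose root has one fixed color from among the last $m'$ colors. Then Theorem~\ref{prop:gfgeneral} gives a quadratic for it, which we translate into a quadratic for $\gfll{A'}$.

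First, by Theorem~\ref{prop:blockul} (with $B$ the all-ones block and $C=I$), $\gff{i}{A'}=\gfa{i}$ for $1\leq i\leq m$. Summing over these colors, the trees whose root color is among the first $m$ colors contribute exactly $\gfall$.

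Next, the rows $m+1,\ldots,m+m'$ of $A'$ are all of the same shape: a color $c>m$ may be followed by any of the first $m$ colors, or by $c$ itself. Renaming the colors $c$ and $c'$ (a permutation as in Theorem~\ref{prop:permmat}, which fixes the first $m$ colors) maps $A'$-colored trees with root color $c$ bijectively to those with root color $c'$. So all the series $\gff{c}{A'}$ for $c>m$ equal a common series $G(x)$, and hence
\[
\gfll{A'}=\gfall+m'G(x).
\]
Applying Theorem~\ref{prop:gfgeneral} to row $c$ of $A'$ gives
\[
G=x+G\Big(\sum_{j=1}^m \gfa{j}+G\Big)=x+G\gfall+G^2 .
\]

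Now substitute $G=(\gfll{A'}-\gfall)/m'$ and multiply through by $m'$. Writing $H=\gfll{A'}$ and $F=\gfall$, this gives
\[
H-F=m'x+(H-F)F+\tfrac{1}{m'}(H-F)^2 .
\]
Expanding and collecting powers of $H$ gives $\frac{1}{m'}H^2$, the linear coefficient $\frac{m'-2}{m'}F-1$, and the constant term $F-\frac{m'-1}{m'}F^2+m'x$. This is exactly the stated relation.

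Finally, multiply the relation by $m'$ and apply the quadratic formula. The only delicate point is the choice of sign of the square root. At $x=0$ we have $F=0$, so the discriminant equals $m'^2$ and the two roots are $H=m'$ and $H=0$. Since $\gfll{A'}$ has zero constant term, we must take the minus sign, and this yields the displayed closed form. I do not expect any real obstacle beyond keeping track of the algebra in the expansion.
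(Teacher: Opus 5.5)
Your proof is correct and is essentially the paper's argument: the paper applies the standard Catalan decomposition directly to the total counts, using interchangeability of the last $m'$ colors, and obtains the same quadratic. You instead package that decomposition through Theorem~\ref{prop:gfgeneral} applied to the single common series $G$, and you also justify the choice of the minus sign, which the paper leaves implicit.
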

\begin{proof}
    As a consequence of Theorem~\ref{prop:blockul}, the
    number of $A'$-colored $n$-vertex trees whose root is not one of the last $m'$ colors is given by $t_A(n)$, and $t_{A'}(n)-t_A(n)$ gives the number of trees whose root \emph{is} one of the last $m'$ colors. Any such tree has its rightmost subtree as possibly any smaller tree colored by $A$ or colored by $A'$ with the same colored root as the whole tree. All $m'$ additional colors are interchangeable, so by the standard Catalan decomposition, we have
    \begin{align*}
    t_{A'}(n)&=t_A(n)+\sum_{k=1}^{n-1}(t_{A'}(k)-t_A(k))\left(t_A(n-k)+\frac{1}{m'}(t_{A'}(n-k)-t_A(n-k))\right)\\
    &=t_A(n)+\sum_{k=1}^{n-1}(t_{A'}(k)-t_A(k))\left(\frac{m'-1}{m'}\cdot t_A(n-k)+\frac{1}{m'}\cdot t_{A'}(n-k)\right)\\
    &=t_A(n)+\frac{1}{m'}\sum_{k=1}^{n-1}(t_{A'}(k)-t_A(k))((m'-1)t_A(n-k)+t_{A'}(n-k)),
    \end{align*}
    as either the tree does not have a root color from among the last $m'$ colors or it does. Converting to ordinary generating functions, this gives
    \begin{align}
    \nonumber\gfll{A'}&=\gfall+\frac{1}{m'}((m'-1)\gfll{A'}\gfall+\gfll{A'}^2 \\
    &\nonumber\qquad -(m'-1)\gfall^2-\gfll{A'}\gfall+m'x\\
    &=\gfall+\frac{1}{m'}((m'-2)\gfll{A'}\gfall+\gfll{A'}^2-(m'-1)\gfall^2)+m'x\label{eq:idgfthm},
    \end{align}
    with the addition of $m'x$ present because there are $m'$ more colorings of the $1$-vertex tree for $A'$ than there are for $A$. 
    Equation~\eqref{eq:idgfthm} is equivalent to the desired relation
    \[
    \frac{1}{m'}\gfll{A'}^2+\left(\frac{m'-2}{m'}\gfall-1\right)\gfll{A'}+\left(\gfall-\frac{m'-1}{m'}\gfall^2+m'x\right)=0.
    \]
    Solving for $\gfll{A'}$ gives
    \begin{align*}
        \gfll{A'}&=\frac{1-\frac{m'-2}{m'}\gfall-\sqrt{\left(1-\frac{m'-2}{m'}\gfall\right)^2-\frac{4}{m'}\left(\gfall-\frac{m'-1}{m'}\gfall^2+(m')^2x\right)}}{2}\\
        &= \frac{m'-(m'-2)\gfall}{2} \\
    &\qquad - \frac{\sqrt{(m'-(m'-2)\gfall)^2-4(m'\gfall-(m'-1)\gfall^2+(m')^2x)}}{2},
    \end{align*}
    as required.
\end{proof}
Theorem~\ref{prop:separateupsetsonly} applies to Entries~15 (first three matrices) and~16 in Appendix~\ref{app:3by3}, which can be obtained, up to isomorphism, from the two $1\times 1$ matrices. Otherwise, it says exactly the same things about the other entries in Appendix~\ref{app:3by3} that Theorem~\ref{prop:upsetonly} applies to.

Finally, we include several easy observations.
\begin{theorem}\label{prop:zerorows}
    Let $A$ be an $m\times m$ coloring matrix and $B$ an $m\times m'$ matrix of zeroes and ones. Define $A'$ to be the block matrix
    \[
    A'=\left[\begin{array}{c|c}A&B\\\hline 0&0\end{array}\right]
    \]
    where $0$ represents a block of zeroes of the appropriate dimensions.
    For any $m+1\leq i\leq m+m'$, we have
    \[
    \gff{i}{A'}=x.
    \]
\end{theorem}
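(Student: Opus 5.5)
The plan is to argue directly from the coloring rule, using the fact that the last $m'$ rows of $A'$ are identically zero. Fix a color $i$ with $m+1\leq i\leq m+m'$. In $A'$, the $i$-th row lies entirely in the lower block $\left[\begin{array}{c|c}0&0\end{array}\right]$, so $a'_{ij}=0$ for every $1\leq j\leq m+m'$. By the coloring rule, a vertex of color $i$ therefore cannot have a child of any color. Consequently, every $A'$-colored tree with root color $i$ consists of the root alone.

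It follows that $t_{A'}^{(i)}(1)=1$, namely the single vertex colored $i$, and that $t_{A'}^{(i)}(n)=0$ for all $n\geq2$. Summing against $x^n$ then gives $\gff{i}{A'}=x$.

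As a cross-check, I would also derive the same conclusion algebraically from Theorem~\ref{prop:gfgeneral}. For this $i$ that theorem reads
\[
\gff{i}{A'}=x+\gff{i}{A'}\sum_{j=1}^{m+m'}a'_{ij}\gff{j}{A'}.
\]
Since every $a'_{ij}$ vanishes, the sum is zero and the identity collapses to $\gff{i}{A'}=x$. This algebraic route has the advantage of not requiring any knowledge of the upper-left block $A$ or of $B$.

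I expect no genuine obstacle here. The only point requiring care is the bookkeeping of the block indices: one has to confirm that rows $m+1$ through $m+m'$ of $A'$ are precisely the zero rows, covering both the zero block beneath $A$ and the zero block beneath $B$.
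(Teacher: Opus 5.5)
Your proof is correct and follows essentially the same argument as the paper's. A vertex of color $i$ with $m+1\le i\le m+m'$ can have no children, so the only tree with that root color is the single vertex, which gives generating function $x$ (your algebraic cross-check via Theorem~\ref{prop:gfgeneral} is also valid, though not needed).
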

\begin{proof}
    Fix a color $i$ with
    $m+1\leq i\leq m+m'$. 
    A vertex with color $i$ cannot have any successors. So, the only tree with root color $i$ colored according to $A'$ is the tree with a single vertex colored $i$. This implies that $\gff{i}{A'}=x$, as required.
\end{proof}

\begin{theorem}\label{prop:catrows}
    Let $A$ be an $m\times m$ coloring matrix and $B$ an $m\times m'$ matrix of zeroes and ones. Define $A'$ to be the block matrix
    \[
    A'=\left[\begin{array}{c|c}A&B\\\hline 0&I\end{array}\right]
    \]
    where $0$ represents a block of zeroes and $I$ represents the $m'\times m'$ identity matrix.
    For any $m+1\leq i\leq m+m'$, we have
    \[
    \gff{i}{A'}=\frac{1-\sqrt{1-4x}}{2}.
    \]
\end{theorem}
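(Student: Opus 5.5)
Fix a color $i$ with $m+1\leq i\leq m+m'$. The plan is to show that any $A'$-colored tree with root color $i$ must be monochromatic in color $i$. Then count those trees directly, mirroring the proof of Theorem~\ref{prop:zerorows}.

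First, row $i$ of $A'$ lies in the lower block $[\,0 \mid I\,]$. Its only nonzero entry is $a'_{ii}=1$, coming from the identity block. So a vertex of color $i$ can only have children of color $i$. Second, an induction on depth (or on the number of vertices, using the standard Catalan decomposition) shows that every vertex of a tree rooted at color $i$ has color $i$. Conversely, every monochromatic coloring in color $i$ is valid, because $a'_{ii}=1$. Hence $t_{A'}^{(i)}(n)$ equals the number of plane trees on $n$ vertices, namely $C_{n-1}$, exactly as for the $1\times 1$ matrix $[1]$.

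Alternatively, in generating-function form, Theorem~\ref{prop:gfgeneral} gives
\[
\gff{i}{A'} = x + \gff{i}{A'}^2 .
\]
This is the Catalan equation. Its unique power-series solution with zero constant term is $\frac{1-\sqrt{1-4x}}{2}$, where the minus sign is chosen so that the series vanishes at $x=0$. This proves the statement.

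I do not expect any real obstacle. The only points to state carefully are that row $i$ of $A'$ has its single one on the diagonal, and that the sign choice in the quadratic formula is forced by the condition $\gff{i}{A'}(0)=0$.
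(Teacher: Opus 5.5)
Your proposal is correct and matches the paper's proof: row $i$ of $A'$ has its only one on the diagonal, so every tree rooted at color $i$ is monochromatic in color $i$. This gives $t_{A'}^{(i)}(n)=C_{n-1}$ and hence the stated generating function. Your alternative derivation via Theorem~\ref{prop:gfgeneral}, which reduces to the Catalan equation $F=x+F^2$ with the sign forced by $F(0)=0$, is also valid; the paper does not spell it out.
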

\begin{proof}
    Fix a color $i$ with
    $m+1\leq i\leq m+m'$.
    A vertex with color $i$ can only have descendants of color~$i$. So, each tree structure has exactly one coloring with root color $i$. This means that $t_{A'}^{(i)}(n)=C_{n-1}$ for $n\geq1$.
    This implies that
    \[
    \gff{i}{A'}=\frac{1-\sqrt{1-4x}}{2},
    \]
    as required.
\end{proof}
Theorems~\ref{prop:zerorows} and~\ref{prop:catrows} apply to several entries in Table~\ref{tb:2cols} and several  entries in Appendix~\ref{app:3by3}.

\begin{theorem}\label{prop:unusablecolors}
    Let $A$ be an $m\times m$ coloring matrix, and define $A'$ to be the block matrix
    \[
    A'=\left[\begin{array}{c|c}A&0\\\hline0&0\end{array}\right],
    \]
    where $m'$ rows and columns of zeroes are added on to $A$ to make $A'$.
    We have
    \[
    \gfll{A'}=\gfall+m'x.
    \]
\end{theorem}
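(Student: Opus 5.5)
The plan is to split the total count by root color and handle the two blocks separately, using results already established. Write $\gfll{A'}=\sum_{i=1}^{m+m'}\gff{i}{A'}$ and treat the indices $1\le i\le m$ and $m+1\le i\le m+m'$ in turn.

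For the original colors $1\leq i\leq m$, the matrix $A'$ has exactly the block form of Theorem~\ref{prop:blockul}, taking $B$ to be the $m'\times m$ zero block and $C$ the $m'\times m'$ zero block. That theorem gives $\gff{i}{A'}=\gfa{i}$ for each such $i$. Summing over $i$ yields $\sum_{i=1}^m\gff{i}{A'}=\gfall$. The underlying reason is that colors greater than $m$ can never occur below a vertex of color at most $m$.

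For the new colors $m+1\leq i\leq m+m'$, the matrix $A'$ also fits the form of Theorem~\ref{prop:zerorows}, with the upper-right block equal to the zero $m\times m'$ matrix. That theorem gives $\gff{i}{A'}=x$, since a vertex of such a color can have no children and the only valid tree is a single vertex. Summing over these $m'$ colors contributes $m'x$.

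Adding the two contributions gives $\gfll{A'}=\gfall+m'x$. There is no real obstacle here; the only step needing care is checking that $A'$ matches the hypotheses of the two earlier theorems. Alternatively, one could apply Theorem~\ref{prop:rootonlysingle} with $B=0$, which gives $\gff{i}{A'}=x/(1-0)=x$ for the new colors directly.
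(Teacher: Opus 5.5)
Your proof is correct and matches the paper's argument: the paper likewise applies Theorem~\ref{prop:blockul} (with zero lower blocks) to the first $m$ colors and Theorem~\ref{prop:zerorows} to the last $m'$ colors, then sums the contributions. Your alternative via Theorem~\ref{prop:rootonlysingle} with $B=0$ is also valid.
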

\begin{proof}
    This follows by applying Theorem~\ref{prop:blockul} to the first $m$ colors and Theorem~\ref{prop:zerorows} to the last $m'$ colors.
\end{proof}
Theorem~\ref{prop:unusablecolors} applies to Entries~1 and~3 in Table~\ref{tb:2cols}, which can be obtained, up to isomorphism, from the two $1\times1$ coloring matrices. It also applies to Entries~1 and~6 in Appendix~\ref{app:3by3}, which can be obtained, up to isomorphism, from the two $1\times 1$ coloring matrices. In addition, it applies, up to isomorphism, to Entries~1, 2, 6, 8, 10, 12, 37, and~39 in Appendix~\ref{app:3by3}, which can be obtained from the eight entries of Table~\ref{tb:2cols} in order.

\begin{theorem}\label{prop:singletoncolors}
    Fix positive integers $m_1$, $m_2$, and $m_3$.
    Let $A_1$ be an $m_1\times m_1$ coloring matrix, let $A_2$ be an $m_1\times m_2$ matrix of zeroes and ones, and let $B$ be an $m_3\times m_2$ matrix of zeroes and ones. Let
    \[
    A=\left[\begin{array}{c|c}A_1&A_2\\\hline0&0\end{array}\right]
    \]
    and
    \[
    A'=\left[\begin{array}{c|c|c}A_1&A_2&0\\\hline0&0&0\\\hline0&B&0\end{array}\right],
    \]
    where each $0$ stands for a block of zeroes so that $A$ is $(m_1+m_2)\times(m_1+m_2)$ and $A'$ is $(m_1+m_2+m_3)\times (m_1+m_2+m_3)$.

    Then, we have
    \[
    \gfll{A'}=\gfall+x\sum_{i=1}^{m_3}\frac{1}{1-x\sum_{j=1}^{m_2}b_{ij}}.
    \]
    In particular, for $1\leq i\leq m_1+m_2$ we have $t^{(i)}_{A'}(n)=t^{(i)}_{A}(n)$ for all $n\geq1$,
    and for $m_1+m_2<i\leq m_1+m_2+m_3$ we have
    \[
    t^{(i)}_{A'}(n)=\left(\sum_{j=1}^{m_2}b_{i-m_1-m_2,j}\right)^{n-1}
    \]
    for all $n\geq1$. 
\end{theorem}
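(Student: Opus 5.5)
The plan is to treat the three groups of colors in $A'$ separately and then sum the root-color generating functions, in the same spirit as Theorems~\ref{prop:blockul}, \ref{prop:rootonlysingle} and~\ref{prop:zerorows}.

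\emph{First group, colors $1\leq i\leq m_1+m_2$.} In $A'$, the rows for these colors are exactly the rows of $A$, padded with $m_3$ zero columns. So no vertex whose color lies in the first $m_1+m_2$ colors can have a child of color greater than $m_1+m_2$. An induction on depth then shows that a tree with root color $i\leq m_1+m_2$ uses only the first $m_1+m_2$ colors. Such a tree is $A'$-colored if and only if it is $A$-colored, which gives $t^{(i)}_{A'}(n)=t^{(i)}_A(n)$ for all $n$. Equivalently, this is Theorem~\ref{prop:blockul} applied to $A$ viewed as the upper-left block of $A'$, with lower-left block $\left[\begin{array}{c|c}0&B\end{array}\right]$.

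\emph{Third group, colors $i>m_1+m_2$.} Write $i'=i-m_1-m_2$ and $r_{i'}=\sum_{j=1}^{m_2}b_{i'j}$. A vertex of color $i$ may only have children whose colors lie in the middle block $m_1+1,\dots,m_1+m_2$, and there are exactly $r_{i'}$ allowed choices. Every middle color has a zero row in $A'$, so those children must be leaves. Hence a valid tree with root color $i$ is necessarily a star, i.e.\ a root together with $n-1$ leaf children. This tree shape is unique for each $n$, and each leaf can be colored independently in $r_{i'}$ ways, so
\[
t^{(i)}_{A'}(n)=r_{i'}^{\,n-1}.
\]
The one point needing care is that for a non-star shape the count is $0$. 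This holds because any child of the root with children of its own would have a middle color, which has a zero row. The convention $0^0=1$ handles the case $n=1$ when $r_{i'}=0$.

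\emph{Summing.} The generating function for root color $i$ in the third group is
\[
\sum_{n\geq1} r_{i'}^{\,n-1}x^n=\frac{x}{1-r_{i'}x}.
\]
Adding the contributions of the first two groups gives $\gfall$, and adding the third group gives the stated formula for $\gfll{A'}$. Alternatively, one can check the third-group formula against Theorem~\ref{prop:rootonlysingle}: the middle colors satisfy $\gff{j}{A}=x$ by Theorem~\ref{prop:zerorows}, which yields $x/(1-x\,r_{i'})$ directly.

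No step is a real obstacle. The only subtle point is the star-structure argument for the third group.
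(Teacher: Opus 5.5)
Your proposal is correct and takes essentially the same route as the paper. Both use Theorem~\ref{prop:blockul} for root colors $i\le m_1+m_2$. For the remaining root colors, both observe that the children must have middle colors with zero rows, so only stars can be colored, each in $r_{i'}^{\,n-1}$ ways. Both then convert these counts into the stated generating function identity.
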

\begin{proof}
    We prove the latter claim, which implies the former by converting to ordinary generating functions. First, suppose we assign $1\leq i\leq m_1+m_2$ as the root's color.  
    We obtain from Theorem~\ref{prop:blockul} that $t^{(i)}_{A'}(n)=t^{(i)}_{A}(n)$ for all $n\geq1$, as required. 
    Now, suppose we assign the root a color $m_1+m_2<i\leq m_1+m_2+m_3$ in an $A'$-coloring.
    In this case, the root must be followed by a color $m_1+1\leq j\leq m_2$. Each of these colors has no allowed successors. 
    That is, the only trees that can be colored have height at most $1$; such a tree with $n$ vertices has $n-1$ vertices at depth $1$. 
    Each can be colored independently with any of the colors that can follow color $i$, of which there are $\sum_{j=1}^{m_2}b_{i-m_1-m_2,j}$ options. 
    The result immediately follows.
\end{proof}
Theorem~\ref{prop:singletoncolors} applies to Entries~2, 3, 7, and~14 in Appendix~\ref{app:3by3}. In each case, $A_1$, $A_2$, and $B$ are $1\times 1$ coloring matrices with $B$ containing a $1$. The four entries correspond to the four possible choices for $(A_1,A_2)$, with the resulting matrices being isomorphic to those listed in the entries.

\section{Small matrices representing coloring rules}\label{sec:small}

Sequence \seqnum{A000595} enumerates the number of equivalence classes of isomorphic coloring matrices. In this section, we exhaustively study all $2\times 2$ and $3\times 3$ coloring matrices, which can be generated by brute force. A faster generation of matrices of larger order can be achieved by looking at each coloring matrix as an adjacency matrix of a graph, and by using one of the existing libraries for generation of these graphs first, e.g., \texttt{networkx} in Python based on the article of Cordella et al. \cite{cordella2001improved}. Table~\ref{tb:2cols} includes a representative of each of the $10$ equivalence classes of $2\times 2$ coloring matrices, and Appendix~\ref{app:3by3} includes a representative of each of the $104$ equivalence classes of $3\times 3$ coloring matrices. Proofs of the corresponding counting sequences are also given (in the nontrivial cases). We use the following notation: 
\begin{itemize}
    \item[] $C_{n}$ denotes the $n$-th Catalan number $\frac{1}{n+1}\binom{2n}{n}$ (\seqnum{A000108}).
    \item[] $N_{n,k}$ denotes the corresponding Narayana number, $\frac{1}{n}\binom{n}{k-1}\binom{n}{k}$ (\seqnum{A001263}).
    \item[] $F_{n}$ denotes the $n$-th Fibonacci number, with $F_{1}=F_{2}=1$ (\seqnum{A000045}).
    \item[] $D_{n}$ denotes the number of (nonempty) antichains in rooted plane trees with $n$ vertices (\seqnum{A007852}). 
    \item[] $R_{n}$ denotes the large Schr\"{o}der numbers (\seqnum{A006318}).
    \item[] $r_{n}$ denotes the little Schr\"{o}der numbers (\seqnum{A001003}).
    ($R_n=2\cdot r_n$ for $n\geq1$.)
\end{itemize}
Note that an antichain in a tree is a set of vertices where no vertex in the set is an ancestor of another.
\subsection{Coloring rules with two colors}\label{ss:2by2}
Table~\ref{tb:2cols} describes all counting sequences corresponding to all coloring rules with two colors with all non-isomorphic coloring matrices listed. We also give references or proofs for each nontrivial counting sequence. 
\begin{table}[htbp]
    \setlength{\tabcolsep}{0.25em}
    \begin{tabular}{ | c | c | c | c | } 
  \hline
   Entry & Matrix & Sequence ($n = 1, 2,3,\ldots$) & Formula ($n\geq2$)\\
  \hline
  1 & $\begin{bmatrix}
0 & 0 \\
0 & 0 
\end{bmatrix}$ & $2,0,0,0,\ldots$ & $0$ \\
\hline
  2 & $\begin{bmatrix}
0 & 1 \\
0 & 0 
\end{bmatrix}$ & $2,1,1,1,\ldots$ & $1$ \\
  \hline
  3 & $\begin{bmatrix}
1 & 0 \\
0 & 0 
\end{bmatrix}$ & $2, 1, 2, 5, 14, 42, \ldots$ & 
$C_{n-1}$\\
  \hline
  4 & $\begin{bmatrix}
1 & 0 \\
0 & 1 
\end{bmatrix}$, $\begin{bmatrix}
1 & 0 \\
1 & 0 
\end{bmatrix}$, $\begin{bmatrix}
0 & 1 \\
1 & 0 
\end{bmatrix}$ & $2, 2, 4, 10, 28, 84, \ldots$ & $2C_{n-1}$ \\ 
  \hline
  5 & $\begin{bmatrix}
1 & 1 \\
0 & 0 
\end{bmatrix}$ & $2, 2,6,22,90, 394, \ldots$ & 
$R_{n-1}$ \\
\hline
6 & $\begin{bmatrix}
1 & 1 \\
0 & 1 
\end{bmatrix}$ & $2, 3, 9,34,145, 667, \ldots$ & $C_{n-1} + D_{n}$  \\
\hline
7 & $\begin{bmatrix}
1 & 1 \\
1 & 0 
\end{bmatrix}$ & $2, 3, 10,42,198,1001, \ldots$ & $
\frac{2}{n}\binom{3n-3}{n-1}$\\
\hline
8 & $\begin{bmatrix}
1 & 1 \\
1 & 1 
\end{bmatrix}$ & $2, 4, 16, 80, 448, 2688, \ldots$ & $
2^n C_{n-1}$\\
\hline
\end{tabular}
\caption{Coloring rules with two colors}
\label{tb:2cols}
\end{table}
\begin{description}
    \item[Entry 4] This follows from Theorem~\ref{prop:row-sums}, since all row sums are equal to $1$, and it follows from Corollary~\ref{cor:EqivCols} since we have identical rows for the colors $1$ and $2$ in $\begin{bmatrix}
1 & 0 \\
1 & 0 
\end{bmatrix}$. Switching the $0$ and the $1$ in rows $1$ and $2$, respectively, gives us the matrices $\begin{bmatrix}
0 & 1 \\
1 & 0 
\end{bmatrix}$ and $\begin{bmatrix}
1 & 0 \\
0 & 1 
\end{bmatrix}$.\\
\vspace{1mm}
    \item[Entry 5] 
    We have to count the number of trees with $n$ vertices whose leaves can be of two different colors (vertices of color $2$ can be only leaves). We know that the number of trees in $\mathcal{T}_{n}$ with $k$ leaves is given by the Narayana numbers $N_{n-1,k} = \frac{1}{n-1}\binom{n-1}{k}\binom{n-1}{k-1}$. The identity 
    $$
    \sum\limits_{k=1}^{n-1}2^{k}N_{n-1,k} = R_{n-1}
    $$
    is well-known and was also mentioned in Section \ref{sec:intro}.\\
\vspace{1mm}
\item[Entry 6] The trees in $\mathcal{T}_{n}$ with root of color $2$ are enumerated by $C_{n-1}$. It remains to count all trees with root color~1. Every such choice corresponds to a choice of vertices to be the roots of subtrees colored entirely by color~2. In addition, for any pair $x,y$ of these vertices, neither $x$ is a descendant of $y$, nor $y$ of $x$. The one exception is when the antichain is just the root; in this case we color the entire tree with color~$1$. Thus, it suffices to count antichains in trees with $n$ vertices. The generating function for this sequence, $D_{n}$, is obtained in~\cite{Klazar1997Twelve,Klazar1997Addendum}.\\
\vspace{1mm}
\item[Entry 7] We need to count those trees for which we have two colors, but no two vertices of color 2 are adjacent. 
This follows from Theorem~\ref{thm:red-blue}.

In Appendix~\ref{app:ent7bij}, we exhibit a new bijective proof of this entry, using a bijection with some restricted Dyck paths. In addition, we prove that trees with root color~$1$ are enumerated by $\frac{1}{n}\binom{3n-2}{n-1}=\frac{3n-2}{n(2n-1)}\binom{3n-3}{n-1}$ (\seqnum{A006013}), while trees with root color~$2$ are enumerated by $\frac{1}{2n-1}\binom{3n-3}{n-1}$ (\seqnum{A001764}). These formulas were (in the setting of independent sets in plane trees) already obtained by Kirschenhofer, Prodinger and Tichy~\cite{KPT1986Fibonacci} by means of generating functions.
\vspace{1mm}
\item[Entry 8] This follows from Theorem~\ref{prop:row-sums}, since all row sums are equal to $2$. As we pointed out, the fact follows from Theorem~\ref{prop:blowup}, as well.
\end{description}

\subsection{Coloring rules with three colors}\label{ss:3by3}
An exhaustive search reveals that there are $74$ strong tree coloring equivalence classes of tree coloring rules with three colors. 
These $74$ equivalence classes result in $72$ different enumeration sequences, as there are two instances of tree coloring equivalent matrices that are not strongly tree coloring equivalent.
Information about all of the equivalence classes can be found in Appendix~\ref{app:3by3}. 
Here, we highlight a few notable findings from Appendix~\ref{app:3by3}.

\subsubsection{Equivalent matrices that are not strongly equivalent}
Based on our exhaustive study in Subsection~\ref{ss:2by2}, two $2\times2$ coloring matrices $A$ and $B$ are tree coloring equivalent if and only if they are strongly tree coloring equivalent. 
This is no longer true for $3\times 3$ coloring matrices. The matrices $A=\begin{bmatrix}
    0&0&0\\
    1&0&1\\
    1&0&1
\end{bmatrix}$ and $B=\begin{bmatrix}
    0&1&1\\
    1&0&0\\
    1&0&0
\end{bmatrix}$ have $t_A(n)=t_B(n)=2R_{n-1}$ for $n\geq2$. However, these matrices are not strongly tree coloring equivalent by Theorem~\ref{prop:equalones}, as the first has a row of all zeroes which the second lacks. Indeed, we also see this lack of strong equivalence because, for $n\geq2$, $t^{(1)}_A(n)=0$ and $t^{(2)}_A(n)=t^{(3)}_A(n)=R_{n-1}$, whereas 
$t^{(1)}_B(n)=R_{n-1}$ and $t^{(2)}_A(n)=t^{(3)}_A(n)=r_{n-1}$. These findings are described in detail under Entry~15 of Appendix~\ref{app:3by3}, including a description of a bijection between trees colored according to $A$ and according to a matrix isomorphic to $B$.
A similar phenomenon exists for the matrices $C=\begin{bmatrix}
    1&1&1\\
    0&1&0\\
    0&0&1
\end{bmatrix}$ and $D=\begin{bmatrix}
    1&1&0\\
    1&1&0\\
    0&0&1
\end{bmatrix}$, which satisfy $t_C(n)=t_D(n)=\left(2^n+1\right)C_{n-1}$ for $n\geq1$. Details regarding these matrices are available under Entry~29 of Appendix~\ref{app:3by3}.

Note that these entries are connected by Corollary~\ref{cor:complement}, so the fact that one of these entries has matrices that are tree coloring equivalent but not strongly tree coloring equivalent implies that the other entry must as well.

\subsubsection{An appearance of Fibonacci numbers}
In Entry~30 of Appendix~\ref{app:3by3}, the counting sequence is found to involve Fibonacci numbers. In particular, for the matrix $A=\begin{bmatrix}
    0&0&0\\
    1&0&1\\
    1&0&0
\end{bmatrix}$, we have $t^{(2)}_A(n)=F_{2n-1}$ for all $n\geq1$. This single entry involving Fibonacci numbers arises from the unique pair of circumstances where the rule colors only trees of bounded height, and no two colors are interchangeable. We generalize this entry in Section~\ref{sec:special} with Theorem~\ref{thm:genfib}.

\subsubsection{Sequences related to Entry~7 in Table~\ref{tb:2cols}}
Entries~34 and~40 in Appendix~\ref{app:3by3} are both closely related to Entry~7 in Table~\ref{tb:2cols}. In Entry~34, color~1 is effectively duplicated; Entry~40 duplicates color~2. Entry~7 in Table~\ref{tb:2cols} has a formula involving a binomial coefficient containing $3n$, as do the sequences (\seqnum{A006013} and \seqnum{A001764}) resulting from fixing the root color for that rule. As such, we find a counting formula of $t_A(n)=\frac{7n-4}{n\left(2n-1\right)}\binom{3n-3}{n-1}$ for Entry~34 and $t_A(n)=\frac{5n-2}{n\left(2n-1\right)}\binom{3n-3}{n-1}$ for Entry~40. At time of writing, neither of these sequences was in the OEIS.

Entry~42 in Appendix~\ref{app:3by3} is also closely related to Entry~7 in Table~\ref{tb:2cols}. This rule is just like Entry~7 in Table~\ref{tb:2cols}, except that the root is allowed to be colored with color~3 if all vertices at depth $1$ are colored with color~2. As such, the sequences $t^{(1)}_A(n)$ and $t^{(2)}_A(n)$ match those for Entry~7 in Table~\ref{tb:2cols}. The sequence $t^{(3)}_A(n)$ is also intriguing; it is enumerated by \seqnum{A098746}. This is proved in the discussion under Entry~42 in Appendix~\ref{app:3by3}. One such proof is obtained via a variation of the bijection for Entry~7 in Table~\ref{tb:2cols} exhibited in Appendix~\ref{app:ent7bij}.

Next, Entry~54 in Appendix~\ref{app:3by3} includes the matrix $A=\begin{bmatrix}
    1&1&1\\
    1&1&0\\
    1&0&0
\end{bmatrix}$, a natural extension of Entry~7 in Table~\ref{tb:2cols}. 
This is the next rule in an infinite family that is fully enumerated by Theorem~\ref{thm:k-plane}. 
As such, this entry, and only this entry, includes a formula with the term $4n$ in a binomial coefficient.

There are several additional entries in Appendix~\ref{app:3by3} closely tied to Entry~7 in Table~\ref{tb:2cols}, but we conclude by highlighting only two more of them. Entry~53 results from a different notion of duplicating color~2, and Entry~71 results from that same notion of duplicating color~1. 
The counting sequences for fixed root colors for both of these entries are existing OEIS sequences. 
However, unlike Entries~34 and~40, these are not hypergeometric.

\section{Special families of coloring rules}\label{sec:special}
In this section, we look at families of coloring rules akin to Theorem~\ref{thm:k-plane} that apply to an infinite family of coloring rules with arbitrarily many colors. The six families presented here are largely unrelated to one another, and they show off the diversity of general results available in this realm. Some of these families generalize entries in Appendix~\ref{app:3by3}; others are inspired by them.

\subsection{Family 1}

The discussion of trees with bicolored leaves in the introduction generalizes to coloring rules corresponding to matrices with the following structure: $\ell$ rows consisting entirely of ones (without loss of generality, the first $\ell$ rows), and $m$ rows consisting entirely of zeroes. The associated coloring rule is: every internal vertex can be colored in one of $\ell$ colors, and every leaf can be colored in one of $\ell+m$ colors. Thus, by the same argument that gave us~\eqref{eq:schroder}, we obtain a generalization of the large Schr\"oder numbers (the case $l=m=1$ gives Entry~5 in Table~\ref{tb:2cols}).

\begin{theorem}\label{thm:gennarayana}
    Let $S(\ell,m)$ be the $(\ell+m) \times (\ell+m)$-matrix whose first $\ell$ rows consist entirely of ones while the rest consists entirely of zeroes. Then we have
    $$t_{S(\ell,m)}(n) = \sum_{k=1}^{n-1} N_{n-1,k} \ell^{n-k} (\ell+m)^k = \sum_{k=1}^{n-1} \frac{1}{n-1} \binom{n-1}{k} \binom{n-1}{k-1} \ell^{n-k} (\ell+m)^k$$
    for all $n > 1$.
\end{theorem}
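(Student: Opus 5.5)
Under the rule $S(\ell,m)$, the color of a vertex can be chosen independently of its parent's color, because every parent that has a child is colored from the first $\ell$ colors, and those rows are all ones. The only restriction is that a vertex with at least one child must use one of the first $\ell$ colors, since the last $m$ rows are zero. So I would count pairs (plane tree, coloring) by fixing the underlying tree and then counting its valid colorings.

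\textbf{Step 1: characterize valid colorings.} Fix a plane tree $T\in\mathcal{T}_n$ with $n>1$ and exactly $k$ leaves. I would check that a coloring of $T$ is valid for $S(\ell,m)$ if and only if every internal vertex receives a color in $\{1,\dots,\ell\}$. For necessity: if an internal vertex had color $i>\ell$, then its edge to a child $j$ would require $a_{ij}=1$, which is false. For sufficiency: every edge goes from an internal vertex of color $i\le \ell$ to some child of color $j$, and $a_{ij}=1$ for all $j$. Leaves impose no constraint as parents, so a leaf may take any of the $\ell+m$ colors. Hence $T$ has exactly $\ell^{n-k}(\ell+m)^k$ valid colorings.

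\textbf{Step 2: sum over trees.} As recalled in Section~\ref{sec:intro}, the number of plane trees with $n$ vertices and $k$ leaves is $N_{n-1,k}$. For $n>1$ the number of leaves satisfies $1\le k\le n-1$. Summing the count from Step 1 over $k$ gives
\[
t_{S(\ell,m)}(n)=\sum_{k=1}^{n-1}N_{n-1,k}\,\ell^{n-k}(\ell+m)^k,
\]
and substituting the definition of $N_{n-1,k}$ gives the second expression in the statement.

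There is no real obstacle here. The only point needing care is that the formula holds only for $n>1$: when $n=1$ the root is a leaf, the tree has $\ell+m$ colorings, and the sum would be empty. As a sanity check, taking $\ell=m=1$ should recover the large Schr\"oder identity~\eqref{eq:schroder}, matching Entry~5 of Table~\ref{tb:2cols}.
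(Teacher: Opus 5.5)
Your proof is correct and follows the same route as the paper. Internal vertices must take one of the first $\ell$ colors while leaves may take any of the $\ell+m$ colors, and summing $\ell^{n-k}(\ell+m)^k$ against the Narayana count $N_{n-1,k}$ of plane trees with $n$ vertices and $k$ leaves gives the formula, exactly as in the large Schr\"oder case; your Step~1 just spells out the necessity and sufficiency that the paper states in one line.
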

In addition to the large Schr\"oder numbers, several other sequences of interest can be obtained as special cases. When $
\ell=1$ and $m=2$, we get \seqnum{A047891}: the number of plane trees with $n$ vertices and tricolored leaves. The case where $\ell=2$ and $m=1$ yields \seqnum{A239488}.
Note that when $\ell = m$, the formula simplifies to $m^n$ times the large Schr\"oder numbers. Specifically, $\ell=m=2$ yields \seqnum{A054726}, which also counts graphs with $n+1$ vertices on a circle without crossing edges.

\subsection{Family 2}\label{sec:fam3}

Recall that the matrix
$$A = \begin{bmatrix}
    1 & 1 \\
    1 & 0 \\    
\end{bmatrix}$$
gives rise to the counting sequence $t_A(n) = \frac{2}{n} \binom{3n-3}{n-1}$. This counts trees that are colored with two colors (blue and white) and the rule that there cannot be two adjacent white vertices. In fact, \cite{KPT1986Fibonacci} contains a refinement: the number of plane trees colored according to this rule, with $k$ blue vertices and $n-k$ white vertices, is equal to
$$\frac{2}{n} \binom{n}{k} \binom{2n-3}{n-k-1},$$
of which
$$\frac{1}{n} \binom{n}{k} \binom{2n-2}{n-k-1}$$
have a blue root, and
$$\frac{1}{n-1} \binom{n-1}{k-1} \binom{2n-2}{n-k-1}$$
have a white root. Note that the formula for $t_A(n)$ is a simple corollary of the Vandermonde's identity. We can use this refinement to provide a formula for a slight generalization of the matrix $A$ above: consider
the matrix $A(\ell,m)$ whose block form is
\[
    A(\ell,m) =\left[\begin{array}{c|c}1&1\\\hline 1&0\end{array}\right],
\]
where the upper left block is an $\ell \times \ell$-matrix of ones, and the lower right block an $m \times m$-matrix of zeroes. The coloring rule determined by $A(\ell,m)$ can be described as follows: take a feasible blue-white coloring as described above, then recolor  every blue vertex in one of $\ell$ new colors and repaint every white vertex in one of $m$ new colors. As for the previous family, we immediately get the following theorem.

\begin{theorem}\label{thm:removesquare}
For all positive integers $\ell,m,n$, we have
    $$t_{A(\ell,m)}(n) = \sum_{k=0}^{n-1} \frac{2}{n} \binom{n}{k} \binom{2n-3}{n-k-1} \ell^{n-k} m^k.$$
Moreover, for $1 \leq i \leq \ell$,
    $$t_{A(\ell,m)}^{(i)}(n) = \sum_{k=0}^{n-1} \frac{1}{n} \binom{n}{k} \binom{2n-2}{n-k-1} \ell^{n-k-1} m^k,$$
and for $\ell + 1 \leq i \leq \ell+m$ and $n \geq 2$,
    $$t_{A(\ell,m)}^{(i)}(n) = \sum_{k=1}^{n-1} \frac{1}{n-1} \binom{n-1}{k-1} \binom{2n-2}{n-k-1} \ell^{n-k} m^{k-1}.$$
\end{theorem}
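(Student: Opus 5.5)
The plan is to reduce everything to the two-color rule $A=\begin{bmatrix}1&1\\1&0\end{bmatrix}$ through a projection map, and then read off the counts from the refinement of Kirschenhofer, Prodinger and Tichy quoted above. Call colors $1,\dots,\ell$ \emph{blue-type} and colors $\ell+1,\dots,\ell+m$ \emph{white-type}. Let $\pi$ send an $A(\ell,m)$-colored tree to the blue-white tree obtained by painting every blue-type vertex blue and every white-type vertex white.

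First I will check that $\pi$ lands in the $A$-colored trees and describe its fibers exactly. The entry of $A(\ell,m)$ in row $p$, column $q$ depends only on the types of $p$ and $q$. It equals $0$ precisely when both are white-type. So a coloring is $A(\ell,m)$-feasible if and only if its projection has no parent-child pair of white vertices, i.e., the projection is $A$-feasible. Conversely, any $A$-feasible blue-white tree with $b$ blue and $w$ white vertices lifts to exactly $\ell^{b}m^{w}$ colorings: each blue vertex independently gets one of $\ell$ colors and each white vertex one of $m$ colors.

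Summing $\ell^{b}m^{w}$ over blue-white trees then gives all three formulas.
\begin{itemize}
\item For the total $t_{A(\ell,m)}(n)$, I will group the blue-white trees by their numbers of blue and white vertices and insert the refined count of Kirschenhofer, Prodinger and Tichy.
\item If the root color $i\le\ell$ is fixed, the root's color is no longer free. The fiber then has size $\ell^{b-1}m^{w}$, and summing against the blue-root refinement $\frac1n\binom nk\binom{2n-2}{n-k-1}$ gives the second formula with exponent $n-k-1$ on $\ell$.
\item If the root color $i>\ell$ is fixed, the fiber has size $\ell^{b}m^{w-1}$. Summing against the white-root refinement $\frac1{n-1}\binom{n-1}{k-1}\binom{2n-2}{n-k-1}$ gives the third formula. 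The range $k\ge1$ reflects that the root itself is white, and the case $n\geq 2$ is needed because of the factor $\frac{1}{n-1}$.
\end{itemize}

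The main obstacle is bookkeeping: matching the index $k$ in the refinement with the exponents in the theorem. In the theorem, $\ell^{n-k}m^k$ means $k$ must count \emph{white} vertices. So the refinement must be used with $k$ read as the number of white vertices, whereas the text above phrases it as ``$k$ blue vertices''. I will confirm this reading by checking small cases. For $n=2$, the three blue-white trees are blue-blue, blue-white and white-blue. The total formula gives $\ell^2+2\ell m$, which agrees with direct counting. The two rooted formulas give $\ell+m$ and $\ell$, which are the row sums of $A(\ell,m)$, consistent with Theorem~\ref{prop:t2}.

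If the indexing in the cited refinement turns out to be the reverse, I will re-index by $k\mapsto n-k$ and verify with Vandermonde's identity. Setting $\ell=m=1$ should recover $\frac{2}{n}\binom{3n-3}{n-1}$, $\frac1n\binom{3n-2}{n-1}$ and $\frac{1}{2n-1}\binom{3n-3}{n-1}$ respectively.
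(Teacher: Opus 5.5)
Your proposal is correct and follows the paper's own argument. The paper obtains the theorem ``immediately'' by viewing an $A(\ell,m)$-coloring as a feasible blue--white coloring in which every blue vertex is repainted in one of $\ell$ colors and every white vertex in one of $m$ colors. It then weights the Kirschenhofer--Prodinger--Tichy refinement accordingly, dropping the root's factor when its color is fixed, exactly as you do. You are also right that $k$ has to count \emph{white} vertices. The paper's phrase ``$k$ blue vertices'' is a slip, and your $n=2$ check settles it.

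One caveat concerns $n=1$. The refined total count $\frac{2}{n}\binom{n}{k}\binom{2n-3}{n-k-1}$ is only valid for $n\ge 2$. As a result, the first formula of the theorem is false at $n=1$ unless $\ell=m$: it reduces to the single term $2\ell\binom{-1}{0}=2\ell$, whereas $t_{A(\ell,m)}(1)=\ell+m$.

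So you should state the total formula for $n\ge 2$ only, as the paper itself does in Theorems~\ref{thm:removesquare-alternative} and~\ref{thm:removesquare-alternative2}. For $n\ge 2$ the total also follows from your two rooted formulas via $t_{A(\ell,m)}(n)=\ell\,t^{(1)}_{A(\ell,m)}(n)+m\,t^{(\ell+1)}_{A(\ell,m)}(n)$. This uses $\frac1n\binom nk+\frac1{n-1}\binom{n-1}{k-1}=\frac{n+k-1}{n(n-1)}\binom nk$ together with $\binom{2n-3}{n-k-1}=\frac{n+k-1}{2n-2}\binom{2n-2}{n-k-1}$.
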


The generating functions $F_{A(\ell,m)}^{(i)}$ satisfy cubic equations, as will be shown in the following. 
Note that $F_{A(\ell,m)}^{(i)} = F_{A(\ell,m)}^{(1)}$ for $1 \leq i \leq \ell$, and $F_{A(\ell,m)}^{(i)} = F_{A(\ell,m)}^{(\ell+1)}$ for $\ell+1 \leq i \leq \ell+m$. For simplicity, let us abbreviate these generating functions as $g(x)$ and $h(x)$ respectively. This implies 
\[g(x) = \frac{x}{1-\ell g(x) - m h(x)}\]
and
\[h(x) = \frac{x}{1-\ell g(x)}\]
by Theorem~\ref{prop:gfgeneral}.
Plugging the second equation into the first yields
\[g(x) = \frac{x}{1-\ell g(x) - \frac{m x}{1-\ell g(x)}},\]
which simplifies to
\[\ell^2 g(x)^3 - 2\ell g(x)^2 + ((\ell-m)x+1) g(x) - x = 0.\]
In this equation, we can substitute $g(x) = \frac{1}{\ell} (1 - \frac{x}{h(x)})$
and obtain an equation for $h(x)$ after some simplifications:
\[m h(x)^3 + (\ell-m)x h(x)^2 - x h(x) + x^2 = 0.\]
Hence, we have the following theorem.

\begin{theorem}\label{thm:removesquare-gfs}
For all positive integers $\ell,m$, we have
$$\ell^2 F_{A(\ell,m)}^{(i)}(x)^3 - 2\ell F_{A(\ell,m)}^{(i)}(x)^2 + ((\ell-m)x+1) F_{A(\ell,m)}^{(i)}(x) - x = 0$$
for $1 \leq i \leq \ell$, and
$$ m F_{A(\ell,m)}^{(i)}(x)^3 + (\ell-m)x F_{A(\ell,m)}^{(i)}(x)^2 - x F_{A(\ell,m)}^{(i)}(x) + x^2 = 0$$
for $\ell + 1 \leq i \leq \ell + m$.
\end{theorem}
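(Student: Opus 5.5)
The plan is to derive both cubic equations from the system of functional equations in Theorem~\ref{prop:gfgeneral}, after reducing it to two unknowns by symmetry.

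\textbf{Step 1: reduction to two unknowns.} The first $\ell$ rows of $A(\ell,m)$ are identical, so colors $1,\ldots,\ell$ are interchangeable. Likewise the last $m$ rows are identical, so colors $\ell+1,\ldots,\ell+m$ are interchangeable. Hence $F_{A(\ell,m)}^{(i)}=g(x)$ for $1\le i\le \ell$ and $F_{A(\ell,m)}^{(i)}=h(x)$ for $\ell+1\le i\le \ell+m$. Row $i\le\ell$ contains ones in all $\ell+m$ columns, and row $i>\ell$ contains ones exactly in the first $\ell$ columns. Theorem~\ref{prop:gfgeneral}, in the form \eqref{eq:gf-eq}, then gives
\[
g=\frac{x}{1-\ell g-mh},\qquad h=\frac{x}{1-\ell g}.
\]

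\textbf{Step 2: the cubic for $g$.} Substitute $h=x/(1-\ell g)$ into the first equation and multiply through by $(1-\ell g)$:
\[
g\bigl((1-\ell g)^2-mx\bigr)=x(1-\ell g).
\]
Expanding gives $g-2\ell g^2+\ell^2g^3-mxg=x-\ell xg$. Rearranged, this is $\ell^2g^3-2\ell g^2+((\ell-m)x+1)g-x=0$, which is the first claim.

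\textbf{Step 3: the cubic for $h$.} From $h(1-\ell g)=x$ we get $\ell g=1-x/h$. This is legitimate because $h=x+\cdots$ is a nonzero power series, so division by $h$ makes sense in Laurent series. I would avoid substituting into the cubic and instead use the first functional equation directly. Writing it as $g(1-\ell g-mh)=x$ and using $1-\ell g=x/h$ gives
\[
\frac{1}{\ell}\Bigl(1-\frac{x}{h}\Bigr)\Bigl(\frac{x}{h}-mh\Bigr)=x.
\]
Multiplying by $\ell h^2$ gives $(h-x)(x-mh^2)=\ell xh^2$. Expanding, this is $hx-mh^3-x^2+mxh^2=\ell xh^2$, which rearranges to $mh^3+(\ell-m)xh^2-xh+x^2=0$, the second claim.

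The main point needing care is the interchangeability of colors within each block, which Step 1 relies on. Identical rows justify it directly, as noted before Corollary~\ref{cor:EqivCols}. Alternatively, it follows from uniqueness of the power series solution of the system, since that system is symmetric under permuting colors within a block. Everything else is routine algebra with formal power series, where the only subtlety is that dividing by $1-\ell g$ and by $h$ is valid (the former is invertible and the latter has nonzero linear term).
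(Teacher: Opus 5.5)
Your proposal is correct and follows the paper's argument. You reduce to $g$ and $h$ using interchangeability within each block, obtain $g=x/(1-\ell g-mh)$ and $h=x/(1-\ell g)$ from Theorem~\ref{prop:gfgeneral}, eliminate $h$ to get the cubic for $g$, and then use $\ell g=1-x/h$ to get the cubic for $h$. The only difference is cosmetic: you substitute $\ell g=1-x/h$ into the original functional equation rather than into the cubic for $g$ as the paper does, which spares you the extraneous factor of $-x$ that the paper's substitution produces.
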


We can alternatively also derive  sum representations for the sequences: we solve the equation
$$g(x) = \frac{x}{1-\ell g(x) - m h(x)} = \frac{x}{1-\ell g(x) - \frac{mx}{1-\ell g(x)}}$$
for $x$ to obtain
$$x = \frac{g(x)(1-\ell g(x))^2}{1 + (m-\ell)g(x)}$$
or
$$g(x) = \frac{x(1+(m-\ell)g(x))}{(1-\ell g(x))^2}.$$
Letting $[x^i]f(x)$ denote the coefficient of $x^i$ in formal power series $f(x)$, the Lagrange inversion formula gives us
\begin{align*}
[x^n] g(x) &= \frac{1}{n} [t^{n-1}] \Big( \frac{(1+(m-\ell)t)}{(1-\ell t)^2} \Big)^n \\
&= \frac{1}{n} [t^{n-1}] (1+(m-\ell)t)^n (1-\ell t)^{-2n} \\
&= \frac{1}{n} \sum_{k=0}^{n-1} (m-\ell)^{n-k-1} \ell^k \binom{n}{n-k-1} \binom{2n+k-1}{k} \\
&= \frac{1}{n} \sum_{k=1}^{n} (m-\ell)^{n-k} \ell^{k-1} \binom{n}{k} \binom{2n+k-2}{k-1}.
\end{align*}
Moreover, since $h(x) = \frac{x}{1-\ell g(x)}$, the Lagrange--B\"urmann formula~\cite[Corollary 5.4.3]{Stanley1999Enumerative} yields
\begin{align*}
[x^n] h(x) &= [x^{n-1}] \frac{1}{1- \ell g(x)} \\
&= \frac{1}{n-1} [t^{n-2}] \frac{\ell}{(1- \ell t)^2}
\Big( \frac{(1+(m-\ell)t)}{(1-\ell t)^2} \Big)^{n-1} \\
&= \frac{\ell}{n-1} [t^{n-2}] (1+(m-\ell)t)^{n-1} (1-\ell t)^{-2n} \\
&= \frac{\ell}{n-1} \sum_{k=0}^{n-2} (m-\ell)^{n-k-2} \ell^k \binom{n-1}{n-k-2} \binom{2n+k-1}{k} \\
&= \frac{1}{n-1} \sum_{k=1}^{n-1} (m-\ell)^{n-k-1} \ell^k \binom{n-1}{k} \binom{2n+k-2}{k-1}.
\end{align*}

Thus, we have the following theorem.

\begin{theorem}\label{thm:removesquare-alternative}
For all positive integers $\ell,m,n$ and $1 \leq i \leq \ell$,
    $$t_{A(\ell,m)}^{(i)}(n) = \frac{1}{n} \sum_{k=1}^{n} (m-\ell)^{n-k} \ell^{k-1} \binom{n}{k} \binom{2n+k-2}{k-1}.$$
For $\ell + 1 \leq i \leq \ell+m$ and $n \geq 2$,
    $$t_{A(\ell,m)}^{(i)}(n) = \frac{1}{n-1} \sum_{k=1}^{n-1} (m-\ell)^{n-k-1} \ell^k \binom{n-1}{k} \binom{2n+k-2}{k-1}.$$
In total, we have, for $n \geq 2$,
    $$t_{A(\ell,m)}(n) = \sum_{k=1}^{n} \frac{(2m-\ell)n - m k + (\ell-m)}{n(n-1)}  (m-\ell)^{n-k-1} \ell^k \binom{n}{k} \binom{2n+k-2}{k-1}.$$
\end{theorem}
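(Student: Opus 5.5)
The first two formulas are exactly the Lagrange inversion computations carried out just before the statement, so the plan is to package those and then derive the total.

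\textbf{The individual root colors.} Abbreviate $g(x)=F^{(1)}_{A(\ell,m)}(x)$ and $h(x)=F^{(\ell+1)}_{A(\ell,m)}(x)$. The $\ell$ rows of color $1,\dots,\ell$ are identical, as are the $m$ rows of colors $\ell+1,\dots,\ell+m$, so colors within each group are interchangeable. Hence Theorem~\ref{prop:gfgeneral} gives
\[
g=\frac{x}{1-\ell g-mh},\qquad h=\frac{x}{1-\ell g}.
\]
Eliminating $h$ yields $g=x\,\phi(g)$ with $\phi(t)=(1+(m-\ell)t)(1-\ell t)^{-2}$.
\begin{itemize}
\item The ordinary Lagrange inversion formula, after expanding both binomial series and shifting the index, gives the formula for $1\le i\le \ell$.
\item For $\ell+1\le i\le\ell+m$, write $[x^n]h=[x^{n-1}](1-\ell g)^{-1}$. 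The Lagrange--B\"urmann formula with $H(t)=(1-\ell t)^{-1}$, so that $H'(t)=\ell(1-\ell t)^{-2}$, gives the second formula after the same expansion and reindexing.
\end{itemize}
Both computations are displayed above, so this part just restates them.

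\textbf{The total count.} By interchangeability, $t_{A(\ell,m)}(n)=\ell\,[x^n]g+m\,[x^n]h$, and the task is to merge the two sums into one. Multiplying the first formula by $\ell$, its summand becomes $\frac1n(m-\ell)^{n-k}\ell^{k}\binom nk\binom{2n+k-2}{k-1}$. In the second sum:
\begin{itemize}
\item use $\binom{n-1}{k}=\frac{n-k}{n}\binom nk$;
\item note that the term $k=n$ can be appended to the second sum since it vanishes there (by the factor $n-k$).
\end{itemize}
Both sums then run over $1\le k\le n$ with the common factor $(m-\ell)^{n-k-1}\ell^k\binom nk\binom{2n+k-2}{k-1}/(n(n-1))$. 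The remaining coefficients are $(m-\ell)(n-1)$ from the first sum and $m(n-k)$ from the second. Their sum is
\[
(m-\ell)(n-1)+m(n-k)=(2m-\ell)n-mk+(\ell-m),
\]
which is the claimed numerator.

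\textbf{Main obstacle.} The only delicate point is the $k=n$ term when $m=\ell$, where $(m-\ell)^{-1}$ appears formally. At $k=n$ the numerator equals $(m-\ell)(n-1)$, which cancels that factor. So I will state that the $k=n$ summand is to be read as $\frac{1}{n}\ell^n\binom{3n-2}{n-1}$, that is, the expression is a polynomial in $m-\ell$. Equivalently, I will verify the identity for $m\ne\ell$ and extend to $m=\ell$ by continuity in $m$, since both sides are polynomials in $m$. Everything else is routine algebra.
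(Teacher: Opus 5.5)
Your proposal is correct and follows the paper's route. You use the same system $g=x/(1-\ell g-mh)$, $h=x/(1-\ell g)$, ordinary Lagrange inversion for $g$, and Lagrange--B\"urmann with $H(t)=(1-\ell t)^{-1}$ for $h$. The paper leaves the total formula implicit, and your merge via $\binom{n-1}{k}=\frac{n-k}{n}\binom{n}{k}$, including the reading of the $k=n$ summand when $m=\ell$, correctly fills that step in.
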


Yet another set of sum formulas is obtained by means of the substitution $g(x) = \frac{Y}{1+(\ell-m)Y}$, where $Y$ is a new formal variable. Then, we have
$$h(x) = \frac{x}{1-\ell g(x)} = \frac{x(1+(\ell-m)Y)}{1-mY}$$
and thus
$$\frac{Y}{1+(\ell-m)Y} = g(x) = \frac{x}{1-\ell g(x) - m h(x)} = \frac{x}{1- \frac{\ell Y}{1+(\ell-m)Y}
- \frac{m x(1+(\ell-m)Y)}{1-m Y}}.$$
Solving for $x$, this yields
$$x = \frac{Y(1-mY)^2}{(1+(\ell-m)Y)^2}.$$
Now, we can apply the Lagrange--B\"urmann formula to obtain
\begin{align*}
[x^n] g(x) &= [x^n] \frac{Y}{1+(\ell-m)Y} 
= \frac{1}{n} [y^{n-1}] \frac{1}{(1+(\ell-m)y)^2} \Big( \frac{(1+(\ell-m)y)^2}{(1-my)^2} \Big)^n \\
&= \frac{1}{n} [y^{n-1}] \frac{(1+(\ell-m)y)^{2n-2}}{(1-my)^{2n}}
= \frac{1}{n} \sum_{k=0}^{n-1} (\ell-m)^k m^{n-k-1} \binom{2n-2}{k} \binom{3n-k-2}{n-k-1}
\end{align*}
as well as
\begin{align*}
[x^n] h(x) &= [x^n] \frac{x (1+(\ell-m)Y)}{1-mY} = [x^{n-1}] \frac{1+(\ell-m)Y}{1-mY} \\
&= \frac{1}{n-1} [y^{n-2}] \frac{\ell}{(1-my)^2} \Big( \frac{(1+(\ell-m)y)^2}{(1-my)^2} \Big)^{n-1} = \frac{\ell}{n-1} [y^{n-2}]
\frac{(1+(\ell-m)y)^{2n-2}}{(1-my)^{2n}} \\
&= \frac{\ell}{n-1} \sum_{k=0}^{n-2} (\ell-m)^k m^{n-k-2} \binom{2n-2}{k} \binom{3n-k-3}{n-k-2}.
\end{align*}
We summarize this in the following theorem providing alternative summation formulas for the quantities in the last Theorem~\ref{thm:removesquare-alternative}.

\begin{theorem}\label{thm:removesquare-alternative2}
For all positive integers $\ell,m,n$ and $1 \leq i \leq \ell$,
    $$t_{A(\ell,m)}^{(i)}(n) = \frac{1}{n} \sum_{k=0}^{n-1} (\ell-m)^k m^{n-k-1} \binom{2n-2}{k} \binom{3n-k-2}{n-k-1}.$$
For $\ell + 1 \leq i \leq \ell+m$ and $n \geq 2$,
    $$t_{A(\ell,m)}^{(i)}(n) = \frac{\ell}{n-1} \sum_{k=0}^{n-2} (\ell-m)^k m^{n-k-2} \binom{2n-2}{k} \binom{3n-k-3}{n-k-2}.$$
In total, we have, for $n \geq 2$,
    $$t_{A(\ell,m)}(n) = \frac{2\ell}{n} \sum_{k=0}^{n-1} (\ell-m)^k m^{n-k-1} \binom{2n-3}{k} \binom{3n-k-3}{n-k-1}.$$
\end{theorem}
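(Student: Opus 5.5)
The first two formulas were just derived via the substitution $g(x)=\frac{Y}{1+(\ell-m)Y}$ and Lagrange--B\"urmann. So the plan is to turn that computation into a proof and then handle the total.

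\textbf{Justifying the substitution.} I will define $Y$ as the unique formal power series with zero constant term satisfying
\[
Y=x\,\frac{(1+(\ell-m)Y)^2}{(1-mY)^2}.
\]
I then check that $g=\frac{Y}{1+(\ell-m)Y}$ and $h=\frac{x(1+(\ell-m)Y)}{1-mY}$ satisfy the system $g=\frac{x}{1-\ell g-mh}$, $h=\frac{x}{1-\ell g}$ from Theorem~\ref{prop:gfgeneral}.

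For the second equation, note that $1-\ell g=\frac{1-mY}{1+(\ell-m)Y}$.

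For the first equation, the reduced cubic-type equation $g=\frac{x}{1-\ell g-\frac{mx}{1-\ell g}}$ becomes, after clearing denominators, exactly the defining relation for $Y$.

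Since the system determines $g$ and $h$ uniquely as power series with zero constant term, these are the generating functions of $t^{(1)}_{A(\ell,m)}$ and $t^{(\ell+1)}_{A(\ell,m)}$. By interchangeability of colors with identical rows, they are also the generating functions for every $i$ in the respective ranges.

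\textbf{Coefficient extraction.} Lagrange--B\"urmann with $\phi(y)=(1+(\ell-m)y)^2/(1-my)^2$ gives the two coefficient extractions displayed above. Expanding $(1+(\ell-m)y)^{2n-2}$ and $(1-my)^{-2n}$ binomially gives the stated single sums for $t^{(i)}$.

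\textbf{The total.} The total is
\[
t_{A(\ell,m)}(n)=\ell\,[x^n]g+m\,[x^n]h.
\]
Rather than combining the two binomial sums termwise, I will apply Lagrange--B\"urmann once to the series
\[
\ell g+mh=\ell\frac{Y}{1+(\ell-m)Y}+\frac{mx(1+(\ell-m)Y)}{1-mY}.
\]
Using $x=Y(1-mY)^2/(1+(\ell-m)Y)^2$, the second term becomes $\frac{mY(1-mY)}{1+(\ell-m)Y}$. Hence
\[
\ell g+mh=\frac{Y(\ell+m-m^2Y)}{1+(\ell-m)Y}=:H(Y).
\]
Then $[x^n]H(Y)=\frac1n[y^{n-1}]H'(y)\phi(y)^n$, and I compute $H'(y)\phi(y)^n$.

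\textbf{Expected obstacle.} The main difficulty is showing that this extraction collapses to the form $\frac{2\ell}{n}\sum_k(\ell-m)^k m^{n-k-1}\binom{2n-3}{k}\binom{3n-k-3}{n-k-1}$. The target has $(1+(\ell-m)y)^{2n-3}$ and $(1-my)^{-(2n-1)}$-type exponents, which suggests $H'(y)\phi^n$ should be $2\ell\,\frac{(1+(\ell-m)y)^{2n-3}}{(1-my)^{2n-1}}$ minus a derivative term that contributes nothing to $[y^{n-1}]$.

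To get there, I would use the alternative Lagrange form $[x^n]H=[y^n]H(y)\phi(y)^n(1-y\phi'(y)/\phi(y))$, and try to exhibit $H'\phi^n$ as $2\ell\,\frac{(1+(\ell-m)y)^{2n-3}}{(1-my)^{2n-1}}+\frac{d}{dy}(\cdots)$, since a derivative has zero $[y^{n-1}]$ coefficient contribution after the factor $\frac1n$ adjustment.

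If that fails, the fallback is to verify the identity directly. I would check that both sides satisfy the same recurrence in $n$ (Zeilberger), or write $2\ell\binom{2n-3}{k}$ via Pascal's rule and match it against $\ell\cdot(\text{first sum})+m\cdot(\text{second sum})$ after an index shift.
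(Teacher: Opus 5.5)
Your derivation of the two root-color formulas is the paper's argument: the same substitution $g=Y/(1+(\ell-m)Y)$ followed by Lagrange--B\"urmann. Your explicit check that the resulting $g,h$ solve the system is correct.

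The gap is the total formula. The paper only asserts it, and your proposal leaves it as a plan whose key step is misstated. For a power series $R$ one has $[y^{n-1}]R'(y)=n\,[y^n]R(y)$, so ``a derivative contributes nothing to $[y^{n-1}]$'' is false in general. What you need is that the discrepancy, multiplied by $y^{-n}$, is an exact derivative of a Laurent series. Equivalently, the discrepancy must be the Lagrange--B\"urmann integrand of a series in $Y$ whose $x^n$-coefficient vanishes.

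That is in fact the case here. Take $\phi(y)=(1+(\ell-m)y)^2/(1-my)^2$ and your $H(y)=y(\ell+m-m^2y)/(1+(\ell-m)y)$. A direct computation gives
\[
\frac{d}{dy}\Big(H(y)+(\ell-m)\frac{y}{\phi(y)}\Big)=\frac{2\ell(1-my)}{(1+(\ell-m)y)^3},
\]
and $Y/\phi(Y)=x$. Hence for $n\ge 2$,
\[
t_{A(\ell,m)}(n)=[x^n]\big(H(Y)+(\ell-m)x\big)=\frac{2\ell}{n}[y^{n-1}]\frac{(1+(\ell-m)y)^{2n-3}}{(1-my)^{2n-1}},
\]
which expands to the stated sum. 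The correction term $(\ell-m)x$ also explains why the formula fails at $n=1$, where it gives $2\ell$ instead of $\ell+m$; any proof must use $n\ge 2$ somewhere.

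Your fallback is easier than you expect, and needs no index shift or recurrence. Write
\[
m\,[x^n]h=\frac{\ell}{n-1}\sum_k(\ell-m)^k m^{n-k-1}\binom{2n-2}{k}\binom{3n-k-3}{n-k-2}.
\]
Then all three sums run over the same monomials $(\ell-m)^km^{n-1-k}$, and the identity holds term by term. With $j=n-1-k$,
\[
\binom{2n-2}{k}\Big[\frac1n\binom{2n-1+j}{j}+\frac1{n-1}\binom{2n-2+j}{j-1}\Big]=\frac2n\binom{2n-3}{k}\binom{2n-2+j}{j}.
\]
This follows from three ratio identities:
\[
\binom{2n-1+j}{j}=\tfrac{2n-1+j}{2n-1}\binom{2n-2+j}{j},\qquad
\binom{2n-2+j}{j-1}=\tfrac{j}{2n-1}\binom{2n-2+j}{j},\qquad
\binom{2n-2}{k}=\tfrac{2n-2}{n-1+j}\binom{2n-3}{k}.
\]
Either argument closes the gap, but as written the third formula is not yet proved.
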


Note that the formulas in the two previous theorems simplify further for $\ell=m$ (since then only one term of the sum remains).

\subsection{Family 3}

We now encounter two families of sparse coloring matrices exhibiting similar behavior to Entries~15 and~29 in Appendix~\ref{app:3by3} in that 
they are tree coloring equivalent without being strongly tree coloring equivalent.
\begin{theorem}\label{thm:sparsefamily}
For $m\geq4$, define $m\times m$ matrices
\[
A=\begin{bmatrix}
    0&0&0&0&\cdots&0\\
    0&0&0&0&\cdots&0\\
    0&1&0&0&\cdots&0\\
    0&1&0&0&\cdots&0\\
    \vdots&\vdots&\vdots&\vdots&\ddots&\vdots\\
    0&1&0&0&\cdots&0\\
    1&1&0&0&\cdots&0
\end{bmatrix}
\]
and
\[
B=\begin{bmatrix}
    0&0&0&0&\cdots&0\\
    0&0&0&0&\cdots&1\\
    0&0&0&0&\cdots&1\\
    1&0&0&0&\cdots&0\\
    \vdots&\vdots&\vdots&\vdots&\ddots&\vdots\\
    1&0&0&0&\cdots&0\\
    1&0&0&0&\cdots&0
\end{bmatrix}.
\]
For all $n\geq2$, we have $t_A(n)=t_B(n)=2^{n-1}+m-3$. (Also, of course $t_A(1)=t_B(1)=m$.) However, $A$ and $B$ are not strongly tree coloring equivalent.
\end{theorem}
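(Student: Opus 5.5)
I will compute the root-color sequences $t_A^{(i)}(n)$ and $t_B^{(i)}(n)$ directly for each matrix, either from the combinatorial description or from the system in Theorem~\ref{prop:gfgeneral}. I will then sum them, and finally compare the multisets of root-color sequences.

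\emph{Matrix $A$.} Colors $1$ and $2$ have zero rows, so $t_A^{(1)}(n)=t_A^{(2)}(n)=0$ for $n\geq2$ by Theorem~\ref{prop:zerorows}. Each color $3\le i\le m-1$ can only be followed by color~$2$, and color~$2$ has no successors. So such a root yields only stars whose leaves are all colored~$2$, giving $t_A^{(i)}(n)=1$ for all $n$; this is Theorem~\ref{prop:singletoncolors}, or a direct argument. That accounts for $m-3$ colors. For the root color $m$, the children may have colors $1$ or $2$, both of which are leaves. Hence the tree is a star with $n-1$ leaves, each colored independently in $2$ ways, giving $t_A^{(m)}(n)=2^{n-1}$. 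Summing, $t_A(n)=2^{n-1}+m-3$ for $n\ge2$.

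\emph{Matrix $B$.} Row $1$ is zero, so color $1$ is always a leaf. Colors $4,\dots,m$ can only be followed by color~$1$, so they give stars and $t_B^{(i)}(n)=1$ for $4\le i\le m$; that is $m-3$ colors. Color $m$ belongs to this group (since $m\ge4$), and its children must be colored~$1$. Colors $2$ and $3$ can only be followed by color $m$, which in turn is followed only by color $1$. So a tree with root color $2$ (or $3$) has height at most~$2$: the root's children are all colored $m$, and their children are all colored~$1$. Since all colors are forced, each such plane tree of height at most $2$ has exactly one valid coloring. I will count plane trees with $n$ vertices and height at most $2$. Removing the root and encoding the child subtrees as a composition of $n-1$ into positive parts (each part is a child together with its leaves) gives $2^{n-2}$ such trees for $n\ge2$. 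Therefore $t_B^{(2)}(n)=t_B^{(3)}(n)=2^{n-2}$, and $t_B(n)=2\cdot2^{n-2}+(m-3)=2^{n-1}+m-3$. Alternatively, this can be obtained from $F^{(m)}_B=x/(1-x)$ and $F^{(2)}_B=x/(1-F^{(m)}_B)=x(1-x)/(1-2x)$.

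\emph{Non-strong equivalence.} The multiset of root-color sequences for $A$ contains two copies of the sequence $1,0,0,\dots$ and one copy of $2^{n-1}$. The multiset for $B$ contains only one sequence that vanishes for $n\ge2$ (color $1$), and it contains $2^{n-2}$ twice. These multisets differ, so $A$ and $B$ are not strongly tree coloring equivalent. A quicker route is Theorem~\ref{prop:equalones} applied to any isomorphic copy: $A$ has two zero rows while $B$ has one, and isomorphism preserves the multiset of row sums. I do not expect any real obstacle. The only step requiring care is the height-$\le 2$ count for $B$, where I need to confirm that the colors are forced so that the count equals the number of compositions.
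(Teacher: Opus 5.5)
Your proposal is correct and takes essentially the same route as the paper. Both arguments count directly: $A$ only colors stars, and $B$ only colors trees of height at most $2$ with forced colors, which gives $2^{n-1}+(m-3)$ in each case. The differences are cosmetic: you justify the $2^{n-2}$ count of height-$\le 2$ trees via compositions, and you separate the matrices by their number of zero rows via Theorem~\ref{prop:equalones} (or directly by the multisets of root-color sequences), whereas the paper uses that $A$ has a row with sum $2$ and $B$ does not.
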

\begin{proof}
        Matrix $A$ specifies a coloring rule where a vertex of any color from 3 through $m-1$ must be followed by a vertex of color~2 and where a vertex of color~$m$ must be followed by a vertex of color~1 or color~2. Since vertices of colors~1 and~2 cannot be followed by anything, this rule can only be used to color trees of height at most  $1$. Given $n\geq2$, there is one coloring with each root color from~3 through $m-1$ (all leaves have color~2). There are $2^{n-1}$ colorings with roots of color~$m$ (two choices, color~1 or color~2, for each leaf independently). Thus, there are $2^{n-1}+m-3$ colorings according to $A$.
        
        For $B$, a vertex of color~2 or~3 must be followed by a vertex of color~$m$, and a vertex of any color from $4$ through $m$ must be followed by a vertex of color~1. Since vertices of color~1 cannot be followed by anything and every descending path eventually reaches color~1, this rule can only be used to color trees of height at most $2$. On $n\geq2$ vertices, there are $2^{n-2}$ such trees. There are exactly two ways to color most of these trees: A root of color~2 or~3 which then forces the root's children to be of color~$m$ and these vertices' children (which are the root's grandchildren) to be of color~1. There are $m-3$ additional colored trees: the trees of height $1$ with a root of some color from $4$ to $m$. Thus, there are a total of $2^{n-1}+m-3$ trees colored according to $B$.

        Finally, to see that $A$ and $B$ are not strongly tree coloring equivalent, note that $A$ contains a row with sum $2$, whereas $B$ contains no such row. 
        So, Theorem~\ref{prop:equalones} tells us that they are not strongly tree coloring equivalent.
    \end{proof}

    While Theorem~\ref{thm:sparsefamily} leads only to sparse matrices, we can obtain corresponding dense matrices via Corollary~\ref{cor:complement}. So, for $m\geq4$, there exists a pair of $m\times m$ coloring matrices, each with exactly $m-1$ zeroes, that are tree coloring equivalent but not strongly tree coloring equivalent.

\subsection{Family 4}

As our next example, we describe a variant of $m$-plane trees (Theorem~\ref{thm:k-plane}) that also gives rise to hypergeometric counting formulas. Let $m = 2\ell$ be an even integer, and define the $m \times m$ matrix $C^{(m)}$ by its entries $c_{ij}^{(m)}$ as follows:

$$c_{ij}^{(m)} = \begin{cases} 1 & i+j \leq m+1, \text{ except when } i+j = m \text{ and } i,j \text{ are odd,} \\ 0 & \text{otherwise.} \end{cases}$$

For example, when $m=6$, the matrix is
$$C^{(6)} = \begin{bmatrix}
    1 & 1 & 1 & 1 & 0 & 1 \\
    1 & 1 & 1 & 1 & 1 & 0 \\
    1 & 1 & 0 & 1 & 0 & 0 \\
    1 & 1 & 1 & 0 & 0 & 0 \\
    0 & 1 & 0 & 0 & 0 & 0 \\
    1 & 0 & 0 & 0 & 0 & 0 \\
\end{bmatrix}\,.$$

For this coloring rule, we have the following theorem.

\begin{theorem}\label{thm:kplane-variant}
For every even integer $m = 2\ell$ and every $n \geq 1$, we have $$t_{C^{(m)}}(n) = \frac{\ell}{n} \cdot 2^{2n-1} \binom{\frac{\ell+1}{2} n - \frac{\ell}{2}-1}{n-1}.$$
\end{theorem}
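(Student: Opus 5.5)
The plan is to solve the system of Theorem~\ref{prop:gfgeneral} explicitly, imitating the generating-function proof of Theorem~\ref{thm:k-plane}, and then finish with Lagrange inversion.

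\textbf{Setting up the system.} Write $F_i=\gff{i}{C^{(m)}}$ and $D_i = 1-\sum_j c^{(m)}_{ij}F_j$, so that $F_i = x/D_i$. Put $P_r = F_1+\dots+F_r$. Then:
\begin{itemize}
\item for even $i$, $D_i = 1-P_{m+1-i}$;
\item for odd $i$, $D_i = 1-P_{m+1-i}+F_{m-i}$, where $m-i$ is the odd index whose entry is deleted.
\end{itemize}
For the $m$-plane tree rule one uses $u_r = 1-P_r$. There $u_{r-1}-u_r = F_r = x/u_{m+1-r}$, and this telescoping system is solved by a single parameter series. I would do the same here, grouping the colors into the $\ell$ pairs $\{2k-1,2k\}$ and also pairing $i$ with $m+1-i$.

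\textbf{Finding the parametrization.} The exceptional zeros sit exactly at the odd--odd positions on the antidiagonal $i+j=m$. This suggests that within each pair the odd color behaves like ``two half-colors'', giving the factor $4^n/2 = 2^{2n-1}$. The exponent $\frac{\ell+1}{2}$ in the binomial coefficient suggests an equation of the form
\[
w = 4x\,(1+\alpha w)^{(\ell+1)/2}.
\]
So I would first compute the cases $m=2$ (this is the matrix $\begin{bmatrix}0&1\\1&0\end{bmatrix}$, giving $2C_{n-1}$) and $m=4$ symbolically. From these I would guess closed forms for the $D_i$: for instance $D_{2k-1}$ and $D_{2k}$ as powers of a common expression in a single series $w$, with exponents linear in $k$, exactly as in the $m$-plane case. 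I would then verify the guess by substituting it into the $m$ equations, using only the telescoping differences $P_r-P_{r-1}=F_r$.

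\textbf{Extracting coefficients.} Once the parametrization is verified, write the total $\sum_i F_i = P_m$ as a simple function of $w$; I expect $\ell$ times something like $(1-\text{power of } w)$, accounting for the prefactor $\ell$. Then apply the Lagrange--B\"urmann formula \cite[Corollary 5.4.3]{Stanley1999Enumerative} as in Family 2. This should produce the single hypergeometric term
\[
\frac{\ell}{n}\,2^{2n-1}\binom{\frac{\ell+1}{2}n-\frac{\ell}{2}-1}{n-1}.
\]
A consistency check is $n=2$: by Theorem~\ref{prop:t2} the count must equal the number of ones, namely $\binom{m+1}{2}-\lfloor \ell/2\rfloor\cdot$(correction), and the formula gives $\ell(\ell+1)/2\cdot 4/\ldots$. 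I would use this to fix normalizations.

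\textbf{Main obstacle.} The hard step is guessing the right parametrization of the $D_i$. The odd-indexed correction term $F_{m-i}$ breaks the clean telescoping of the $m$-plane case. My first attempt would be to combine each odd color with its even partner, setting $Q_k = F_{2k-1}+F_{2k}$. The hope is to show that the $Q_k$ satisfy an $\ell$-color system of the $m$-plane type with a modified $x$ (for example $x\mapsto 4x$ together with a square root). That system could then be solved directly by the method behind Theorem~\ref{thm:k-plane}.
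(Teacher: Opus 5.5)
Your setup, including the formulas for the $D_i$, is correct, but the proposal stops exactly where the proof begins. The explicit solution of the system is deferred to ``guess from $m=2,4$ and verify,'' so as written there is no argument. The one concrete route you offer for the obstacle also fails as stated. Taken literally, the $Q_k=F_{2k-1}+F_{2k}$ do not satisfy a closed system, since each $D_i$ involves a single color of a pair. Even granting $F_{2k-1}=F_{2k}=:G_k$, the $Q_k=2G_k$ satisfy $Q_k\bigl(1-\sum_{i\le \ell-k}Q_i-\tfrac12 Q_{\ell+1-k}\bigr)=2x$. Here the antidiagonal term carries weight $\tfrac12$, so this is not the $\ell$-plane system with a modified $x$. (Your $n=2$ check is also off: $C^{(m)}$ has $\binom{m+1}{2}-\ell=2\ell^2$ ones, matching $\frac{\ell}{2}\cdot 2^3\cdot\frac{\ell}{2}=2\ell^2$ from the formula.)

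Two ideas are missing.
\begin{itemize}
\item \emph{Interchangeability.} Your formulas give $D_{2k-1}-D_{2k}=F_{m+1-2k}-F_{m+2-2k}$, a difference of the two colors of pair $\ell+1-k$. So the system is consistent with $F_{2k-1}=F_{2k}=:G_k$, and by uniqueness of its power series solution, colors $2k-1$ and $2k$ are interchangeable. This leaves the $\ell$ equations $G_k\bigl(1-2\sum_{i\le \ell-k}G_i-G_{\ell+1-k}\bigr)=x$.
\item \emph{Geometric ansatz.} Take $G_k=UV^{k-1}$ with $V=1-2U$. Then $2\sum_{i\le \ell-k}G_i=1-V^{\ell-k}$, so every equation becomes $x=U(1-U)(1-2U)^{\ell-1}$, and $P_m=2\sum_k G_k=1-(1-2U)^{\ell}$.
\end{itemize}
This is the paper's proof. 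It then applies Lagrange--B\"urmann in $U$, gets a sum, and needs Lemma~\ref{lem:hyper} (a ${}_2F_1$ evaluation at $z=2$) to collapse it.

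Your guessed shape is in fact exactly right and would shortcut that final step. Put $1+w=(1-2U)^{-2}$. Then $4U(1-U)=w/(1+w)$, hence $w=4x(1+w)^{(\ell+1)/2}$ and $P_m=1-(1+w)^{-\ell/2}$. Lagrange--B\"urmann now yields $\frac{4^n}{n}\cdot\frac{\ell}{2}\binom{\frac{\ell+1}{2}n-\frac{\ell}{2}-1}{n-1}$ directly, with no hypergeometric identity. But this becomes a proof only once the parametrization, which your proposal leaves open, is actually supplied.
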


In order to prove Theorem~\ref{thm:kplane-variant}, we need a hypergeometric identity.
\begin{lemma}\label{lem:hyper}
For every nonnegative integer $n$ and every real number $x$, we have
$$\sum_{k=0}^n \binom{2n-k}{n-k} (-2)^k \binom{x}{k} = 4^n \binom{n-\frac{x+1}{2}}{n}.$$
\end{lemma}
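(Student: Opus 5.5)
Both sides are polynomials in $x$ of degree $n$: the left side because $\binom{x}{k}$ has degree $k\le n$, and the right side because $\binom{n-\frac{x+1}{2}}{n}$ is a product of $n$ linear factors in $x$. It therefore suffices to verify the identity for infinitely many values of $x$. I would take $x=N$ a nonnegative integer and prove the identity there by a generating-function computation.

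The plan is to compare generating functions in an auxiliary variable $z$, summing over $n$ for fixed $N$. Write $\binom{2n-k}{n-k}=[z^{n-k}](1-4z)^{-1/2}\,c(z)^{k}$, where $c(z)=\frac{1-\sqrt{1-4z}}{2z}$ is the Catalan generating function. This is the classical identity $\sum_{j}\binom{2j+k}{j}z^j=\frac{c(z)^k}{\sqrt{1-4z}}$. Then
$$\sum_{n\ge0}z^n\sum_{k}\binom{2n-k}{n-k}(-2)^k\binom{N}{k}=\frac{1}{\sqrt{1-4z}}\sum_k\binom{N}{k}(-2zc(z))^k=\frac{(1-2zc(z))^N}{\sqrt{1-4z}}.$$
Since $1-2zc(z)=\sqrt{1-4z}$, this equals $(1-4z)^{(N-1)/2}$.

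For the right side, $4^n\binom{n-\frac{N+1}{2}}{n}=(-4)^n\binom{\frac{N-1}{2}}{n}$ by the upper negation identity $\binom{n-a-1}{n}=(-1)^n\binom{a}{n}$ with $a=\frac{N-1}{2}$. The generating function $\sum_n (-4)^n\binom{\frac{N-1}{2}}{n}z^n$ is exactly $(1-4z)^{(N-1)/2}$. Comparing coefficients of $z^n$ proves the identity for every integer $N\ge0$. The polynomial argument then extends it to all real $x$. In fact the generating-function computation works verbatim for any real $x$, as an identity of formal power series with $(1-2zc)^x$ defined binomially, so the polynomial step is only a convenience.

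The main obstacle is getting the first generating-function identity exactly right: the index shift $\binom{2n-k}{n-k}=\binom{2j+k}{j}$ with $j=n-k$, together with the correct power $c(z)^k$ rather than $c(z)^{k+1}$ and the factor $(1-4z)^{-1/2}$. I would check this identity at $k=0$, where it gives the central binomial generating function, and at $k=1$ before relying on it. The remaining manipulations are routine.
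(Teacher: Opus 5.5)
Your proof is correct, but it takes a different route from the paper.

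The paper works through hypergeometric functions. It rewrites the left side as $\binom{2n}{n}\,{}_2F_1(-x,-n;-2n;2)$. It then imports a closed-form evaluation of this ${}_2F_1$ from Koepf's book, which is valid only for even positive integers $x$. A Gamma-function computation turns the resulting ratio of rising factorials into $4^n\binom{n-\frac{x+1}{2}}{n}$, and the polynomial argument extends the identity to all real $x$.

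You instead fix $x$ and sum over $n$. Using the classical series $\sum_j\binom{2j+k}{j}z^j=c(z)^k/\sqrt{1-4z}$ and the relation $1-2zc(z)=\sqrt{1-4z}$, you show that the left sides have generating function $(1-4z)^{(x-1)/2}$. Upper negation then identifies the right sides as the coefficients of the same binomial series. The index shift $j=n-k$ and the power $c(z)^k$, which you flagged as the delicate points, are both right.

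What your argument buys:
\begin{itemize}
\item It is self-contained and more elementary, needing no external hypergeometric evaluation.
\item It explains where the right-hand side comes from, as coefficient extraction from $(1-4z)^{(x-1)/2}$.
\item Since $(1-2zc(z))^x=(1-4z)^{x/2}$ holds as an identity of formal power series for every real $x$, the interpolation step is optional.
\end{itemize}

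What the paper's route buys: given the citation it is short on the page, and it places the lemma among the standard terminating ${}_2F_1$ evaluations at argument $2$. The cost is reliance on an outside result stated only for even positive $x$, so there the polynomial extension is genuinely needed.
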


In fact, the identity in Lemma~\ref{lem:hyper} can be reduced to a known identity. Recall that the hypergeometric function ${}_2F_1(a,b;c;z)$ is defined by
$${}_2F_1(a,b;c;z) = \sum_{k=0}^{\infty}
\frac{a^{\overline{k}}b^{\overline{k}}}{c^{\overline{k}}} \frac{z^k}{k!},$$
where $a^{\overline{k}} = a(a+1)\cdots(a+k-1)$ is the rising factorial.

\begin{proof}
Observe that
\begin{align*}
{}_2F_1(-x,-n;-2n;z) &= \sum_{k=0}^{\infty} \frac{(-x)^{\overline{k}}(-n)^{\overline{k}}}{(-2n)^{\overline{k}}} \frac{z^k}{k!} = \sum_{k=0}^{\infty} \frac{x^{\underline{k}}n^{\underline{k}}}{(2n)^{\underline{k}}} \frac{(-z)^k}{k!} \\
&= \sum_{k=0}^{\infty} \frac{\binom{x}{k} \binom{n}{k}}{\binom{2n}{k}} (-z)^k = \sum_{k=0}^n \frac{\binom{x}{k} \binom{n}{k}}{\binom{2n}{k}} (-z)^k \\
&= \frac{1}{\binom{2n}{n}} \sum_{k=0}^n \binom{x}{k} \binom{2n-k}{n-k} (-z)^k.
\end{align*}
Hence, we have
$$\sum_{k=0}^n \binom{2n-k}{n-k} (-2)^k \binom{x}{k} =
\binom{2n}{n} {}_2F_1(-x,-n;-2n;2).
$$
Now \cite[p.415]{Koepf1995Algorithms} gives the identity
$${}_2F_1(-x,-n;-2n;2) = \frac{(1/2)^{\overline{x/2}}}{(1/2-n)^{\overline{x/2}}}$$
for even positive integers $x$. 
Letting $\Gamma$ denote the gamma function, it follows that
\begin{align*}
    \sum_{k=0}^n \binom{2n-k}{n-k} (-2)^k \binom{x}{k} &=  \binom{2n}{n} \frac{(1/2)^{\overline{x/2}}}{(1/2-n)^{\overline{x/2}}} = \binom{2n}{n} \frac{\Gamma(\frac{x+1}{2})/\Gamma(\frac12)}{\Gamma(\frac{x+1}{2}-n)/\Gamma(\frac12-n)} \\
    &= \binom{2n}{n} \frac{\Gamma(\frac{x+1}{2})/\Gamma(\frac{x+1}{2}-n)}{\Gamma(\frac12)/\Gamma(\frac12-n)} = \binom{2n}{n} \frac{(\frac{x-1}{2})^{\underline{n}}}{(-\frac12)^{\underline{n}}} \\
    &= \binom{2n}{n} \frac{(\frac{1-x}{2})^{\overline{n}}}{(\frac12)^{\overline{n}}} = \binom{2n}{n} \frac{(n - \frac{x+1}{2})^{\underline{n}}}{(n-\frac12)^{\underline{n}}} \\
    &= \frac{(2n)!}{n!(n-\frac12)^{\underline{n}}} \cdot \frac{(n - \frac{x+1}{2})^{\underline{n}}}{n!} = \frac{(2n)!!}{n!} \cdot \frac{(2n-1)!!}{(n-\frac12)^{\underline{n}}} \cdot \frac{(n - \frac{x+1}{2})^{\underline{n}}}{n!} \\
    &= 2^n \cdot 2^n \cdot \binom{n-\frac{x+1}{2}}{n},
\end{align*}
where $a^{\underline{n}} = a(a-1)\cdots(a-n+1)$ is the falling factorial.
This proves the lemma (a priori only for even positive integers, but since both sides are polynomials of degree $n$ in $x$, the identity must hold for all real $x$).
\end{proof}

We are now ready to prove Theorem~\ref{thm:kplane-variant}.

\begin{proof}[Proof of Theorem~\ref{thm:kplane-variant}]
    By Theorem~\ref{prop:gfgeneral}, we have
\begin{equation}\label{eq:gfeq-cm}
    F_{C^{(m)}}^{(i)}(x)=x+ F_{C^{(m)}}^{(i)}(x) \sum_{j=1}^mc_{ij}^{(m)}F_{C^{(m)}}^{(i)}(x).
    \end{equation}
It is straightforward to prove (e.g., by induction or by applying Theorem~\ref{thm:EqivColsStrictIff}) that colors $2j-1$ and $2j$ are interchangeable for all $j \in \{1,2,\ldots,\ell\}$, i.e.,
$$F_{C^{(m)}}^{(2j-1)}(x) = F_{C^{(m)}}^{(2j)}(x).$$
Let us set $G_j(x) = F_{C^{(m)}}^{(2j-1)}(x)$ (for simplicity, we drop the dependence on $m$). Then,~\eqref{eq:gfeq-cm} becomes
\begin{equation}\label{eq:G-eq}
G_j(x) = x + G_j(x) \Big( 2 \sum_{i=1}^{\ell-j} G_i(x) + G_{\ell+1-j}(x) \Big),
\end{equation}
for all $j \in \{1,2,\ldots,\ell\}$.
Motivated by the treatment of $m$-plane trees in \cite{GPW2010Bijections}, we use the \emph{ansatz} $G_j(x) = U V^{j-1}$, where $U$ and $V$ are power series. Then, by summing the geometric series in~\eqref{eq:G-eq}, we obtain the equation
\[UV^{j-1} = x + \frac{2U^2 V^{j-1}}{1-V} - \frac{U^2(1+V)V^{\ell-1}}{1-V}.
\]
The only terms that depend on $j$ become equal if we set $V = 1 - 2U$. Then, we are only left with the equation
\[x = U(1-U)(1-2U)^{\ell-1},\]
which determines the power series $U = G_1(x)$. The generating function for all trees (regardless of root color) that follow the coloring rule determined by $C^{(m)}$ is
\[F_{C^{(m)}}(x) = \sum_{i=1}^m F_{C^{(m)}}^{(i)}(x) = 2 \sum_{j=1}^{\ell} G_j(x) = \frac{2U(1-V^{\ell})}{1-V} = 1-V^{\ell} = 1 - (1-2U)^{\ell}.\]
Now, the coefficients of $F_{C^{(m)}}(x)$ can be determined by means of the Lagrange--B\"urmann formula. We have
\begin{align*}
t_{C^{(m)}}(n) &= \frac{2\ell}{n} [u^{n-1}] (1-u)^{-n}(1-2u)^{-(\ell-1)(n-1)} \\
&= \frac{2\ell}{n} \sum_{k=0}^{n-1} [u^{n-k-1}](1-u)^{-n} [u^k] (1-2u)^{-(\ell-1)(n-1)} \\
&= \frac{2\ell}{n} \sum_{k=0}^{n-1} (-1)^{n-k-1} \binom{-n}{n-k-1} \cdot (-2)^k \binom{-(\ell-1)(n-1)}{k} \\
&= \frac{2\ell}{n} \sum_{k=0}^{n-1} \binom{2n-k-2}{n-k-1} \cdot (-2)^k \binom{-(\ell-1)(n-1)}{k}.
\end{align*}

Now, if we replace $n$ by $n-1$ and $x$ by $-(\ell-1)(n-1)$ in Lemma~\ref{lem:hyper}, we obtain exactly the required sum, showing that
\[t_{C^{(m)}}(n) = \frac{2\ell}{n} \cdot 4^{n-1} \binom{n -1 + \frac{(\ell-1)(n-1)-1}{2}}{n-1} = \frac{\ell}{n} \cdot 2^{2n-1} \binom{\frac{\ell+1}{2} n - \frac{\ell}{2}-1}{n-1}.\]
This completes the proof.
\end{proof}

Let us remark that since the coloring rule associated with $C^{(m)}$ has $\frac{m}{2}$ pairs of interchangeable colors, there are many more matrices that yield the same sequence (by Corollary~\ref{cor:EqivCols}).

\subsection{Family 5}

As another example of an infinite family of coloring rules, let us generalize Entry~30 in Appendix~\ref{app:3by3}. As discussed in Subsection~\ref{ss:3by3}, the coloring rules classified by Entry~30 are unique in that their counting sequences include Fibonacci numbers. In fact, this is the only entry with a rational generating function, aside from the many constant or geometric sequences.
One matrix equivalent to the matrix listed in Entry~30 in Appendix~\ref{app:3by3} is $\begin{bmatrix}
    0&0&0\\
    1&0&0\\
    1&1&0
\end{bmatrix}$; it is this matrix that we generalize. Define $A_m$ to be the matrix with zeroes on and above the main diagonal and ones below the main diagonal. This is the matrix for the $m$-color rule where each color can be followed by any color with lesser value. Using this coloring rule, color~$m$ can only appear as the root color. It immediately follows that the number of colored trees with any given root color does not depend on the total number of colors available (provided there are enough colors to use that color for the root). With this in mind, throughout our discussion of this family of rules we drop subscripts on generating functions. That is, $\gfnoa{i}$ stands for $\gff{i}{A_j}$ for any $j\geq i$.
\begin{theorem}\label{thm:genfib}
    For $m\geq1$, define polynomials $p_m(x)$ and $q_m(x)$ recursively as follows:
    \[
    p_m(x)=\begin{cases}
        x & m = 1,\\
        p_{m-1}(x)q_{m-1}(x) & m>1,
    \end{cases}\]
and
    \[q_m(x)=\begin{cases}
        1 & m = 1,\\
        q_{m-1}(x)^2-\frac{p_{m-1}(x)^2}{x} & m>1.
    \end{cases}
    \]
    For $m\geq1$, we have that the generating function $\gfnoa{m}$ is rational, with $\gfnoa{m}=\frac{p_m(x)}{q_m(x)}$.
\end{theorem}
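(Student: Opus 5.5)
The plan is to prove the statement by induction on $m$. I would strengthen the claim so that it also tracks the partial sums $S_m(x)=\sum_{j=1}^m \gfnoa{j}$, since these are what actually enter the functional equation.

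First, the equation to be used. By Theorem~\ref{prop:gfgeneral} applied to $A_m$, a vertex of color $m$ can have children of exactly the colors $1,\dots,m-1$. Hence
\[
\gfnoa{m}=\frac{x}{1-\sum_{j=1}^{m-1}\gfnoa{j}}=\frac{x}{1-S_{m-1}(x)},
\]
with $S_0=0$. This uses the remark before the theorem that $\gfnoa{j}$ does not depend on the ambient number of colors.

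Second, the strengthened statement. I will prove simultaneously, for all $m\ge 1$, that
\[
\gfnoa{m}=\frac{p_m}{q_m}\qquad\text{and}\qquad 1-S_m=\frac{x\,q_{m+1}}{p_m q_m}.
\]
Along the way I need three bookkeeping facts, each checked by the same induction:
\begin{itemize}
\item each $p_m$ is divisible by $x$ (true for $p_1=x$ and inherited through $p_m=p_{m-1}q_{m-1}$), so $p_{m-1}^2/x$ is a polynomial and each $q_m$ is a genuine polynomial;
\item $q_m(0)=1$ (since $p_{m-1}^2/x$ vanishes at $0$), so $p_m/q_m$ is a formal power series;
\item $p_m\neq 0$, so the divisions above make sense.
\end{itemize}

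Base case $m=1$. We have $\gfnoa{1}=x=p_1/q_1$, and $1-S_1=1-x$. Since $q_2=1-x$ and $p_1q_1=x$, this equals $x q_2/(p_1q_1)$.

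Inductive step. Assume both identities hold for $m-1$. Then
\[
\gfnoa{m}=\frac{x}{1-S_{m-1}}=\frac{p_{m-1}q_{m-1}}{q_m}=\frac{p_m}{q_m}.
\]
Next, using $p_m=p_{m-1}q_{m-1}$,
\[
1-S_m=(1-S_{m-1})-\gfnoa{m}=\frac{x q_m}{p_m}-\frac{p_m}{q_m}=\frac{x q_m^2-p_m^2}{p_mq_m}=\frac{x\,q_{m+1}}{p_mq_m},
\]
where the last equality is the definition $q_{m+1}=q_m^2-p_m^2/x$. This closes the induction, and rationality of $\gfnoa{m}$ follows immediately.

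The main obstacle is finding the right auxiliary invariant for $1-S_m$. Once the form $x q_{m+1}/(p_mq_m)$ has been guessed from the base case, every step is a one-line algebraic check.
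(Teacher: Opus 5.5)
Your argument is correct and is essentially the paper's proof: both run an induction on $m$ through $\gfnoa{m}=x/\bigl(1-\sum_{i<m}\gfnoa{i}\bigr)$. Your auxiliary identity $1-S_{m-1}=xq_m/p_m$ is exactly the paper's step that identifies the cleared denominator $\bigl(1-\sum_{i=1}^{m-2}p_i/q_i\bigr)\prod_{i=1}^{m-2}q_i$ with $q_{m-1}$ via $p_{m-1}=x\prod_{i=1}^{m-2}q_i$; carrying it in the induction hypothesis (with $x\mid p_m$ and $q_m(0)=1$) makes each step a one-line check and the paper's degree and monicity remarks unnecessary (e.g.\ $q_2=1-x$ is not monic).
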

\begin{proof}    
    The 
    theorem statement is implied by the claim about the generating function (plus inspecting the case $m=2$, which is Entry~2 in Table~\ref{tb:2cols}). Observe that $A_m$ is a block matrix for $m\geq2$:
    \[
    A_m=\left[\begin{array}{c|c}A_{m-1}&0\\\hline1&0\end{array}\right].
    \]
    Theorem~\ref{prop:rootonly} now applies (with $m'=1$) and tells us that
    \[
    \gfll{A_m}=\gfll{A_{m-1}}+\frac{x}{1-\gfll{A_{m-1}}}.
    \]
    We note that $\gfnoa{m}=\gfll{A_m}-\gfll{A_{m-1}}$ and that
    \[
    \gfll{A_{m-1}}=\sum_{i=1}^{m-1}\gfnoa{i}.
    \]
    Thus, we have
    \[
    \gfnoa{m}=\frac{x}{1-\sum_{i=1}^{m-1}\gfnoa{i}}.
    \]

    We now prove the theorem by induction. As a base case, we observe that $\gfnoa{1}=x=\frac{x}{1}=\frac{p_1(x)}{q_1(x)}$. Now, suppose we know that $\gfnoa{m-1}=\frac{p_{m-1}(x)}{q_{m-1}(x)}$. We have
    \begin{align*}
        \gfnoa{m}&=\frac{x}{1-\sum_{i=1}^{m-1}\gfnoa{i}}\\
        &=\frac{x}{1-\sum_{i=1}^{m-2}\gfnoa{i}-\gfnoa{m-1}}\\
        &=\frac{x}{1-\sum_{i=1}^{m-2}\frac{p_i(x)}{q_i(x)}-\frac{p_{m-1}(x)}{q_{m-1}(x)}}\\
        &=\frac{x\prod_{i=1}^{m-2}q_i(x)}{\left(1-\sum_{i=1}^{m-2}\frac{p_i(x)}{q_i(x)}\right)\prod_{i=1}^{m-2}q_i(x)-\frac{p_{m-1}(x)}{q_{m-1}(x)}\prod_{i=1}^{m-2}q_i(x)}.
    \end{align*}
    By similar steps, we can find that
    \begin{align*}
        \gfnoa{m-1}&=\frac{x}{1-\sum_{i=1}^{m-2}\gfnoa{i}}\\
        &=\frac{x}{1-\sum_{i=1}^{m-2}\frac{p_i(x)}{q_i(x)}}\\
        &=\frac{x\prod_{i=1}^{m-2}q_i(x)}{\left(1-\sum_{i=1}^{m-2}\frac{p_i(x)}{q_i(x)}\right)\prod_{i=1}^{m-2}q_i(x)}.
    \end{align*}
    This second expression has polynomials in its numerator and denominator, as the last step cleared the denominators. Also, both the numerator and denominator have degree $2^{m-2}$, as the degree of each $p_i(x)$ and each $q_i(x)$ is $2^{i-1}$ for $i\geq2$ (and $p_1(x)$ has degree $1$ while $q_1(x)$ has degree $0$). The degrees of $p_{m-1}$ and $q_{m-1}$ are also $2^{m-2}$. 
    Furthermore, $p_i(x)$ and $q_i(x)$ are always monic, so both the numerator and denominator are monic. This means that 
    the numerator equals $p_{m-1}(x)$ and the denominator equals $q_{m-1}(x)$.
    Notably, observe that $p_{m-1}(x)=x\prod_{i=1}^{m-2}q_i(x)$. 
    Hence, we have
    \begin{align*}
        \gfnoa{m}&=\frac{p_{m-1}(x)}{q_{m-1}(x)-\frac{p_{m-1}(x)}{q_{m-1}(x)}\prod_{i=1}^{m-2}q_i(x)}\\
        &=\frac{p_{m-1}(x)q_{m-1}(x)}{q_{m-1}(x)^2-p_{m-1}(x)\prod_{i=1}^{m-2}q_i(x)}.
    \end{align*}
    This expression has a degree $2^{m-1}$ monic polynomial in its numerator and a degree $2^{m-1}$ monic polynomial in its denominator. Observe that the numerator is precisely $p_m(x)$, as required. To handle the denominator, our earlier observation that $p_{m-1}(x)=x\prod_{i=1}^{m-2}q_i(x)$ implies that $\prod_{i=1}^{m-2}q_i(x)=\frac{p_{m-1}(x)}{x}$. 
    We have that the denominator of $\gfnoa{m}$ equals
    \[
    q_{m-1}(x)^2-p_{m-1}(x)\prod_{i=1}^{m-2}q_i(x)=q_{m-1}(x)^2-\frac{p_{m-1}(x)^2}{x}=q_m(x),
    \]
    as required.
\end{proof}

Theorem~\ref{thm:genfib} has a couple immediate corollaries. The first is a tie-in to some existing OEIS sequences.
\begin{corollary}\label{cor:recippolys}
    Define polynomials $p_m(x)$ and $q_m(x)$ for $m\geq1$ as in the statement of Theorem~\ref{thm:genfib}. For all $m\geq3$, we have $p_m(x^2)=-xT_m(x)$ and $q_m(x^2)=S_m(x)$, where $S_m(x)$ and $T_m(x)$ come from the entries for sequences \seqnum{A147985} and \seqnum{A147986} and satisfy the mutual recurrences $S_m(x)=S_{m-1}(x)^2-T_{m-1}(x)^2$ and $T_m(x)=S_{m-1}(x)T_{m-1}(x)$ with initial conditions $S_1(x)=x$ and $T_1(x)=1$.
\end{corollary}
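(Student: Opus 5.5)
The plan is to prove both identities together by induction on $m$, starting from the base case $m=3$. The two families satisfy recurrences of the same shape: $p_m=p_{m-1}q_{m-1}$ matches $T_m=S_{m-1}T_{m-1}$, and $q_m=q_{m-1}^2-p_{m-1}^2/x$ matches $S_m=S_{m-1}^2-T_{m-1}^2$. The substitution $x\mapsto x^2$ is exactly what makes the extra factor $1/x$ in the $q$-recurrence cancel the factor $x$ in $-xT_{m-1}$.

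\textbf{Base case $m=3$.} From the recurrences of Theorem~\ref{thm:genfib} we have $p_1=x$, $q_1=1$, $p_2=x$, $q_2=1-x$, and hence
\[
p_3(x)=x(1-x),\qquad q_3(x)=(1-x)^2-x.
\]
From the mutual recurrences we have $S_2=x^2-1$, $T_2=x$, and hence
\[
S_3=(x^2-1)^2-x^2,\qquad T_3=x(x^2-1).
\]
Direct comparison gives $q_3(x^2)=(1-x^2)^2-x^2=S_3(x)$ and $p_3(x^2)=x^2(1-x^2)=-x\cdot x(x^2-1)=-xT_3(x)$.

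This also explains why the statement starts at $m=3$. For $m=2$ we get $q_2(x^2)=1-x^2=-S_2(x)$, so the identity fails by a sign. Squaring in the next step removes that sign.

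\textbf{Inductive step.} Assume $p_{m-1}(x^2)=-xT_{m-1}(x)$ and $q_{m-1}(x^2)=S_{m-1}(x)$ for some $m-1\geq 3$. Substituting $x^2$ for $x$ in the recurrences of Theorem~\ref{thm:genfib} gives
\[
p_m(x^2)=p_{m-1}(x^2)\,q_{m-1}(x^2)=-xT_{m-1}(x)S_{m-1}(x)=-xT_m(x),
\]
and
\[
q_m(x^2)=q_{m-1}(x^2)^2-\frac{p_{m-1}(x^2)^2}{x^2}=S_{m-1}(x)^2-\frac{x^2T_{m-1}(x)^2}{x^2}=S_m(x).
\]
This closes the induction.

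The only points needing care are the sign bookkeeping in the base case (the reason $m=2$ must be excluded) and confirming that the recurrences and initial conditions quoted for \seqnum{A147985} and \seqnum{A147986} are exactly the ones stated in the corollary. Beyond that the argument is routine, and I expect no real obstacle.
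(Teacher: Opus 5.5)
Your proposal is correct and follows the paper's proof essentially verbatim: check $m=3$ directly, then induct using the recurrences with $x^2$ substituted for $x$, where the $x^2$ in $p_{m-1}(x^2)^2=x^2T_{m-1}(x)^2$ cancels the division by $x^2$. Your explicit base-case computation and your explanation of why $m=2$ is excluded go beyond the paper's ``easily checked'' base case; at $m=2$ both identities are off by a sign, and these signs cancel at $m=3$ through the product and the squares.
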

\begin{proof}
    The claimed fact can be easily checked for $m=3$, which serves as a base case for an induction argument. 
    Now, suppose inductively that for some $m\geq4$, $p_{m-1}(x^2)=-xT_{m-1}(x)$ and $q_{m-1}(x^2)=S_m(x)$. 
    We have
    \[
    p_m(x^2)=p_{m-1}(x^2)q_{m-1}(x^2)=-xT_{m-1}(x)S_{m-1}(x)=-xT_m(x)
    \]
    and
    \begin{align*}
    q_m(x^2)&=q_{m-1}(x^2)^2-\frac{p_{m-1}(x^2)^2}{x^2}=S_{m-1}(x)^2-\frac{\left(-xT_{m-1}(x)\right)^2}{x^2}\\
    &=S_{m-1}(x)^2-T_{m-1}(x)^2=S_m(x),
    \end{align*}
    as required.
\end{proof}
The second corollary describes the structure of the counting sequences.
\begin{corollary}
    For $m\geq2$, the sequence $t^{(m)}_{A_m}(n)$ for $n\geq1$ satisfies a homogeneous linear recurrence relation of order $2^{m-2}$ with constant coefficients. Moreover, the characteristic polynomial of this recurrence has coefficients coming from \seqnum{A147990} and is palindromic for $m\geq3$.
\end{corollary}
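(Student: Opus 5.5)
Since $t^{(m)}_{A_m}(n)$ is the coefficient of $x^n$ in $\gfnoa{m}=p_m(x)/q_m(x)$, I will read off the recurrence from the denominator $q_m(x)$. A rational power series $P/Q$ with $Q(0)\neq 0$ and $\deg P\le \deg Q=d$ has coefficients $c_n$ that satisfy the linear recurrence encoded by $Q$ for all $n>\deg P$. Multiplying by $Q(x)=\sum_{k=0}^{d} \gamma_k x^k$ and comparing coefficients of $x^n$ gives $\sum_k \gamma_k c_{n-k}=0$ for $n>\deg P$. So the plan is to establish three facts: $q_m(0)\neq 0$, the degree of $q_m$, and the palindromicity of the reversed polynomial.

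First I track degrees and constant terms by induction, using the recursion of Theorem~\ref{thm:genfib}. The proof of that theorem records $\deg p_m=\deg q_m=2^{m-1}$ for $m\ge 2$, and this is too large by a factor of two. So I will instead pass through Corollary~\ref{cor:recippolys}: $q_m(x^2)=S_m(x)$ and $p_m(x^2)=-xT_m(x)$, hence the true degree of $q_m$ in $x$ is $\deg S_m/2$. From $S_1=x$, $T_1=1$ and $S_m=S_{m-1}^2-T_{m-1}^2$, $T_m=S_{m-1}T_{m-1}$, induction gives $\deg S_m=2^{m-1}$ and $\deg T_m=2^{m-1}-1$. Thus $\deg q_m=2^{m-2}$ and $\deg p_m=2^{m-2}$.

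I will reconcile this with the earlier degree bookkeeping by checking small cases directly. For $m=2$: $p_2=x$ and $q_2=1-x$, giving $t=1,1,1,\ldots$, which matches Entry~2. For $m=3$: $p_3=x(1-x)$ and $q_3=(1-x)^2-x=1-3x+x^2$, whose coefficients give the Fibonacci numbers $F_{2n-1}$ and recurrence order $2=2^{1}$. In general $q_m(0)=q_{m-1}(0)^2-\lim_{x\to0} p_{m-1}(x)^2/x=1-0=1$, since $x\mid p_{m-1}$. Hence the denominator can be normalized with constant term $1$, and the recurrence has order exactly $\deg q_m=2^{m-2}$. One caveat is that the recurrence holds for $n>\deg p_m$; to get it for all $n$ beyond the initial window I would note that $\deg p_m\le\deg q_m$, so it is valid from index $2^{m-2}+1$ on. This is the standard meaning of satisfying a recurrence of order $d$.

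The characteristic polynomial is the reversal $x^{d}q_m(1/x)$ with $d=2^{m-2}$. Palindromicity of the characteristic polynomial is equivalent to $q_m$ itself being palindromic, i.e.\ $x^{d}q_m(1/x)=q_m(x)$. I expect this step to be the main obstacle. I would prove it jointly by induction with an antipalindromic (or shifted-palindromic) property of $p_m$, guessing from $m=3$ that $x^{d}p_m(1/x)=\pm x\cdot(\cdots)$. Concretely, the claim is $x^{2^{m-2}}p_m(1/x)=-p_m(x)$ up to the appropriate power of $x$; for $m=3$, $x^2p_3(1/x)=x-1\cdot$ stuff, and I would check how $x(1-x)$ transforms.

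The cleaner route is via the substitution $x\mapsto x^2$. There, palindromicity of $q_m$ means $S_m(x)=x^{2^{m-1}}S_m(1/x)$. I would prove by induction from the $S,T$ recurrences that $S_m$ is palindromic and $T_m$ satisfies $x^{2^{m-1}-1}T_m(1/x)=\pm T_m(x)$ (with the signs fixed so that $S_{m-1}^2$ and $T_{m-1}^2$ transform compatibly), starting from $m=3$ where $S_3=x^4-3x^2+1$ and $T_3=x(x^2-1)$. The link to \seqnum{A147990} is then just identifying the coefficient list of $q_m$ with that entry.
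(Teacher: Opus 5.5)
Your derivation of the recurrence is correct and matches the paper's argument; the paper simply reads the order off $\deg q_m=2^{m-2}$. Your extra checks supply what it leaves implicit: $q_m(0)=1$; $\deg p_m\le \deg q_m$, so the recurrence holds from $n=2^{m-2}+1$ on; and the characteristic polynomial is $x^{2^{m-2}}q_m(1/x)$. You are also right that the degree $2^{i-1}$ quoted in the proof of Theorem~\ref{thm:genfib} is off by a factor of two.

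For palindromicity the paper gives no argument; it only appeals to properties of \seqnum{A147985} and \seqnum{A147990}. A proof would therefore be a real addition, but the one you sketch fails as stated.

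Your invariant $x^{2^{m-1}-1}T_m(1/x)=\pm T_m(x)$ is false already at your base case: $T_3=x^3-x$ gives $x^3T_3(1/x)=1-x^2$. In general, since $x\mid T_m$ for $m\ge 2$, the reversal of $T_m$ about its own degree has nonzero constant term, so it can never equal $\pm T_m$. Even granting the invariant, squaring it gives $x^{2^{m-1}-2}T_{m-1}(1/x)^2=T_{m-1}(x)^2$. Concluding $x^{2^{m-1}}S_m(1/x)=S_m(x)$ requires the exponent $2^{m-1}$, so the induction does not close. Your direct route stalls at the same point (``up to the appropriate power of $x$'').

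The fix is to reflect one degree higher. With $d_m=2^{m-2}$, the correct joint statement is
\[
x^{d_m}q_m(1/x)=q_m(x),\qquad x^{d_m+1}p_m(1/x)=-p_m(x)\qquad(m\ge 3),
\]
equivalently $x^{2^{m-1}}T_m(1/x)=-T_m(x)$ on the $S,T$ side. Both identities hold for $q_3=1-3x+x^2$ and $p_3=x-x^2$, and they propagate because $d_{m+1}=2d_m$:
\begin{itemize}
\item The reversal of $p_m^2/x$ with respect to $2d_m$ is
\[
x^{2d_m}\cdot x\,p_m(1/x)^2=\frac{\bigl(x^{d_m+1}p_m(1/x)\bigr)^2}{x}=\frac{p_m(x)^2}{x}.
\]
Hence $q_m^2$ and $p_m^2/x$, and therefore $q_{m+1}$, are palindromic with respect to $d_{m+1}$.
\item For $p_{m+1}$ we get $x^{d_{m+1}+1}p_{m+1}(1/x)=x^{d_m+1}p_m(1/x)\cdot x^{d_m}q_m(1/x)=-p_{m+1}(x)$.
\end{itemize}
At $m=2$ one has $xq_2(1/x)=-q_2(x)$ and $x^2p_2(1/x)=p_2(x)$. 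Squaring removes the sign, which is why palindromicity starts at $m=3$.

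With this correction your argument proves more than the paper does. The identification with \seqnum{A147990} remains a lookup in both.
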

\begin{proof}
    By Theorem~\ref{thm:genfib}, we have that the generating function $\gfnoa{m}=\frac{p_m(x)}{q_m(x)}$, where $p_m(x)$ and $q_m(x)$ are defined in the statement of Theorem~\ref{thm:genfib}. Looking at the recurrences defining those polynomials, the degree of $q_m(x)$ is $2^{m-2}$ when $m\geq2$. This immediately implies that the sequence $t^{(m)}_{A_m}(n)$ for $n\geq1$ satisfies a homogeneous linear recurrence relation of order $2^{m-2}$ with constant coefficients. By Corollary~\ref{cor:recippolys}, the coefficients of $q_m(x)$ come from sequence \seqnum{A147985} with some zeroes removed. The result is precisely sequence \seqnum{A147990}. Based on properties of these sequences, we have that $q_m(x)$ is palindromic for all $m\geq3$.
\end{proof}

\subsection{Family 6}

A related phenomenon occurs when the coloring rules from Family~5 are modified so that each color is also allowed to follow itself. This idea generalizes Entry~6 in Table~\ref{tb:2cols} and Entry~33 in Appendix~\ref{app:3by3}. Now, the generating functions are not rational, but it turns out that the algebraic relations satisfied by the generating functions themselves are related to the same group of OEIS sequences. Sequences \seqnum{A147987} and \seqnum{A147988} define families of polynomials $P_m(x)$ and $Q_m(x)$ for $m\geq1$ recursively as follows:
\[
    P_m(x)=\begin{cases}
        x & m = 1\\
        P_{m-1}(x)^2+Q_{m-1}(x)^2 & m>1
    \end{cases}
    \quad \text{and} \quad
    Q_m(x)=\begin{cases}
        1 & m = 1\\
        P_{m-1}(x)\cdot Q_{m-1}(x) & m>1.
    \end{cases}
    \]

Define $B_m$ to be the matrix with zeroes above the main diagonal and ones on and below the main diagonal. This is the matrix for the $m$-color rule where each color can be followed by any color with lesser or equal value. As with the $A_m$ rule, for this coloring rule color~$m$ can only appear as the root color. It immediately follows here too that the number of colored trees with any given root color does not depend on the total number of colors available (provided there are enough colors to use that color for the root). With this in mind, throughout our discussion of this family of rules we also drop subscripts on generating functions. That is, from here onward $\gfnoa{i}$ stands for $\gff{i}{B_j}$ for any $j\geq i$.

The summative result is the following.
\begin{theorem}\label{thm:gen33}   
    For $m\geq1$ we have that the generating function $\gfnoa{m}$ satisfies the following polynomial relation:
    \[
    x^{2^{m-1}}\left(\frac{1}{\sqrt{x}}Q_{m+1}\!\left(\frac{\gfnoa{m}}{\sqrt{x}}\right)-P_{m+1}\!\left(\frac{\gfnoa{m}}{\sqrt{x}}\right)\right).
    \]
\end{theorem}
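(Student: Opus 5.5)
The plan is to reduce the statement to a one-step recursion between consecutive root-color generating functions, and then iterate it using the polynomials $P_m,Q_m$. The polynomial expression in Theorem~\ref{thm:gen33} is meant to vanish, i.e. the relation is $\ldots=0$. I will work in the ring of formal Laurent series in $\sqrt{x}$, which is legitimate because every $\gfnoa{k}$ has leading term $x$ and is therefore invertible there.

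\textbf{Step 1: the system for the $\gfnoa{k}$.} By Theorem~\ref{prop:gfgeneral} applied to $B_m$, row $k$ of $B_m$ has ones exactly in columns $1,\dots,k$. Hence
\[
\gfnoa{k}=\frac{x}{1-\sum_{i=1}^{k}\gfnoa{i}} .
\]
Rearranging gives
\[
1-\sum_{i=1}^{k-1}\gfnoa{i}=\gfnoa{k}+\frac{x}{\gfnoa{k}}
\]
for every $k\ge 1$, with an empty sum when $k=1$. Subtracting this identity for $k-1$ from the one for $k$ cancels the common sum, and one obtains
\[
\frac{x}{\gfnoa{k-1}}=\gfnoa{k}+\frac{x}{\gfnoa{k}} \qquad (k\ge2).
\]

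\textbf{Step 2: normalise.} Put $y_k=\gfnoa{k}/\sqrt{x}$ and $\psi(r)=r+1/r$. The relation from Step 1 becomes $1/y_{k-1}=\psi(y_k)$. The case $k=1$ of the rearranged identity becomes $1/\sqrt{x}=\psi(y_1)$. Since $\psi(1/r)=\psi(r)$, we get
\[
\psi(y_{k-1})=\psi(1/y_{k-1})=\psi(\psi(y_k)).
\]
Iterating from $k=m$ down to $k=1$ therefore yields
\[
\frac{1}{\sqrt{x}}=\psi(y_1)=\psi^{m}(y_m).
\]

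\textbf{Step 3: identify $\psi^{m}$ with $P_{m+1}/Q_{m+1}$.} By induction on $j$, one shows $P_j(r)/Q_j(r)=\psi^{j-1}(r)$. The base case is $P_1/Q_1=r$. For the inductive step, with $s=P_{j-1}/Q_{j-1}$,
\[
\frac{P_j}{Q_j}=\frac{P_{j-1}^2+Q_{j-1}^2}{P_{j-1}Q_{j-1}}=s+\frac{1}{s}=\psi(s).
\]
Hence $1/\sqrt{x}=P_{m+1}(y_m)/Q_{m+1}(y_m)$, which is equivalent to
\[
\frac{1}{\sqrt{x}}\,Q_{m+1}(y_m)-P_{m+1}(y_m)=0.
\]

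\textbf{Step 4: clear the half-integer powers.} Multiplying by $x^{2^{m-1}}$ gives the stated relation. It remains to check that the result is a genuine polynomial in $x$ and $\gfnoa{m}$. The inductive claim is:
\begin{itemize}
\item $\deg P_j=2^{j-1}$ and $\deg Q_j=2^{j-1}-1$;
\item $P_j$ contains only monomials of the same parity as $2^{j-1}$, and $Q_j$ only monomials of the opposite parity.
\end{itemize}
Consider a monomial $y^d$ of $P_{m+1}$ evaluated at $y=\gfnoa{m}/\sqrt{x}$ and multiplied by $x^{2^{m-1}}$. It becomes $\gfnoa{m}^d\,x^{(2^m-d)/2}$, and the exponent of $x$ is a nonnegative integer by the parity claim. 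The same computation applies to the $Q_{m+1}/\sqrt{x}$ term.

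\textbf{Main obstacle.} The step needing most care is Step 2. Each equation determines $y_{k-1}$ as a function of $y_k$, not the other way round, so the iteration of $\psi$ composes the right way only because of the symmetry $\psi(1/r)=\psi(r)$. The remaining concern is formal legitimacy, namely that the divisions by $\gfnoa{k}$ and the use of $\sqrt{x}$ are valid. This holds because each $\gfnoa{k}=x+O(x^2)$ is invertible in Laurent series, and the final identity lives in $\mathbb{Q}[[x]]$.
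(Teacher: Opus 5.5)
Your proof is correct and is essentially the paper's argument. The paper's quantity $R_j=-\sqrt{x}\,P_{j+1}(y_m)/Q_{j+1}(y_m)$ in Lemma~\ref{lem:gen33} equals $-x/F^{(m-j)}(x)=-\sqrt{x}\,\psi^{j}(y_m)$, and its recursion $R_j=R_{j-1}+x/R_{j-1}$ is your one-step relation $1/y_{k-1}=\psi(y_k)$ combined with $\psi(1/r)=\psi(r)$; it then closes with $F^{(1)}(x)\bigl(1-F^{(1)}(x)\bigr)=x$, which is your $\psi(y_1)=1/\sqrt{x}$. Your normalization by $\psi(r)=r+1/r$ is a cleaner packaging of the same computation, and your parity and degree check in Step~4 verifies that the result is a genuine polynomial, which the paper only asserts.
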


The following lemmas are key to proving Theorem~\ref{thm:gen33}.
\begin{lemma}\label{lem:gen33gf}
    For all $m\geq1$, we have
    \[
    \gfnoa{m}\cdot\left(1-\sum_{i=1}^{m}\gfnoa{i}\right)=x.
    \]
\end{lemma}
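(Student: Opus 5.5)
This is a direct application of Theorem~\ref{prop:gfgeneral} to the matrix $B_m$, using the convention that $\gfnoa{i}$ stands for $\gff{i}{B_j}$ for any $j\geq i$. Fix $m\geq1$ and work with the rule $B_m$, in which color~$m$ is the largest available color. The entries in row $m$ of $B_m$ are $b_{mj}=1$ for $j\leq m$, and there are no columns beyond $m$. Theorem~\ref{prop:gfgeneral} then gives
\[
\gff{m}{B_m}=x+\gff{m}{B_m}\sum_{j=1}^{m}\gff{j}{B_m}.
\]

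The one point that needs justification is that $\gff{j}{B_m}$ agrees with $\gfnoa{j}$ for every $j\leq m$. I would justify this through the block structure
\[
B_m=\left[\begin{array}{c|c}B_{j}&0\\\hline \ast&\ast\end{array}\right].
\]
Theorem~\ref{prop:blockul} applies to this decomposition and gives $\gff{j}{B_m}=\gff{j}{B_j}$. The same fact can also be seen directly: below a vertex of color $j$, only colors at most $j$ can occur, since the entries of $B_m$ above the diagonal are zero.

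With this identification, the displayed equation becomes
\[
\gfnoa{m}=x+\gfnoa{m}\sum_{i=1}^m\gfnoa{i}.
\]
Moving the sum term to the left-hand side gives $\gfnoa{m}\bigl(1-\sum_{i=1}^m\gfnoa{i}\bigr)=x$, which is the claimed identity.

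I do not expect a real obstacle here. The only subtlety is the consistency of the notation $\gfnoa{i}$ across different $B_j$, and Theorem~\ref{prop:blockul} handles it. The lemma itself is essentially a restatement of Theorem~\ref{prop:gfgeneral} for the last row of $B_m$.
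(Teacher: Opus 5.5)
Your proof is correct, but it takes a more direct route than the paper.

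You read the identity straight off Theorem~\ref{prop:gfgeneral} applied to the last row of $B_m$, which consists entirely of ones. You use Theorem~\ref{prop:blockul} only to justify that $\gff{j}{B_m}=\gfnoa{j}$ for $j\leq m$.

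The paper instead writes $B_m$ as $B_{m-1}$ bordered by a row of ones and invokes Theorem~\ref{prop:upsetonly} with $m'=1$. This gives a relation for the \emph{total} generating functions, $\gfll{B_m}=\gfll{B_{m-1}}+\gfll{B_m}^2-\gfll{B_{m-1}}\gfll{B_m}+x$. It then isolates $\gfnoa{m}=\gfll{B_m}-\gfll{B_{m-1}}$ to obtain $\gfnoa{m}=\gfll{B_m}\gfnoa{m}+x$.

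Your version is more elementary and uniform in $m$. It covers $m=1$ (where $B_1=[1]$ and the identity is the Catalan equation) without change, whereas the paper's block decomposition is stated only for $m\geq 2$. It also makes clear that the lemma is simply the root-color-$m$ equation of the system in Theorem~\ref{prop:gfgeneral}.

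The paper's route fits the block-matrix framework of Section~\ref{sec:new_from_old}, parallel to its use of Theorem~\ref{prop:rootonly} for Family~5. Underneath, both arguments rest on the same standard Catalan decomposition.
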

\begin{proof}
    Observe that $B_m$ is a block matrix for $m\geq2$:
    \[
    B_m=\left[\begin{array}{c|c}B_{m-1}&0\\\hline1&1\end{array}\right].
    \]
    Theorem~\ref{prop:upsetonly} now applies (with $m'=1$),
    which tells us that
    \[
    \gfll{B_m}=\gfll{B_{m-1}}+\gfll{B_m}^2-\gfll{B_{m-1}}\gfll{B_m}+x.
    \]
    Using the fact that $\gfnoa{m}=\gfll{B_m}-\gfll{B_{m-1}}$ yields
    \begin{align*}
    \gfnoa{m}&=\gfll{B_m}^2-\gfll{B_{m-1}}\gfll{B_m}+x\\
    &=\gfll{B_m}\left(\gfll{B_m}-\gfll{B_{m-1}}\right)+x=\gfll{B_m}\gfnoa{m}+x.
    \end{align*}
    Thus, $\gfnoa{m}\cdot\left(1-\gfll{B_m}\right)=x$.
    The fact that
    \[
    \gfll{B_m}=\sum_{i=1}^m\gfnoa{i}
    \]
    completes the proof.
\end{proof}
\begin{lemma}\label{lem:gen33}
    For any $m\geq2$ and $1\leq j<m$, we have
    \[
    \gfnoa{m-j}=\frac{-\sqrt{x}P_{j+1}\!\left(\frac{\gfnoa{m}}{\sqrt{x}}\right)}{Q_{j+1}\!\left(\frac{\gfnoa{m}}{\sqrt{x}}\right)}+1-\sum_{i=1}^{m-j-1}\gfnoa{i}.
    \]
\end{lemma}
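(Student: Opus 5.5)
The plan is to reformulate the claim in terms of the partial sums $S_k=\sum_{i=1}^{k}\gfnoa{i}$ and prove it by induction on $j$, using only Lemma~\ref{lem:gen33gf}. Moving $\gfnoa{m-j}$ to the left side and combining it with the sum, the statement is equivalent to
\[
1-S_{m-j}=\sqrt{x}\,\frac{P_{j+1}(y)}{Q_{j+1}(y)},\qquad\text{where } y=\frac{\gfnoa{m}}{\sqrt{x}}.
\]
So I would define $z_j=(1-S_{m-j})/\sqrt{x}$ for $0\le j<m$ and aim to show $z_j=P_{j+1}(y)/Q_{j+1}(y)$ for $1\le j<m$. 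All of these objects live in the ring of formal Laurent series in $\sqrt{x}$, so every division below is legitimate. Indeed, each $S_k$ has zero constant term, so $1-S_k$ is invertible. Also $\gfnoa{k}=x+O(x^2)$, so $y$ is a nonzero series and $1/y$ makes sense.

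\textbf{Step 1 (one-step recurrence).} Lemma~\ref{lem:gen33gf} applied to color $k$ (the lemma holds for every $m\ge1$, hence for every $k\le m$) gives $\gfnoa{k}=x/(1-S_k)$. Since $S_{k-1}=S_k-\gfnoa{k}$, we get
\[
1-S_{k-1}=(1-S_k)+\frac{x}{1-S_k}.
\]
Dividing by $\sqrt{x}$ and setting $k=m-j$ yields $z_{j+1}=z_j+1/z_j$ for $0\le j<m-1$.

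\textbf{Step 2 (initial value).} For $k=m$, Lemma~\ref{lem:gen33gf} gives $1-S_m=x/\gfnoa{m}=\sqrt{x}/y$, so $z_0=1/y$. This does not match $P_1(y)/Q_1(y)=y$, which is why the lemma starts at $j=1$. Applying Step 1 once gives $z_1=y+1/y=(y^2+1)/y=P_2(y)/Q_2(y)$, which is the base case $j=1$.

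\textbf{Step 3 (induction).} Suppose $z_j=P_{j+1}(y)/Q_{j+1}(y)$. Then
\[
z_{j+1}=\frac{P_{j+1}}{Q_{j+1}}+\frac{Q_{j+1}}{P_{j+1}}=\frac{P_{j+1}^2+Q_{j+1}^2}{P_{j+1}Q_{j+1}}=\frac{P_{j+2}(y)}{Q_{j+2}(y)},
\]
by the defining recurrences for $P_m$ and $Q_m$. Unwinding the definition of $z_j$ then gives the formula in the statement for all $1\le j<m$.

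The main obstacle is bookkeeping rather than anything deep. The indexing must be right: the identity genuinely fails at $j=0$, so the induction has to start at $j=1$. One must also justify working with $\sqrt{x}$ and with the quotient $P_{j+1}/Q_{j+1}$. For the latter, $Q_{j+1}(y)$ is nonzero because $z_j$ is a well-defined nonzero Laurent series, and $P_{j+1}(y)$ is nonzero because $z_j$ is invertible. Once the statement is rephrased as $z_{j+1}=z_j+1/z_j$, the recurrence for $(P,Q)$ is exactly the map $r\mapsto r+1/r$ on the ratio $P/Q$.
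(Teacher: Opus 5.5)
Your proof is correct and is essentially the paper's argument. Both run an induction on $j$ from $j=1$, apply Lemma~\ref{lem:gen33gf} at color $m-j$ to get $R_j=R_{j-1}+x/R_{j-1}$ for $R_j=-\sqrt{x}\,P_{j+1}(y)/Q_{j+1}(y)$, and realize this update on $P/Q$ via the defining recurrences; your $z_{j+1}=z_j+1/z_j$ is the same step, since $R_j=S_{m-j}-1=-\sqrt{x}\,z_j$. The only thing worth tidying is the nonvanishing remark, which is cleaner run inductively: $Q_{j+1}(y)=P_j(y)Q_j(y)$, with $P_1(y)=y\neq 0$ and $P_j(y)=z_{j-1}Q_j(y)\neq 0$ for $j\ge 2$.
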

\begin{proof}
    We prove the lemma by induction on $j$. As a base case, we verify the statement when $j=1$. Starting from Lemma~\ref{lem:gen33gf}, we have
    \begin{align}
        &\nonumber \gfnoa{m}\cdot\left(1-\sum_{i=1}^{m}\gfnoa{i}\right)=x\\
        &\nonumber \Rightarrow\gfnoa{m}\cdot\left(1-\sum_{i=1}^{m-2}\gfnoa{i}-\gfnoa{m-1}-\gfnoa{m}\right)=x\\
        &\nonumber \Rightarrow1-\sum_{i=1}^{m-2}\gfnoa{i}-\gfnoa{m-1}-\gfnoa{m}=\frac{x}{\gfnoa{m}}\\
        &\nonumber \Rightarrow\gfnoa{m-1}=-\frac{x}{\gfnoa{m}}-\gfnoa{m}+1-\sum_{i=1}^{m-2}\gfnoa{i}\\
        &\Rightarrow\gfnoa{m-1}=\frac{-x-\gfnoa{m}^2}{\gfnoa{m}}+1-\sum_{i=1}^{m-2}\gfnoa{i}\label{eq:firstgfexpression}.
    \end{align}
    Next, we observe that $P_2(x)=P_1(x)^2+Q_1(x)^2=x^2+1$ and $Q_2(x)=P_1(x)\cdot Q_1(x)=x$. This implies
    \[
    P_2\!\left(\frac{\gfnoa{m}}{\sqrt{x}}\right)=\frac{\gfnoa{m}^2}{x}+1
    \]
    and
    \[
    Q_2\!\left(\frac{\gfnoa{m}}{\sqrt{x}}\right)=\frac{\gfnoa{m}}{\sqrt{x}}.
    \]
    By substitution followed by algebraic manipulation, we have 
    \begin{align}
    \frac{-\sqrt{x}P_{2}\!\left(\frac{\gfnoa{m}}{\sqrt{x}}\right)}{Q_{2}\!\left(\frac{\gfnoa{m}}{\sqrt{x}}\right)}&=\frac{-\sqrt{x}\cdot\left(\frac{\gfnoa{m}^2}{x}+1\right)}{\frac{\gfnoa{m}}{\sqrt{x}}}
    =\frac{-x\cdot\left(\frac{\gfnoa{m}^2}{x}+1\right)}{\gfnoa{m}}
    =\frac{-x-\gfnoa{m}^2}{\gfnoa{m}}\label{eq:secondgfexpression}.
    \end{align}
    Thus, we have, by substituting the last expression from~\eqref{eq:secondgfexpression} into~\eqref{eq:firstgfexpression}, 
    \[
    \gfnoa{m-1}=\frac{-\sqrt{x}P_{2}\!\left(\frac{\gfnoa{m}}{\sqrt{x}}\right)}{Q_{2}\!\left(\frac{\gfnoa{m}}{\sqrt{x}}\right)}+1-\sum_{i=1}^{m-2}\gfnoa{i},
    \]
    as required.

    For ease of notation in the inductive step, define
    \[
    R_j=\frac{-\sqrt{x}P_{j+1}\!\left(\frac{\gfnoa{m}}{\sqrt{x}}\right)}{Q_{j+1}\!\left(\frac{\gfnoa{m}}{\sqrt{x}}\right)}.
    \]
    For some $j\geq2$, suppose inductively that
    \[
    \gfnoa{m-j+1}=R_{j-1}+1-\sum_{i=1}^{m-j}\gfnoa{i}.
    \]
    By Lemma~\ref{lem:gen33gf}, we have
    \[
    \gfnoa{m-j+1}\cdot\left(1-\sum_{i=1}^{m-j+1}\gfnoa{i}\right)=x,
    \]
    which implies that
    \[
    \frac{x}{\gfnoa{m-j+1}}=1-\sum_{i=1}^{m-j+1}\gfnoa{i}=1-\sum_{i=1}^{m-j}\gfnoa{i}-\gfnoa{m-j+1}.
    \]
    By the inductive hypothesis, we have
    \[
    \frac{x}{R_{j-1}+1-\sum_{i=1}^{m-j}\gfnoa{i}}=1-\sum_{i=1}^{m-j}\gfnoa{i}-\left(R_{j-1}+1-\sum_{i=1}^{m-j}\gfnoa{i}\right)=-R_{j-1}.
    \]
    Rearranging terms, we see that
    \[
    -\frac{x}{R_{j-1}}=R_{j-1}+1-\sum_{i=1}^{m-j}\gfnoa{i}=R_{j-1}+1-\sum_{i=1}^{m-j-1}\gfnoa{i}-\gfnoa{m-j},
    \]
    and thus,
    \[
    \gfnoa{m-j}=R_{j-1}+\frac{x}{R_{j-1}}+1-\sum_{i=1}^{m-j-1}\gfnoa{i}.
    \]

    To conclude the proof, it suffices to show that
    \[
    R_j=R_{j-1}+\frac{x}{R_{j-1}}=\frac{R_{j-1}^2+x}{R_{j-1}}.
    \]
    We have
    \begin{align*}
    R_{j-1}^2+x&=\left(\frac{-\sqrt{x}P_j\!\left(\frac{\gfnoa{m}}{\sqrt{x}}\right)}{Q_{j}\!\left(\frac{\gfnoa{m}}{\sqrt{x}}\right)}\right)^2+x
    \\
    &=\frac{xP_{j}\!\left(\frac{\gfnoa{m}}{\sqrt{x}}\right)^2}{Q_{j}\!\left(\frac{\gfnoa{m}}{\sqrt{x}}\right)^2}+\frac{xQ_{j}\!\left(\frac{\gfnoa{m}}{\sqrt{x}}\right)^2}{Q_{j}\!\left(\frac{\gfnoa{m}}{\sqrt{x}}\right)^2}
    =\frac{xP_{j+1}\left(\frac{\gfnoa{m}}{\sqrt{x}}\right)}{Q_{j}\!\left(\frac{\gfnoa{m}}{\sqrt{x}}\right)^2}.
    \end{align*}
    Dividing through by $R_{j-1}$ gives
    \begin{align*}
        \frac{R_{j-1}^2+x}{R_{j-1}}&=\frac{xP_{j+1}\left(\frac{\gfnoa{m}}{\sqrt{x}}\right)}{Q_{j}\!\left(\frac{\gfnoa{m}}{\sqrt{x}}\right)^2}\cdot\frac{Q_{j}\!\left(\frac{\gfnoa{m}}{\sqrt{x}}\right)}{-\sqrt{x}P_j\!\left(\frac{\gfnoa{m}}{\sqrt{x}}\right)}
        =\frac{xP_{j+1}\left(\frac{\gfnoa{m}}{\sqrt{x}}\right)}{-\sqrt{x}P_{j}\!\left(\frac{\gfnoa{m}}{\sqrt{x}}\right)Q_{j}\!\left(\frac{\gfnoa{m}}{\sqrt{x}}\right)}\\
        &=\frac{-\sqrt{x}P_{j+1}\!\left(\frac{\gfnoa{m}}{\sqrt{x}}\right)}{Q_{j+1}\!\left(\frac{\gfnoa{m}}{\sqrt{x}}\right)}
        =R_j,
    \end{align*}
    as required.
\end{proof}

Now, we are ready to prove Theorem~\ref{thm:gen33}.
\begin{proof}[Proof of Theorem~\ref{thm:gen33}]
    First, let $m=1$. 
    We have $P_2(x)=x^2+1$ and $Q_2(x)=x$. So,
    \begin{align*}
        x^{2^{m-1}}&\left(\frac{1}{\sqrt{x}}Q_{m+1}\!\left(\frac{\gfnoa{m}}{\sqrt{x}}\right)-P_{m+1}\!\left(\frac{\gfnoa{m}}{\sqrt{x}}\right)\right) \\
        &=x\left(\frac{1}{\sqrt{x}}\cdot\frac{\gfnoa{1}}{\sqrt{x}}-\left(\left(\frac{\gfnoa{1}}{\sqrt{x}}\right)^2+1\right)\right)
        =\gfnoa{1}-\gfnoa{1}^2-x.
    \end{align*}
    This yields $\gfnoa{1}\cdot\left(1-\gfnoa{1}\right)=x$, which agrees with Lemma~\ref{lem:gen33gf}.
    
    Now, suppose $m\geq2$. By Lemma~\ref{lem:gen33},
    \[
    \gfnoa{1}=\frac{-\sqrt{x}P_{m}\!\left(\frac{\gfnoa{m}}{\sqrt{x}}\right)}{Q_{m}\!\left(\frac{\gfnoa{m}}{\sqrt{x}}\right)}+1=1-\frac{x^{2^{m-2}}P_m\!\left(\frac{\gfnoa{m}}{\sqrt{x}}\right)}{x^{2^{m-2}-\frac{1}{2}}Q_m\!\left(\frac{\gfnoa{m}}{\sqrt{x}}\right)},
    \]
    where the latter expression has polynomials in the numerator and denominator of its fraction. By Lemma~\ref{lem:gen33gf}, $\gfnoa{1}\cdot\left(1-\gfnoa{1}\right)=x$. Then,
    \[
    x=\left(1-\frac{x^{2^{m-2}}P_m\!\left(\frac{\gfnoa{m}}{\sqrt{x}}\right)}{x^{2^{m-2}-\frac{1}{2}}Q_m\!\left(\frac{\gfnoa{m}}{\sqrt{x}}\right)}\right)\cdot\frac{x^{2^{m-2}}P_m\!\left(\frac{\gfnoa{m}}{\sqrt{x}}\right)}{x^{2^{m-2}-\frac{1}{2}}Q_m\!\left(\frac{\gfnoa{m}}{\sqrt{x}}\right)}.
    \]
    If we were to combine this expression into a single rational expression, the denominator would be $x^{2^{m-1}-1}Q_m\!\left(\frac{\gfnoa{m}}{\sqrt{x}}\right)^2$. We care about the numerator of this rational expression, as this numerator is a polynomial satisfied by $\gfnoa{m}$. The numerator would be
    \begin{align*}
        &\phantom{=}x^{2^{m-1}-\frac{1}{2}}P_m\!\left(\frac{\gfnoa{m}}{\sqrt{x}}\right)Q_m\!\left(\frac{\gfnoa{m}}{\sqrt{x}}\right)-x^{2^{m-1}}P_m\!\left(\frac{\gfnoa{m}}{\sqrt{x}}\right)^2-x^{2^{m-1}}Q_m\!\left(\frac{\gfnoa{m}}{\sqrt{x}}\right)^2\\
        &=x^{2^{m-1}}\left(\frac{1}{\sqrt{x}}P_m\!\left(\frac{\gfnoa{m}}{\sqrt{x}}\right)Q_m\!\left(\frac{\gfnoa{m}}{\sqrt{x}}\right)-\left(P_m\!\left(\frac{\gfnoa{m}}{\sqrt{x}}\right)^2+Q_m\!\left(\frac{\gfnoa{m}}{\sqrt{x}}\right)^2\right)\right)\\
        &=x^{2^{m-1}}\left(\frac{1}{\sqrt{x}}Q_{m+1}\!\left(\frac{\gfnoa{m}}{\sqrt{x}}\right)-P_{m+1}\!\left(\frac{\gfnoa{m}}{\sqrt{x}}\right)\right),
    \end{align*}
    as required.
\end{proof}

\section{Future Work}\label{sec:future}

This paper illustrates the richness of combinatorial results related to colored plane trees. 
We have classified all coloring rules using at most three colors, and we have several results that apply to certain rules with more colors. A bulk of natural further questions are worth considering.

An obvious next step would be to carry out an exhaustive analysis of the coloring rules with four colors, akin to  Appendix~\ref{app:3by3}. Preliminary computation indicates that there are $2037$ tree coloring equivalence classes of $4\times 4$ matrices and $2079$ strong tree coloring equivalence classes. The same computation yields that the excess $42$ strong tree coloring equivalence classes are found within $39$ of the tree coloring equivalence classes; there are three such classes that include three different matrices, no two of which are strongly tree coloring equivalent. 
Several of the $39$ tree coloring equivalence classes covering multiple strong classes come from combining Entry~15 or Entry~29 from Appendix~\ref{app:3by3} with results from Section~\ref{sec:new_from_old}, and one corresponds to Family~4 from Section~\ref{sec:special}.

In addition, there remain some combinatorial questions related to the objects we have studied. For example, Entry~15 in Appendix~\ref{app:3by3} has two matrices that are not strongly tree coloring equivalent, but we exhibit a direct bijection between trees colored according to those matrices. Does a similar ``nice'' bijection exist for Entry~29 (the other entry with two matrices that are not strongly tree coloring equivalent)? A direct bijective proof of the fact that $(2^{n}+1)C_{n-1}$ counts the trees colored according to the first matrix there, is also of interest. Currently, we have a proof which is partially bijective. 

Other combinatorial questions arise from looking at the OEIS. For example, for the matrix $A$ in Entry~71 of Appendix~\ref{app:3by3}, $t_{A}^{(3)}(n)$ is enumerated by \seqnum{A027307}. This sequence has a description in terms of lattice paths: ``Number of paths from $(0,0)$ to $(3n,0)$ that stay in first quadrant (but may touch horizontal axis) and where each step is $(2,1)$, $(1,2)$ or $(1,-1)$.'' Disallowing the $(2,1)$ step results in sequence \seqnum{A001764}, associated with Entry~7 in Table~\ref{tb:2cols}. We have a bijection (Appendix~\ref{app:ent7bij}) for that sequence with its lattice path interpretation. Is there a ``nice'' extension of that bijection to Entry~71 in Appendix~\ref{app:3by3}? And, is there some generalization that would apply with four or more colors?

Another route of inquiry would be to expand the study of the sequences $t_A(n,q)$ (and $t_A^{(i)}(n,q)$), where a certain number of coloring violations are allowed. We introduce these in this paper primarily to arrive at Corollary~\ref{cor:complement}, which provides extra insight into several of our results. But, it seems likely that rich combinatorics can be found in these sequences if studied in their own right.

Finally, there are undoubtedly several notable infinite families of coloring rules beyond those we introduce in Section~\ref{sec:special}. In particular, perhaps there are some families related to Family~5 where some of the ones are replaced with zeroes. Such coloring rules would also have rational generating functions, as they would also color trees of bounded height.

\bibliographystyle{abbrv}
\bibliography{main}

\appendix

\section{New bijective proof for Entry~7 in Table~\ref{tb:2cols}}\label{app:ent7bij}
Throughout this appendix, let $A=\begin{bmatrix}
    1&1\\1&0
\end{bmatrix}$, and let $\mathcal{T}_n^{A}$ denote the set of $n$-vertex trees ($n\geq1$) colored according to the rule $A$. This corresponds to Entry~7 in Table~\ref{tb:2cols}. Also, throughout this appendix, we refer to color~1 as ``blue'' and color~2 as ``white.''

Here, we exhibit a bijection between $\mathcal{T}_n^{A}$ and the union of the following two sets of Dyck paths:
\begin{itemize}
    \item Dyck paths of length $4n-2$ where all ascents are of even length except for the first one (in bijection with trees with blue root). This is enumerated by \seqnum{A006013}. 
    \item Dyck paths of length $4n-4$ where all ascents are of even length (in bijection with trees with white root). This is enumerated by \seqnum{A001764}. 
\end{itemize}
This union can succinctly be described as the set of Dyck paths of length $4n-4$ or $4n-2$ where all ascents are of even length except possibly the first one.

In any tree, define a \emph{downward path} as any path that starts at a vertex with at least two children, progresses one step downward to a non-leftmost child, and then proceeds from there via leftmost children to a leaf.
Note that the number of downward paths in any tree is one less than the number of leaves and that there is exactly one downward path ending in every leaf except for the leftmost leaf in the entire tree, for which no such path exists.
Let $\mathcal{E}_k$
denotes the set of (non-colored) rooted trees on $k$ vertices where each 
downward path has even length.
Note that if $k$ is even, the leftmost leaf is forced to be at odd depth, and if $k$ is odd, the leftmost leaf is forced to be at even depth. 
This is true since the edges of the tree can be decomposed into a disjoint union of the downward paths corresponding to its non-leftmost leaves along with a path from the root to the leftmost leaf.
We can
observe that $\mathcal{E}_k$ is in bijection with the set of Dyck paths of length $2k-2$ where all ascents are of even length except possibly the first one via the glove bijection between rooted trees and Dyck paths.
So, it suffices to biject elements of $\mathcal{T}_n^{A}$ having blue root with $\mathcal{E}_{2n}$ and elements of $\mathcal{T}_n^{A}$ having white root with $\mathcal{E}_{2n-1}$. We describe a recursive map $\tau:\mathcal{T}_n^{A}\to\mathcal{E}_{2n}\cup\mathcal{E}_{2n-1}$, together with its inverse $\tau^{-1}:\mathcal{E}_{2n}\cup\mathcal{E}_{2n-1}\to\mathcal{T}_n^{A}$. The map $\tau$ sends each vertex of a tree in $\mathcal{T}_n^A$ to a parent-child pair of vertices (an edge) in a tree in $\mathcal{E}_{2n}\cup\mathcal{E}_{2n-1}$, except when the vertex is a white root, which gets sent to a single vertex.

In our construction of $\tau$, we use the following piece of terminology:
    A \emph{left-sibling} of a vertex $v$ in a tree is another vertex $w$ with the same parent as $v$ where $w$ is to the left of $v$ in the ordered tree.
\begin{description}
    \item[Description of $\tau$] 
    Let $T\in\mathcal{T}_n^{A}$. We define the tree $\tau (T) = U\in \left(\mathcal{E}_{2n}\cup\mathcal{E}_{2n-1}\right)$ recursively. 
    If $n=1$ and the single vertex of $T$ is $x$, then $U$ is a single edge if $x$ is colored blue, and a single vertex if $x$ is colored white.
    If $n>1$, let $T'$ denote $T$ with its rightmost leaf deleted, and let $U' = \tau(T')\in\left(\mathcal{E}_{2n-2}\cup\mathcal{E}_{2n-3}\right)$.
    The rightmost leaf $v$ of $T$ has a parent $x$, and there is a pair of vertices $x_1$ and $x_2$ in $U'$ corresponding to $x$, with $x_2$ a child of $x_1$ (see Figure~\ref{fig:TU'}), unless $x$ is the root of $T$ and it is colored white. Then, there is only one corresponding vertex, say $x_2$, which is the root of $U'$.
    
    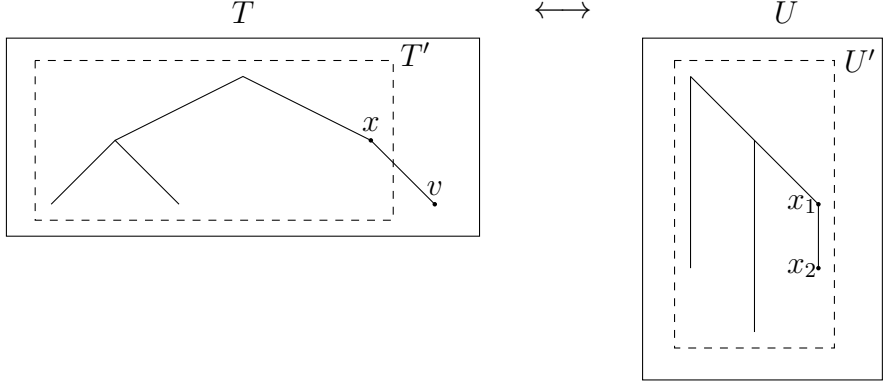
\begin{figure}[htbp]
        \begin{tikzpicture}[scale=0.85]
            \tikzset{blpoint/.style = {circle, fill=blue, draw=blue, inner sep=2pt, text=white}}
            \tikzset{wpoint/.style = {circle, fill=white, draw=black, inner sep=2pt}}
            \draw (-2,-1)--(0,0)--(2,-1)--(3,-2);
            \draw (-3,-2)--(-2,-1)--(-1,-2);

            \draw (-3.7,0.6) rectangle (3.7,-2.5);
            \draw[dashed] (-3.25,0.25) rectangle (2.35,-2.25);

            \node at (2.7,0.35) {$T'$};
            \node at (0,1) {$T$};
            \node at (2,-0.75) {$x$};
            \fill (2,-1) circle (1pt);
            \node at (3,-1.75) {$v$};
            \fill (3,-2) circle (1pt);
            \node at (5,1) {$\longleftrightarrow$};
            \node at (8.5,1) {$U$};
            
            \draw (7,0)--(7,-1)--(7,-2)--(7,-3);
            \draw (7,0)--(8,-1)--(8,-2)--(8,-3)--(8,-4);
            \draw (8,-1)--(9,-2)--(9,-3);

            \draw (6.25,0.6) rectangle (10,-4.75);
            \draw[dashed] (6.75,0.25) rectangle (9.25,-4.25);
            \node at (9.65,0.25) {$U'$};
            \node at (8.75,-2) {$x_1$};
            \fill (9,-2) circle (1pt);
            \node at (8.75,-3) {$x_2$};
            \fill (9,-3) circle (1pt);
            
        \end{tikzpicture}
        \caption{The image $U$ of $T$ is obtained from the image $U'$ of $T'$ ($T$ without its rightmost leaf).}
        \label{fig:TU'}
    \end{figure}
    
    It suffices to consider the three cases below. In each of them, we construct $U$ from $U'$ by adding a new edge $e$ between two new vertices $v_1$ and $v_2$, as described:
    \begin{description}
        \item[$x$ or $v$ is colored white] Note that either $x$ or $v$ is colored white, but not both (if $x$ is white, $v$ must blue). In both cases, add $e$, so that $v_1$ is the rightmost child of $x_2$.
        See Figure~\ref{fig:root1bij}, where $v=e$ (with $v_1=e_1$, $v_2=e_2$) and $x=a$ (with $x_1=a_1$, $x_2=a_2$), for an example.
        \item[$x$ and $v$ are blue, and $v$ has a left-sibling colored white] Let us write $w$ for the nearest left-sibling of $v$ colored white, and let $w_1$ and $w_2$ denote the vertices in $U'$ corresponding to $w$, where $w_1$ is the parent of $w_2$. Add $e$, so that $v_1$ is the rightmost child of $w_1$. See Figure~\ref{fig:root2bij}, where $v=h$ (with $v_1=h_1$, $v_2=h_2$) and $x=e$ (with $x_1=e_1$, $x_2=e_2$), for an example of this case, where $f$ is $v$'s nearest left-sibling that is colored white.        
        \item[$x$ and $v$ are blue, and $v$ has no left-sibling colored white] Add $e$, so that $v_1$ is the rightmost child of $x_1$.
        
    \end{description}

    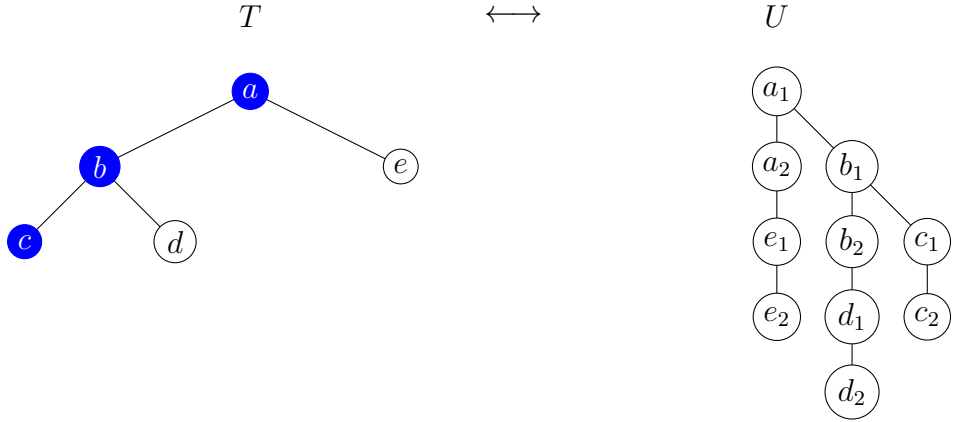
\begin{figure}[htbp]
        \begin{tikzpicture}
            \tikzset{blpoint/.style = {circle, fill=blue, draw=blue, inner sep=2pt, text=white}}
            \tikzset{wpoint/.style = {circle, fill=white, draw=black, inner sep=2pt}}
            \draw (-2,-1)--(0,0)--(2,-1);
            \draw (-3,-2)--(-2,-1)--(-1,-2);
            
            \node at (0,1) {$T$};
            \node at (3.5,1) {$\longleftrightarrow$};
            \node at (7,1) {$U$};
            
            \node[blpoint] at (0,0) {$a$};
            \node[blpoint] at (-2,-1) {$b$};
            \node[wpoint] at (2,-1) {$e$};
            \node[blpoint] at (-3,-2) {$c$};
            \node[wpoint] at (-1,-2) {$d$};
            
            \draw (7,0)--(7,-1)--(7,-2)--(7,-3);
            \draw (7,0)--(8,-1)--(8,-2)--(8,-3)--(8,-4);
            \draw (8,-1)--(9,-2)--(9,-3);
            
            \node[wpoint] at (7,0) {$a_1$};
            \node[wpoint] at (7,-1) {$a_2$};
            \node[wpoint] at (8,-1) {$b_1$};
            \node[wpoint] at (8,-2) {$b_2$};
            \node[wpoint] at (8,-3) {$d_1$};
            \node[wpoint] at (8,-4) {$d_2$};
            \node[wpoint] at (9,-2) {$c_1$};
            \node[wpoint] at (9,-3) {$c_2$};
            \node[wpoint] at (7,-2) {$e_1$};
            \node[wpoint] at (7,-3) {$e_2$};
            
        \end{tikzpicture}
        \caption{Example of the action of the bijection $\tau$. A colored tree $T$ with $5$ vertices 
        maps to a tree $U$ with $10$ vertices. Vertex labels in $T$ are in preorder, which is the order they are added when constructing $U$. $U$ contains two downward paths ($a_1,b_1,b_2,d_1,d_2$ and $b_1,c_1,c_2$), both of even length.}\label{fig:root1bij}
    \end{figure}

    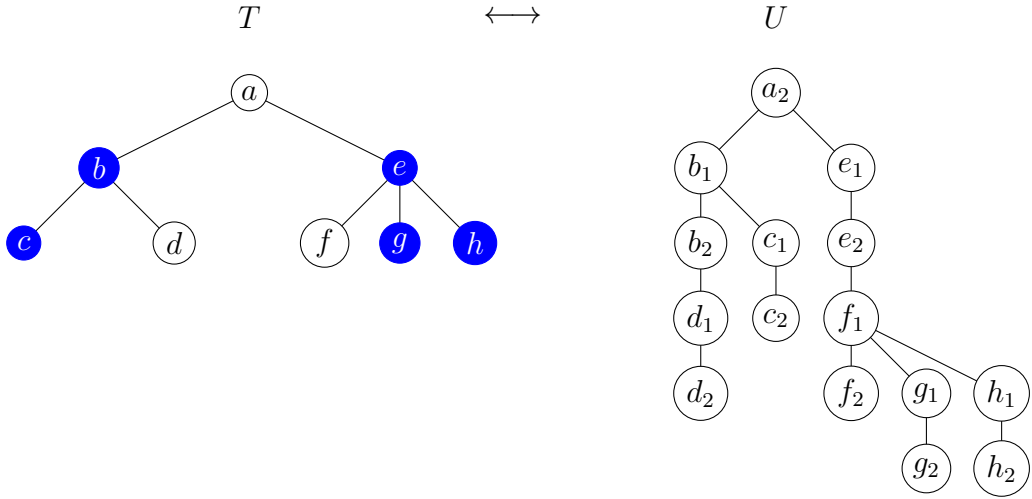
\begin{figure}[htbp]
        \begin{tikzpicture}
            \tikzset{blpoint/.style = {circle, fill=blue, draw=blue, inner sep=2pt, text=white}}
            \tikzset{wpoint/.style = {circle, fill=white, draw=black, inner sep=2pt}}
            \draw (-2,-1)--(0,0)--(2,-1);
            \draw (-3,-2)--(-2,-1)--(-1,-2);
            \draw (1,-2)--(2,-1)--(3,-2);
            \draw (2,-1)--(2,-2);
            
            \node at (0,1) {$T$};
            \node at (3.5,1) {$\longleftrightarrow$};
            \node at (7,1) {$U$};
            
            \node[wpoint] at (0,0) {$a$};
            \node[blpoint] at (-2,-1) {$b$};
            \node[blpoint] at (2,-1) {$e$};
            \node[blpoint] at (-3,-2) {$c$};
            \node[wpoint] at (-1,-2) {$d$};
            \node[blpoint] at (3,-2) {$h$};
            \node[wpoint] at (1,-2) {$f$};
            \node[blpoint] at (2,-2) {$g$};
            
            \draw (6,-1)--(7,0)--(8,-1)--(8,-2)--(8,-3)--(8,-4);
            \draw (7,-3)--(7,-2)--(6,-1)--(6,-2)--(6,-3)--(6,-4);
            \draw (8,-3)--(9,-4)--(9,-5);
            \draw (8,-3)--(10,-4)--(10,-5);
            
            \node[wpoint] at (7,0) {$a_2$};
            \node[wpoint] at (6,-1) {$b_1$};
            \node[wpoint] at (6,-2) {$b_2$};
            \node[wpoint] at (6,-3) {$d_1$};
            \node[wpoint] at (6,-4) {$d_2$};
            \node[wpoint] at (7,-2) {$c_1$};
            \node[wpoint] at (7,-3) {$c_2$};
            \node[wpoint] at (8,-1) {$e_1$};
            \node[wpoint] at (8,-2) {$e_2$};
            \node[wpoint] at (8,-3) {$f_1$};
            \node[wpoint] at (8,-4) {$f_2$};
            \node[wpoint] at (9,-4) {$g_1$};
            \node[wpoint] at (9,-5) {$g_2$};
            \node[wpoint] at (10,-4) {$h_1$};
            \node[wpoint] at (10,-5) {$h_2$};
            
        \end{tikzpicture}
        \caption{Bijection between given colored tree $T$ with $8$ vertices 
        and tree $U$ with $15$ vertices. Vertex labels in $T$ are in preorder, which is the order they are added when constructing $U$. $U$ contains four downward paths ($b_1,c_1,c_2$; $a_2,e_1,e_2,f_1,f_2$; $f_1,g_1,g_2$; and $f_1,h_1,h_2$), all of even length.}\label{fig:root2bij}
    \end{figure}
    
    We must show that, for any $T\in\mathcal{T}_n^A$, $\tau(T) = U\in\mathcal{E}_{2n}$ if the root of $T$ is blue, and $U\in\mathcal{E}_{2n-1}$ if the root of $T$ is white. It is clear that $U$ has $2n$ vertices in the first case and $2n-1$ vertices in the second, as the root of $T$ corresponds to two vertices or one vertex in $U$ respectively, while each non-root vertex of $T$ corresponds to a pair of vertices of $U$. It remains to show that 
    all of the downward paths in $U$ have even length.
    We show this inductively. When $n=1$, the condition is vacuously satisfied. Now, suppose that $U'\in\mathcal{E}_{2n-2}\cup\mathcal{E}_{2n-3}$. First, if the rightmost leaf $v$ of $T$ is blue, then the downward paths in $U$ are precisely the downward paths in $U'$ along with one additional downward path of length $2$ consisting of the two vertices $v_1$ and $v_2$ corresponding to $v$ along with their parent. On the other hand, if $v$ is white, then the downward paths of $U$ are precisely the downward paths of $U'$, except that the rightmost downward path has been extended by $2$ vertices. In either case, all of the downward paths in $U$ have even length, as required.

    \item[Description of $\tau^{-1}$] To prove that $\tau$ is a bijection, we construct its inverse. For a given tree $U\in\mathcal{E}_{2n}\cup\mathcal{E}_{2n-1}$, we wish to reconstruct the colored tree $\tau^{-1}(U) = T\in\mathcal{T}_n^A$. Again, we do this construction recursively. We can count vertices to see which set $U$ lies in. If $n=1$, $U$ consists of a single vertex if it lies in $\mathcal{E}_{2n-1}$. We map this to the single-vertex tree $T$ colored white. Alternatively, $U$ consists of a pair of vertices if it lies in $\mathcal{E}_{2n}$. We map this to the single-vertex tree $T$ colored blue. Now, suppose $n>1$. 
    
    Let $v_2$ denote the rightmost leaf in $U$, and denote the parent of $v_2$ by $v_1$. Note that $v_2$ must be the only child of $v_1$, as otherwise, $v_1$ to $v_2$ would be a downward path of odd length. Let $U'$ be the tree obtained from $U$ be deleting $v_1$ and $v_2$. We claim that $U'\in\mathcal{E}_{2n-2}\cup\mathcal{E}_{2n-3}$. First, suppose that the parent of $v_1$ has $v_1$ as its only child. Note that $U'$ is a path if and only if $U$ is a path; these are the only trees for which $v_2$ is the leftmost leaf. Supposing that $U'$ is not a path, then the parent of $v_1$ is a leaf in $U'$ whose downward path has length two less than the downward path in $U$ to $v_2$. Since the latter path has even length, 
    so does the former, and the other downward paths are all identical. Otherwise, if the parent of $v_1$ has multiple children, then $U'$ has all the same leaves as $U$ aside from $v_2$. They all have the same downward paths in $U'$ as they did in $U$, and the downward path to $v_2$ has length $2$.

    Since we have $U'\in\mathcal{E}_{2n-2}\cup\mathcal{E}_{2n-3}$, we can let $T'\in\mathcal{T}_{n-1}^A$ denote the tree mapping to $U'$. The parent of $v_1$ is a vertex $y$ corresponding to some vertex $x$ of $T'$.
    Give $y$ the designation $x_1$ if it has a child that also corresponds to $x$; otherwise call it $x_2$. If such a child of $y$ exists, call that child $x_2$. Similarly, if $y$ was designated as $x_2$ and is not the root, call its parent $x_1$, noting that the parent of $y$ must also correspond to $x$ in this case.
    We construct $T$ by considering three possibilities:
    \begin{description}
        \item[The parent of $v_1$ is $x_2$] In this case, obtain $T$ from $T'$ by adding a new rightmost child of $x$ and coloring it the opposite color of $x$.
        \item[The parent of $v_1$ is $x_1$, and $x$ is blue] In this case, $T$ is obtained from $T'$ by adding a new child to $x$ immediately to the left of its leftmost child that is white (or as its rightmost child if no such child exists) and coloring it blue. See 
        Figure~\ref{fig:root1bijrev}, where $v_1=e_1$, $v_2=e_2$, $x_1=c_1$, $x_2=c_2$, and $x=c$.
        \item[The parent of $v_1$ is $x_1$, and $x$ is white] In this case, obtain $T$ from $T'$ by adding a new rightmost child of the \emph{parent} of $x$ and coloring it blue. See Figure~\ref{fig:root2bijrev}, where $v_1=h_1$, $v_2=h_2$, $x_1=f_1$, $x_2=f_2$, $x=f$, and the parent of $x$ is $e$.
    \end{description}
    These operations result in a 
    colored tree $T\in\mathcal{T}_n^A$, and they reverse the operations used to construct $U$, as required for this to be the inverse mapping.
    
    \begin{figure}[htbp]
        \begin{tikzpicture}
            \tikzset{blpoint/.style = {circle, fill=blue, draw=blue, inner sep=2pt, text=white}}
            \tikzset{wpoint/.style = {circle, fill=white, draw=black, inner sep=2pt}}
            \draw (5,-1)--(7,0)--(9,-1);
            \draw (4,-2)--(5,-1)--(6,-2);
            
            \node at (7,1) {$T$};
            \node at (3.5,1) {$\longleftrightarrow$};
            \node at (0,1) {$U$};
            
            \node[blpoint] at (7,0) {$a$};
            \node[blpoint] at (5,-1) {$c$};
            \node[wpoint] at (9,-1) {$b$};
            \node[blpoint] at (4,-2) {$e$};
            \node[wpoint] at (6,-2) {$d$};
            
            \draw (0,0)--(0,-1)--(0,-2)--(0,-3);
            \draw (0,0)--(1,-1)--(1,-2)--(1,-3)--(1,-4);
            \draw (1,-1)--(2,-2)--(2,-3);
            
            \node[wpoint] at (0,0) {$a_1$};
            \node[wpoint] at (0,-1) {$a_2$};
            \node[wpoint] at (1,-1) {$c_1$};
            \node[wpoint] at (1,-2) {$c_2$};
            \node[wpoint] at (1,-3) {$d_1$};
            \node[wpoint] at (1,-4) {$d_2$};
            \node[wpoint] at (2,-2) {$e_1$};
            \node[wpoint] at (2,-3) {$e_2$};
            \node[wpoint] at (0,-2) {$b_1$};
            \node[wpoint] at (0,-3) {$b_2$};
            
        \end{tikzpicture}
        \caption{Reversed bijection between given tree $U$ with $10$ vertices having all downward paths of even length and colored tree $T$ with $5$ vertices. 
        Vertex pair labels in $U$ are in preorder, which is the order they are added when constructing $T$. Note that the second rule results in $e$ appearing to the left of $d$ and $c$ to the left of $b$ in $T$.}\label{fig:root1bijrev}
    \end{figure}
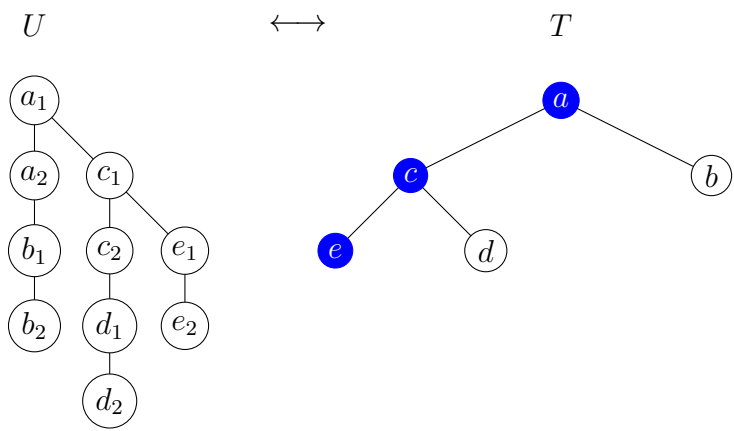

    \begin{figure}[htbp]
        \begin{tikzpicture}
            \tikzset{blpoint/.style = {circle, fill=blue, draw=blue, inner sep=2pt, text=white}}
            \tikzset{wpoint/.style = {circle, fill=white, draw=black, inner sep=2pt}}
            \draw (5,-1)--(7,0)--(9,-1);
            \draw (4,-2)--(5,-1)--(6,-2);
            \draw (8,-2)--(9,-1)--(10,-2);
            \draw (9,-1)--(9,-2);
            
            \node at (7,1) {$T$};
            \node at (3.5,1) {$\longleftrightarrow$};
            \node at (0,1) {$U$};
            
            \node[wpoint] at (7,0) {$a$};
            \node[blpoint] at (5,-1) {$b$};
            \node[blpoint] at (9,-1) {$e$};
            \node[blpoint] at (4,-2) {$d$};
            \node[wpoint] at (6,-2) {$c$};
            \node[blpoint] at (10,-2) {$h$};
            \node[wpoint] at (8,-2) {$f$};
            \node[blpoint] at (9,-2) {$g$};
            
            \draw (-1,-1)--(0,0)--(1,-1)--(1,-2)--(1,-3)--(1,-4);
            \draw (0,-3)--(0,-2)--(-1,-1)--(-1,-2)--(-1,-3)--(-1,-4);
            \draw (1,-3)--(2,-4)--(2,-5);
            \draw (1,-3)--(3,-4)--(3,-5);
            
            \node[wpoint] at (0,0) {$a_2$};
            \node[wpoint] at (-1,-1) {$b_1$};
            \node[wpoint] at (-1,-2) {$b_2$};
            \node[wpoint] at (-1,-3) {$c_1$};
            \node[wpoint] at (-1,-4) {$c_2$};
            \node[wpoint] at (0,-2) {$d_1$};
            \node[wpoint] at (0,-3) {$d_2$};
            \node[wpoint] at (1,-1) {$e_1$};
            \node[wpoint] at (1,-2) {$e_2$};
            \node[wpoint] at (1,-3) {$f_1$};
            \node[wpoint] at (1,-4) {$f_2$};
            \node[wpoint] at (2,-4) {$g_1$};
            \node[wpoint] at (2,-5) {$g_2$};
            \node[wpoint] at (3,-4) {$h_1$};
            \node[wpoint] at (3,-5) {$h_2$};
            
        \end{tikzpicture}
        \caption{Reversed bijection between given tree $U$ with $15$ vertices having all downward paths of even length and colored tree $T$ with $8$ vertices. 
        Vertex pair labels in $U$ are in preorder, which is the order they are added when constructing $T$. Note that the second rule results in $d$ appearing to the left of $c$ in $T$.}\label{fig:root2bijrev}
    \end{figure}
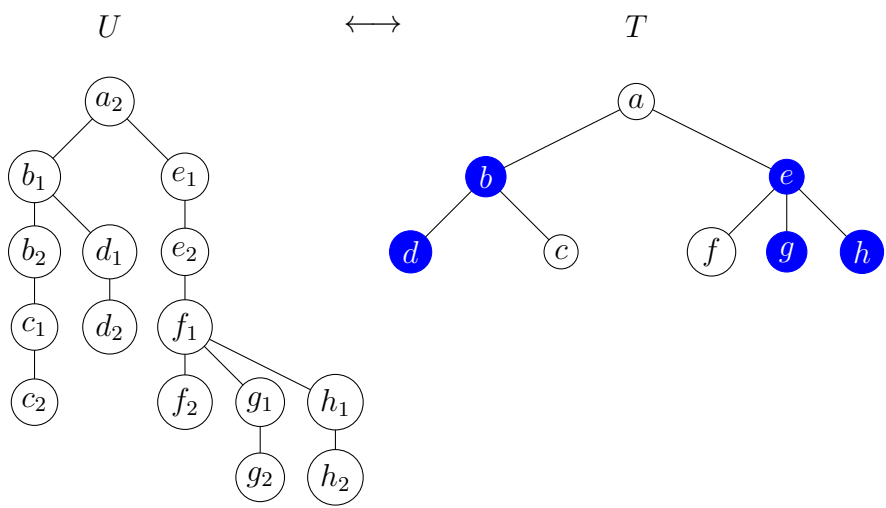

    See Figures~\ref{fig:root1bijrev} 
    and~\ref{fig:root2bijrev} for depictions of this direction of the bijection on the same tree structures as Figures~\ref{fig:root1bij}, 
     and~\ref{fig:root2bij}.
\end{description}

\section{Data on distinct $3\times3$ coloring rules}\label{app:3by3}
As mentioned in Subsection~\ref{ss:3by3}, there are $74$ strong equivalence classes of tree coloring rules with three colors that result in $72$ different counting sequences. This appendix contains $72$ entries, one for each tree coloring equivalence class. Each entry contains some of the following elements:
\begin{description}
    \item[Matrices] This is a listing of one or more coloring matrices resulting in the counting sequence for this entry. If multiple matrices are present, they are strictly tree coloring equivalent, 
    except where noted in Entries~15 and~29. Any matrix 
    isomorphic to a listed matrix would also fall within this entry, though the sequences with fixed root color would be correspondingly permuted.
    \item[Sequences] This is a table of the first eight terms of the counting sequence along with the first eight terms of the sequences where the root color is fixed. The entry in the last column of each row summarizes the eventual behavior of that sequence with a formula, which may be in terms of an existing OEIS sequence. 
    If the sequence's formula is too long to fit in the table, the entry is listed as $***$, and the formula appears below the table. Additionally, a superscript $h$ is used in the first column of the table whenever we have verified that that sequence is a nontrivial hypergeometric sequence, i.e., when it is not eventually constant and satisfies a first-order recurrence with polynomial coefficients. In all instances where the total number of trees for a given matrix is a hypergeometric sequence, each of the three sequences for the different colors is also hypergeometric (possibly trivial). Out of the 72 different sequences, 13 are found to be nontrivially hypergeometric. 
    \item[Nontrivial Functional Equations] As established by Corollary~\ref{cor:algebraicgf}, all of the sequences in the tables have algebraic generating functions. Via computation with a polynomial system elimination algorithm, we obtain polynomial equations satisfied by each of the generating functions with fixed root color. We list these polynomials here unless the sequence is given by $t_A^{(i)}(n)=a\cdot b^{n}$ for some real number $a$, some integer $b$, and sufficiently large $n$. We also omit these functional equations for the specific sequences given below, instead opting to list these frequent expressions here. Recall the notation for these sequences from Section~\ref{sec:small}.
    \begin{description}
        \item[$t^{(i)}_A(n)=C_{n-1}$] $\gfa{i}^2-\gfa{i}+x$
        \item[$t^{(i)}_A(n)=2^{n-1}C_{n-1}$] $2\gfa{i}^2-\gfa{i}+x$
        \item[$t^{(i)}_A(n)=3^{n-1}C_{n-1}$] $3\gfa{i}^2-\gfa{i}+x$
        \item[$t^{(i)}_A(n)=R_{n-1}$] $\gfa{i}^2+(x - 1)\gfa{i}+x$
        \item[$t^{(i)}_A(n)=r_{n-1}$] $2\gfa{i}^2+(-x - 1)\gfa{i}+x$
        \item[$t^{(i)}_A(n)=D_{n}$] $\gfa{i}^4-\gfa{i}^3+3x\gfa{i}^2-x\gfa{i}+x^2$
    \end{description}
    \item[Discussion] It is here where we prove that our formulas in the ``Sequences'' table are correct. For simple cases, where a formula follows from Theorem~\ref{prop:zerorows} or~\ref{prop:catrows} (possibly after permuting colors with Theorem~\ref{prop:permmat}) or is otherwise immediate, we may omit any discussion. When one matrix is present, we refer to it as $A$ in the discussion. When several matrices are present, we refer to them by $A_1,A_2,\ldots$ in the order they are listed.

    In our discussions, we do not address sequences with A-numbers greater than 394000, as these sequences were added to the OEIS because of this work.
\end{description}

\subsection*{Entry 1}
\begin{description}
    \item[Matrices]
    
    \[\begin{bmatrix}
0 & 0 & 0 \\
0 & 0 & 0 \\
0 & 0 & 0
\end{bmatrix}\]
    \item[Sequences]\hspace*{\fill}\vspace{-0.5cm}
    \begin{center}
        \[
        \begin{array}{|c||c|c|c|c|c|c|c|c||c|}\hline
           n  & 1 & 2 & 3 & 4 & 5 & 6 & 7 & 8  & \text{Formula}\\\hline\hline
           \text{Total}  & 3 & 0 & 0 & 0 & 0 & 0 & 0 & 0 & 0\\\hline\hline
           \text{Root Color }1 & 1 & 0 & 0 & 0 & 0 & 0 & 0 & 0 & 0\\\hline
           \text{Root Color }2 & 1 & 0 & 0 & 0 & 0 & 0 & 0 & 0 & 0\\\hline
           \text{Root Color }3 & 1 & 0 & 0 & 0 & 0 & 0 & 0 & 0 & 0\\\hline
        \end{array}
        \]
    \end{center}

\end{description}

\subsection*{Entry 2}
\begin{description}
    \item[Matrices]
    
    \[\begin{bmatrix}
0 & 1 & 0 \\
0 & 0 & 0 \\
0 & 0 & 0
\end{bmatrix}\]
    \item[Sequences]\hspace*{\fill}\vspace{-0.5cm}
    \begin{center}
        \[
        \begin{array}{|c||c|c|c|c|c|c|c|c||c|}\hline
           n  & 1 & 2 & 3 & 4 & 5 & 6 & 7 & 8  & \text{Formula}\\\hline\hline
           \text{Total}  & 3 & 1 & 1 & 1 & 1 & 1 & 1 & 1 & 1\\\hline\hline
           \text{Root Color }1 & 1 & 1 & 1 & 1 & 1 & 1 & 1 & 1 & 1\\\hline
           \text{Root Color }2 & 1 & 0 & 0 & 0 & 0 & 0 & 0 & 0 & 0\\\hline
           \text{Root Color }3 & 1 & 0 & 0 & 0 & 0 & 0 & 0 & 0 & 0\\\hline
        \end{array}
        \]
    \end{center}
    
\end{description}

\subsection*{Entry 3}
\begin{description}
    \item[Matrices]
    
    \[\begin{bmatrix}
0 & 0 & 0 \\
1 & 0 & 0 \\
1 & 0 & 0
\end{bmatrix}\]
    \item[Sequences]\hspace*{\fill}\vspace{-0.5cm}
    \begin{center}
        \[
        \begin{array}{|c||c|c|c|c|c|c|c|c||c|}\hline
           n  & 1 & 2 & 3 & 4 & 5 & 6 & 7 & 8  & \text{Formula}\\\hline\hline
           \text{Total}  & 3 & 2 & 2 & 2 & 2 & 2 & 2 & 2 & 2  \\\hline\hline
           \text{Root Color }1 & 1 & 0 & 0 & 0 & 0 & 0 & 0 & 0 & 0\\\hline
           \text{Root Color }2 & 1 & 1 & 1 & 1 & 1 & 1 & 1 & 1 & 1\\\hline
           \text{Root Color }3 & 1 & 1 & 1 & 1 & 1 & 1 & 1 & 1 & 1\\\hline
        \end{array}
        \]
    \end{center}

\end{description}

\subsection*{Entry 4}
\begin{description}
    \item[Matrices]
    
    \[\begin{bmatrix}
0 & 0 & 0 \\
1 & 0 & 0 \\
0 & 1 & 0
\end{bmatrix}\]
    \item[Sequences]\hspace*{\fill}\vspace{-0.5cm}
    \begin{center}
        \[
        \begin{array}{|c||c|c|c|c|c|c|c|c||c|}\hline
           n  & 1 & 2 & 3 & 4 & 5 & 6 & 7 & 8  & \text{Formula}\\\hline\hline
           \text{Total}  & 3 & 2 & 3 & 5 & 9 & 17 & 33 & 65 & 2^{n-2}+1\\\hline\hline
           \text{Root Color }1 & 1 & 0 & 0 & 0 & 0 & 0 & 0 & 0 & 0\\\hline
           \text{Root Color }2 & 1 & 1 & 1 & 1 & 1 & 1 & 1 & 1 & 1\\\hline
           \text{Root Color }3^h & 1 & 1 & 2 & 4 & 8 & 16 & 32 & 64 & 2^{n-2}\\\hline
        \end{array}
        \]
    \end{center}

    \item[Discussion] There is only one tree with $n$ vertices and root of color 2: the root needs to have $n-1$ children that are leaves of color 1. It remains to count the trees whose root is of color 3. Note that each of the children of such a root must be of color 2 and have a number of children (possibly $0$) of color 1 which have to be leaves. Thus, to every such tree, we have a corresponding composition $x_{1} + x_{2}+ \cdots $, where $x_i$ is the number of vertices in the subtree of the $i$-th child of the root (including the root). Conversely, every such composition determines uniquely one of the trees we want to count. The number of all compositions of $n-1$ into positive parts is $2^{n-2}$.
\end{description}

\subsection*{Entry 5}
\begin{description}
    \item[Matrices]
    
    \[\begin{bmatrix}
0 & 1 & 1 \\
0 & 0 & 0 \\
0 & 0 & 0
\end{bmatrix}\]
    \item[Sequences]\hspace*{\fill}\vspace{-0.5cm}
    \begin{center}
        \[
        \begin{array}{|c||c|c|c|c|c|c|c|c||c|}\hline
           n  & 1 & 2 & 3 & 4 & 5 & 6 & 7 & 8  & \text{Formula}\\\hline\hline
           \text{Total}^h  & 3 & 2 & 4 & 8 & 16 & 32 & 64 & 128 & 2^{n-1}\\\hline\hline
           \text{Root Color }1^h & 1 & 2 & 4 & 8 & 16 & 32 & 64 & 128 & 2^{n-1}\\\hline
           \text{Root Color }2 & 1 & 0 & 0 & 0 & 0 & 0 & 0 & 0 & 0\\\hline
           \text{Root Color }3 & 1 & 0 & 0 & 0 & 0 & 0 & 0 & 0 & 0\\\hline
        \end{array}
        \]
    \end{center}

    \item[Discussion] A root of color~1 must have $n-1$ children, each of which can be colored either with color 2 or color 3.
\end{description}

\subsection*{Entry 6}
\begin{description}
    \item[Matrices]
    
    \[\begin{bmatrix}
1 & 0 & 0 \\
0 & 0 & 0 \\
0 & 0 & 0
\end{bmatrix}\]
    \item[Sequences]\hspace*{\fill}\vspace{-0.5cm}
    \begin{center}
        \[
        \begin{array}{|c||c|c|c|c|c|c|c|c||c|}\hline
           n  & 1 & 2 & 3 & 4 & 5 & 6 & 7 & 8  & \text{Formula}\\\hline\hline
           \text{Total}^h  & 3 & 1 & 2 & 5 & 14 & 42 & 132 & 429 & C_{n-1}\\\hline\hline
           \text{Root Color }1^h & 1 & 1 & 2 & 5 & 14 & 42 & 132 & 429 & C_{n-1}\\\hline
           \text{Root Color }2 & 1 & 0 & 0 & 0 & 0 & 0 & 0 & 0 & 0\\\hline
           \text{Root Color }3 & 1 & 0 & 0 & 0 & 0 & 0 & 0 & 0 & 0\\\hline
        \end{array}
        \]
    \end{center}

\end{description}

\subsection*{Entry 7}
\begin{description}
    \item[Matrices]
    
    \[\begin{bmatrix}
1 & 0 & 0 \\
0 & 0 & 1 \\
0 & 0 & 0
\end{bmatrix}\]
    \item[Sequences]\hspace*{\fill}\vspace{-0.5cm}
    \begin{center}
        \[
        \begin{array}{|c||c|c|c|c|c|c|c|c||c|}\hline
           n  & 1 & 2 & 3 & 4 & 5 & 6 & 7 & 8  & \text{Formula}\\\hline\hline
           \text{Total}  & 3 & 2 & 3 & 6 & 15 & 43 & 133 & 430 & 1+C_{n-1}\\\hline\hline
           \text{Root Color }1^h & 1 & 1 & 2 & 5 & 14 & 42 & 132 & 429 & C_{n-1}\\\hline
           \text{Root Color }2 & 1 & 1 & 1 & 1 & 1 & 1 & 1 & 1 & 1\\\hline
           \text{Root Color }3 & 1 & 0 & 0 & 0 & 0 & 0 & 0 & 0 & 0\\\hline
        \end{array}
        \]
    \end{center}
    \item[Discussion] Apply Theorem~\ref{prop:blockdiag} with Entry~2 in Table~\ref{tb:2cols}, or swap colors $2$ and $3$ and apply Theorem~\ref{prop:singletoncolors}.

\end{description}

\subsection*{Entry 8}
\begin{description}
    \item[Matrices]
    
    \[\begin{bmatrix}
1 & 0 & 0 \\
0 & 1 & 0 \\
0 & 0 & 0
\end{bmatrix}, \begin{bmatrix}
1 & 0 & 0 \\
1 & 0 & 0 \\
0 & 0 & 0
\end{bmatrix}, \begin{bmatrix}
0 & 1 & 0 \\
1 & 0 & 0 \\
0 & 0 & 0
\end{bmatrix}\]
    \item[Sequences]\hspace*{\fill}\vspace{-0.5cm}
    \begin{center}
        \[
        \begin{array}{|c||c|c|c|c|c|c|c|c||c|}\hline
           n  & 1 & 2 & 3 & 4 & 5 & 6 & 7 & 8  & \text{Formula}\\\hline\hline
           \text{Total}^h  & 3 & 2 & 4 & 10 & 28 & 84 & 264 & 858 & 2C_{n-1}\\\hline\hline
           \text{Root Color }1^h & 1 & 1 & 2 & 5 & 14 & 42 & 132 & 429 & C_{n-1}\\\hline
           \text{Root Color }2^h & 1 & 1 & 2 & 5 & 14 & 42 & 132 & 429 & C_{n-1}\\\hline
           \text{Root Color }3 & 1 & 0 & 0 & 0 & 0 & 0 & 0 & 0 & 0\\\hline
        \end{array}
        \]
    \end{center}

    \item[Discussion] 
    Apply Theorem~\ref{prop:unusablecolors} to Entry~4 in Table~\ref{tb:2cols}.
\end{description}

\subsection*{Entry 9}
\begin{description}
    \item[Matrices]\hspace*{\fill}
    
    \makecell{$\begin{bmatrix}
1 & 0 & 0 \\
0 & 1 & 0 \\
0 & 0 & 1
\end{bmatrix}$, $\begin{bmatrix}
1 & 0 & 0 \\
0 & 1 & 0 \\
1 & 0 & 0
\end{bmatrix}$, $\begin{bmatrix}
1 & 0 & 0 \\
1 & 0 & 0 \\
1 & 0 & 0
\end{bmatrix}$, $\begin{bmatrix}
1 & 0 & 0 \\
1 & 0 & 0 \\
0 & 1 & 0
\end{bmatrix}$, \\ $\begin{bmatrix}
0 & 1 & 0 \\
0 & 0 & 1 \\
1 & 0 & 0
\end{bmatrix}$, $\begin{bmatrix}
1 & 0 & 0 \\
0 & 0 & 1 \\
0 & 1 & 0
\end{bmatrix}$, $\begin{bmatrix}
0 & 1 & 0 \\
1 & 0 & 0 \\
0 & 1 & 0
\end{bmatrix}$}
    \item[Sequences]\hspace*{\fill}\vspace{-0.5cm}
    \begin{center}
        \[
        \begin{array}{|c||c|c|c|c|c|c|c|c||c|}\hline
           n  & 1 & 2 & 3 & 4 & 5 & 6 & 7 & 8  & \text{Formula}\\\hline\hline
           \text{Total}^h  & 3 & 3 & 6 & 15 & 42 & 126 & 396 & 1287 & 3C_{n-1}\\\hline\hline
           \text{Root Color }1^h & 1 & 1 & 2 & 5 & 14 & 42 & 132 & 429 & C_{n-1}\\\hline
           \text{Root Color }2^h & 1 & 1 & 2 & 5 & 14 & 42 & 132 & 429 & C_{n-1}\\\hline
           \text{Root Color }3^h & 1 & 1 & 2 & 5 & 14 & 42 & 132 & 429 & C_{n-1}\\\hline
        \end{array}
        \]
    \end{center}

    \item[Discussion] This follows from Theorem~\ref{prop:row-sums}, since all row sums are equal to $1$.    
\end{description}

\subsection*{Entry 10}
\begin{description}
    \item[Matrices]
    
    \[\begin{bmatrix}
1 & 1 & 0 \\
0 & 0 & 0 \\
0 & 0 & 0
\end{bmatrix}\]
    \item[Sequences]\hspace*{\fill}\vspace{-0.5cm}
    \begin{center}
        \[
        \begin{array}{|c||c|c|c|c|c|c|c|c||c|}\hline
           n  & 1 & 2 & 3 & 4 & 5 & 6 & 7 & 8  & \text{Formula}\\\hline\hline
           \text{Total}  & 3 & 2 & 6 & 22 & 90 & 394 & 1806 & 8558 & R_{n-1}\\\hline\hline
           \text{Root Color }1 & 1 & 2 & 6 & 22 & 90 & 394 & 1806 & 8558 & R_{n-1}\\\hline
           \text{Root Color }2 & 1 & 0 & 0 & 0 & 0 & 0 & 0 & 0 & 0\\\hline
           \text{Root Color }3 & 1 & 0 & 0 & 0 & 0 & 0 & 0 & 0 & 0\\\hline
        \end{array}
        \]
    \end{center}

    \item[Discussion]
    Apply Theorem~\ref{prop:unusablecolors} to Entry~5 in Table~\ref{tb:2cols}.
\end{description}

\subsection*{Entry 11}
\begin{description}
    \item[Matrices]
    
    \[\begin{bmatrix}
1 & 0 & 0 \\
0 & 1 & 1 \\
0 & 0 & 0
\end{bmatrix}\]
    \item[Sequences]\hspace*{\fill}\vspace{-0.5cm}
    \begin{center}
        \[
        \begin{array}{|c||c|c|c|c|c|c|c|c||c|}\hline
           n  & 1 & 2 & 3 & 4 & 5 & 6 & 7 & 8  & \text{Formula}\\\hline\hline
           \text{Total}  & 3 & 3 & 8 & 27 & 104 & 436 & 1938 & 8987 & C_{n-1}+R_{n-1}.\\\hline\hline
           \text{Root Color }1^{h} & 1 & 1 & 2 & 5 & 14 & 42 & 132 & 429 & C_{n-1}\\\hline
           \text{Root Color }2 & 1 & 2 & 6 & 22 & 90 & 394 & 1806 & 8558 & R_{n-1}\\\hline
           \text{Root Color }3 & 1 & 0 & 0 & 0 & 0 & 0 & 0 & 0 & 0\\\hline
        \end{array}
        \]
    \end{center}

\item[Discussion] Apply Theorem~\ref{prop:blockdiag} with Entry~5 in Table~\ref{tb:2cols}. 
    
\end{description}

\subsection*{Entry 12}
\begin{description}
    \item[Matrices]
    
    \[\begin{bmatrix}
1 & 0 & 0 \\
1 & 1 & 0 \\
0 & 0 & 0
\end{bmatrix}\]
    \item[Sequences]\hspace*{\fill}\vspace{-0.5cm}
    \begin{center}
        \[
        \begin{array}{|c||c|c|c|c|c|c|c|c||c|}\hline
           n  & 1 & 2 & 3 & 4 & 5 & 6 & 7 & 8  & \text{Formula}\\\hline\hline
           \text{Total}  & 3 & 3 & 9 & 34 & 145 & 667 & 3231 & 16247 & C_{n-1} + D_{n}\\\hline\hline
           \text{Root Color }1^{h} & 1 & 1 & 2 & 5 & 14 & 42 & 132 & 429 & C_{n-1}\\\hline
           \text{Root Color }2 & 1 & 2 & 7 & 29 & 131 & 625 & 3099 & 15818 & D_n\\\hline
           \text{Root Color }3 & 1 & 0 & 0 & 0 & 0 & 0 & 0 & 0 & 0\\\hline
        \end{array}
        \]
    \end{center}

    \item[Discussion] 
    Apply Theorem~\ref{prop:unusablecolors} to Entry~6 in Table~\ref{tb:2cols}.
\end{description}

\subsection*{Entry 13}
\begin{description}
    \item[Matrices]
    
    \[\begin{bmatrix}
1 & 0 & 0 \\
0 & 1 & 0 \\
1 & 0 & 1
\end{bmatrix}, \begin{bmatrix}
1 & 0 & 0 \\
1 & 0 & 0 \\
1 & 0 & 1
\end{bmatrix}, \begin{bmatrix}
1 & 0 & 0 \\
1 & 0 & 0 \\
0 & 1 & 1
\end{bmatrix}, \begin{bmatrix}
0 & 1 & 0 \\
1 & 0 & 0 \\
0 & 1 & 1
\end{bmatrix}\]
    \item[Sequences]\hspace*{\fill}\vspace{-0.5cm}
    \begin{center}
        \[
        \begin{array}{|c||c|c|c|c|c|c|c|c||c|}\hline
           n  & 1 & 2 & 3 & 4 & 5 & 6 & 7 & 8  & \text{Formula}\\\hline\hline
           \text{Total}  & 3 & 4 & 11 & 39 & 159 & 709 & 3363 & 16676 & 2C_{n-1} + D_{n}\\\hline\hline
           \text{Root Color }1^{h} & 1 & 1 & 2 & 5 & 14 & 42 & 132 & 429 & C_{n-1}\\\hline
           \text{Root Color }2^{h} & 1 & 1 & 2 & 5 & 14 & 42 & 132 & 429 & C_{n-1}\\\hline
           \text{Root Color }3 & 1 & 2 & 7 & 29 & 131 & 625 & 3099 & 15818 & D_n\\\hline
        \end{array}
        \]
    \end{center}
    
    \item[Discussion] 
    Swap colors $2$ and $3$ in $A_1$, then apply Theorem~\ref{prop:blockdiag} with Entry~6 in Table~\ref{tb:2cols}. The equivalence of the four matrices follows from the fact that colors 1 and 2 are interchangeable.
\end{description}

\subsection*{Entry 14}
\begin{description}
    \item[Matrices]
    
    \[\begin{bmatrix}
0 & 0 & 0 \\
1 & 0 & 0 \\
1 & 0 & 1
\end{bmatrix}\]
    \item[Sequences]\hspace*{\fill}\vspace{-0.5cm}
    \begin{center}
        \[
        \begin{array}{|c||c|c|c|c|c|c|c|c||c|}\hline
           n  & 1 & 2 & 3 & 4 & 5 & 6 & 7 & 8  & \text{Formula}\\\hline\hline
           \text{Total}  & 3 & 3 & 7 & 23 & 91 & 395 & 1807 & 8559 & 1+R_{n-1}\\\hline\hline
           \text{Root Color }1 & 1 & 0 & 0 & 0 & 0 & 0 & 0 & 0 & 0\\\hline
           \text{Root Color }2 & 1 & 1 & 1 & 1 & 1 & 1 & 1 & 1 & 1\\\hline
           \text{Root Color }3 & 1 & 2 & 6 & 22 & 90 & 394 & 1806 & 8558 & R_{n-1}\\\hline
        \end{array}
        \]
    \end{center}

    \item[Discussion] 
    Permute colors so that color~$1$ becomes color~$2$, color $2$ becomes color~$3$, and color~$3$ becomes color~$1$. Then, apply Theorem~\ref{prop:singletoncolors} with all of the matrices in that theorem equal to the $1\times1$ matrix consisting of a single $1$, and refer to Entry~5 in Table~\ref{tb:2cols}.
\end{description}

\subsection*{Entry 15}
\begin{description}
    \item[Matrices]
    \[\begin{bmatrix}
    0 & 0 & 0 \\
    1 & 1 & 0 \\
    1 & 0 & 1
    \end{bmatrix}, \begin{bmatrix}
    0 & 0 & 0 \\
    1 & 0 & 1 \\
    1 & 1 & 0
    \end{bmatrix}, \begin{bmatrix}
    0 & 0 & 0 \\
    1 & 0 & 1 \\
    1 & 0 & 1
    \end{bmatrix}, \begin{bmatrix}
    0 & 1 & 1 \\
    1 & 0 & 0 \\
    1 & 0 & 0
    \end{bmatrix}\]
    \item[Sequences]\hspace*{\fill}
    \begin{description}
        \item[First Three Matrices]\hspace*{\fill}\vspace{-0.5cm}
        \begin{center}
            \[
            \begin{array}{|c||c|c|c|c|c|c|c|c||c|}\hline
               n  & 1 & 2 & 3 & 4 & 5 & 6 & 7 & 8  & \text{Formula}\\\hline\hline
               \text{Total}  & 3 & 4 & 12 & 44 & 180 & 788 & 3612 & 17116 & 2R_{n-1}\\\hline\hline
               \text{Root Color }1 & 1 & 0 & 0 & 0 & 0 & 0 & 0 & 0 & 0\\\hline
               \text{Root Color }2 & 1 & 2 & 6 & 22 & 90 & 394 & 1806 & 8558 & R_{n-1}\\\hline
               \text{Root Color }3 & 1 & 2 & 6 & 22 & 90 & 394 & 1806 & 8558 & R_{n-1}\\\hline
            \end{array}
            \]
        \end{center}
        \item[Last Matrix]\hspace*{\fill}\vspace{-0.5cm}
        \begin{center}
            \[
            \begin{array}{|c||c|c|c|c|c|c|c|c||c|}\hline
               n  & 1 & 2 & 3 & 4 & 5 & 6 & 7 & 8  & \text{Formula}\\\hline\hline
               \text{Total}  & 3 & 4 & 12 & 44 & 180 & 788 & 3612 & 17116 & 2R_{n-1}\\\hline\hline
               \text{Root Color }1 & 1 & 2 & 6 & 22 & 90 & 394 & 1806 & 8558 & R_{n-1}\\\hline
               \text{Root Color }2 & 1 & 1 & 3 & 11 & 45 & 197 & 903 & 4279 & r_{n-1}\\\hline
               \text{Root Color }3 & 1 & 1 & 3 & 11 & 45 & 197 & 903 & 4279 & r_{n-1}\\\hline
            \end{array}
            \]
        \end{center}
    \end{description}

    \item[Discussion]
    Swap colors $1$ and $2$ in $A_3$ and apply Theorem~\ref{prop:blockul} to obtain the sequence for root color~$2$ from Entry~5 in Table~\ref{tb:2cols}. The sequence for root color~$3$ follows from observing that colors~$2$ and~$3$ are interchangeable.

     Observe, though, that Theorem~\ref{prop:equalones} tells us that $A_4$ is not strongly tree coloring equivalent to $A_1$, $A_2$, and $A_3$.
    We give a bijection between the trees colored according to $A_{4}' = \begin{bmatrix}
    0 & 1 & 0 \\
    1 & 0 & 1 \\
    0 & 1 & 0
    \end{bmatrix}$, which is isomorphic to $A_4$ by swapping colors~$1$ and~$2$, and the trees colored according to $A_3$.

    Observe that a nontrivial tree colored according to $A_3$ has a root of color~$2$ or~$3$, leaves that can be of color~$1$ or~$3$, and vertices of color~$3$ everywhere else. On the other hand, according to the rule specified by $A_{4}'$, vertices of color~$1$ or $3$ can only be followed by vertices of color~$2$, and vertices of color~$2$ can be followed by vertices of either color~$1$ or~$3$. As a result, a tree colored according to $A_{4}'$ has one of the following structures:
    \begin{itemize}
        \item All vertices at even distance from the root have color~$2$, and all vertices an odd distance from the root have color~$1$ or~$3$.
        \item All vertices an odd distance from the root have color~$2$, and all vertices at even distance from the root have color~$1$ or~$3$.
    \end{itemize}
    
    Deutsch~\cite{Deutsch2000bijection} gives a bijection $f_e$ between trees of order $n$ and themselves, where a tree with $k$ leaves is mapped to a tree with $k$ vertices at even distance from the root. The sets of vertices in $\mathcal{T}_{n}$ at even and odd distance from the root, respectively, are equinumerous (see \cite[Corollary 4.3]{CLS2007Butterfly} and the involution $\phi$ there). Thus, a similar bijection $f_o$ exists mapping a tree with $n$ vertices and $k$ leaves to a tree with $n$ vertices, $k$ of which are at odd distance from the root. 
    
    We are now ready to describe a bijection $f$ between trees colored according to $A_{3}$ and trees colored according to $A_{4}'$. Let $T$ be a tree colored according to $A_3$, and let $T\in\mathcal{T}_n$. If $n=1$, let $f(T)=T$. Otherwise, define $f(T)$ as follows:
    \begin{itemize}
        \item If the root of $T$ has color~$3$, let $f(T) = f_e(T)$, where if $v$ is a leaf of $T$, the color of $v$ and its corresponding vertex in $f(T)$ are the same. (The other vertices of $f(T)$ are required to be colored with color~$2$.)
        \item If the root of $T$ has color~$2$, let $f(T) = f_o(T)$, where if $v$ is a leaf of $T$, the color of $v$ and its corresponding vertex in $f(T)$ are the same. (The other vertices of $f(T)$ are required to be colored with color~$2$.)
    \end{itemize}
    It is not difficult to check that this is a bijection, because one can explicitly describe its inverse.

\end{description}

\subsection*{Entry 16}
\begin{description}
    \item[Matrices]
    
    \[\begin{bmatrix}
1 & 0 & 0 \\
1 & 1 & 0 \\
1 & 0 & 1
\end{bmatrix}, \begin{bmatrix}
1 & 0 & 0 \\
1 & 0 & 1 \\
1 & 0 & 1
\end{bmatrix}, \begin{bmatrix}
1 & 0 & 0 \\
1 & 0 & 1 \\
1 & 1 & 0
\end{bmatrix}\]
    \item[Sequences]\hspace*{\fill}\vspace{-0.5cm}
    \begin{center}
        \[
        \begin{array}{|c||c|c|c|c|c|c|c|c||c|}\hline
           n  & 1 & 2 & 3 & 4 & 5 & 6 & 7 & 8  & \text{Formula}\\\hline\hline
           \text{Total}  & 3 & 5 & 16 & 63 & 276 & 1292 & 6330 & 32065 & C_{n-1}+2D_{n}\\\hline\hline
           \text{Root Color }1^{h} & 1 & 1 & 2 & 5 & 14 & 42 & 132 & 429 & C_{n-1}\\\hline
           \text{Root Color }2 & 1 & 2 & 7 & 29 & 131 & 625 & 3099 & 15818 & D_n\\\hline
           \text{Root Color }3 & 1 & 2 & 7 & 29 & 131 & 625 & 3099 & 15818 & D_n\\\hline
        \end{array}
        \]
    \end{center}

    \item[Discussion] Swap colors $1$ and $2$ in $A_3$ and apply Theorem~\ref{prop:blockul} to obtain the sequence for root color~$2$ from Entry~6 in Table~\ref{tb:2cols}. The sequence for root color~$3$ follows from observing that colors~$2$ and~$3$ are interchangeable.
\end{description}

\subsection*{Entry 17}
\begin{description}
    \item[Matrices]
    
    \[\begin{bmatrix}
0 & 0 & 0 \\
1 & 0 & 0 \\
0 & 1 & 1
\end{bmatrix}\]
    \item[Sequences]\hspace*{\fill}\vspace{-0.5cm}
    \begin{center}
        \[
        \begin{array}{|c||c|c|c|c|c|c|c|c||c|}\hline
           n  & 1 & 2 & 3 & 4 & 5 & 6 & 7 & 8  & \text{Formula}\\\hline\hline
           \text{Total}  & 3 & 3 & 8 & 29 & 123 & 566 & 2736 & 13683 & 1+A215973(n-1)\\\hline\hline
           \text{Root Color }1 & 1 & 0 & 0 & 0 & 0 & 0 & 0 & 0 & 0\\\hline
           \text{Root Color }2 & 1 & 1 & 1 & 1 & 1 & 1 & 1 & 1 & 1\\\hline
           \text{Root Color }3 & 1 & 2 & 7 & 28 & 122 & 565 & 2735 & 13682 & A215973(n-1)\\\hline
        \end{array}
        \]
    \end{center}
    
    \item[Nontrivial Functional Equations]\hspace*{\fill}

    \begin{itemize}
\item $(x - 1)\gfa{3}^2+(1 - 2x)\gfa{3}+x^2 - x$

\end{itemize}
    \item[Discussion] The coloring rule can be interpreted as follows: Color~1 can only be assigned to leaves. Color~2 can only be followed by Color 1, so subtrees rooted at a vertex of color~2 are necessarily stars whose leaves are all of color~1. Finally, color~3 can either be followed by color~2 or by itself. This means that a tree colored according to this rule has the following shape: all vertices have color~3, except for some stars consisting of a vertex of color~2 whose children (if any) are all leaves of color~1.

According to a paper of Kung and de Mier~\cite{Kung2013Catalan}, sequence \seqnum{A215973} has an interpretation in terms of
a variant of Dyck paths from $(0,0)$ to $(t,0)$, where in addition to the usual $(1,1)$ (up) and $(1,-1)$ (down) steps, we are also allowed a horizontal $(a,0)$ step for any positive integer $a$ (arbitrary length).
We now give a bijection between $A$-colored trees and these Dyck paths with $t=n-1$.
Our bijection is a variant of the standard glove bijection. 
The only exception occurs when we encounter a vertex of color~2: when a subtree consisting of a vertex of color~2 and $k$ children of color~1 is traversed, this corresponds to a 
horizontal step $(k+1,0)$.
It is easy to see that the resulting path has the required properties and that the procedure can be reversed to construct a tree from a path. Thus $t_A(n)$ is (for $n > 1$) equal to the number of these Dyck paths,
as required.
\end{description}

\subsection*{Entry 18}
\begin{description}
    \item[Matrices]
    
    \[\begin{bmatrix}
0 & 0 & 0 \\
1 & 1 & 0 \\
0 & 1 & 0
\end{bmatrix}\]
    \item[Sequences]\hspace*{\fill}\vspace{-0.5cm}
    \begin{center}
        \[
        \begin{array}{|c||c|c|c|c|c|c|c|c||c|}\hline
           n  & 1 & 2 & 3 & 4 & 5 & 6 & 7 & 8  & \text{Formula}\\\hline\hline
           \text{Total}  & 3 & 3 & 9 & 33 & 135 & 591 & 2709 & 12837 & 3r_{n-1}\\\hline\hline
           \text{Root Color }1 & 1 & 0 & 0 & 0 & 0 & 0 & 0 & 0 & 0\\\hline
           \text{Root Color }2 & 1 & 2 & 6 & 22 & 90 & 394 & 1806 & 8558 & R_{n-1}\\\hline
           \text{Root Color }3 & 1 & 1 & 3 & 11 & 45 & 197 & 903 & 4279 & r_{n-1}\\\hline
        \end{array}
        \]
    \end{center}
    
    \item[Discussion] First, observe that if color 3 is not used, then we have $R_n$ many trees (see Entry~5 in Table~1 and apply Theorem~\ref{prop:blockul}). Denote the set of them by $X_n$. Also, note that a vertex can have color 3, if and only if it is the root of the tree and if all of its children are have color 2. It suffices to show that these trees are counted by $\frac{1}{2}R_{n} = r_{n}$, the little Schr\"{o}der numbers. For each of these trees, if the root had color 2, but not color 3, we would get one of the colored trees in $X_n$. Thus, we must just show that the trees in $X_n$ without a leaf at depth 1 of color 1 are counted by $\frac{1}{2}R_{n} = r_{n}$. We do that using a correspondence between the trees in $X_n$ and the so-called Schr\"{o}der paths, which are paths from $(0,0)$ to $(2n,0)$ with steps $(1,1), (1,-1)$ and $(2,0)$ (called U, D and F steps, respectively), not going above the 
    $x$-axis. Indeed, let us traverse a tree in $X_n$ using depth-first search and write F when we see a leaf of color 1, U when we go away from the root and D when we get closer to the root. We obtain a Schr\"{o}der path and the correspondence is one-to-one, as we obviously have the reverse map. Furthermore, a leaf at depth 1 of color 1 corresponds to an F step along the $x$-axis. 
    It is known that the Schr\"{o}der paths with F steps along the $x$-axis 
    and those without 
    are equinumerous and thus the trees in $X_n$ without a leaf at depth 1 of color 1 are counted by $\frac{1}{2}R_{n} = r_{n}$.
\end{description}

\subsection*{Entry 19}
\begin{description}
    \item[Matrices]
    
    \[\begin{bmatrix}
0 & 1 & 1 \\
0 & 1 & 0 \\
0 & 0 & 0
\end{bmatrix}\]
    \item[Sequences]\hspace*{\fill}\vspace{-0.5cm}
    \begin{center}
        \[
        \begin{array}{|c||c|c|c|c|c|c|c|c||c|}\hline
           n  & 1 & 2 & 3 & 4 & 5 & 6 & 7 & 8  & \text{Formula}\\\hline\hline
           \text{Total}^{h}  & 3 & 3 & 7 & 19 & 56 & 174 & 561 & 1859 & C_{n-1} + C_{n}\\\hline\hline
           \text{Root Color }1^h & 1 & 2 & 5 & 14 & 42 & 132 & 429 & 1430 & C_n\\\hline
           \text{Root Color }2^h & 1 & 1 & 2 & 5 & 14 & 42 & 132 & 429 & C_{n-1}\\\hline
           \text{Root Color }3 & 1 & 0 & 0 & 0 & 0 & 0 & 0 & 0 & 0\\\hline
        \end{array}
        \]
    \end{center}
    
    \item[Nontrivial Functional Equations]\hspace*{\fill}

    \begin{itemize}
\item $x\gfa{1}^2+(2x - 1)\gfa{1}+x$

\end{itemize}
    \item[Discussion] 
    Swapping color~$1$ and~$2$ and applying Theorem~\ref{prop:blockul} yields that we have $C_{n-1}$ such trees with root color~$2$.
    It remains to show that when the root has color $1$, we have $C_{n}$ trees. 
    Trees on two or more vertices with root color~$1$ colored according to this matrix $A$ consist of a root of color~$1$, leaves at depth $1$ of any combination of colors~$2$ and~$3$, and vertices of color~$2$ everywhere else. So, showing that $t_A^{(1)}(n)=C_n$ amounts to proving that the number of ordered trees with $n$ vertices whose leaves at depth $1$ are colored either~$2$ or~$3$ is $C_{n}$. For a bijective proof, see Exercise~11 in \cite{stanley2015catalan}. Here, we give a proof with generating functions. The generating function for the number of ordered trees $T(x) = \sum\limits_{i=1}^{n}t_{n}x^{n}$ satisfies the equation $T(x) = \frac{x}{1-T(x)} = x(1+T(x)+T^{2}(x)+ \ldots )$. If the generating function for the number of ordered trees with $n$ vertices whose leaves at depth $1$ are colored with either color~$2$ or~$3$ is $S(x)$, then we have
\[
S(x) = x(1 + (x+T(x))+ (x+T(x))^{2} + \ldots) = \frac{x}{1-x-T(x)}. 
\]
Using that $T^{2}(x) = T(x)-x$, we can show that $\frac{x}{1-x-T(x)} = \frac{T(x)-x}{x}$,  which is the generating function for the shifted Catalan numbers.

This also follows from applying Theorem~\ref{prop:rootonly} to Entry~3 in Table~\ref{tb:2cols}.
\end{description}

\subsection*{Entry 20}
\begin{description}
    \item[Matrices]
    
    \[\begin{bmatrix}
0 & 0 & 0 \\
1 & 1 & 0 \\
0 & 1 & 1
\end{bmatrix}\]
    \item[Sequences]\hspace*{\fill}\vspace{-0.5cm}
    \begin{center}
        \[
        \begin{array}{|c||c|c|c|c|c|c|c|c||c|}\hline
           n  & 1 & 2 & 3 & 4 & 5 & 6 & 7 & 8  & \text{Formula}\\\hline\hline
           \text{Total}  & 3 & 4 & 14 & 60 & 286 & 1456 & 7754 & 42680 & R_{n-1}+A394113(n) \\\hline\hline
           \text{Root Color }1 & 1 & 0 & 0 & 0 & 0 & 0 & 0 & 0 & 0\\\hline
           \text{Root Color }2 & 1 & 2 & 6 & 22 & 90 & 394 & 1806 & 8558 & R_{n-1}\\\hline
           \text{Root Color }3 & 1 & 2 & 8 & 38 & 196 & 1062 & 5948 & 34122 & A394113(n) \\\hline
        \end{array}
        \]
    \end{center}
    
    \item[Nontrivial Functional Equations]\hspace*{\fill}

    \begin{itemize}
\item $\gfa{3}^4+(-x - 1)\gfa{3}^3+4x\gfa{3}^2+(-x^2 - x)\gfa{3}+x^2$

\end{itemize}
    \item[Discussion] Swapping colors~$1$ and~$2$ and applying Theorem~\ref{prop:blockul} yields that we have $R_{n-1}$ such trees with root color~$2$. 
    
\end{description}

\subsection*{Entry 21}
\begin{description}
    \item[Matrices]
    
    \[\begin{bmatrix}
1 & 0 & 0 \\
1 & 1 & 0 \\
0 & 1 & 0
\end{bmatrix}\]
    \item[Sequences]\hspace*{\fill}\vspace{-0.5cm}
    \begin{center}
        \[
        \begin{array}{|c||c|c|c|c|c|c|c|c||c|}\hline
           n  & 1 & 2 & 3 & 4 & 5 & 6 & 7 & 8  & \text{Formula}\\\hline\hline
           \text{Total}  & 3 & 4 & 12 & 46 & 199 & 926 & 4525 & 22902 & C_{n-1}+D_n+A125188(n-1)\\\hline\hline
           \text{Root Color }1^{h} & 1 & 1 & 2 & 5 & 14 & 42 & 132 & 429 & C_{n-1}\\\hline
           \text{Root Color }2 & 1 & 2 & 7 & 29 & 131 & 625 & 3099 & 15818 & D_n\\\hline
           \text{Root Color }3 & 1 & 1 & 3 & 12 & 54 & 259 & 1294 & 6655 & A125188(n-1)\\\hline
        \end{array}
        \]
    \end{center}
    
    \item[Nontrivial Functional Equations]\hspace*{\fill}

    \begin{itemize}
\item $(x + 2)\gfa{3}^4+(-5x - 1)\gfa{3}^3+(3x^2 + 3x)\gfa{3}^2-3x^2\gfa{3}+x^3$

\end{itemize}
    \item[Discussion] The sequences for root colors~$1$ and~$2$ follow from Theorem~\ref{prop:blockul} and Entry~6 in Table~\ref{tb:2cols}. We prove that the number of trees with root of color~$3$ colored according to $A$ is given by OEIS sequence $\text{A125188}(n-1)$.
    According to the first formula in the OEIS, this sequence (shifted appropriately) has generating function
\[
g(x)=\frac{1+xC(x)-\sqrt{1-xC(x)-5x}}{2\left(1+C(x)\right)},
\]
where
\[
C(x)=\frac{1-\sqrt{1-4x}}{2x}.
\]
Theorem~\ref{prop:rootonlysingle} tells us that
\[
\gfa{3}=\frac{x}{1-\gfa{2}}.
\]
Based on the OEIS entry \seqnum{A007852}, which is the sequence $D_n$, we have that \[
\gfa{2}=\frac{1-xC(x)-\sqrt{1-5x-xC(x)}}{2},
\]
where $C(x)$ is as above. We then have
\[
\gfa{3}=\frac{x}{1-\gfa{2}}=\frac{x}{1-\frac{1-xC(x)-\sqrt{1-5x-xC(x)}}{2}}.
\]
This can be shown by algebraic manipulation to equal the function $g(x)$ above, as required. 
\end{description}

\subsection*{Entry 22}
\begin{description}
    \item[Matrices]
    
    \[\begin{bmatrix}
1 & 0 & 0 \\
1 & 1 & 0 \\
0 & 1 & 1
\end{bmatrix}\]
    \item[Sequences]\hspace*{\fill}\vspace{-0.5cm}
    \begin{center}
        \[
        \begin{array}{|c||c|c|c|c|c|c|c|c||c|}\hline
           n  & 1 & 2 & 3 & 4 & 5 & 6 & 7 & 8  & \text{Formula}\\\hline\hline
           \text{Total}  & 3 & 5 & 17 & 73 & 353 & 1834 & 9996 & 56377 & C_{n-1}+D_n+A394114(n)\\\hline\hline
           \text{Root Color }1^{h} & 1 & 1 & 2 & 5 & 14 & 42 & 132 & 429 & C_{n-1}\\\hline
           \text{Root Color }2 & 1 & 2 & 7 & 29 & 131 & 625 & 3099 & 15818 & D_n\\\hline
           \text{Root Color }3 & 1 & 2 & 8 & 39 & 208 & 1167 & 6765 & 40130 & A394114(n)\\\hline
        \end{array}
        \]
    \end{center}
    
    \item[Nontrivial Functional Equations]\hspace*{\fill}

    \begin{itemize}
\item $\gfa{3}^8-3\gfa{3}^7+(7x + 3)\gfa{3}^6+(-14x - 1)\gfa{3}^5+(13x^2 + 8x)\gfa{3}^4$ $+(-14x^2 - x)\gfa{3}^3+(7x^3 + 3x^2)\gfa{3}^2-3x^3\gfa{3}+x^4$
\end{itemize}
        \item[Discussion] The sequences for root colors~$1$ and~$2$ follow from Theorem~\ref{prop:blockul} and Entry~6 in Table~\ref{tb:2cols} as for the previous entry.
\end{description}

\subsection*{Entry 23}
\begin{description}
    \item[Matrices]
    
    \[\begin{bmatrix}
0 & 0 & 1 \\
1 & 0 & 0 \\
0 & 1 & 1
\end{bmatrix}\]
    \item[Sequences]\hspace*{\fill}\vspace{-0.5cm}
    \begin{center}
        \[
        \begin{array}{|c||c|c|c|c|c|c|c|c||c|}\hline
           n  & 1 & 2 & 3 & 4 & 5 & 6 & 7 & 8  & \text{Formula}\\\hline\hline
           \text{Total}  & 3 & 4 & 12 & 47 & 209 & 1000 & 5020 & 26076 & *** \\\hline\hline
           \text{Root Color }1 & 1 & 1 & 3 & 12 & 54 & 260 & 1310 & 6821 & A006026(n)\\\hline
           \text{Root Color }2 & 1 & 1 & 2 & 6 & 23 & 101 & 480 & 2400 & A218225(n-2)\\\hline
           \text{Root Color }3 & 1 & 2 & 7 & 29 & 132 & 639 & 3230 & 16855 & A394115(n)\\\hline
        \end{array}
        \]
        $***$: Formula for Total is $t_A(n)=A006026(n) + A218225(n-2) + A394115(n)$.
    \end{center}
    
    \item[Nontrivial Functional Equations]\hspace*{\fill}

    \begin{itemize}
\item $\gfa{1}^3-3\gfa{1}^2+(2x + 1)\gfa{1}-x$

\item $(1 - x)\gfa{2}^3+(2x^2 - 2x)\gfa{2}^2+x^3$

\item $\gfa{3}^3+(2x - 2)\gfa{3}^2+(1 - x)\gfa{3}+x^2 - x$

\end{itemize}
    \item[Discussion] OEIS sequence $\text{A006026}(n)$ has a generating function $G(x)$ that satisfies the equation $G(x)^3-3G(x)^2+\left(2x+1\right)G(x)-x$, which is precisely the functional equation listed for $\gfa{1}$. 
    Given that the initial terms agree, these sequences match.

We now prove that the number of trees with root of color~$2$ colored according to $A$ is given by OEIS sequence $\text{A218225}(n-2)$, with value $1$ when $n=1$. According to the sequence's definition, this sequence has a generating function $H(x)$ satisfying the equation
\[
\frac{1 - xH(x)}{(1 - (xH(x))^2)^2} = 1 - x.
\]
We claim that $\gfa{2}=x^2H(x)+x$. This would be equivalent to $xH(x)=\frac{\gfa{2}}{x}-1$, or in turn
\[
\frac{1 - \left(\frac{\gfa{2}}{x}-1\right)}{\left(1 - \left(\frac{\gfa{2}}{x}-1\right)^2\right)^2} = 1 - x.
\]
Clearing denominators, expanding, and factoring gives a factor of $(1-x)\gfa{2}^3+(2x^2-2x)\gfa{2}^2+x^3$, which is precisely the functional equation listed for $\gfa{2}$.
\end{description}

\subsection*{Entry 24}
\begin{description}
    \item[Matrices]
    
    \[\begin{bmatrix}
0 & 0 & 1 \\
1 & 1 & 0 \\
0 & 1 & 1
\end{bmatrix}\]
    \item[Sequences]\hspace*{\fill}\vspace{-0.5cm}
    \begin{center}
        \[
        \begin{array}{|c||c|c|c|c|c|c|c|c||c|}\hline
           n  & 1 & 2 & 3 & 4 & 5 & 6 & 7 & 8  & \text{Formula}\\\hline\hline
           \text{Total}  & 3 & 5 & 18 & 82 & 419 & 2293 & 13138 & 77801 & ***\\\hline\hline
           \text{Root Color }1 & 1 & 1 & 3 & 13 & 66 & 364 & 2111 & 12664 & A394116(n)\\\hline
           \text{Root Color }2 & 1 & 2 & 7 & 30 & 144 & 744 & 4049 & 22906 & A394117(n) \\\hline
           \text{Root Color }3 & 1 & 2 & 8 & 39 & 209 & 1185 & 6978 & 42231 & A394118(n) \\\hline
        \end{array}
        \]
        $***$: Formula for Total is $t_A(n)=A394116(n) + A394117(n) + A394118(n)$.
    \end{center}
    
    \item[Nontrivial Functional Equations]\hspace*{\fill}

    \begin{itemize}
\item $\gfa{1}^5+(-2x - 3)\gfa{1}^4+(7x + 1)\gfa{1}^3+(-4x^2 - 3x)\gfa{1}^2+3x^2\gfa{1}$ $-x^3$

\item $\gfa{2}^5+(2x - 2)\gfa{2}^4+\gfa{2}^3+(4x^2 - 2x)\gfa{2}^2+x^3$

\item $\gfa{3}^5-2\gfa{3}^4+(4x + 1)\gfa{3}^3-5x\gfa{3}^2+(2x^2 + x)\gfa{3}-x^2$

\end{itemize}
    
\end{description}

\subsection*{Entry 25}
\begin{description}
    \item[Matrices]\hspace*{\fill}
    
    \makecell{$\begin{bmatrix}
1 & 0 & 1 \\
1 & 1 & 0 \\
0 & 1 & 1
\end{bmatrix}$, $\begin{bmatrix}
0 & 1 & 1 \\
0 & 1 & 1 \\
0 & 1 & 1
\end{bmatrix}$, $\begin{bmatrix}
1 & 1 & 0 \\
1 & 1 & 0 \\
0 & 1 & 1
\end{bmatrix}$, $\begin{bmatrix}
0 & 1 & 1 \\
1 & 1 & 0 \\
1 & 0 & 1
\end{bmatrix}$, \\$\begin{bmatrix}
1 & 1 & 0 \\
1 & 0 & 1 \\
1 & 0 & 1
\end{bmatrix}$, $\begin{bmatrix}
0 & 1 & 1 \\
1 & 0 & 1 \\
1 & 0 & 1
\end{bmatrix}$, $\begin{bmatrix}
0 & 1 & 1 \\
1 & 0 & 1 \\
1 & 1 & 0
\end{bmatrix}$}
    \item[Sequences]\hspace*{\fill}\vspace{-0.5cm}
    \begin{center}
        \[
        \begin{array}{|c||c|c|c|c|c|c|c|c||c|}\hline
           n  & 1 & 2 & 3 & 4 & 5 & 6 & 7 & 8  & \text{Formula}\\\hline\hline
           \text{Total}^{h}  & 3 & 6 & 24 & 120 & 672 & 4032 & 25344 & 164736 & 3\cdot 2^{n-1}C_{n-1}\\\hline\hline
           \text{Root Color }1^h & 1 & 2 & 8 & 40 & 224 & 1344 & 8448 & 54912 & 2^{n-1}C_{n-1}\\\hline
           \text{Root Color }2^h & 1 & 2 & 8 & 40 & 224 & 1344 & 8448 & 54912 & 2^{n-1}C_{n-1}\\\hline
           \text{Root Color }3^h & 1 & 2 & 8 & 40 & 224 & 1344 & 8448 & 54912 & 2^{n-1}C_{n-1}\\\hline
        \end{array}
        \]
    \end{center}

    \item[Discussion] This follows from Theorem~\ref{prop:row-sums}, since all row sums are equal to $2$.
\end{description}

\subsection*{Entry 26}
\begin{description}
    \item[Matrices]
    
    \[\begin{bmatrix}
0 & 1 & 1 \\
0 & 1 & 0 \\
0 & 0 & 1
\end{bmatrix}, \begin{bmatrix}
0 & 1 & 1 \\
0 & 0 & 1 \\
0 & 0 & 1
\end{bmatrix}, \begin{bmatrix}
0 & 1 & 1 \\
0 & 0 & 1 \\
0 & 1 & 0
\end{bmatrix}\]
    \item[Sequences]\hspace*{\fill}\vspace{-0.5cm}
    \begin{center}
        \[
        \begin{array}{|c||c|c|c|c|c|c|c|c||c|}\hline
           n  & 1 & 2 & 3 & 4 & 5 & 6 & 7 & 8  & \text{Formula}\\\hline\hline
           \text{Total}^{h}  & 3 & 4 & 10 & 30 & 98 & 336 & 1188 & 4290 & 2\cdot A189176(n-2)\\\hline\hline
           \text{Root Color }1^h & 1 & 2 & 6 & 20 & 70 & 252 & 924 & 3432 & \binom{2n-2}{n-1}\\\hline
           \text{Root Color }2^h & 1 & 1 & 2 & 5 & 14 & 42 & 132 & 429 & C_{n-1}\\\hline
           \text{Root Color }3^h & 1 & 1 & 2 & 5 & 14 & 42 & 132 & 429 & C_{n-1}\\\hline
        \end{array}
        \]
    \end{center}
    
    \item[Nontrivial Functional Equations]\hspace*{\fill}

    \begin{itemize}
\item $(4x - 1)\gfa{1}^2+x^2$

\end{itemize}
    \item[Discussion] 
Trees colored according to $A_1$ with root color~1 have free choice between colors 2 and 3 for the root's children, but all remaining vertices then have their colors determined. So, trees with root color~1 are in bijection with lattice paths from $(0,0)$ to $(n,n)$ by taking each subtree of color~2 as a Dyck path and each subtree of color~3 a ``negative'' Dyck path (below $y=0$). So, $t_A^{(1)}(n)=\binom{2n-2}{n-1}$, meaning
$\gfa{1}=\frac{x}{\sqrt{1-4x}}$. 
Furthermore, $\gfa{2}=\gfa{3}=\frac{1-\sqrt{1-4x}}{2}$ by Theorem~\ref{prop:catrows}.
So,
\[
\gfall=\frac{x}{\sqrt{1-4x}}+1-\sqrt{1-4x}=\frac{1-4x+(5x-1)\sqrt{1-4x}}{1-4x}.
\]
We now claim that $t_A(n)=2\cdot A189176(n-1)$ for $n\geq2$. Based on the given generating function on OEIS, $2\cdot A189176(n-1)$ has generating function
\[
G(x)=\frac{1-5x+4x^2-(1-5x)\sqrt{1-4x}}{1-4x}=F_A(x)-\frac{1}{2}.
\]
The claim immediately follows.
Alternatively, everything follows from applying Theorem~\ref{prop:rootonly} to Entry~4 in Table~\ref{tb:2cols}. The equivalence of the three matrices is due to the fact that colors 2 and 3 are interchangeable.
\end{description}

\subsection*{Entry 27}
\begin{description}
    \item[Matrices]
    
    \[\begin{bmatrix}
1 & 1 & 1 \\
0 & 0 & 0 \\
0 & 0 & 0
\end{bmatrix}\]
    \item[Sequences]\hspace*{\fill}\vspace{-0.5cm}
    \begin{center}
        \[
        \begin{array}{|c||c|c|c|c|c|c|c|c||c|}\hline
           n  & 1 & 2 & 3 & 4 & 5 & 6 & 7 & 8  & \text{Formula}\\\hline\hline
           \text{Total}  & 3 & 3 & 12 & 57 & 300 & 1686 & 9912 & 60213 & A047891(n)\\\hline\hline
           \text{Root Color }1 & 1 & 3 & 12 & 57 & 300 & 1686 & 9912 & 60213 & A047891(n)\\\hline
           \text{Root Color }2 & 1 & 0 & 0 & 0 & 0 & 0 & 0 & 0 & 0\\\hline
           \text{Root Color }3 & 1 & 0 & 0 & 0 & 0 & 0 & 0 & 0 & 0\\\hline
        \end{array}
        \]
    \end{center}
    
    \item[Nontrivial Functional Equations]\hspace*{\fill}

    \begin{itemize}
\item $\gfa{1}^2+(2x - 1)\gfa{1}+x$

\end{itemize}
    \item[Discussion]
This is the special case $\ell=1$, $m=2$ of Theorem~\ref{thm:gennarayana}: trees with root color 2 or 3 exist only for $n=1$, while the number of trees with root color 1, and thus the total number of trees, colored according to $A$ is
$$\sum_{k=1}^{n-1} N_{n-1,k} 3^k$$
for $n > 1$, which is exactly OEIS sequence \seqnum{A047891}.
\end{description}

\subsection*{Entry 28}
\begin{description}
    \item[Matrices]
    
    \[\begin{bmatrix}
1 & 1 & 1 \\
0 & 1 & 0 \\
0 & 0 & 0
\end{bmatrix}\]
    \item[Sequences]\hspace*{\fill}\vspace{-0.5cm}
    \begin{center}
        \[
        \begin{array}{|c||c|c|c|c|c|c|c|c||c|}\hline
           n  & 1 & 2 & 3 & 4 & 5 & 6 & 7 & 8  & \text{Formula}\\\hline\hline
           \text{Total}  & 3 & 4 & 15 & 71 & 380 & 2192 & 13291 & 83489 & A394119(n)+C_{n-1}\\\hline\hline
           \text{Root Color }1 & 1 & 3 & 13 & 66 & 366 & 2150 & 13159 & 83060 & A394119(n)\\\hline
           \text{Root Color }2^h & 1 & 1 & 2 & 5 & 14 & 42 & 132 & 429 & C_{n-1}\\\hline
           \text{Root Color }3 & 1 & 0 & 0 & 0 & 0 & 0 & 0 & 0 & 0\\\hline
        \end{array}
        \]
    \end{center}
    
    \item[Nontrivial Functional Equations]\hspace*{\fill}

    \begin{itemize}
\item $\gfa{1}^4+(2x - 1)\gfa{1}^3+(x^2 + 2x)\gfa{1}^2+(2x^2 - x)\gfa{1}+x^2$

\end{itemize}
    
\end{description}

\subsection*{Entry 29}
\begin{description}
    \item[Matrices]
    \[\begin{bmatrix}
1 & 1 & 1 \\
0 & 1 & 0 \\
0 & 0 & 1
\end{bmatrix}, \begin{bmatrix}
1 & 1 & 1 \\
0 & 1 & 0 \\
0 & 1 & 0
\end{bmatrix}, \begin{bmatrix}
1 & 1 & 1 \\
0 & 0 & 1 \\
0 & 1 & 0
\end{bmatrix}, \begin{bmatrix}
1 & 1 & 0 \\
1 & 1 & 0 \\
0 & 0 & 1
\end{bmatrix}\]
    \item[Sequences]\hspace*{\fill}
    \begin{description}
        \item[First Three Matrices]\hspace*{\fill}\vspace{-0.5cm}
        \begin{center}
            \[
            \begin{array}{|c||c|c|c|c|c|c|c|c||c|}\hline
               n  & 1 & 2 & 3 & 4 & 5 & 6 & 7 & 8  & \text{Formula}\\\hline\hline
               \text{Total}  & 3 & 5 & 18 & 85 & 462 & 2730 & 17028 & 110253 & (2^{n}+1)C_{n-1}\\\hline\hline
               \text{Root Color }1 & 1 & 3 & 14 & 75 & 434 & 2646 & 16764 & 109395 & (2^{n}-1)C_{n-1}\\\hline
               \text{Root Color }2^{h} & 1 & 1 & 2 & 5 & 14 & 42 & 132 & 429 & C_{n-1}\\\hline
               \text{Root Color }3^{h} & 1 & 1 & 2 & 5 & 14 & 42 & 132 & 429 & C_{n-1}\\\hline
            \end{array}
            \]
        \end{center}
        \item[Last Matrix]\hspace*{\fill}\vspace{-0.5cm}
        \begin{center}
            \[
            \begin{array}{|c||c|c|c|c|c|c|c|c||c|}\hline
               n  & 1 & 2 & 3 & 4 & 5 & 6 & 7 & 8  & \text{Formula}\\\hline\hline
               \text{Total}  & 3 & 5 & 18 & 85 & 462 & 2730 & 17028 & 110253 & (2^{n}+1)C_{n-1}\\\hline\hline
               \text{Root Color }1^{h} & 1 & 2 & 8 & 40 & 224 & 1344 & 8448 & 54912 & 2^{n-1}C_{n-1}\\\hline
               \text{Root Color }2^{h} & 1 & 2 & 8 & 40 & 224 & 1344 & 8448 & 54912 & 2^{n-1}C_{n-1}\\\hline
               \text{Root Color }3^{h} & 1 & 1 & 2 & 5 & 14 & 42 & 132 & 429 & C_{n-1}\\\hline
            \end{array}
            \]
        \end{center}
    \end{description}
    
    \item[Nontrivial Functional Equations]\hspace*{\fill}
    \begin{description}
        \item[First Three Matrices]\hspace*{\fill}

        \begin{itemize}
\item $\gfa{1}^4+(6x - 1)\gfa{1}^2+x^2$

\end{itemize}

    \end{description}
    \item[Discussion] The trees colored according to $A_4$ with root of color 3 are counted by $C_{n-1}$, while for the rest of the trees each of the vertices has either color $1$ or $2$. This gives the formula $(2^{n}+1)C_{n-1}$. This formula can also be obtained from Theorem~\ref{prop:blockdiag}. Theorem~\ref{prop:upsetonly} applied to Entry~4 of Table~\ref{tb:2cols} gives the same formula (via its generating function) for coloring according to the first three matrices. (This is based on $A_1$ with colors~$1$ and~$3$ swapped.)

    We describe another, partially bijective, proof that $A_1$ has counting sequence $(2^{n}+1)C_{n-1}$, utilizing the butterfly decomposition of plane trees \cite[Section 2]{CLS2007Butterfly}. One can still try to obtain a purely bijective proof of the fact that, for example, $A_1$ and $A_4$ have the same counting sequences. Note that the equivalence of $A_1$, $A_2$, and $A_3$ follows directly from Corollary~\ref{cor:EqivCols}. Note also that \cite[Section 5]{CLS2007Butterfly} uses the same butterfly decomposition to interpret the generating function for the total number of chains in a tree with $n$ edges, and below we are showing that this decomposition can be used to find the generating function of all antichains in trees with $n$ vertices.

Consider the trees colored according to $A_1$ and let us denote their generating function by $B$. It suffices to show that the number of those with at least one vertex not having color 1 (i.e., in color 2 or color 3) is $2^{n}C_{n-1}$. Note that the set of vertices not colored 1 forms an antichain in the tree, as defined in \cite{Klazar1997Twelve}. Consider the path from the root to the first vertex with color 2 or color 3, when traversing the tree with DFS. The butterfly decomposition for this path gives us the following equation with similar derivation to \cite[Equation~2.2]{CLS2007Butterfly}:
\begin{equation*}
    B = 2C + (2C)(xC(B-C)) + (2C)(xC(B-C))^{2} + \ldots
\end{equation*} 
This is true since each butterfly on the considered path has generating function $xC((B+C)-2C) = xC(B-C)$, as we know that both subtrees in it have a common root of color 1, and the left subtree must have all of its vertices of color 1, while the right subtree of the butterfly can be any of the trees with root color 1. The generating function for the trees for which this path is of length $i$, where $i\geq 0$, is $(2C)(xC(B-C))^{i}$. Now, if $F=B-C$, then we obtain
\begin{equation*}
    F = C + (2C)(xCF) + (2C)(xCF)^{2} + \ldots = C+ \frac{2xC^{2}F}{1-xCF}.
\end{equation*}
After simplifying we get $F= \frac{C+xC^{2}F}{1-xCF}$ or 
\begin{equation*}
    xCF^{2} + (xC^{2}-1)F + C = 0.
\end{equation*}
Thus, one of the roots is  
\begin{equation*}
    F = \frac{(1-xC^{2})-\sqrt{(xC^{2}-1)^{2}-4xC^{2}}}{2xC} = \frac{(1-xC^{2})-\sqrt{(xC^{2}+1)^{2}-8xC^{2}}}{2xC}.
\end{equation*}
Using that $1+xC^{2}= C$, we see that 
\begin{equation*}
    B = F+C = \frac{(1+xC^{2})-\sqrt{(xC^{2}+1)^{2}-8xC^{2}}}{2xC} = \frac{1-\sqrt{1-8x}}{2x} = 2C(2x).
\end{equation*}
The sequence with generating function $2C(2x)$ is $2^{n}C_{n-1}$.

\end{description}

\subsection*{Entry 30}
\begin{description}
    \item[Matrices]
    
    \[\begin{bmatrix}
0 & 0 & 0 \\
1 & 0 & 1 \\
1 & 0 & 0
\end{bmatrix}\]
    \item[Sequences]\hspace*{\fill}\vspace{-0.5cm}
    \begin{center}
        \[
        \begin{array}{|c||c|c|c|c|c|c|c|c||c|}\hline
           n  & 1 & 2 & 3 & 4 & 5 & 6 & 7 & 8  & \text{Formula}\\\hline\hline
           \text{Total}  & 3 & 3 & 6 & 14 & 35 & 90 & 234 & 611 & 
           1+F_{2n-1} \\\hline\hline
           \text{Root Color }1 & 1 & 0 & 0 & 0 & 0 & 0 & 0 & 0 & 0\\\hline
           \text{Root Color }2 & 1 & 2 & 5 & 13 & 34 & 89 & 233 & 610 & F_{2n-1}\\\hline
           \text{Root Color }3 & 1 & 1 & 1 & 1 & 1 & 1 & 1 & 1 & 1\\\hline
        \end{array}
        \]
    \end{center}
    
    \item[Nontrivial Functional Equations]\hspace*{\fill}

    \begin{itemize}
\item $(x^2 - 3x + 1)\gfa{2}+x^2 - x$

\end{itemize}
    \item[Discussion] This rule can only color trees of height at most $2$. For $n\geq2$, there are $1+2^{n-1}$ ways to color the tree of height $1$ with $n$ vertices: One way to color it with a root of color~3 (all leaves have color~1) and $2^{n-1}$ ways with a root of color~2 (where the leaves can be either of colors~1 or~3). For a tree of height~2, there are $2^{\ell}$ ways to color it, where $\ell$ is the number of leaves that are children of the root. Any coloring of such a tree must have root of color~2, intermediate vertices of color~3, and leaves at depth~2 of color~1. The leaves of depth~1 can be either color~1 or color~3. Another way of summarizing this is, forcing the root to have color~2, that there are $2^\ell$ ways to color a tree of height at most $2$ that has $\ell$ leaves of depth~1.

There are $2^{n-2}$ trees of height at most $2$ on $n$ vertices. Here is a one-to-two correspondence between trees on $n-1$ vertices and trees on $n$ vertices. Given a tree on $n-1$ vertices, either give the root a new rightmost child or give the root's rightmost child a new rightmost child. Every tree is generated by exactly one of these transformations, as the rightmost leaf is either a child of the root or not. Clearly, this map can be inverted, so it is, in fact, a one-to-two mapping.

Now, consider applying the above mapping to a tree $T$ on $n-1$ vertices colored with root of color~2 according to this color rule. Let $T_1$ be the tree obtained by giving the root a new rightmost child, and let $T_2$ be the tree obtained by giving the root's rightmost child a new rightmost child. If $T$ has $c$ colorings with root colored by color~2, then $T_1$ has $2c$ such colorings, as the coloring of $T$ transfers to $T_1$, and the new leaf of depth~1 can be either color~1 or color~3. Now, in $T_2$ the new leaf is of depth~2, so any valid coloring assigns it color~1. Consider the coloring of $T_2$ consisting of the coloring from $T$ transferred over where the new leaf is given color~1. This is a valid coloring if and only if the rightmost child of the root was colored with color~3. If that vertex is not a leaf in $T$, all valid colorings of $T$ assign it color~3. If that vertex is a leaf in $T$, half of the valid colorings of $T$ assign it color~3 (and the other half assign it color~1). If that vertex is a leaf in $T$, $T$ was obtained from a tree $T'$ with $n-2$ vertices by giving the root a new rightmost child. So, the number of colorings of $T$ is twice that of $T'$. The number of colorings of $T_2$ is the same as the number of colorings of $T'$, which also equals the number of colorings of $T$ minus the number of colorings of $T'$.

Let $E_n$ denote the total number of colorings of trees with $n$ vertices where the root is assigned color~2. We have
\begin{align*}
E_n&=\sum_{T''\text{ has $n$ vertices}}\text{colorings}(T'')\\
&=\sum_{T\text{ has $n-1$ vertices}}(\text{colorings}(T_1)+\text{colorings}(T_2))\\
&=\sum_{T\text{ has $n-1$ vertices}}(2\cdot\text{colorings}(T)+\text{colorings}(T))-\sum_{T'\text{ has $n-2$ vertices}}\text{colorings}(T')\\
&=3E_{n-1}-E_{n-2}.
\end{align*}
The formula $E_n=F_{2n-1}$ follows, and then the formula for Entry~30 follows from this plus adding back the one tree on $n$ vertices whose root has color~3.

This entry also follows (non-bijectively) from Theorem~\ref{thm:genfib}.
\end{description}

\subsection*{Entry 31}
\begin{description}
    \item[Matrices]
    
    \[\begin{bmatrix}
0 & 0 & 0 \\
1 & 1 & 1 \\
1 & 0 & 0
\end{bmatrix}\]
    \item[Sequences]\hspace*{\fill}\vspace{-0.5cm}
    \begin{center}
        \[
        \begin{array}{|c||c|c|c|c|c|c|c|c||c|}\hline
           n  & 1 & 2 & 3 & 4 & 5 & 6 & 7 & 8  & \text{Formula}\\\hline\hline
           \text{Total}  & 3 & 4 & 14 & 66 & 356 & 2062 & 12502 & 78324 & 1+A155867(n-1)\\\hline\hline
           \text{Root Color }1 & 1 & 0 & 0 & 0 & 0 & 0 & 0 & 0 & 0\\\hline
           \text{Root Color }2 & 1 & 3 & 13 & 65 & 355 & 2061 & 12501 & 78323 & A155867(n-1)\\\hline
           \text{Root Color }3 & 1 & 1 & 1 & 1 & 1 & 1 & 1 & 1 & 1\\\hline
        \end{array}
        \]
    \end{center}
    
    \item[Nontrivial Functional Equations]\hspace*{\fill}

    \begin{itemize}
\item $(x - 1)\gfa{2}^2+(x^2 - 3x + 1)\gfa{2}+x^2 - x$

\end{itemize}
    \item[Discussion] This entry can be obtained from Entry~2 in Table~\ref{tb:2cols} by adding a row of ones at the bottom (and replacing with an equivalent matrix). So, Theorem~\ref{prop:upsetonly} applies. Letting $B=\begin{bmatrix}
        0&1\\0&0
    \end{bmatrix}$ be the matrix from that table entry, 
    we have $\gfll{B}=\frac{x}{1-x} + x$. So, by Theorem~\ref{prop:upsetonly} we have
\[
\gfall=\frac{1+\gfll{B}-\sqrt{(1-\gfll{B})^2-4x}}{2}.
\]
Substituting the expression in for $\gfll{B}$ and manipulating with a computer algebra system verifies that
\[
\gfall=x + \frac{x}{1-x}+ \frac{1 - 3x + x^2 - \sqrt{1 - 10x + 19x^2 - 10 x^3+x^4}}{2(1-x)}.
\]
The last term in this sum is the generating function for $A155867(n-1)$. The other two terms serve to add $2$ to the term at index $1$ and to add $1$ to all other terms, yielding the desired formula for Entry~31.
\end{description}

\subsection*{Entry 32}
\begin{description}
    \item[Matrices]
    
    \[\begin{bmatrix}
0 & 0 & 0 \\
1 & 1 & 1 \\
1 & 0 & 1
\end{bmatrix}\]
    \item[Sequences]\hspace*{\fill}\vspace{-0.5cm}
    \begin{center}
        \[
        \begin{array}{|c||c|c|c|c|c|c|c|c||c|}\hline
           n  & 1 & 2 & 3 & 4 & 5 & 6 & 7 & 8  & \text{Formula}\\\hline\hline
           \text{Total}  & 3 & 5 & 20 & 99 & 550 & 3286 & 20644 & 134557 & A394120(n)+R_{n-1}\\\hline\hline
           \text{Root Color }1 & 1 & 0 & 0 & 0 & 0 & 0 & 0 & 0 & 0\\\hline
           \text{Root Color }2 & 1 & 3 & 14 & 77 & 460 & 2892 & 18838 & 125999 & A394120(n) \\\hline
           \text{Root Color }3 & 1 & 2 & 6 & 22 & 90 & 394 & 1806 & 8558 & R_{n-1}\\\hline
        \end{array}
        \]
    \end{center}
    
    \item[Nontrivial Functional Equations]\hspace*{\fill}

    \begin{itemize}
\item $\gfa{2}^4+(x - 1)\gfa{2}^3+3x\gfa{2}^2+(x^2 - x)\gfa{2}+x^2$

\end{itemize}
    \item[Discussion] The listed formulas follow from swapping colors~$2$ and~$3$ and applying Theorem~\ref{prop:blockul}.
    
\end{description}

\subsection*{Entry 33}
\begin{description}
    \item[Matrices]
    
    \[\begin{bmatrix}
1 & 0 & 0 \\
1 & 1 & 0 \\
1 & 1 & 1
\end{bmatrix}\]
    \item[Sequences]\hspace*{\fill}\vspace{-0.5cm}
    \begin{center}
        \[
        \begin{array}{|c||c|c|c|c|c|c|c|c||c|}\hline
           n  & 1 & 2 & 3 & 4 & 5 & 6 & 7 & 8  & \text{Formula}\\\hline\hline
           \text{Total}  & 3 & 6 & 24 & 121 & 689 & 4233 & 27396 & 184151 & C_{n-1}+D_n+A394121(n)\\\hline\hline
           \text{Root Color }1^{h} & 1 & 1 & 2 & 5 & 14 & 42 & 132 & 429 & C_{n-1}\\\hline
           \text{Root Color }2 & 1 & 2 & 7 & 29 & 131 & 625 & 3099 & 15818 & D_n\\\hline
           \text{Root Color }3 & 1 & 3 & 15 & 87 & 544 & 3566 & 24165 & 167904 & A394121(n)\\\hline
        \end{array}
        \]
    \end{center}
    
    \item[Nontrivial Functional Equations]\hspace*{\fill}

    \begin{itemize}
\item $\gfa{3}^8-\gfa{3}^7+7x\gfa{3}^6-4x\gfa{3}^5+13x^2\gfa{3}^4-4x^2\gfa{3}^3+7x^3\gfa{3}^2-x^3\gfa{3}+x^4$

\end{itemize}
    \item[Discussion] This is the $3\times3$ case of Theorem~\ref{thm:gen33}. The listed formulas also follow from Theorem~\ref{prop:blockul}.
\end{description}

\subsection*{Entry 34}
\begin{description}
    \item[Matrices]
    
    \[\begin{bmatrix}
1 & 0 & 1 \\
1 & 0 & 1 \\
1 & 0 & 0
\end{bmatrix}, \begin{bmatrix}
1 & 0 & 1 \\
1 & 0 & 1 \\
0 & 1 & 0
\end{bmatrix},\begin{bmatrix}
1 & 0 & 1 \\
0 & 1 & 1 \\
1 & 0 & 0
\end{bmatrix}, \begin{bmatrix}
0 & 1 & 1 \\
1 & 0 & 1 \\
1 & 0 & 0
\end{bmatrix}\]
    \item[Sequences]\hspace*{\fill}\vspace{-0.5cm}
    
    \begin{center}
        \[
        \begin{array}{|c||c|c|c|c|c|c|c|c||c|}\hline
           n  & 1 & 2 & 3 & 4 & 5 & 6 & 7 & 8  & \text{Formula}\\\hline\hline
           \text{Total}^{h}  & 3 & 5 & 17 & 72 & 341 & 1729 & 9180 & 50388 & \frac{7n-4}{n(2n-1)}\binom{3n-3}{n-1}\\\hline\hline
           \text{Root Color }1^h & 1 & 2 & 7 & 30 & 143 & 728 & 3876 & 21318 & \frac{1}{n}\binom{3n-2}{n-1}\\\hline
           \text{Root Color }2^h & 1 & 2 & 7 & 30 & 143 & 728 & 3876 & 21318 & \frac{1}{n}\binom{3n-2}{n-1}\\\hline
           \text{Root Color }3^h & 1 & 1 & 3 & 12 & 55 & 273 & 1428 & 7752 & \frac{1}{2n-1}\binom{3n-3}{n-1}\\\hline
        \end{array}
        \]
    \end{center}
    \item[Nontrivial Functional Equations]\hspace*{\fill}

    \begin{itemize}
\item $\gfa{1}^3-2\gfa{1}^2+\gfa{1}-x$

\item $\gfa{2}^3-2\gfa{2}^2+\gfa{2}-x$

\item $\gfa{3}^3-x\gfa{3}+x^2$

\end{itemize}
    \item[Discussion] 
    Swapping colors~$2$ and~$3$ in $A_1$ and then applying Theorem~\ref{prop:blockul} while referring to Entry~7 in Table~\ref{tb:2cols} shows 
    that trees with root color~$1$ are enumerated by $\frac{1}{n}\binom{3n-2}{n-1}=\frac{3n-2}{n(2n-1)}\binom{3n-3}{n-1}$, while trees with root color~$3$ are enumerated by $\frac{1}{2n-1}\binom{3n-3}{n-1}$. Since colors~$1$ and~$2$ are interchangeable, trees with root color~$2$ are also enumerated by $\frac{3n-2}{n(2n-1)}\binom{3n-3}{n-1}$.
    Hence the total number of $n$-vertex trees colored according to any of the four matrices is given by $\frac{2}{n}\binom{3n-3}{n-1}+\frac{3n-2}{n(2n-1)}\binom{3n-3}{n-1}=\frac{7n-4}{n(2n-1)}\binom{3n-3}{n-1}$, as required.
\end{description}

\subsection*{Entry 35}
\begin{description}
    \item[Matrices]
    
    \[\begin{bmatrix}
1 & 1 & 1 \\
0 & 1 & 1 \\
0 & 1 & 0
\end{bmatrix}\]
    \item[Sequences]\hspace*{\fill}\vspace{-0.5cm}
    \begin{center}
        \[
        \begin{array}{|c||c|c|c|c|c|c|c|c||c|}\hline
           n  & 1 & 2 & 3 & 4 & 5 & 6 & 7 & 8  & \text{Formula}\\\hline\hline
           \text{Total}  & 3 & 6 & 25 & 130 & 757 & 4728 & 30988 & 210426 & \frac{2}{n}\binom{3n-3}{n-1}+A394122(n)\\\hline\hline
           \text{Root Color }1 & 1 & 3 & 15 & 88 & 559 & 3727 & 25684 & 181356 & A394122(n) \\\hline
           \text{Root Color }2^{h} & 1 & 2 & 7 & 30 & 143 & 728 & 3876 & 21318 & \frac{1}{n}\binom{3n-2}{n-1}\\\hline
           \text{Root Color }3^{h} & 1 & 1 & 3 & 12 & 55 & 273 & 1428 & 7752 & \frac{1}{2n-1}\binom{3n-3}{n-1}\\\hline
        \end{array}
        \]
    \end{center}
    
    \item[Nontrivial Functional Equations]\hspace*{\fill}

    \begin{itemize}
\item $\gfa{1}^6-\gfa{1}^5+5x\gfa{1}^4+(-x^2 - 2x)\gfa{1}^3+5x^2\gfa{1}^2-x^2\gfa{1}+x^3$

\item $\gfa{2}^3-2\gfa{2}^2+\gfa{2}-x$

\item $\gfa{3}^3-x\gfa{3}+x^2$

\end{itemize}
    \item[Description] Permuting colors so that color~$1$ becomes color~$3$, color~$2$ becomes color~$1$, and color~$3$ becomes color~$2$, and then applying Theorem~\ref{prop:blockul} with Entry~7 in Table~\ref{tb:2cols} 
    in mind yields the listed formulas.
    
\end{description}

\subsection*{Entry 36}
\begin{description}
    \item[Matrices]
    
    \[\begin{bmatrix}
1 & 1 & 1 \\
0 & 1 & 1 \\
0 & 1 & 1
\end{bmatrix}\]
    \item[Sequences]\hspace*{\fill}\vspace{-0.5cm}
    \begin{center}
        \[
        \begin{array}{|c||c|c|c|c|c|c|c|c||c|}\hline
           n  & 1 & 2 & 3 & 4 & 5 & 6 & 7 & 8  & \text{Formula}\\\hline\hline
           \text{Total}  & 3 & 7 & 32 & 181 & 1140 & 7666 & 53888 & 391197 & A394123(n)+2^nC_{n-1}\\\hline\hline
           \text{Root Color }1 & 1 & 3 & 16 & 101 & 692 & 4978 & 36992 & 281373 & A394123(n)\\\hline
           \text{Root Color }2^{h} & 1 & 2 & 8 & 40 & 224 & 1344 & 8448 & 54912 & 2^{n-1}C_{n-1}\\\hline
           \text{Root Color }3^{h} & 1 & 2 & 8 & 40 & 224 & 1344 & 8448 & 54912 & 2^{n-1}C_{n-1}\\\hline
        \end{array}
        \]
    \end{center}
    
    \item[Nontrivial Functional Equations]\hspace*{\fill}

    \begin{itemize}
\item $\gfa{1}^4-\gfa{1}^3+4x\gfa{1}^2-x\gfa{1}+x^2$

\end{itemize}
\item[Description] Swapping colors~$1$ and~$3$, and then applying Theorem~\ref{prop:blockul} with Entry~8 in Table~\ref{tb:2cols} in mind yields the listed formulas.
    
\end{description}

\subsection*{Entry 37}
\begin{description}
    \item[Matrices]
    
    \[\begin{bmatrix}
0 & 1 & 0 \\
1 & 1 & 0 \\
0 & 0 & 0
\end{bmatrix}\]
    \item[Sequences]\hspace*{\fill}\vspace{-0.5cm}
    \begin{center}
        \[
        \begin{array}{|c||c|c|c|c|c|c|c|c||c|}\hline
           n  & 1 & 2 & 3 & 4 & 5 & 6 & 7 & 8  & \text{Formula}\\\hline\hline
           \text{Total}^{h}  & 3 & 3 & 10 & 42 & 198 & 1001 & 5304 & 29070 & 
           \frac{2}{n}\binom{3n-3}{n-1}\\\hline\hline
           \text{Root Color }1^h & 1 & 1 & 3 & 12 & 55 & 273 & 1428 & 7752 & \frac{1}{2n-1}\binom{3n-3}{n-1}\\\hline
           \text{Root Color }2^h & 1 & 2 & 7 & 30 & 143 & 728 & 3876 & 21318 & \frac{1}{n}\binom{3n-2}{n-1}\\\hline
           \text{Root Color }3 & 1 & 0 & 0 & 0 & 0 & 0 & 0 & 0 & 0\\\hline
        \end{array}
        \]
    \end{center}
    
    \item[Nontrivial Functional Equations]\hspace*{\fill}

    \begin{itemize}
\item $\gfa{1}^3-x\gfa{1}+x^2$

\item $\gfa{2}^3-2\gfa{2}^2+\gfa{2}-x$

\end{itemize}

\item[Discussion] Apply Theorem~\ref{prop:unusablecolors} to Entry~7 in Table~\ref{tb:2cols}.
    
\end{description}

\subsection*{Entry 38}
\begin{description}
    \item[Matrices]
    
    \[\begin{bmatrix}
0 & 1 & 0 \\
1 & 1 & 0 \\
0 & 0 & 1
\end{bmatrix}\]
    \item[Sequences]\hspace*{\fill}\vspace{-0.5cm}
    \begin{center}
        \[
        \begin{array}{|c||c|c|c|c|c|c|c|c||c|}\hline
           n  & 1 & 2 & 3 & 4 & 5 & 6 & 7 & 8  & \text{Formula}\\\hline\hline
           \text{Total}  & 3 & 4 & 12 & 47 & 212 & 1043 & 5436 & 29499 & \frac{2}{n}\binom{3n-3}{n-1}+C_{n-1}\\\hline\hline
           \text{Root Color }1^{h} & 1 & 1 & 3 & 12 & 55 & 273 & 1428 & 7752 & \frac{1}{2n-1}\binom{3n-3}{n-1}\\\hline
           \text{Root Color }2^{h} & 1 & 2 & 7 & 30 & 143 & 728 & 3876 & 21318 & \frac{1}{n}\binom{3n-2}{n-1}\\\hline
           \text{Root Color }3^{h} & 1 & 1 & 2 & 5 & 14 & 42 & 132 & 429 & C_{n-1}\\\hline
        \end{array}
        \]
    \end{center}
    
    \item[Nontrivial Functional Equations]\hspace*{\fill}

    \begin{itemize}
\item $\gfa{1}^3-x\gfa{1}+x^2$

\item $\gfa{2}^3-2\gfa{2}^2+\gfa{2}-x$

\end{itemize}

\item[Discussion] This formula follows from Theorem~\ref{prop:blockdiag} and Entry~7 in Table~\ref{tb:2cols}.
    
\end{description}

\subsection*{Entry 39}
\begin{description}
    \item[Matrices]
    
    \[\begin{bmatrix}
1 & 1 & 0 \\
1 & 1 & 0 \\
0 & 0 & 0
\end{bmatrix}\]
    \item[Sequences]\hspace*{\fill}\vspace{-0.5cm}
    \begin{center}
        \[
        \begin{array}{|c||c|c|c|c|c|c|c|c||c|}\hline
           n  & 1 & 2 & 3 & 4 & 5 & 6 & 7 & 8  & \text{Formula}\\\hline\hline
           \text{Total}^{h}  & 3 & 4 & 16 & 80 & 448 & 2688 & 16896 & 109824 & 2^{n}C_{n-1}\\\hline\hline
           \text{Root Color }1^h & 1 & 2 & 8 & 40 & 224 & 1344 & 8448 & 54912 & 2^{n-1}C_{n-1}\\\hline
           \text{Root Color }2^h & 1 & 2 & 8 & 40 & 224 & 1344 & 8448 & 54912 & 2^{n-1}C_{n-1}\\\hline
           \text{Root Color }3 & 1 & 0 & 0 & 0 & 0 & 0 & 0 & 0 & 0\\\hline
        \end{array}
        \]
    \end{center}

\item[Discussion] Apply Theorem~\ref{prop:unusablecolors} to Entry~8 in Table~\ref{tb:2cols}.
    
\end{description}

\subsection*{Entry 40}
\begin{description}
    \item[Matrices]
    
    \[\begin{bmatrix}
0 & 1 & 0 \\
1 & 1 & 0 \\
0 & 1 & 0
\end{bmatrix}\]
    \item[Sequences]\hspace*{\fill}\vspace{-0.5cm}

    \begin{center}
        \[
        \begin{array}{|c||c|c|c|c|c|c|c|c||c|}\hline
           n  & 1 & 2 & 3 & 4 & 5 & 6 & 7 & 8  & \text{Formula}\\\hline\hline
           \text{Total}^{h}  & 3 & 4 & 13 & 54 & 253 & 1274 & 6732 & 36822 & \frac{5n-2}{n(2n-1)}\binom{3n-3}{n-1}\\\hline\hline
           \text{Root Color }1^h & 1 & 1 & 3 & 12 & 55 & 273 & 1428 & 7752 & \frac{1}{2n-1}\binom{3n-3}{n-1}\\\hline
           \text{Root Color }2^h & 1 & 2 & 7 & 30 & 143 & 728 & 3876 & 21318 & \frac{1}{n}\binom{3n-2}{n-1}\\\hline
           \text{Root Color }3^h & 1 & 1 & 3 & 12 & 55 & 273 & 1428 & 7752 & \frac{1}{2n-1}\binom{3n-3}{n-1}\\\hline
        \end{array}
        \]
    \end{center}
    
    \item[Nontrivial Functional Equations]\hspace*{\fill}

    \begin{itemize}
\item $\gfa{1}^3-x\gfa{1}+x^2$

\item $\gfa{2}^3-2\gfa{2}^2+\gfa{2}-x$

\item $\gfa{3}^3-x\gfa{3}+x^2$

\end{itemize}
    \item[Discussion] 
    Swapping colors~$1$ and~$2$ in $A_1$ and then applying Theorem~\ref{prop:blockul} while referring to Entry~7 in Table~\ref{tb:2cols} shows
    that trees with root color~$2$ are enumerated by $\frac{1}{n}\binom{3n-2}{n-1}=\frac{3n-2}{n(2n-1)}\binom{3n-3}{n-1}$, while trees with root color~$1$ are enumerated by $\frac{1}{2n-1}\binom{3n-3}{n-1}$. Since colors~$1$ and~$3$ are interchangeable, trees with root color~$3$ are also enumerated by $\frac{1}{2n-1}\binom{3n-3}{n-1}$.
    Hence the total number of $n$-vertex trees colored according to any of the four matrices is given by $\frac{2}{n}\binom{3n-3}{n-1}+\frac{1}{2n-1}\binom{3n-3}{n-1}=\frac{5n-2}{n(2n-1)}\binom{3n-3}{n-1}$, as required.
\end{description}

\subsection*{Entry 41}
\begin{description}
    \item[Matrices]
    
    \[\begin{bmatrix}
0 & 1 & 0 \\
1 & 1 & 0 \\
0 & 1 & 1
\end{bmatrix}\]
    \item[Sequences]\hspace*{\fill}\vspace{-0.5cm}
    \begin{center}
        \[
        \begin{array}{|c||c|c|c|c|c|c|c|c||c|}\hline
           n  & 1 & 2 & 3 & 4 & 5 & 6 & 7 & 8  & \text{Formula}\\\hline\hline
           \text{Total}  & 3 & 5 & 18 & 81 & 407 & 2185 & 12261 & 71019 & \frac{2}{n}\binom{3n-3}{n-1}+A394124(n)\\\hline\hline
           \text{Root Color }1^{h} & 1 & 1 & 3 & 12 & 55 & 273 & 1428 & 7752 & \frac{1}{2n-1}\binom{3n-3}{n-1}\\\hline
           \text{Root Color }2^{h} & 1 & 2 & 7 & 30 & 143 & 728 & 3876 & 21318 & \frac{1}{n}\binom{3n-2}{n-1}\\\hline
           \text{Root Color }3 & 1 & 2 & 8 & 39 & 209 & 1184 & 6957 & 41949 & A394124(n) \\\hline
        \end{array}
        \]
    \end{center}
    
    \item[Nontrivial Functional Equations]\hspace*{\fill}

    \begin{itemize}
\item $\gfa{1}^3-x\gfa{1}+x^2$

\item $\gfa{2}^3-2\gfa{2}^2+\gfa{2}-x$

\item $\gfa{3}^6-\gfa{3}^5+3x\gfa{3}^4-x\gfa{3}^3+3x^2\gfa{3}^2-x^2\gfa{3}+x^3$

\end{itemize}

\item[Description] Swapping colors~$1$ and~$2$ and then applying Theorem~\ref{prop:blockul} with Entry~7 in Table~\ref{tb:2cols} 
in mind yields the listed formulas.
    
\end{description}

\subsection*{Entry 42}
\begin{description}
    \item[Matrices]
    
    \[\begin{bmatrix}
1 & 1 & 0 \\
1 & 0 & 0 \\
0 & 1 & 0
\end{bmatrix}\]
    \item[Sequences]\hspace*{\fill}\vspace{-0.5cm}
    \begin{center}
        \[
        \begin{array}{|c||c|c|c|c|c|c|c|c||c|}\hline
           n  & 1 & 2 & 3 & 4 & 5 & 6 & 7 & 8  & \text{Formula}\\\hline\hline
           \text{Total}  & 3 & 4 & 12 & 48 & 221 & 1103 & 5799 & 31619 & \frac{2}{n}\binom{3n-3}{n-1}+A098746(n-1)\\\hline\hline
           \text{Root Color }1^{h} & 1 & 2 & 7 & 30 & 143 & 728 & 3876 & 21318 & \frac{1}{n}\binom{3n-2}{n-1}\\\hline
           \text{Root Color }2^{h} & 1 & 1 & 3 & 12 & 55 & 273 & 1428 & 7752 & \frac{1}{2n-1}\binom{3n-3}{n-1}\\\hline
           \text{Root Color }3 & 1 & 1 & 2 & 6 & 23 & 102 & 495 & 2549 & A098746(n-1)\\\hline
        \end{array}
        \]
    \end{center}
    
    \item[Nontrivial Functional Equations]\hspace*{\fill}

    \begin{itemize}
\item $\gfa{1}^3-2\gfa{1}^2+\gfa{1}-x$

\item $\gfa{2}^3-x\gfa{2}+x^2$

\item $(x^2 - x + 1)\gfa{3}^3+(x^2 - 3x)\gfa{3}^2+3x^2\gfa{3}-x^3$

\end{itemize}
    \item[Discussion] 
    Applying Theorem~\ref{prop:blockul} with Entry~7 in Table~\ref{tb:2cols} 
    in mind yields the listed formulas for root colors~1 and~2.
    The number of trees of root color~3 is equal to the number of sequences of trees of root color~2 with $n-1$ vertices. Sequence \seqnum{A098746} is stated as the INVERT transform of \seqnum{A001764}, which encodes exactly this, as $A001764(n-1)=\frac{1}{2n-1}\binom{3n-3}{n-1}$. This also follows from Theorem~\ref{prop:rootonlysingle} applied to Entry~7 of Table~\ref{tb:2cols}.

Another description of \seqnum{A098746} is Dyck paths of semilength $2n$ with all descents of even length and no valley vertices at height $1$. By reversing the paths, this is the same as counting Dyck paths of semilength $2n$ with all ascents of even length and no valley vertices at height $1$. The bijection for Entry~7 in Table~\ref{tb:2cols} given in Appendix~\ref{app:ent7bij} handles Dyck paths like this without the valley restriction, and it can be adapted to a bijection between trees with root color~$3$ colored according to Entry~42 and such Dyck paths. Via the glove bijection, Dyck paths with no valley vertices at height $1$ correspond to rooted trees where every vertex at depth $1$ has exactly one child. Via the bijection from Appendix~\ref{app:ent7bij}, $n$-vertex colored trees with root color~$1$ and all vertices at depth $1$ having color $2$ are in bijection with $2n$-vertex trees with the root having exactly one child, with all vertices at depth $2$ having exactly one child, and having all downward paths of even length. These are in bijection with trees on $2n-1$ vertices with all vertices at depth $1$ having exactly one child and having all downward paths of even length by deleting the root. These, in turn, are in bijection with the Dyck paths under consideration here. So, to parlay this into a bijection for Entry~42 trees with root color~$3$:
\begin{itemize}
    \item Given a tree colored according to Entry~42 with root color~$3$ (which forces all vertices at depth $1$ to be of color~$2$), re-color the root to color~$1$, apply the bijection $\tau$ from Appendix~\ref{app:ent7bij}, delete the root of the resulting tree, then convert to a Dyck path via the standard glove bijection.
    \item Given a Dyck path of semilength $2n$ with all ascents of even length and no valley vertices at height $1$, apply the standard glove bijection to convert it to a tree on $2n-1$ vertices. Then, add a new root above the existing root of that tree. From there, apply $\tau^{-1}$ from Appendix~\ref{app:ent7bij} to obtain a bicolored tree with root color~$1$ and all vertices at depth $1$ having color~$2$. Finally, re-color the root to color~$3$.
\end{itemize}
\end{description}

\subsection*{Entry 43}
\begin{description}
    \item[Matrices]
    
    \[\begin{bmatrix}
1 & 1 & 0 \\
1 & 1 & 0 \\
0 & 1 & 0
\end{bmatrix}\]
    \item[Sequences]\hspace*{\fill}\vspace{-0.5cm}
    \begin{center}
        \[
        \begin{array}{|c||c|c|c|c|c|c|c|c||c|}\hline
           n  & 1 & 2 & 3 & 4 & 5 & 6 & 7 & 8  & \text{Formula}\\\hline\hline
           \text{Total}  & 3 & 5 & 19 & 93 & 515 & 3069 & 19203 & 124413 & 2^{n}C_{n-1}+A064062(n-1)\\\hline\hline
           \text{Root Color }1^{h} & 1 & 2 & 8 & 40 & 224 & 1344 & 8448 & 54912 & 2^{n-1}C_{n-1}\\\hline
           \text{Root Color }2^{h} & 1 & 2 & 8 & 40 & 224 & 1344 & 8448 & 54912 & 2^{n-1}C_{n-1}\\\hline
           \text{Root Color }3 & 1 & 1 & 3 & 13 & 67 & 381 & 2307 & 14589 & A064062(n-1)\\\hline
        \end{array}
        \]
    \end{center}
    
    \item[Nontrivial Functional Equations]\hspace*{\fill}

    \begin{itemize}

\item $(x + 1)\gfa{3}^2-3x\gfa{3}+2x^2$

\end{itemize}
    \item[Discussion] 
    The formulas for root colors~1 and~2 follow from applying Theorem~\ref{prop:blockul} with Entry~8 in Table~\ref{tb:2cols} in mind.
    Now, suppose the root has color~3. Then, all children have color~$2$. Using the glove bijection, each edge in the tree corresponds to an up-step and a down-step in the path. We can color the up-steps by the child's color of their corresponding edges. Then, each up-step at the ``ground level'' has color~$2$, as these steps come from the edges emanating from the root. The other up-steps can have any combination of colors~$2$ and~$3$. If we erase the colors on the ``ground level'' up-steps, we have exactly the second interpretation of sequence \seqnum{A064062}. This is clearly a bijection, as given such a colored Dyck path, we can obtain a colored tree.
\end{description}

\subsection*{Entry 44}
\begin{description}
    \item[Matrices]
    
    \[\begin{bmatrix}
0 & 1 & 0 \\
1 & 0 & 1 \\
0 & 0 & 0
\end{bmatrix}\]
    \item[Sequences]\hspace*{\fill}\vspace{-0.5cm}
    \begin{center}
        \[
        \begin{array}{|c||c|c|c|c|c|c|c|c||c|}\hline
           n  & 1 & 2 & 3 & 4 & 5 & 6 & 7 & 8  & \text{Formula}\\\hline\hline
           \text{Total}  & 3 & 3 & 8 & 25 & 87 & 325 & 1274 & 5169 & A002212(n - 1) + A007317(n)\\\hline\hline
           \text{Root Color }1 & 1 & 1 & 3 & 10 & 36 & 137 & 543 & 2219 & A002212(n-1)\\\hline
           \text{Root Color }2 & 1 & 2 & 5 & 15 & 51 & 188 & 731 & 2950 & A007317(n)\\\hline
           \text{Root Color }3 & 1 & 0 & 0 & 0 & 0 & 0 & 0 & 0 & 0\\\hline
        \end{array}
        \]
    \end{center}
    
    \item[Nontrivial Functional Equations]\hspace*{\fill}

    \begin{itemize}
\item $\gfa{1}^2+(x - 1)\gfa{1}-x^2 + x$

\item $(x - 1)\gfa{2}^2+(1 - x)\gfa{2}-x$

\end{itemize}
    \item[Discussion] Recall the symbols U, D, and F related to Sch\"oder paths as used in the discussion for Entry~18. Sequence \seqnum{A002212} enumerates Schr\"{o}der paths with no consecutive UD peaks an odd number of steps from the $x$-axis, and sequence \seqnum{A007317} enumerates Schr\"oder paths with no consecutive UD peaks an even number of steps from the $x$-axis. Denote the set of $n$-vertex trees colored by Entry~43 whose root is color $1$ by $X^{(1)}_n$ and whose root is color $2$ by $X^{(2)}_n$. We now exhibit a bijection between $X^{(1)}_n$ and the first set of restricted Schr\"oder paths. Traverse a tree in $X^{(1)}_n$ using depth-first search and write F, when we see a leaf of color 2 or 3, U when we go away from the root and D when we get closer to the root. We obtain a Schr\"{o}der path and the correspondence is one-to-one, as we obviously have the reverse map. Furthermore, an occurrence of UD corresponds to a leaf of color 1, which can only occur at even distance from the root. This map is invertible, so it is a bijection. We obtain a similar bijection between $X^{(2)}_n$ and the second set of restricted Schr\"oder paths. No nontrivial tree is colored by Entry~43 with root colored 3, so the formula in the table follows.
\end{description}

\subsection*{Entry 45}
\begin{description}
    \item[Matrices]
    
    \[\begin{bmatrix}
0 & 1 & 0 \\
1 & 0 & 1 \\
0 & 0 & 1
\end{bmatrix}\]
    \item[Sequences]\hspace*{\fill}\vspace{-0.5cm}
    
    \begin{center}  
        \[
        \begin{array}{|c||c|c|c|c|c|c|c|c||c|}\hline
           n  & 1 & 2 & 3 & 4 & 5 & 6 & 7 & 8  & \text{Formula}\\\hline\hline
           \text{Total}  & 3 & 4 & 11 & 37 & 138 & 549 & 2284 & 9822 & 
           ***\\\hline\hline
           \text{Root Color }1 & 1 & 1 & 3 & 11 & 44 & 185 & 804 & 3579 & A127632(n-1)\\\hline
           \text{Root Color }2 & 1 & 2 & 6 & 21 & 80 & 322 & 1348 & 5814 & A121988(n)\\\hline
           \text{Root Color }3^{h} & 1 & 1 & 2 & 5 & 14 & 42 & 132 & 429 & C_{n-1}\\\hline
        \end{array}
        \]
        $***$: Formula for Total is $t_A(n)=A127632(n - 1) + A121988(n) + C_{n-1}$.
    \end{center}
    
    \item[Nontrivial Functional Equations]\hspace*{\fill}

    \begin{itemize}
\item $\gfa{1}^4-\gfa{1}^3+2x\gfa{1}^2-2x^2\gfa{1}+x^3$

\item $\gfa{2}^4-2\gfa{2}^3+2\gfa{2}^2-\gfa{2}+x$

\end{itemize}
    \item[Discussion] We first prove that the number of trees with root of color~1 colored according to $A$ is given by OEIS sequence $\text{A127632}(n-1)$.
    OEIS entry \seqnum{A127632} has generating function $g(x)$ satisfying
    \[
    xg(x)^4-g(x)^3+2g(x)^2-2g(x)+1=0.
    \]
    Letting $h(x)=xg(x)$, this is the same as
    \[
    x\left(\frac{h(x)}{x}\right)^4-\left(\frac{h(x)}{x}\right)^3+2\left(\frac{h(x)}{x}\right)^2-2\left(\frac{h(x)}{x}\right)+1=0.
    \]
    Algebraic manipulation yields
    \[
    h(x)^4-h(x)^3+2xh(x)^2-2x^2h(x)+x^3=0.
    \]
    So, $h(x)$ satisfies the same polynomial equation as $\gfa{1}$. This fact, combined with the fact that the first several terms of $\text{A127632}(n-1)$ and $t_A^{(1)}(n)$ agree, shows that the number of $n$-vertex trees colored according to $A$ with root of color~1 is enumerated by $\text{A127632}(n-1)$.

    As for root color~2, 
    OEIS entry \seqnum{A121988} says that that sequence is the series reversion of $x-2x^2+2x^3-x^4$, which agrees with the polynomial for $\gfa{2}$.

    Finally, the formula for root color~3 follows from Theorem~\ref{prop:catrows}.
\end{description}

\subsection*{Entry 46}
\begin{description}
    \item[Matrices]
    
    \[\begin{bmatrix}
0 & 1 & 0 \\
1 & 1 & 1 \\
0 & 0 & 0
\end{bmatrix}\]
    \item[Sequences]\hspace*{\fill}\vspace{-0.5cm}
    \begin{center}
        \[
        \begin{array}{|c||c|c|c|c|c|c|c|c||c|}\hline
           n  & 1 & 2 & 3 & 4 & 5 & 6 & 7 & 8  & \text{Formula}\\\hline\hline
           \text{Total}  & 3 & 4 & 17 & 88 & 508 & 3138 & 20296 & 135708 & ***\\\hline\hline
           \text{Root Color }1 & 1 & 1 & 4 & 20 & 113 & 688 & 4404 & 29219 & A108447(n-1)\\\hline
           \text{Root Color }2 & 1 & 3 & 13 & 68 & 395 & 2450 & 15892 & 106489 & A200757(n)\\\hline
           \text{Root Color }3 & 1 & 0 & 0 & 0 & 0 & 0 & 0 & 0 & 0\\\hline
        \end{array}
        \]
        $***$: Formula for Total is $t_A(n)=A108447(n-1)+A200757(n)$.
    \end{center}

    \item[Nontrivial Functional Equations]\hspace*{\fill}

    \begin{itemize}
\item $\gfa{1}^3+x\gfa{1}^2+(-x^2 - x)\gfa{1}+x^2$

\item $\gfa{2}^3+(x - 2)\gfa{2}^2+(1 - x)\gfa{2}-x$

\end{itemize}
    \item[Discussion] We first prove that the number of trees with root of color~1 colored according to $A$ is given by OEIS sequence $\text{A108447}(n-1)$.
    OEIS entry \seqnum{A108447} has generating function $g(x)$ satisfying
    \[
    g(x) = 1 + xg(x)(g(x)^2+g(x)-1).
    \]
    Letting $h(x)=xg(x)$, this is the same as
    \[
    \frac{h(x)}{x} = 1 + x\left(\frac{h(x)}{x}\right)\left(\left(\frac{h(x)}{x}\right)^2+\left(\frac{h(x)}{x}\right)-1\right).
    \]
    Algebraic manipulation yields
    \[
    h(x)^3+xh(x)^2+(-x^2-x)h(x)+x^2=0.
    \]
    So, $h(x)$ satisfies the same polynomial equation as $\gfa{1}$. This fact, combined with the fact that the first several terms of $\text{A108447}(n-1)$ and $t_A^{(1)}(n)$ agree, shows that the number of $n$-vertex trees colored according to $A$ with root of color~1 is enumerated by $\text{A108447}(n-1)$.

    We now prove that the number of trees with root of color~2 colored according to $A$ is given by OEIS sequence $\text{A200757}(n)$.
    OEIS entry \seqnum{A200757} has generating function $g(x)$ satisfying
    \[
    g(x) = x + \frac{(x + g(x))^2}{1-x-g(x)}-\frac{g(x)^2}{1-g(x)}.
    \]
    Algebraic manipulation yields
    \[
    g(x)^3+(x-2)g(x)^2+(1-x)g(x)-x=0.
    \]
    So, $g(x)$ satisfies the same polynomial equation as $\gfa{2}$. This fact, combined with the fact that the first several terms of $\text{A200757}(n-1)$ and $t_A^{(2)}(n)$ agree, shows that the number of $n$-vertex trees colored according to $A$ with root of color~2 is enumerated by $\text{A200757}(n-1)$.

    Finally, the formula for root color~3 follows from Theorem~\ref{prop:zerorows}.
\end{description}

\subsection*{Entry 47}
\begin{description}
    \item[Matrices]
    
    \[\begin{bmatrix}
0 & 1 & 0 \\
1 & 1 & 1 \\
0 & 0 & 1
\end{bmatrix}\]
    \item[Sequences]\hspace*{\fill}\vspace{-0.5cm}
    \begin{center}
        \[
        \begin{array}{|c||c|c|c|c|c|c|c|c||c|}\hline
           n  & 1 & 2 & 3 & 4 & 5 & 6 & 7 & 8  & \text{Formula}\\\hline\hline
           \text{Total}  & 3 & 5 & 20 & 103 & 602 & 3794 & 25128 & 172361 & ***\\\hline\hline
           \text{Root Color }1 & 1 & 1 & 4 & 21 & 124 & 784 & 5190 & 35525 & A394125(n)\\\hline
           \text{Root Color }2 & 1 & 3 & 14 & 77 & 464 & 2968 & 19806 & 136407 & A394126(n)\\\hline
           \text{Root Color }3^{h} & 1 & 1 & 2 & 5 & 14 & 42 & 132 & 429 & C_{n-1}\\\hline
        \end{array}
        \]
        $***$: Formula for Total is $t_A(n)=A394125(n)+A394126(n)+C_{n-1}$.
    \end{center}
    
    \item[Nontrivial Functional Equations]\hspace*{\fill}

    \begin{itemize}
\item $\gfa{1}^6+\gfa{1}^5-2x\gfa{1}^4-x\gfa{1}^3+(x^3 + 3x^2)\gfa{1}^2-3x^3\gfa{1}+x^4$

\item $\gfa{2}^6-3\gfa{2}^5+(x + 3)\gfa{2}^4+(-4x - 1)\gfa{2}^3+4x\gfa{2}^2-x\gfa{2}+x^2$

\end{itemize}
    
\end{description}

\subsection*{Entry 48}
\begin{description}
    \item[Matrices]
    
    \[\begin{bmatrix}
1 & 1 & 0 \\
1 & 0 & 1 \\
0 & 0 & 0
\end{bmatrix}\]
    \item[Sequences]\hspace*{\fill}\vspace{-0.5cm}
    \begin{center}
        \[
        \begin{array}{|c||c|c|c|c|c|c|c|c||c|}\hline
           n  & 1 & 2 & 3 & 4 & 5 & 6 & 7 & 8  & \text{Formula}\\\hline\hline
           \text{Total}  & 3 & 4 & 14 & 62 & 312 & 1694 & 9666 & 57124 & ***\\\hline\hline
           \text{Root Color }1 & 1 & 2 & 8 & 38 & 198 & 1096 & 6330 & 37722 & A394127(n) \\\hline
           \text{Root Color }2 & 1 & 2 & 6 & 24 & 114 & 598 & 3336 & 19402 & A228907(n-1)\\\hline
           \text{Root Color }3 & 1 & 0 & 0 & 0 & 0 & 0 & 0 & 0 & 0\\\hline
        \end{array}
        \]
        $***$: Formula for Total is $t_A(n)=A394127(n)+A228907(n-1)$.
    \end{center}
    
    \item[Nontrivial Functional Equations]\hspace*{\fill}

    \begin{itemize}
\item $\gfa{1}^3+(x - 2)\gfa{1}^2+(1 - x)\gfa{1}+x^2 - x$

\item $(1 - x)\gfa{2}^3+(x^2 - x)\gfa{2}^2+(2x^2 - x)\gfa{2}+x^2$

\end{itemize}
    \item[Discussion] We prove that the number of trees with root of color~2 colored according to $A$ is given by OEIS sequence $\text{A228907}(n-1)$.
    OEIS entry \seqnum{A228907} has generating function $g(x)$ satisfying
    \[
    g(x) = 1 + xg(x)(2 - g(x) + g(x)^2) + x^2g(x)^2(1 - g(x)).
    \]
    Letting $h(x)=xg(x)$, this is the same as
    \[
    \frac{h(x)}{x} = 1 + x\left(\frac{h(x)}{x}\right)\left(2 - \left(\frac{h(x)}{x}\right) + \left(\frac{h(x)}{x}\right)^2\right) + x^2\left(\frac{h(x)}{x}\right)^2\left(1 - \left(\frac{h(x)}{x}\right)\right).
    \]
    Algebraic manipulation yields
    \[
    (1 - x)h(x)^3+(x^2 - x)h(x)^2+(2x^2 - x)h(x)+x^2=0.
    \]
    So, $h(x)$ satisfies the same polynomial equation as $\gfa{2}$. This fact, combined with the fact that the first several terms of $\text{A228907}(n-1)$ and $t_A^{(2)}(n)$ agree, shows that the number of $n$-vertex trees colored according to $A$ with root of color~2 is enumerated by $\text{A228907}(n-1)$.

    Also, the formula for root color~$3$ follows from Theorem~\ref{prop:zerorows}.
\end{description}

\subsection*{Entry 49}
\begin{description}
    \item[Matrices]
    
    \[\begin{bmatrix}
1 & 1 & 0 \\
1 & 0 & 1 \\
0 & 0 & 1
\end{bmatrix}\]
    \item[Sequences]\hspace*{\fill}\vspace{-0.5cm}
    \begin{center}
        \[
        \begin{array}{|c||c|c|c|c|c|c|c|c||c|}\hline
           n  & 1 & 2 & 3 & 4 & 5 & 6 & 7 & 8  & \text{Formula}\\\hline\hline
           \text{Total}  & 3 & 5 & 17 & 74 & 368 & 1989 & 11371 & 67644 & ***\\\hline\hline
           \text{Root Color }1 & 1 & 2 & 8 & 39 & 209 & 1186 & 7000 & 42535 & A394128(n) \\\hline
           \text{Root Color }2 & 1 & 2 & 7 & 30 & 145 & 761 & 4239 & 24680 & A394129(n) \\\hline
           \text{Root Color }3^{h} & 1 & 1 & 2 & 5 & 14 & 42 & 132 & 429 & C_{n-1}\\\hline
        \end{array}
        \]
        $***$: Formula for Total is $t_A(n)=A394128(n)+A394129(n)+C_{n-1}$.
    \end{center}
    
    \item[Nontrivial Functional Equations]\hspace*{\fill}

    \begin{itemize}
\item $\gfa{1}^6-3\gfa{1}^5+(x + 3)\gfa{1}^4+(-3x - 1)\gfa{1}^3+(2x^2 + 2x)\gfa{1}^2-2x^2\gfa{1}+x^3$

\item $\gfa{2}^5-\gfa{2}^4+(1 - 3x)\gfa{2}^3+x\gfa{2}^2+(2x^2 - x)\gfa{2}+x^3$

\end{itemize}
    
\end{description}

\subsection*{Entry 50}
\begin{description}
    \item[Matrices]
    
    \[\begin{bmatrix}
1 & 1 & 0 \\
1 & 1 & 1 \\
0 & 0 & 0
\end{bmatrix}\]
    \item[Sequences]\hspace*{\fill}\vspace{-0.5cm}
    \begin{center}
        \[
        \begin{array}{|c||c|c|c|c|c|c|c|c||c|}\hline
           n  & 1 & 2 & 3 & 4 & 5 & 6 & 7 & 8  & \text{Formula}\\\hline\hline
           \text{Total}  & 3 & 5 & 23 & 131 & 834 & 5685 & 40585 & 299563 & *** \\\hline\hline
           \text{Root Color }1 & 1 & 2 & 9 & 51 & 324 & 2206 & 15737 & 116098 & A378465(n-1) \\\hline
           \text{Root Color }2 & 1 & 3 & 14 & 80 & 510 & 3479 & 24848 & 183465 & A121873(n)\\\hline
           \text{Root Color }3 & 1 & 0 & 0 & 0 & 0 & 0 & 0 & 0 & 0\\\hline
        \end{array}
        \]
        $***$: Formula for Total is $t_A(n)=A378465(n-1) + A121873(n)$.
    \end{center}
    
    \item[Nontrivial Functional Equations]\hspace*{\fill}

    \begin{itemize}
\item $\gfa{1}^3-3\gfa{1}^2+(x + 1)\gfa{1}-x$

\item $\gfa{2}^3+(x + 1)\gfa{2}^2+(2x - 1)\gfa{2}+x$

\end{itemize}
    \item[Discussion] 
    We first prove that the number of trees with root of color~1 colored according to $A$ is given by OEIS sequence $\text{A378465}(n-1)$. OEIS entry \seqnum{A378465} is defined as the expansion of $\frac{1}{x}$ times the series reversion of $x\left(1-x-\frac{x}{1-x}\right)$. So, the generating function $g(x)$ for $\text{A378465}(n-1)$ satisfies
    \[
    g(x)\left(1-g(x)-\frac{g(x)}{1-g(x)}\right)=x.
    \]
    Algebraic manipulation yields
    \[
    g(x)^3-3xg(x)^2+(x+1)g(x)-x=0.
    \]
    So, $g(x)$ satisfies the same polynomial equation as $\gfa{1}$. This fact, combined with the fact that the first several terms of $\text{A378465}(n-1)$ and $t_A^{(1)}(n)$ agree, shows that the number of $n$-vertex trees colored according to $A$ with root of color~1 is enumerated by $\text{A378465}(n-1)$.

    Next, we prove that the number of trees with root of color~2 colored according to $A$ is given by OEIS sequence $\text{A121873}(n-1)$.
    OEIS entry \seqnum{A121873} has generating function $g(x)$ satisfying
    \[
    x = \frac{g(x) - g(x)^2 - g(x)^3}{(1+g(x))^2}.
    \]
    Algebraic manipulation yields
    \[
    g(x)^3+(x + 1)g(x)^2+(2x - 1)g(x)+x=0.
    \]
    So, $g(x)$ satisfies the same polynomial equation as $\gfa{2}$. This fact, combined with the fact that the first several terms of $\text{A121873}(n)$ and $t_A^{(2)}(n)$ agree, shows that the number of $n$-vertex trees colored according to $A$ with root of color~2 is enumerated by $\text{A121873}(n)$.

    Finally, the formula for root color~$3$ follows from Theorem~\ref{prop:zerorows}.
\end{description}

\subsection*{Entry 51}
\begin{description}
    \item[Matrices]
    
    \[\begin{bmatrix}
1 & 1 & 0 \\
1 & 1 & 1 \\
0 & 0 & 1
\end{bmatrix}\]
    \item[Sequences]\hspace*{\fill}\vspace{-0.5cm}
    \begin{center}
        \[
        \begin{array}{|c||c|c|c|c|c|c|c|c||c|}\hline
           n  & 1 & 2 & 3 & 4 & 5 & 6 & 7 & 8  & \text{Formula}\\\hline\hline
           \text{Total}  & 3 & 6 & 26 & 146 & 933 & 6433 & 46610 & 349693 & *** \\\hline\hline
           \text{Root Color }1 & 1 & 2 & 9 & 52 & 338 & 2353 & 17142 & 129024 & A394130(n)\\\hline
           \text{Root Color }2 & 1 & 3 & 15 & 89 & 581 & 4038 & 29336 & 220240 & A394131(n)\\\hline
           \text{Root Color }3^{h} & 1 & 1 & 2 & 5 & 14 & 42 & 132 & 429 & C_{n-1}\\\hline
        \end{array}
        \]
        $***$: Formula for Total is $t_A(n)=A394130(n)+A394131(n)+C_{n-1}$.
    \end{center}
    
    \item[Nontrivial Functional Equations]\hspace*{\fill}

    \begin{itemize}
\item $\gfa{1}^6-4\gfa{1}^5+(6x + 4)\gfa{1}^4+(-9x - 1)\gfa{1}^3+(5x^2 + 3x)\gfa{1}^2-3x^2\gfa{1}+x^3$

\item $\gfa{2}^5+\gfa{2}^4+(5x - 1)\gfa{2}^3+x\gfa{2}^2+(6x^2 - x)\gfa{2}+x^3$

\end{itemize}
    
\end{description}

\subsection*{Entry 52}
\begin{description}
    \item[Matrices]
    
    \[\begin{bmatrix}
0 & 1 & 1 \\
1 & 0 & 0 \\
1 & 0 & 1
\end{bmatrix}\]
    \item[Sequences]\hspace*{\fill}\vspace{-0.5cm}
    \begin{center}
        \[
        \begin{array}{|c||c|c|c|c|c|c|c|c||c|}\hline
           n  & 1 & 2 & 3 & 4 & 5 & 6 & 7 & 8  & \text{Formula}\\\hline\hline
           \text{Total}  & 3 & 5 & 18 & 82 & 422 & 2340 & 13644 & 82477 & ***\\\hline\hline
           \text{Root Color }1 & 1 & 2 & 7 & 31 & 156 & 850 & 4888 & 29221 & A394132(n)\\\hline
           \text{Root Color }2 & 1 & 1 & 3 & 12 & 56 & 288 & 1583 & 9129 & A394133(n)\\\hline
           \text{Root Color }3 & 1 & 2 & 8 & 39 & 210 & 1202 & 7173 & 44127 & A394134(n)\\\hline
        \end{array}
        \]
        $***$: Formula for Total is $t_A(n)=A394132(n)+A394133(n)+A394134(n)$.
    \end{center}
    
    \item[Nontrivial Functional Equations]\hspace*{\fill}

    \begin{itemize}
\item $\gfa{1}^5+(x - 2)\gfa{1}^4+(1 - x)\gfa{1}^3+x\gfa{1}^2-x\gfa{1}+x^2$

\item $\gfa{2}^5-2\gfa{2}^4+(4x + 1)\gfa{2}^3+(-3x^2 - 3x)\gfa{2}^2+(x^3 + 3x^2)\gfa{2}-x^3$

\item $\gfa{3}^5-2\gfa{3}^4+(2x + 1)\gfa{3}^3-4x\gfa{3}^2+(x^2 + x)\gfa{3}-x^2$

\end{itemize}
    
\end{description}

\subsection*{Entry 53}
\begin{description}
    \item[Matrices]
    
    \[\begin{bmatrix}
1 & 1 & 1 \\
1 & 0 & 0 \\
1 & 0 & 0
\end{bmatrix}\]
    \item[Sequences]\hspace*{\fill}\vspace{-0.5cm}
    \begin{center}
        \[
        \begin{array}{|c||c|c|c|c|c|c|c|c||c|}\hline
           n  & 1 & 2 & 3 & 4 & 5 & 6 & 7 & 8  & \text{Formula}\\\hline\hline
           \text{Total}  & 3 & 5 & 22 & 121 & 746 & 4930 & 34140 & 244513 & ***\\\hline\hline
           \text{Root Color }1 & 1 & 3 & 14 & 79 & 494 & 3294 & 22952 & 165127 & A003169(n)\\\hline
           \text{Root Color }2 & 1 & 1 & 4 & 21 & 126 & 818 & 5594 & 39693 & A003168(n-1)\\\hline
           \text{Root Color }3 & 1 & 1 & 4 & 21 & 126 & 818 & 5594 & 39693 & A003168(n-1)\\\hline
        \end{array}
        \]
        $***$: Formula for Total is $t_A(n)=A003169(n)+2\cdot A003168(n-1)$.
    \end{center}
    
    \item[Nontrivial Functional Equations]\hspace*{\fill}

    \begin{itemize}
\item $\gfa{1}^3-2\gfa{1}^2+(1 - x)\gfa{1}-x$

\item $2\gfa{2}^3-x\gfa{2}^2-x\gfa{2}+x^2$

\item $2\gfa{3}^3-x\gfa{3}^2-x\gfa{3}+x^2$

\end{itemize}
    \item[Discussion] 
This is a special case of Family 2 (see Section~\ref{sec:fam3}), where $\ell = 1$ and $m=2$. The generating function identities follow from Theorem~\ref{thm:removesquare-gfs}. Moreover, Theorems~\ref{thm:removesquare},~\ref{thm:removesquare-alternative} and~\ref{thm:removesquare-alternative2} yield formulas for $t_A(n)$, $t_A^{(1)}(n)$ and
$t_A^{(2)}(n) = t_A^{(3)}(n)$: for all $n \geq 2$, we have
\begin{align*}
  t_{A}(n) &= \frac{2}{n} \sum_{k=0}^{n-1} 2^k  \binom{n}{k} \binom{2n-3}{n-k-1} \\
  &= \sum_{k=1}^{n} \frac{3n-2k-1}{n(n-1)} \binom{n}{k} \binom{2n+k-2}{k-1} \\ 
  &= \frac{2^n}{n} \sum_{k=0}^{n-1} \Big({-}\frac12\Big)^k \binom{2n-3}{k} \binom{3n-k-3}{n-k-1}, \\
  t_{A}^{(1)}(n) &= \frac{1}{n} \sum_{k=0}^{n-1} 2^k \binom{n}{k} \binom{2n-2}{n-k-1} \\
  &= \frac{1}{n} \sum_{k=1}^{n} \binom{n}{k} \binom{2n+k-2}{k-1} \\
  &= \frac{2^{n-1}}{n} \sum_{k=0}^{n-1} \Big({-}\frac12\Big)^k \binom{2n-2}{k} \binom{3n-k-2}{n-k-1}, \\
  t_{A}^{(2)}(n) &= \frac{1}{n-1} \sum_{k=1}^{n-1} 2^{k-1} \binom{n-1}{k-1} \binom{2n-2}{n-k-1}  \\
  &= \frac{1}{n-1} \sum_{k=1}^{n-1} \binom{n-1}{k} \binom{2n+k-2}{k-1} \\
  &= \frac{2^{n-2}}{n-1} \sum_{k=0}^{n-2} \Big({-}\frac12\Big)^k \binom{2n-2}{k} \binom{3n-k-3}{n-k-2}.
\end{align*}

Some of these sum formulas appear already in the OEIS.

\end{description}

\subsection*{Entry 54}
\begin{description}
    \item[Matrices]
    
    \[\begin{bmatrix}
1 & 1 & 1 \\
1 & 1 & 0 \\
1 & 0 & 0
\end{bmatrix}\]
    \item[Sequences]\hspace*{\fill}\vspace{-0.5cm}
    \begin{center}
        \[
        \begin{array}{|c||c|c|c|c|c|c|c|c||c|}\hline
           n  & 1 & 2 & 3 & 4 & 5 & 6 & 7 & 8  & \text{Formula}\\\hline\hline
           \text{Total}^{h}  & 3 & 6 & 28 & 165 & 1092 & 7752 & 57684 & 444015 & 
           \frac{3}{n}\binom{4n-4}{n-1}\\\hline\hline
           \text{Root Color }1^h & 1 & 3 & 15 & 91 & 612 & 4389 & 32890 & 254475 & 
           \frac{3}{4n-1}\binom{4n-1}{n-1}\\\hline
           \text{Root Color }2^h & 1 & 2 & 9 & 52 & 340 & 2394 & 17710 & 135720 & 
           \frac{2}{3n-1}\binom{4n-3}{n-1}\\\hline
           \text{Root Color }3^h & 1 & 1 & 4 & 22 & 140 & 969 & 7084 & 53820 & 
           \frac{1}{n}\binom{4n-4}{n-2}\\\hline
        \end{array}
        \]
    \end{center}
    
    \item[Nontrivial Functional Equations]\hspace*{\fill}

    \begin{itemize}
\item $\gfa{1}^4-3\gfa{1}^3+3\gfa{1}^2-\gfa{1}+x$

\item $\gfa{2}^4+2x\gfa{2}^2-x\gfa{2}+x^2$

\item $\gfa{3}^4-x^2\gfa{3}+x^3$

\end{itemize}
    \item[Discussion] This follows from Theorem~\ref{thm:k-plane}.

    The OEIS sequences corresponding to this entry are:
    \begin{itemize}
        \item $t_A(n)=A007228(n-1)$
        \item $t_A^{(1)}(n)=A006632(n)$
        \item $t_A^{(2)}(n)=A069271(n-1)$
        \item $t_A^{(3)}(n)=A002293(n-1)$.
    \end{itemize}
\end{description}

\subsection*{Entry 55}
\begin{description}
    \item[Matrices]
    
    \[\begin{bmatrix}
1 & 1 & 1 \\
1 & 1 & 0 \\
1 & 0 & 1
\end{bmatrix}, \begin{bmatrix}
1 & 1 & 1 \\
1 & 0 & 1 \\
1 & 0 & 1
\end{bmatrix}, \begin{bmatrix}
1 & 1 & 1 \\
1 & 0 & 1 \\
1 & 1 & 0
\end{bmatrix}\]
    \item[Sequences]\hspace*{\fill}\vspace{-0.5cm}
    \begin{center}
        \[
        \begin{array}{|c||c|c|c|c|c|c|c|c||c|}\hline
           n  & 1 & 2 & 3 & 4 & 5 & 6 & 7 & 8  & \text{Formula}\\\hline\hline
           \text{Total}  & 3 & 7 & 34 & 209 & 1446 & 10744 & 83736 & 675367 & ***\\\hline\hline
           \text{Root Color }1 & 1 & 3 & 16 & 103 & 732 & 5534 & 43654 & 355219 & A394135(n) \\\hline
           \text{Root Color }2 & 1 & 2 & 9 & 53 & 357 & 2605 & 20041 & 160074 & A379209(n-1)\\\hline
           \text{Root Color }3 & 1 & 2 & 9 & 53 & 357 & 2605 & 20041 & 160074 & A379209(n-1)\\\hline
        \end{array}
        \]
        $***$: Formula for Total is $t_A(n)=A394135(n)+2\cdot A379209(n-1)$.
    \end{center}
    
    \item[Nontrivial Functional Equations]\hspace*{\fill}

    \begin{itemize}
\item $\gfa{1}^4-2\gfa{1}^3+(1 - 4x)\gfa{1}^2-x^2$

\item $\gfa{2}^4-\gfa{2}^3-x\gfa{2}^2+x\gfa{2}-x^2$

\item $\gfa{3}^4-\gfa{3}^3-x\gfa{3}^2+x\gfa{3}-x^2$

\end{itemize}
    \item[Discussion]
    We prove that the number of trees with root of color~2 colored according to $A_1$ is given by OEIS sequence $\text{A379209}(n-1)$.
    OEIS entry \seqnum{A379209} has generating function $g(x)$ satisfying
    \[
    g(x) = \frac{1}{\left(1-x\cdot g(x)^2\right)\left(1-x\cdot g(x)\right)}.
    \]
    Letting $h(x)=xg(x)$, this is the same as
    \[
    \frac{h(x)}{x} = \frac{1}{\left(1-\frac{h(x)^2}{x}\right)\left(1-h(x)\right)}.
    \]
    Algebraic manipulation yields
    \[
    h(x)^4-h(x)^3-xh(x)^2+xh(x)-x^2=0.
    \]
    So, $h(x)$ satisfies the same polynomial equation as $\gff{2}{A_1}$. This fact, combined with the fact that the first several terms of $\text{A379209}(n-1)$ and $t_{A_1}^{(2)}(n)$ agree, shows that the number of $n$-vertex trees colored according to $A_1$ with root of color~2 is enumerated by $\text{A379209}(n-1)$. Since colors~2 and~3 are interchangeable, this proves that the number of $n$-vertex trees colored according to $A_1$ with root of color~3 is also enumerated by $\text{A379209}(n-1)$.
\end{description}

\subsection*{Entry 56}
\begin{description}
    \item[Matrices]
    
    \[\begin{bmatrix}
0 & 1 & 0 \\
1 & 0 & 1 \\
1 & 0 & 0
\end{bmatrix}\]
    \item[Sequences]\hspace*{\fill}\vspace{-0.5cm}
    \begin{center}
        \[
        \begin{array}{|c||c|c|c|c|c|c|c|c||c|}\hline
           n  & 1 & 2 & 3 & 4 & 5 & 6 & 7 & 8  & \text{Formula}\\\hline\hline
           \text{Total}  & 3 & 4 & 11 & 38 & 147 & 609 & 2643 & 11863 & ***\\\hline\hline
           \text{Root Color }1 & 1 & 1 & 3 & 11 & 44 & 186 & 818 & 3706 & A394136(n)\\\hline
           \text{Root Color }2 & 1 & 2 & 6 & 21 & 81 & 334 & 1444 & 6463 & A394137(n)\\\hline
           \text{Root Color }3 & 1 & 1 & 2 & 6 & 22 & 89 & 381 & 1694 & A200753(n-1)\\\hline
        \end{array}
        \]
        $***$: Formula for Total is $t_A(n)=A394136(n)+A394137(n)+A200753(n-1)$.
    \end{center}
    
    \item[Nontrivial Functional Equations]\hspace*{\fill}

    \begin{itemize}
\item $\gfa{1}^3-2\gfa{1}^2+\gfa{1}+x^2 - x$

\item $(1 - x)\gfa{2}^3+(3x - 2)\gfa{2}^2+(1 - x)\gfa{2}+x^2 - x$

\item $(1 - x)\gfa{3}^3-x\gfa{3}+x^2$

\end{itemize}
    \item[Discussion] We prove that the number of trees with root of color~3 colored according to $A$ is given by OEIS sequence $\text{A200753}(n-1)$.
    OEIS entry \seqnum{A200753} has generating function $g(x)$ satisfying
    \[
    g(x) = 1 + (x - x^2)g(x)^3.
    \]
    Letting $h(x)=xg(x)$, this is the same as
    \[
    \frac{h(x)}{x} = 1 + (x - x^2)\left(\frac{h(x)}{x}\right)^3.
    \]
    Algebraic manipulation yields
    \[
    (1 - x)h(x)^3-xh(x)+x^2=0.
    \]
    So, $h(x)$ satisfies the same polynomial equation as $\gfa{3}$. This fact, combined with the fact that the first several terms of $\text{A200753}(n-1)$ and $t_A^{(3)}(n)$ agree, shows that the number of $n$-vertex trees colored according to $A$ with root of color~3 is enumerated by $\text{A200753}(n-1)$.
\end{description}

\subsection*{Entry 57}
\begin{description}
    \item[Matrices]
    
    \[\begin{bmatrix}
0 & 1 & 0 \\
1 & 1 & 1 \\
1 & 0 & 0
\end{bmatrix}\]
    \item[Sequences]\hspace*{\fill}\vspace{-0.5cm}
    \begin{center}
        \[
        \begin{array}{|c||c|c|c|c|c|c|c|c||c|}\hline
           n  & 1 & 2 & 3 & 4 & 5 & 6 & 7 & 8  & \text{Formula}\\\hline\hline
           \text{Total}  & 3 & 5 & 20 & 105 & 624 & 3982 & 26636 & 184285 & *** \\\hline\hline
           \text{Root Color }1 & 1 & 1 & 4 & 21 & 124 & 786 & 5228 & 36005 & A244062(n-1)\\\hline
           \text{Root Color }2 & 1 & 3 & 14 & 77 & 466 & 3002 & 20200 & 140333 & A394138(n) \\\hline
           \text{Root Color }3 & 1 & 1 & 2 & 7 & 34 & 194 & 1208 & 7947 & A394139(n) \\\hline
        \end{array}
        \]
        $***$: Formula for Total is $t_A(n)=A244062(n-1)+A394138(n)+A394139(n)$.
    \end{center}
    
    \item[Nontrivial Functional Equations]\hspace*{\fill}

    \begin{itemize}
\item $\gfa{1}^4-\gfa{1}^3-2x\gfa{1}^2+(2x^2 + x)\gfa{1}-x^2$

\item $\gfa{2}^4+(2x - 3)\gfa{2}^3+(3 - 4x)\gfa{2}^2+(x - 1)\gfa{2}-x^2 + x$

\item $(x - 1)\gfa{3}^4+(-2x^2 + 3x - 1)\gfa{3}^3+(-2x^2 + 3x)\gfa{3}^2-3x^2\gfa{3}+x^3$

\end{itemize}
    \item[Discussion] We prove that the number of trees with root of color~1 colored according to $A$ is given by OEIS sequence $\text{A244062}(n-1)$.
    OEIS entry \seqnum{A244062} has generating function $g(x)$ satisfying
    \[
    g(x) = \frac{2}{1 - xg(x)^2} - \frac{1}{1 - xg(x)}
    \]
    Letting $h(x)=xg(x)$, this is the same as
    \[
    \frac{h(x)}{x} = \frac{2}{1 - x\left(\frac{h(x)}{x}\right)^2} - \frac{1}{1 - h(x)}.
    \]
    Algebraic manipulation yields
    \[
    h(x)^4-h(x)^3-2xh(x)^2+(2x^2 + x)h(x)-x^2=0.
    \]
    So, $h(x)$ satisfies the same polynomial equation as $\gfa{1}$. This fact, combined with the fact that the first several terms of $\text{A244062}(n-1)$ and $t_A^{(1)}(n)$ agree, shows that the number of $n$-vertex trees colored according to $A$ with root of color~1 is enumerated by $\text{A244062}(n-1)$.
\end{description}

\subsection*{Entry 58}
\begin{description}
    \item[Matrices]
    
    \[\begin{bmatrix}
0 & 1 & 0 \\
1 & 1 & 1 \\
1 & 0 & 1
\end{bmatrix}\]
    \item[Sequences]\hspace*{\fill}\vspace{-0.5cm}
    \begin{center}
        \[
        \begin{array}{|c||c|c|c|c|c|c|c|c||c|}\hline
           n  & 1 & 2 & 3 & 4 & 5 & 6 & 7 & 8  & \text{Formula}\\\hline\hline
           \text{Total}  & 3 & 6 & 26 & 142 & 873 & 5772 & 40092 & 288574 & ***\\\hline\hline
           \text{Root Color }1 & 1 & 1 & 4 & 22 & 138 & 930 & 6561 & 47800 & A394140(n)\\\hline
           \text{Root Color }2 & 1 & 3 & 15 & 89 & 577 & 3954 & 28159 & 206344 & A394141(n)\\\hline
           \text{Root Color }3 & 1 & 2 & 7 & 31 & 158 & 888 & 5372 & 34430 & A394142(n)\\\hline
        \end{array}
        \]
        $***$: Formula for Total is $t_A(n)=A394140(n)+A394141(n)+A394142(n)$.
    \end{center}
    
    \item[Nontrivial Functional Equations]\hspace*{\fill}

    \begin{itemize}
\item $(x + 1)\gfa{1}^5-x\gfa{1}^4+(-2x^2 - x)\gfa{1}^3+(2x^3 + 3x^2)\gfa{1}^2-3x^3\gfa{1}+x^4$

\item $\gfa{2}^5-2\gfa{2}^4+(2x + 1)\gfa{2}^3-4x\gfa{2}^2+x\gfa{2}-x^2$

\item $(x + 1)\gfa{3}^5+(-x - 2)\gfa{3}^4+(2x^2 + 3x + 1)\gfa{3}^3+(-x^2 - 3x)\gfa{3}^2+3x^2\gfa{3}-x^3$

\end{itemize}
    
\end{description}

\subsection*{Entry 59}
\begin{description}
    \item[Matrices]
    
    \[\begin{bmatrix}
1 & 1 & 0 \\
1 & 0 & 1 \\
1 & 0 & 0
\end{bmatrix}\]
    \item[Sequences]\hspace*{\fill}\vspace{-0.5cm}
    \begin{center}
        \[
        \begin{array}{|c||c|c|c|c|c|c|c|c||c|}\hline
           n  & 1 & 2 & 3 & 4 & 5 & 6 & 7 & 8  & \text{Formula}\\\hline\hline
           \text{Total}  & 3 & 5 & 18 & 83 & 433 & 2432 & 14347 & 87653 & *** \\\hline\hline
           \text{Root Color }1 & 1 & 2 & 8 & 39 & 210 & 1203 & 7192 & 44362 & A394143(n)\\\hline
           \text{Root Color }2 & 1 & 2 & 7 & 31 & 157 & 864 & 5024 & 30370 & A394144(n)\\\hline
           \text{Root Color }3 & 1 & 1 & 3 & 13 & 66 & 365 & 2131 & 12921 & A394145(n) \\\hline
        \end{array}
        \]
        $***$: Formula for Total is $t_A(n)=A394143(n)+A394144(n)+A394145(n)$.
    \end{center}
    
    \item[Nontrivial Functional Equations]\hspace*{\fill}

    \begin{itemize}
\item $\gfa{1}^4-3\gfa{1}^3+(3 - x)\gfa{1}^2-\gfa{1}-x^2 + x$

\item $(1 - x)\gfa{2}^4+x\gfa{2}^3+(4x^2 - 2x)\gfa{2}^2+x^2\gfa{2}+x^3$

\item $\gfa{3}^4-2\gfa{3}^3+x\gfa{3}^2+x\gfa{3}-x^2$

\end{itemize}
    
\end{description}

\subsection*{Entry 60}
\begin{description}
    \item[Matrices]
    
    \[\begin{bmatrix}
1 & 1 & 0 \\
1 & 1 & 1 \\
1 & 0 & 0
\end{bmatrix}\]
    \item[Sequences]\hspace*{\fill}\vspace{-0.5cm}
    \begin{center}
        \[
        \begin{array}{|c||c|c|c|c|c|c|c|c||c|}\hline
           n  & 1 & 2 & 3 & 4 & 5 & 6 & 7 & 8  & \text{Formula}\\\hline\hline
           \text{Total}  & 3 & 6 & 27 & 156 & 1017 & 7122 & 52308 & 397500 & *** \\\hline\hline
           \text{Root Color }1 & 1 & 2 & 9 & 52 & 339 & 2374 & 17436 & 132500 & A394146(n)\\\hline
           \text{Root Color }2 & 1 & 3 & 15 & 90 & 597 & 4221 & 31188 & 237978 & A394147(n)\\\hline
           \text{Root Color }3 & 1 & 1 & 3 & 14 & 81 & 527 & 3684 & 27022 & A256331(n-1)\\\hline
        \end{array}
        \]
        $***$: Formula for Total is $t_A(n)=A394146(n)+A394147(n)+A256331(n-1)$.
    \end{center}
    
    \item[Nontrivial Functional Equations]\hspace*{\fill}

    \begin{itemize}
\item $3\gfa{1}^3-4\gfa{1}^2+(2x + 1)\gfa{1}-x$

\item $3\gfa{2}^3+(6x - 2)\gfa{2}^2+x\gfa{2}+x^2$

\item $\gfa{3}^3+(-2x - 2)\gfa{3}^2+5x\gfa{3}-3x^2$

\end{itemize}
    \item[Discussion] We prove that the number of trees with root of color~3 colored according to $A$ is given by OEIS sequence $\text{A256331}(n-1)$.
    OEIS entry \seqnum{A256331} has generating function $g(x)$ satisfying
    \[
    xg(x)^3 - (2x+2)g(x)^2 + 5g(x) - 3=0.
    \]
    Letting $h(x)=xg(x)$, this is the same as
    \[
    x\left(\frac{h(x)}{x}\right)^3 - (2x+2)\left(\frac{h(x)}{x}\right)^2 + 5\left(\frac{h(x)}{x}\right) - 3=0.
    \]
    Algebraic manipulation yields
    \[
    h(x)^3+(-2x - 2)h(x)^2+5xh(x)-3x^2=0.
    \]
    So, $h(x)$ satisfies the same polynomial equation as $\gfa{3}$. This fact, combined with the fact that the first several terms of $\text{A256331}(n-1)$ and $t_A^{(3)}(n)$ agree, shows that the number of $n$-vertex trees colored according to $A$ with root of color~3 is enumerated by $\text{A256331}(n-1)$.
\end{description}

\subsection*{Entry 61}
\begin{description}
    \item[Matrices]
    
    \[\begin{bmatrix}
1 & 1 & 0 \\
1 & 1 & 1 \\
1 & 0 & 1
\end{bmatrix}\]
    \item[Sequences]\hspace*{\fill}\vspace{-0.5cm}
    \begin{center}
        \[
        \begin{array}{|c||c|c|c|c|c|c|c|c||c|}\hline
           n  & 1 & 2 & 3 & 4 & 5 & 6 & 7 & 8  & \text{Formula}\\\hline\hline
           \text{Total}  & 3 & 7 & 33 & 196 & 1311 & 9431 & 71262 & 557853 & *** \\\hline\hline
           \text{Root Color }1 & 1 & 2 & 9 & 53 & 356 & 2581 & 19661 & 155043 & A394148(n) \\\hline
           \text{Root Color }2 & 1 & 3 & 16 & 102 & 713 & 5280 & 40697 & 323103 & A394149(n) \\\hline
           \text{Root Color }3 & 1 & 2 & 8 & 41 & 242 & 1570 & 10904 & 79707 & A394150(n) \\\hline
        \end{array}
        \]
        $***$: Formula for Total is $t_A(n)=A394148(n)+A394149(n)+A394150(n)$.
    \end{center}
    
    \item[Nontrivial Functional Equations]\hspace*{\fill}

    \begin{itemize}
\item $3\gfa{1}^6-7\gfa{1}^5+(9x + 5)\gfa{1}^4+(-11x - 1)\gfa{1}^3+(6x^2 + 3x)\gfa{1}^2-3x^2\gfa{1}+x^3$

\item $3\gfa{2}^6-2\gfa{2}^5+9x\gfa{2}^4-3x\gfa{2}^3+6x^2\gfa{2}^2-x^2\gfa{2}+x^3$

\item $\gfa{3}^6-2\gfa{3}^5+(6x + 1)\gfa{3}^4-8x\gfa{3}^3+(9x^2 + 2x)\gfa{3}^2-5x^2\gfa{3}+3x^3$

\end{itemize}
    
\end{description}

\subsection*{Entry 62}
\begin{description}
    \item[Matrices]
    
    \[\begin{bmatrix}
1 & 1 & 1 \\
1 & 0 & 1 \\
0 & 0 & 0
\end{bmatrix}\]
    \item[Sequences]\hspace*{\fill}\vspace{-0.5cm}
    \begin{center}
        \[
        \begin{array}{|c||c|c|c|c|c|c|c|c||c|}\hline
           n  & 1 & 2 & 3 & 4 & 5 & 6 & 7 & 8  & \text{Formula}\\\hline\hline
           \text{Total}  & 3 & 5 & 21 & 112 & 674 & 4356 & 29518 & 206924 & ***\\\hline\hline
           \text{Root Color }1 & 1 & 3 & 14 & 78 & 479 & 3131 & 21372 & 150588 & (-1)^{n-1}\cdot A366326(n)\\\hline
           \text{Root Color }2 & 1 & 2 & 7 & 34 & 195 & 1225 & 8146 & 56336 & A199475(n-1)\\\hline
           \text{Root Color }3 & 1 & 0 & 0 & 0 & 0 & 0 & 0 & 0 & 0\\\hline
        \end{array}
        \]
        $***$: Formula for Total is $t_A(n)=(-1)^{n-1}\cdot A366326(n) + A199475(n-1)$.
    \end{center}
    
    \item[Nontrivial Functional Equations]\hspace*{\fill}

    \begin{itemize}
\item $\gfa{1}^3+(2x - 2)\gfa{1}^2+(x^2 - 2x + 1)\gfa{1}+x^2 - x$

\item $(1 - x)\gfa{2}^3+(x^2 - x)\gfa{2}+x^2$

\end{itemize}
    \item[Discussion] 
    First, we prove that the number of trees with root of color~1 colored according to $A$ is given by $(-1)^{n-1}\cdot \text{A366326}(n)$ for $n\geq2$. OEIS entry \seqnum{A366326} has generating function $g(x)$ defined by
    \[
    g(x)=\left(1+x\right)\left(1+\frac{x}{g(x)^2}\right).
    \]
    Substituting $-x$ for $x$ gives the equation
    \[
    g(-x)=\left(1-x\right)\left(1-\frac{x}{g(-x)^2}\right),
    \]
    where $g(-x)$ is the generating function for $(-1)^{n}\cdot \text{A366326}(n)$.
    Letting $h(x)=1-x-g(-x)$, we have $g(-x)=1-x-h(x)$, and, hence,
    \[
    1-x-h(x)=\left(1-x\right)\left(1-\frac{x}{\left(1-x-h(x)\right)^2}\right).
    \]
    Algebraic manipulation yields
    \[
    h(x)^3+(2x - 2)h(x)^2+(x^2 - 2x + 1)h(x)+x^2 - x.
    \]
    So, $h(x)$ satisfies the same polynomial equation as $\gfa{1}$. This fact, combined with the fact that the several terms of $(-1)^{n-1}\cdot \text{A366326}(n)$ and $t_A^{(1)}(n)$ agree starting from $n=2$, shows that the number of $n$-vertex trees colored according to $A$ with root of color~1 is enumerated by $(-1)^{n-1}\cdot \text{A366326}(n)$ for $n\geq2$.
    
    Next, we prove that the number of trees with root of color~2 colored according to $A$ is given by OEIS sequence $\text{A199475}(n-1)$.
    OEIS entry \seqnum{A199475} has generating function $g(x)$ defined by
    \[
    g(x) = \frac{1}{(1-x)(1 - xg(x)^2)}.
    \]
    Letting $h(x)=xg(x)$, this is the same as
    \[
    \frac{h(x)}{x}=\frac{1}{(1-x)\left(1 - \frac{h(x)^2}{x}\right)}.
    \]
    Algebraic manipulation yields
    \[
    (1-x)h(x)^3+(x^2-x)h(x)+x^2=0.
    \]
    So, $h(x)$ satisfies the same polynomial equation as $\gfa{2}$. This fact, combined with the fact that the first several terms of $\text{A199475}(n-1)$ and $t_A^{(2)}(n)$ agree, shows that the number of $n$-vertex trees colored according to $A$ with root of color~2 is enumerated by $\text{A199475}(n-1)$.
\end{description}

\subsection*{Entry 63}
\begin{description}
    \item[Matrices]
    
    \[\begin{bmatrix}
1 & 1 & 1 \\
1 & 0 & 1 \\
0 & 0 & 1
\end{bmatrix}\]
    \item[Sequences]\hspace*{\fill}\vspace{-0.5cm}
    \begin{center}
        \[
        \begin{array}{|c||c|c|c|c|c|c|c|c||c|}\hline
           n  & 1 & 2 & 3 & 4 & 5 & 6 & 7 & 8  & \text{Formula}\\\hline\hline
           \text{Total}  & 3 & 6 & 25 & 134 & 818 & 5399 & 37512 & 270207 & *** \\\hline\hline
           \text{Root Color }1 & 1 & 3 & 15 & 88 & 563 & 3812 & 26877 & 195349 & A394151(n)\\\hline
           \text{Root Color }2 & 1 & 2 & 8 & 41 & 241 & 1545 & 10503 & 74429 & A381826(n-1)\\\hline
           \text{Root Color }3^{h} & 1 & 1 & 2 & 5 & 14 & 42 & 132 & 429 & C_{n-1}\\\hline
        \end{array}
        \]
        $***$: Formula for Total is $t_A(n)=A394151(n)+A381826(n-1)+C_{n-1}$.
    \end{center}
    
    \item[Nontrivial Functional Equations]\hspace*{\fill}

    \begin{itemize}
\item $\gfa{1}^6-2\gfa{1}^5+(2x + 1)\gfa{1}^4-3x\gfa{1}^3+5x^2\gfa{1}^2-x^2\gfa{1}+x^3$

\item $\gfa{2}^6-2x\gfa{2}^4+x\gfa{2}^3+x^2\gfa{2}^2-x^2\gfa{2}+x^3$

\end{itemize}
    \item[Discussion] 
    We prove that the number of trees with root of color~2 colored according to $A$ is given by OEIS sequence $\text{A381826}(n-1)$.
    OEIS entry \seqnum{A381826} has generating function $g(x)$ satisfying
    \[
    g(x) = \frac{C(x)}{1-xg(x)^2},
    \]
    where $C(x)$ is the generating function for the Catalan numbers. Solving for $C(x)$, we have
    \[
    C(x)=g(x)\left(1-xg(x)^2\right).
    \]
    Since $xC(x)^2-C(x)+1=0$, we have
    \[
    x\left(g(x)\left(1-xg(x)^2\right)\right)^2-\left(g(x)\left(1-xg(x)^2\right)\right)+1=0.
    \]
    Algebraic manipulation yields
    \[
    x^3g(x)^6-2x^2g(x)^4+xg(x)^3+xg(x)^2-g(x)+1=0.
    \]
    Letting $h(x)=xg(x)$ and doing more algebraic manipulation, this is the same as
    \[
    h(x)^6-2xh(x)^4+xh(x)^3+x^2h(x)^2-x^2h(x)+x^3=0.
    \]
    So, $h(x)$ satisfies the same polynomial equation as $\gfa{2}$. This fact, combined with the fact that the first several terms of $\text{A381826}(n-1)$ and $t_A^{(2)}(n)$ agree, shows that the number of $n$-vertex trees colored according to $A$ with root of color~2 is enumerated by $\text{A381826}(n-1)$.
\end{description}

\subsection*{Entry 64}
\begin{description}
    \item[Matrices]
    
    \[\begin{bmatrix}
1 & 1 & 1 \\
1 & 1 & 1 \\
0 & 0 & 0
\end{bmatrix}\]
    \item[Sequences]\hspace*{\fill}\vspace{-0.5cm}
    \begin{center}
        \[
        \begin{array}{|c||c|c|c|c|c|c|c|c||c|}\hline
           n  & 1 & 2 & 3 & 4 & 5 & 6 & 7 & 8  & \text{Formula}\\\hline\hline
           \text{Total}  & 3 & 6 & 30 & 186 & 1290 & 9582 & 74550 & 599730 & A239488(n-1)
           \\\hline\hline
           \text{Root Color }1 & 1 & 3 & 15 & 93 & 645 & 4791 & 37275 & 299865 & A103210(n-1)\\\hline
           \text{Root Color }2 & 1 & 3 & 15 & 93 & 645 & 4791 & 37275 & 299865 & A103210(n-1)\\\hline
           \text{Root Color }3 & 1 & 0 & 0 & 0 & 0 & 0 & 0 & 0 & 0\\\hline
        \end{array}
        \]
    \end{center}
    
    \item[Nontrivial Functional Equations]\hspace*{\fill}

    \begin{itemize}
\item $2\gfa{1}^2+(x - 1)\gfa{1}+x$

\item $2\gfa{2}^2+(x - 1)\gfa{2}+x$

\end{itemize}
    \item[Discussion] 
This is the special case $\ell=2$, $m=1$ of Theorem~\ref{thm:gennarayana}: trees with root color 3 exist only for $n=1$, while the number of trees with root color 1 or 2 colored according to $A$ is
$$\sum_{k=1}^{n-1} N_{n-1,k} 2^{n-k-1}3^k,$$
and the total number is
$$\sum_{k=1}^{n-1} N_{n-1,k} 2^{n-k}3^k$$
for $n > 1$. These are OEIS sequences \seqnum{A103210} and \seqnum{A239488}, respectively.

\end{description}

\subsection*{Entry 65}
\begin{description}
    \item[Matrices]
    
    \[\begin{bmatrix}
1 & 1 & 1 \\
1 & 1 & 1 \\
0 & 0 & 1
\end{bmatrix}\]
    \item[Sequences]\hspace*{\fill}\vspace{-0.5cm}
    \begin{center}
        \[
        \begin{array}{|c||c|c|c|c|c|c|c|c||c|}\hline
           n  & 1 & 2 & 3 & 4 & 5 & 6 & 7 & 8  & \text{Formula}\\\hline\hline
           \text{Total}  & 3 & 7 & 34 & 211 & 1482 & 11206 & 88960 & 731019 & 2\cdot A394152(n)+C_{n-1}\\\hline\hline
           \text{Root Color }1 & 1 & 3 & 16 & 103 & 734 & 5582 & 44414 & 365295 & A394152(n) \\\hline
           \text{Root Color }2 & 1 & 3 & 16 & 103 & 734 & 5582 & 44414 & 365295 & A394152(n) \\\hline
           \text{Root Color }3^{h} & 1 & 1 & 2 & 5 & 14 & 42 & 132 & 429 & C_{n-1}\\\hline
        \end{array}
        \]
    \end{center}
    
    \item[Nontrivial Functional Equations]\hspace*{\fill}

    \begin{itemize}
\item $4\gfa{1}^4-2\gfa{1}^3+5x\gfa{1}^2-x\gfa{1}+x^2$

\item $4\gfa{2}^4-2\gfa{2}^3+5x\gfa{2}^2-x\gfa{2}+x^2$

\end{itemize}
    
\end{description}

\subsection*{Entry 66}
\begin{description}
    \item[Matrices]
    
    \[\begin{bmatrix}
1 & 1 & 1 \\
1 & 0 & 1 \\
0 & 1 & 0
\end{bmatrix}\]
    \item[Sequences]\hspace*{\fill}\vspace{-0.5cm}
    \begin{center}
        \[
        \begin{array}{|c||c|c|c|c|c|c|c|c||c|}\hline
           n  & 1 & 2 & 3 & 4 & 5 & 6 & 7 & 8  & \text{Formula}\\\hline\hline
           \text{Total}  & 3 & 6 & 26 & 144 & 902 & 6082 & 43068 & 315810 & *** \\\hline\hline
           \text{Root Color }1 & 1 & 3 & 15 & 89 & 579 & 3995 & 28721 & 212847 & A394153(n) \\\hline
           \text{Root Color }2 & 1 & 2 & 8 & 42 & 254 & 1672 & 11638 & 84254 & A394154(n) \\\hline
           \text{Root Color }3 & 1 & 1 & 3 & 13 & 69 & 415 & 2709 & 18709 & A394155(n) \\\hline
        \end{array}
        \]
        $***$: Formula for Total is $t_A(n)=A394153(n)+A394154(n)+A394155(n)$.
    \end{center}
    
    \item[Nontrivial Functional Equations]\hspace*{\fill}

    \begin{itemize}
\item $(1 - x)\gfa{1}^4+(5x - 2)\gfa{1}^3+(1 - 5x)\gfa{1}^2+x^2\gfa{1}-x^2$

\item $\gfa{2}^4+(x - 1)\gfa{2}^3-x\gfa{2}^2+x\gfa{2}-x^2$

\item $(x - 1)\gfa{3}^4+(2x + 1)\gfa{3}^3+(-2x^2 - 3x)\gfa{3}^2+(x^3 + 3x^2)\gfa{3}-x^3$

\end{itemize}
    
\end{description}

\subsection*{Entry 67}
\begin{description}
    \item[Matrices]
    
    \[\begin{bmatrix}
1 & 1 & 1 \\
1 & 0 & 1 \\
0 & 1 & 1
\end{bmatrix}\]
    \item[Sequences]\hspace*{\fill}\vspace{-0.5cm}
    \begin{center}
        \[
        \begin{array}{|c||c|c|c|c|c|c|c|c||c|}\hline
           n  & 1 & 2 & 3 & 4 & 5 & 6 & 7 & 8  & \text{Formula}\\\hline\hline
           \text{Total}  & 3 & 7 & 33 & 195 & 1293 & 9200 & 68662 & 530440 & *** \\\hline\hline
           \text{Root Color }1 & 1 & 3 & 16 & 102 & 712 & 5256 & 40317 & 318079 & A394156(n) \\\hline
           \text{Root Color }2 & 1 & 2 & 9 & 52 & 340 & 2395 & 17736 & 136156 & A394157(n) \\\hline
           \text{Root Color }3 & 1 & 2 & 8 & 41 & 241 & 1549 & 10609 & 76205 & A394158(n) \\\hline
        \end{array}
        \]
        $***$: Formula for Total is $t_A(n)=A394156(n)+A394157(n)+A394158(n)$.
    \end{center}
    
    \item[Nontrivial Functional Equations]\hspace*{\fill}

    \begin{itemize}
\item $\gfa{1}^7-2\gfa{1}^6+(4x + 1)\gfa{1}^5-7x\gfa{1}^4+(5x^2 + 2x)\gfa{1}^3-6x^2\gfa{1}^2+(x^3 + x^2)\gfa{1}-x^3$

\item $\gfa{2}^7+x\gfa{2}^6-2x\gfa{2}^5-x^2\gfa{2}^4+2x^2\gfa{2}^3-x^3\gfa{2}+x^4$

\item $\gfa{3}^7-4\gfa{3}^6+(2x + 6)\gfa{3}^5+(-7x - 4)\gfa{3}^4+(3x^2 + 8x + 1)\gfa{3}^3+(-5x^2 - 3x)\gfa{3}^2+(x^3 + 3x^2)\gfa{3}-x^3$

\end{itemize}
    
\end{description}

\subsection*{Entry 68}
\begin{description}
    \item[Matrices]
    
    \[\begin{bmatrix}
1 & 1 & 1 \\
1 & 0 & 1 \\
1 & 0 & 0
\end{bmatrix}\]
    \item[Sequences]\hspace*{\fill}\vspace{-0.5cm}
    \begin{center}
        \[
        \begin{array}{|c||c|c|c|c|c|c|c|c||c|}\hline
           n  & 1 & 2 & 3 & 4 & 5 & 6 & 7 & 8  & \text{Formula}\\\hline\hline
           \text{Total}  & 3 & 6 & 27 & 155 & 1003 & 6971 & 50814 & 383261 & *** \\\hline\hline
           \text{Root Color }1 & 1 & 3 & 15 & 90 & 596 & 4201 & 30914 & 234762 & A394159(n) \\\hline
           \text{Root Color }2 & 1 & 2 & 8 & 43 & 268 & 1819 & 13045 & 97227 & A394160(n) \\\hline
           \text{Root Color }3 & 1 & 1 & 4 & 22 & 139 & 951 & 6855 & 51272 & A394161(n) \\\hline
        \end{array}
        \]
        $***$: Formula for Total is $t_A(n)=A394159(n)+A394160(n)+A394161(n)$.
    \end{center}
    
    \item[Nontrivial Functional Equations]\hspace*{\fill}

    \begin{itemize}
\item $\gfa{1}^5-4\gfa{1}^4+(6 - 2x)\gfa{1}^3+(3x - 4)\gfa{1}^2+\gfa{1}+x^2 - x$

\item $(x - 1)\gfa{2}^5-x\gfa{2}^4+(-2x^2 + 2x)\gfa{2}^3-x^2\gfa{2}+x^3$

\item $\gfa{3}^5-3x\gfa{3}^3+2x^2\gfa{3}^2+x^2\gfa{3}-x^3$

\end{itemize}
    
\end{description}

\subsection*{Entry 69}
\begin{description}
    \item[Matrices]
    
    \[\begin{bmatrix}
1 & 1 & 1 \\
1 & 1 & 1 \\
1 & 0 & 0
\end{bmatrix}\]
    \item[Sequences]\hspace*{\fill}\vspace{-0.5cm}
    \begin{center}
        \[
        \begin{array}{|c||c|c|c|c|c|c|c|c||c|}\hline
           n  & 1 & 2 & 3 & 4 & 5 & 6 & 7 & 8  & \text{Formula}\\\hline\hline
           \text{Total}  & 3 & 7 & 36 & 233 & 1692 & 13174 & 107496 & 907221 & ***\\\hline\hline
           \text{Root Color }1 & 1 & 3 & 16 & 105 & 768 & 6006 & 49152 & 415701 & A085614(n)\\\hline
           \text{Root Color }2 & 1 & 3 & 16 & 105 & 768 & 6006 & 49152 & 415701 & A085614(n)\\\hline
           \text{Root Color }3 & 1 & 1 & 4 & 23 & 156 & 1162 & 9192 & 75819 & A007297(n)\\\hline
        \end{array}
        \]
        $***$: Formula for Total is $t_A(n)=2\cdot A085614(n) + A007297(n)$.
    \end{center}
    
    \item[Nontrivial Functional Equations]\hspace*{\fill}

    \begin{itemize}
\item $2\gfa{1}^3-3\gfa{1}^2+\gfa{1}-x$

\item $2\gfa{2}^3-3\gfa{2}^2+\gfa{2}-x$

\item $\gfa{3}^3+\gfa{3}^2-3x\gfa{3}+2x^2$

\end{itemize}
    \item[Discussion]
    We first prove that the number of trees with root of color~2 colored according to $A$ is given by OEIS sequence $\text{A085614}(n)$.
    The first formula for OEIS sequence \seqnum{A085614} states that that sequence's ordinary generating function is the series reversion of $x-3x^2+2x^3$, which is exactly what we have here by looking at the functional equation for $\gfa{1}$. 
    Since colors~1 and~2 are interchangeable, this proves that $t_A^{(1)}$ and $t_A^{(2)}$ are both enumerated by \seqnum{A085614}.

    We now prove that the number of trees with root of color~3 colored according to $A$ is given by OEIS sequence $\text{A007297}(n)$.
    Let $g(x)=\frac{\gfa{3}}{x}-1$, so that $\gfa{3}=x\left(g(x)+1\right)$. We have
    \begin{align*}
        &\phantom{\Rightarrow} x^3\left(g(x)+1\right)^3+x^2\left(g(x)+1\right)^2-3x^2\left(g(x)+1\right)+2x^2=0\\
        &\Rightarrow x^3\left(g(x)^3+3g(x)^2+3g(x)+1\right)+x^2\left(g(x)^2+2g(x)+1\right)-3x^2\left(g(x)+1\right)+2x^2=0\\
        &\Rightarrow x^3g(x)^3+\left(3x^3+x^2\right)g(x)^2+\left(3x^3-x^2\right)g(x)+x^3=0\\
        &\Rightarrow xg(x)^3+\left(3x+1\right)g(x)^2+\left(3x-1\right)g(x)+x=0\\
        &\Rightarrow x(g(x)+1)^3-g(x)+g(x)^2+x=0\\
        &\Rightarrow x=\frac{g(x)-g(x)^2}{(1+g(x))^3}.
    \end{align*}
    This shows that $g(x)$ is the generating function for OEIS sequence \seqnum{A263843}, which, in turn, implies that $t_A^{(3)}$ is enumerated by \seqnum{A007297}.
\end{description}

\subsection*{Entry 70}
\begin{description}
    \item[Matrices]
    
    \[\begin{bmatrix}
1 & 1 & 1 \\
1 & 1 & 1 \\
1 & 0 & 1
\end{bmatrix}\]
    \item[Sequences]\hspace*{\fill}\vspace{-0.5cm}
    \begin{center}
        \[
        \begin{array}{|c||c|c|c|c|c|c|c|c||c|}\hline
           n  & 1 & 2 & 3 & 4 & 5 & 6 & 7 & 8  & \text{Formula}\\\hline\hline
           \text{Total}  & 3 & 8 & 43 & 290 & 2195 & 17822 & 151708 & 1336086 & *** \\\hline\hline
           \text{Root Color }1 & 1 & 3 & 17 & 118 & 909 & 7467 & 64084 & 567784 & A394162(n) \\\hline
           \text{Root Color }2 & 1 & 3 & 17 & 118 & 909 & 7467 & 64084 & 567784 & A394162(n) \\\hline
           \text{Root Color }3 & 1 & 2 & 9 & 54 & 377 & 2888 & 23540 & 200518 & A394163(n) \\\hline
        \end{array}
        \]
        $***$: Formula for Total is $t_A(n)=2\cdot A394162(n)+A394163(n)$.
    \end{center}
    
    \item[Nontrivial Functional Equations]\hspace*{\fill}

    \begin{itemize}
\item $2\gfa{1}^4-\gfa{1}^3+4x\gfa{1}^2-x\gfa{1}+x^2$

\item $2\gfa{2}^4-\gfa{2}^3+4x\gfa{2}^2-x\gfa{2}+x^2$

\item $\gfa{3}^4-2\gfa{3}^3+(4x + 1)\gfa{3}^2-3x\gfa{3}+2x^2$

\end{itemize}
    
\end{description}

\subsection*{Entry 71}
\begin{description}
    \item[Matrices]
    
    \[\begin{bmatrix}
1 & 1 & 1 \\
1 & 1 & 1 \\
1 & 1 & 0
\end{bmatrix}\]
    \item[Sequences]\hspace*{\fill}\vspace{-0.5cm}
    \begin{center}
        \[
        \begin{array}{|c||c|c|c|c|c|c|c|c||c|}\hline
           n  & 1 & 2 & 3 & 4 & 5 & 6 & 7 & 8  & \text{Formula}\\\hline\hline
           \text{Total}  & 3 & 8 & 44 & 304 & 2356 & 19576 & 170460 & 1535200 & ***\\\hline\hline
           \text{Root Color }1 & 1 & 3 & 17 & 119 & 929 & 7755 & 67745 & 611567 & A344553(n)\\\hline
           \text{Root Color }2 & 1 & 3 & 17 & 119 & 929 & 7755 & 67745 & 611567 & A344553(n)\\\hline
           \text{Root Color }3 & 1 & 2 & 10 & 66 & 498 & 4066 & 34970 & 312066 & A027307(n-1)\\\hline
        \end{array}
        \]
        $***$: Formula for Total is $t_A(n)=2\cdot A344553(n) + A027307(n-1)$.
    \end{center}
    
    \item[Nontrivial Functional Equations]\hspace*{\fill}

    \begin{itemize}
\item $4\gfa{1}^3-4\gfa{1}^2+(x + 1)\gfa{1}-x$

\item $4\gfa{2}^3-4\gfa{2}^2+(x + 1)\gfa{2}-x$

\item $\gfa{3}^3+x\gfa{3}^2-x\gfa{3}+x^2$

\end{itemize}
    \item[Discussion]
This is a special case of Family 2 (see Section~\ref{sec:fam3}), where $\ell = 2$ and $m=1$. The generating function identities follow from Theorem~\ref{thm:removesquare-gfs}. Moreover, Theorems~\ref{thm:removesquare},~\ref{thm:removesquare-alternative} and~\ref{thm:removesquare-alternative2} yield formulas for $t_A(n)$, $t_A^{(1)}(n) = t_A^{(2)}(n)$ and $t_A^{(3)}(n)$: for all $n \geq 2$, we have
\begin{align*}
  t_{A}(n) &= \frac{2}{n} \sum_{k=0}^{n-1} 2^{n-k}  \binom{n}{k} \binom{2n-3}{n-k-1} \\
  &= \frac{(-1)^n}{n(n-1)} \sum_{k=1}^{n} (-2)^k (k-1) \binom{n}{k} \binom{2n+k-2}{k-1} \\ 
  &= \frac{4}{n} \sum_{k=0}^{n-1} \binom{2n-3}{k} \binom{3n-k-3}{n-k-1}, \\
  t_{A}^{(1)}(n) &= \frac{1}{n} \sum_{k=0}^{n-1} 2^{n-k-1} \binom{n}{k} \binom{2n-2}{n-k-1} \\
  &= \frac{(-1)^{n-1}}{n} \sum_{k=1}^{n} (-2)^{k-1} \binom{n}{k} \binom{2n+k-2}{k-1} \\
  &= \frac{1}{n} \sum_{k=0}^{n-1} \binom{2n-2}{k} \binom{3n-k-2}{n-k-1}, \\
  t_{A}^{(3)}(n) &= \frac{1}{n-1} \sum_{k=1}^{n-1} 2^{n-k} \binom{n-1}{k-1} \binom{2n-2}{n-k-1}  \\
  &= \frac{(-1)^{n-1}}{n-1} \sum_{k=1}^{n-1} (-2)^k \binom{n-1}{k} \binom{2n+k-2}{k-1} \\
  &= \frac{2}{n-1} \sum_{k=0}^{n-2} \binom{2n-2}{k} \binom{3n-k-3}{n-k-2}.
\end{align*}

Some of these sum formulas appear already in the OEIS.

\end{description}

\subsection*{Entry 72}
\begin{description}
    \item[Matrices]
    
    \[\begin{bmatrix}
1 & 1 & 1 \\
1 & 1 & 1 \\
1 & 1 & 1
\end{bmatrix}\]
    \item[Sequences]\hspace*{\fill}\vspace{-0.5cm}
    \begin{center}
        \[
        \begin{array}{|c||c|c|c|c|c|c|c|c||c|}\hline
           n  & 1 & 2 & 3 & 4 & 5 & 6 & 7 & 8  & \text{Formula}\\\hline\hline
           \text{Total}^{h}  & 3 & 9 & 54 & 405 & 3402 & 30618 & 288684 & 2814669 & 
           3^{n}C_{n-1}\\\hline\hline
           \text{Root Color }1^h & 1 & 3 & 18 & 135 & 1134 & 10206 & 96228 & 938223 & 3^{n-1}C_{n-1}\\\hline
           \text{Root Color }2^h & 1 & 3 & 18 & 135 & 1134 & 10206 & 96228 & 938223 & 3^{n-1}C_{n-1}\\\hline
           \text{Root Color }3^h & 1 & 3 & 18 & 135 & 1134 & 10206 & 96228 & 938223 & 3^{n-1}C_{n-1}\\\hline
        \end{array}
        \]
    \end{center}

\item[Discussion] This follows from Theorem~\ref{prop:row-sums}, since all row sums are equal to $3$.
    
\end{description}

\end{document}